\newtheorem{theorem}{Theorem}[section] 
\newtheorem{corollary}[theorem]{Corollary} 
\newtheorem{lemma}[theorem]{Lemma} 
\newtheorem{proposition}[theorem]{Proposition}
\theoremstyle{definition}
\newtheorem{definition}{Definition}
\theoremstyle{remark}
\newcommand{\ord}{\operatorname{ord}}
\renewcommand{\deg}{{\operatorname{deg}}}
\newcommand{\Spec}{\operatorname{Spec}}
\newcommand{\CPA}{\operatorname{CPA}}
\newcommand{\GL}{\operatorname{GL}}
\newcommand{\Res}{\operatorname{Res}}
\newcommand{\id}{\operatorname{id}}
\newcommand{\Char}{\operatorname{char}}
\newcommand{\Ann}{\operatorname{Ann}}
\newcommand{\MinResLoc}{\operatorname{MinResLoc}}
\newcommand{\GCD}{\operatorname{GCD}}
\newcommand{\FR}{\operatorname{FR}}
\newcommand{\Can}{\operatorname{Can}}
\newcommand{\SL}{\operatorname{SL}}
\def\ord{{\mathop{\rm ord}}}
\def\ordRes{{\mathop{\rm ordRes}}}
\def\deg{{\mathop{\rm deg}}}
\def\GL{{\mathop{\rm GL}}}
\def\Spec{{\mathop{\rm Spec}}}
\def\CPA{{\mathop{\rm CPA}}}
\def\Berk{{\mathop{\rm Berk}}}
\def\Res{{\mathop{\rm Res}}}
\def\Spec{{\mathop{\rm Spec}}}
\def\Fix{{\mathop{\rm Fix}}}
\def\Repel{{\mathop{\rm Repel}}}
\def\id{{\mathop{\rm id}}}
\def\Visible{{\mathop{\rm Visible}}}
\def\Shearing{{\mathop{\rm Shearing}}}
\def\Rat{{\mathop{\rm Rat}}}
\def\cCr{{\mathop{{\mathcal{C} {\rm r}}}}}
\def\vv{{\vec{v}}}
\def\vw{{\vec{w}}}
\def\v1{{\vec{1}}}
\def\vbb1{\vec{{\mathbf 1}}}
\def\valpha{{\vec{\alpha}}}
\def\BF1{{\mathbf 1}}
\def\HH{{\mathbb H}}
\def\NN{{\mathbb N}}
\def\PP{{\mathbb P}}
\def\RR{{\mathbb R}}
\def\ZZ{{\mathbb Z}}
\def\cB{{\mathcal B}}
\def\cE{{\mathcal E}}
\def\cM{{\mathcal M}}
\def\cO{{\mathcal O}}
\def\ha{{\widehat{a}}}
\def\hb{{\widehat{b}}}
\def\hf{{\widehat{f}}}
\def\hg{{\widehat{g}}}
\def\hA{{\widehat{A}}}
\def\hF{{\widehat{F}}}
\def\hG{{\widehat{G}}}
\def\hGamma{{\widehat{\Gamma}}}
\def\hFR{{\widehat{FR}}}
\def\hbar{{\overline{h}}}
\def\BPP{{{\bf P}}^1}
\def\BHH{{{\bf H}}^1}
\def\fM{{\mathfrak M}}
\def\ta{{\widetilde{a}}}
\def\tb{{\widetilde{b}}}
\def\tc{{\widetilde{c}}}
\def\td{{\widetilde{d}}}
\def\tf{{\widetilde{f}}}
\def\tg{{\widetilde{g}}}
\def\th{{\widetilde{h}}}
\def\tk{{\widetilde{k}}}
\def\tu{{\widetilde{u}}}
\def\tv{{\widetilde{v}}}
\def\tvarphi{{\widetilde{\varphi}}}
\def\ttau{{\widetilde{\tau}}}
\def\teta{{\widetilde{\eta}}}
\def\tA{{\widetilde{A}}}
\def\tC{{\widetilde{C}}}
\def\tF{{\widetilde{F}}}
\def\tG{{\widetilde{G}}}
\def\tH{{\widetilde{H}}}
\def\tL{{\widetilde{L}}}
\def\tM{{\widetilde{M}}}
\def\tZ{{\widetilde{Z}}}
\def\talpha{{\widetilde{\alpha}}}
\def\tphi{{\widetilde{\varphi}}}
\def\tlambda{{\widetilde{\lambda}}}
\def\tPhi{{\widetilde{\Phi}}}
\def\tpsi{{\widetilde{\psi}}}
\def\tid{{\widetilde{\rm 1}}}
\def\trot{{\widetilde{\rm rot}}}
\def\Berk{{\rm Berk}}
\def\<{{\langle }}
\def\>{{\rangle }}
\def\<<{{\langle \! \langle}}
\def\>>{{\rangle \! \rangle}} 
\def\({(\!(}
\def\){)\!)}
\def\[{[\![}
\def\]{]\!]}
\DeclareMathSymbol{\varnothing} {\mathord}{AMSb}{"3F} 
\theoremstyle{definition} 
\theoremstyle{remark} 
\begin{document}
\title{The Geometry of the Minimal Resultant Locus}

\author{Robert Rumely}
\address{Robert Rumely\\ 
Department of Mathematics\\
University of Georgia\\
Athens, Georgia 30602\\
USA}
\email{rr@math.uga.edu}

\date{February 23, 2014}
\subjclass[2000]{Primary  37P50, 11S82; 
Secondary  37P05, 11Y40, 11U05} 
\keywords{minimal resultant locus, crucial set, repelling fixed points, nonarchimedean weight formula, 
geometric invariant theory} 

\begin{abstract}
Let $K$ be a complete, algebraically closed nonarchimedean valued field, 
and let $\varphi(z) \in K(z)$ have degree $d \ge 2$.  
We show there is a natural way to assign non-negative integer weights $w_\varphi(P)$ to points of the
Berkovich projective line over $K$ in such a way that $\sum_P w_\varphi(P) = d-1$.  When $\varphi$ has bad reduction,
the set of points with nonzero weight forms a distributed analogue of the unique point which occurs when $\varphi$ has
potential good reduction.  
Using this, we characterize the Minimal Resultant Locus of $\varphi$ in dynamical and moduli-theoretic terms:
dynamically, it is the barycenter of the weight-measure associated to $\varphi$;  moduli-theoretically, it is the  
closure of the set of points where $\varphi$ has semi-stable reduction, in the sense of Geometric Invariant Theory.
\end{abstract}

\maketitle

Let $K$ be a complete, algebraically closed nonarchimedean valued field with absolute value $| \cdot |$
and associated valuation $\ord(\cdot)$. 
Write $\cO$ for the ring of integers of $K$, $\fM$ for its maximal ideal, and $\tk$ for its residue field.

Let $\varphi(z)  \in K(z)$  be a function with degree $d \ge 2$.  Suppose $(F,G)$ is a normalized
representation for $\varphi$:  a pair of homogeneous functions $F(X,Y), G(X,Y) \in \cO[X,Y]$ of degree $d$, 
such that $\varphi(z) = F(z,1)/G(z,1)$ and some coefficient of $F$ or $G$ belongs to $\cO^{\times}$.  
Such a pair $(F,G)$ is unique up to scaling by a unit.  Let $\Res(F,G)$ be the homogeneous resultant of $F$ and $G$;
then $\ordRes(\varphi) := \ord(\Res(F,G))$ is well-defined and non-negative.    
 
Let $\BPP_K$ be the Berkovich projective line over $K$:  a compact, uniquely path-connected Hausdorff 
space which contains $\PP^1(K)$ as a dense subset.  By Berkovich's classification theorem, 
points of $\BPP_K \backslash \PP^1(K)$ correspond to discs $D(a,r) \subset K$, or to nested sequences of discs;  
points corresponding to discs with radius $r \in |K^{\times}|$ are said to be of  {type {\rm II}}.  
The point $\zeta_G$ corresponding to $D(0,1)$ is called the {\em Gauss point}.  
The natural action of $\GL_2(K)$ on $\PP^1(K)$ extends continuously to $\BPP_K$, 
and $\GL_2(K)$ acts transitively on type II points. 

If $\gamma \in \GL_2(K)$, we denote the conjugate 
$\gamma^{-1} \!\circ\! \varphi \!\circ\! \gamma$ by $\varphi^{\gamma}$.
In \cite{RR-MRL} it is shown that the map 
$\gamma \mapsto \ordRes(\varphi^\gamma)$ factors through a 
function $\ordRes_\varphi(\cdot)$ on $\BPP_K$,  
given on type II points by 
\begin{equation*} 
\ordRes_\varphi(\gamma(\zeta_G)) \ = \ \ordRes(\varphi^\gamma) \ . 
\end{equation*} 
By (\cite{RR-MRL}, Theorem 0.1) $\ordRes_\varphi(\cdot)$ is piecewise affine and convex upward on paths, 
and takes the value $\infty$ on $\PP^1(K)$.
It achieves a minimum on $\BPP_K$. 
The set $\MinResLoc(\varphi)$ where the minimum occurs  
is called the {\em Minimal Resultant Locus} of $\varphi$. 
It is either a single type II point, or a closed segment joining two type II points.

\smallskip
In this paper we investigate the dynamical and geometrical meaning of $\MinResLoc(\varphi)$. 
Our key discovery is the fact that there is a canonical way to assign 
non-negative integer weights $w_\varphi(P)$ to points of $\BPP_K$, such that  
\begin{equation} \label{WeightSumFormula}
\sum_{\ \ P \in \,\BPP_K} w_\varphi(P) \ = \ d-1 \ . 
\end{equation} 
We call the set of points with positive weight the {\em crucial set} of $\varphi$.
The elements of the crucial set are all of type II.  When $\varphi$ has potential good reduction,
the crucial set consists of the unique point where $\varphi$ achieves good reduction.  When 
$\varphi$ has bad reduction, we regard the crucial set as a distributed analogue of the point 
of good reduction (though even when $\varphi$ has bad reduction, the crucial set sometimes
consists of a single point;  see Example G of \S\ref{ExamplesSection}). 

Each repelling fixed point of $\varphi$ in the Berkovich upper half-space $\BHH_K := \BPP_K \backslash \PP^1(K)$ 
belongs to the crucial set.  One consequence of the Weight Formula is that $\varphi$
can have at most $d-1$ repelling fixed points in $\BHH_K$.  
Example C in \S\ref{ExamplesSection} shows this is sharp.

Write $\delta_P$ for the Dirac measure at a point $P \in \BPP_K$.
The crucial set carries a natural probability measure  
\begin{equation*}
\nu_\varphi \ = \ \frac{1}{d-1} \sum_{ \ P \in \,\BPP_K} w_\varphi(P) \delta_P   
\end{equation*}
called the {\em crucial measure}.  The {\em barycenter}
of $\nu_\varphi$ is the set of points $P$ such that each component of $\BPP_K \backslash \{P\}$ 
has $\nu_\varphi$-mass at most $1/2$.  
The tree $\Gamma_\varphi$ spanned by the crucial set will be called
the {\em crucial tree}; we define its vertices to be the elements of the crucial set and the branch points,
and its edges to be the closed segments connecting vertices. 

\smallskip
We can now state our two main theorems:

\medskip
\noindent{\bf Theorem  A} {\rm \big(Dynamical Characterization of $\MinResLoc(\varphi)$\big).} 
{\em Let $\varphi(z) \in K(z)$ have degree $d \ge 2$.  
Then $\MinResLoc(\varphi)$ is the barycenter of $\nu_\varphi$.  
If $d$ is even, $\MinResLoc(\varphi)$ is a vertex 
o\!f \,$\Gamma_\varphi$.  
If $d$ is odd, $\MinResLoc(\varphi)$ is either a vertex
or an edge o\!f \,$\Gamma_\varphi$.  
}

\medskip
Using Geometric Invariant Theory, Silverman (\cite{Sil2}) has constructed a moduli space 
$\cM_d/\Spec(\ZZ)$ for rational functions of degree $d$.
Building on work of Szpiro, Tepper, and Williams in (\cite{STW}), we show  

\medskip
\noindent{\bf Theorem  B} {\rm \big(\rm Moduli-Theoretic Characterization of $\MinResLoc(\varphi)$\big).} 
{\em Suppose $\varphi(z) \in K(z)$ has degree $d \ge 2$.  Let $P \in \BHH_K$ be a point of type {\rm II}, and let 
$\gamma \in \GL_2(K)$ be such that $P = \gamma(\zeta_G)$.  Then 

$(A)$ $P \in \MinResLoc(\varphi)$ if and only if $\varphi^\gamma$ is has semi-stable reduction
in the sense of Geometric Invariant Theory.

$(B)$  $\MinResLoc(\varphi) = \{P\}$ if and only if  
$\varphi^\gamma$ has stable reduction in the sense of Geometric Invariant Theory.
} 
\medskip

The plan of the paper is as follows.   

By (\cite{RR-MRL}, Theorem 0.1), $\MinResLoc(\varphi)$ is contained in the tree $\Gamma_{\Fix,\varphi^{-1}(a)}$
spanned by the classical fixed points of $\varphi$ and the preimages of $a$, for each $a \in \PP^1(K)$.  
We first show that the intersection 
of the trees $\Gamma_{\Fix,\varphi^{-1}(a)}$ is the tree $\Gamma_{\Fix,\Repel}$  
spanned by the classical fixed points and the repelling fixed points of $\varphi$ in $\BHH_K$.  
In \S\ref{DefinitionSection}  we recall some basic facts and definitions. 
After preliminaries concerning the surplus multiplicity $s_\varphi(P,\vv)$ in \S\ref{IdentificationLemmaSection},  
and repelling fixed points in \S\ref{GammaFixRepelSection}, 
we prove the tree intersection theorem in \S\ref{TreeTheoremSection}.
As an application, we obtain a fixed point theorem for balls which $\varphi$ maps onto $\BPP_K$
(Theorem \ref{IndifFixedPtThm}). 

The Weight Formula (\ref{WeightSumFormula}) follows by computing 
the Laplacian of the restriction of $\ordRes_{\varphi}(\cdot)$ to $\Gamma_{\Fix,\Repel}$.
In \S\ref{SlopeFormulaSection} we carry out some slope computations needed for the Weight Formula,
and in \S\ref{WeightFormulaSection}
we establish the Weight Formula  and study the crucial set. 
In \S\ref{CharacterizationSection}, we prove Theorems A and B  
as Theorems \ref{BaryCenterTheorem} and \ref{ModuliTheorem}, respectively.

The remainder of the paper contains supplements to Theorems A and B.  
In \S\ref{BalanceConditionSection} we give 
``balance conditions'' for a point to belong to $\MinResLoc(\varphi)$, in terms of surplus multiplicities and 
directional fixed point counts. 
In \S\ref{PersistenceTheoremSection} and \S\ref{ApplicationsSection}, 
we establish several structure theorems 
concerning the dynamics of $\varphi$, including a sharpening of Theorem A when $\MinResLoc(\varphi)$
is a segment (Theorem \ref{SegmentCharacterization}). In \S\ref{ExamplesSection} we conclude with examples.  

\section{\bf Basic Facts and Definitions.} \label{DefinitionSection} 

The Berkovich projective line 
is a path-connected Hausdorff space containing $\PP^1(K)$.
Following now-standard notation, 
we denote the Berkovich projective line by $\BPP_K$ (in \cite{B-R}, it was written $\PP^1_\Berk$). 
By Berkovich's classification theorem (see e.g. \cite{B-R}, p.5), 
points of $\BPP_K \backslash \{\infty\}$ correspond to discs, or nested sequences of discs, in $K$.  
There are four kinds of points:   
type I points are the points of $\PP^1(K)$, which we regard as discs of radius $0$.  
Type II points correspond to discs $D(a,r) = \{z \in K : |z-a| \le r\}$ such that $r$ belongs to the value group
$|K^{\times}|$. 
Type III points correspond to discs $D(a,r)$  with $r \notin |K^{\times}|$. 
We write $\zeta_{a,r}$ for the point corresponding to $D(a,r)$.  
The type II point $\zeta_{0,1}$ plays a special role;  
it is called the {\em Gauss point}, and is denoted $\zeta_G$.
Type IV points serve to complete $\BPP_K$; they correspond to (cofinal equivalence classes of) 
sequences of nested discs with empty intersection. If $\{D(a_i,r_i)\}_{i \ge 1}$ is such a sequence,
by abuse of notation we continue to write $\zeta_{a,r}$ for the associated point in $\BPP_K$. 

Paths in $\BPP_K$ correspond to ascending or descending chains of discs, 
or unions of chains sharing an endpoint.  
For example the path from $0$ to $1$
in $\BPP_K$ corresponds to the chains 
$\{D(0,r) : 0 \le r \le 1\}$ and $\{D(1,r) : 1 \ge r \ge 0\}$;  here $D(0,1) = D(1,1)$. 
Topologically, $\BPP_K$ is a tree:     
there is a unique path $[P,Q]$ between any two points $P, Q \in \BPP_K$.
We write $(P,Q)$ for the interior of that path, with similar notation for half-open segments.

If $P \in \BPP_K$, the {\em tangent space} $T_P$ is the set of equivalence classes of paths $(P,x]$
as $x$ varies over $\BPP_K \backslash \{P\}$;  
we call paths $(P,x]$ and $(P,y]$ {\em equivalent} if they 
share a common initial segment.  We call elements of $T_P$ 
{\em tangent vectors} or {\em directions}, and denote them by vectors;   
given $\vv \in T_P$, we write $B_P(\vv)^- = \{x \in \BPP_K : [P,x] \in \vv\}$ 
for the associated path-component of $\BPP_K \backslash \{P\}$.
If $x \in B_P(\vv)^-$, we will say that $x$ lies in the direction $\vv$ at $P$.  
If $P$ is of type I or type IV, then $T_P$ has one element;  
if $P$ is of type III, $T_P$ has two elements; and if $P$ is of type II then  $T_P$ is infinite.  
When $P = \zeta_G$, there is a natural $1-1$ correspondence between elements of $T_{\zeta_G}$ and $\PP^1(\tk)$.  
More generally, for an arbitrary type II point $P$, a map $\gamma \in \GL_2(K)$ with $\gamma(\zeta_G) = P$ 
induces a parametrization of $T_P$ by $\PP^1(\tk)$; 
if $a \in \PP^1(\tk)$ we write $\vv_a \in T_P$ for the corresponding direction.     
 
The set $\BHH_K = \BPP_K \backslash \PP^1(K)$ (written $\HH_\Berk$ in \cite{B-R})
is called the {\em Berkovich upper halfspace}; 
it carries a metric $\rho(x,y)$ called the {\em logarithmic path distance}, for which the length of the 
path corresponding to $\{D(a,r) : R_1 \le r \le R_2\}$ is $\log(R_2/R_1)$. 
(We normalize the function $\log(t)$ so that $\ord(x) = -\log(|x|)$.)  
There are two natural topologies on $\BPP_K$, called the {\em weak}\, and {\em strong} topologies. 
The weak topology on $\BPP_K$ is the coarsest one which makes the evaluation functionals 
$z \rightarrow |f(z)|$ continuous for all $f(z) \in K(z)$;   
under the weak topology, $\BPP_K$ is compact and $\PP^1(K)$ is dense in it.
The basic open sets for the weak topology are the path-components 
of $\BPP_K \backslash \{P_1, \ldots, P_n\}$
as $\{P_1, \ldots, P_n\}$ ranges over finite subsets of $\BHH_K$. 
The strong topology on $\BPP_K$ (which is finer than the weak topology) 
restricts to the topology on $\BHH_K$ induced by $\rho(x,y)$.  
The basic open sets for the strong topology 
are the $\rho(x,y)$-balls in $\BHH_K$, 
together with the basic open sets from the weak topology.  
Type II points are dense in $\BPP_K$ for both topologies.

The action of $\varphi$ on $\PP^1(K)$ extends functorially to $\BPP_K$. 
If $\varphi$ is nonconstant, the induced map $\varphi : \BPP_K \rightarrow \BPP_K$ 
is surjective, open and continuous for both topologies, and takes points of a given type to points of the same type;  
if $\varphi(P) = Q$, there is an induced surjection $\varphi_* : T_P \rightarrow T_Q$.  
The action of  $\GL_2(K)$ on $\PP^1(K)$ extends 
to an action on $\BPP_K$ which is continuous for both topologies  
and preserves the type of each point.  
$\GL_2(K)$ acts transitively on type II points.  It preserves the logarithmic path distance:  
$\rho(\gamma(x),\gamma(y)) = \rho(x,y)$ for all $x, y \in \BHH_K$ and $\gamma \in \GL_2(K)$.   

At each  $P \in \BPP_K$, the map $\varphi$ has a {\em local degree} $\deg_\varphi(P)$ 
(called the multiplicity $m_{\varphi}(P)$ in \cite{B-R}), 
which is a positive integer in the range $1 \le \deg_{\varphi}(P) \le d$.
It has the property that for each $Q \in \BPP_K$, $\sum_{\varphi(P) = Q} \deg_\varphi(P) = d$.  
When $P \in \PP^1(K)$, $\deg_\varphi(P)$ coincides with the classical algebraic multiplicity of $\varphi$ at $P$.   

For further properties of $\BPP_K$, see   
(\cite{B-R}, \cite{BIJL}, \cite{Berk}, \cite{Fab}, \cite{F-RL2}, \cite{FRLErgodic}, and \cite{R-L1}). 

\vskip .08 in
We will use two notions of ``reduction'' for $\varphi$.
The first, which we simply call {\em the reduction of $\varphi$}, is defined as follows.  
If $(F,G)$ is a normalized representation of $\varphi$, 
the reduction $\tphi$ is the rational map on $\PP^1(\tk)$ 
gotten by reducing $F$ and $G$ $\pmod{\fM}$ and eliminating common factors. 
Explicitly, 
let $\tF, \tG \in \tk[X,Y]$ be the reductions of $F, G \pmod{\fM}$ and put $\tA(X,Y) = \GCD(\tF(X,Y), \tG(X,Y))$.  
Write $\tF(X,Y) = \tA(X,Y) \tF_0(X,Y)$, $\tG(X,Y) = \tA(X,Y) \tG_0(X,Y)$.  Then $\tphi : \PP^1(\tk) \rightarrow \PP^1(\tk)$ 
is the map defined by $(X,Y) \mapsto (\tF_0(X,Y):\tG_0(X,Y))$.  
If $\tphi$ has degree $d = \deg(\varphi)$, then $\varphi$ is said to have
{\em good reduction}.  

\vskip .08 in
The second, which we call the {\em reduction of $\varphi$ at $P$},
is obtained by fixing a type II point $P \in \HH_\Berk$, 
choosing a $\gamma \in \GL_2(K)$ with $\gamma(\zeta_G) = P$,  
and taking the reduction $\tphi_P$ of $\varphi^{\gamma} = \gamma^{-1} \circ \varphi \circ \gamma$ in the sense above. 
(If $(F,G)$ is a normalized representation of $\varphi^\gamma$, 
we call $(F,G)$ a {\em normalized representation of $\varphi$ at $P$}.) 
When $P = \zeta_G$ and $\gamma = \id$, this notion of reduction coincides with previous one.
The map $\tphi_P$ depends on the choice of $\gamma$, 
but if $\gamma^\prime \in \GL_2(K)$ also satisfies $\gamma^\prime(\zeta_G) = \zeta$, and $\tphi^{\prime}_P$
is the reduction of $\varphi$ at $P$ corresponding to $\gamma^{\prime}$, there is an $\teta \in \GL_2(\tk)$
such that $\tphi^{\prime}_P  = \teta^{-1} \circ \tphi_P \circ \teta$.  
Thus $\deg(\tphi)$, and the properties that $\tphi$ is constant, is nonconstant, 
or is  the identity map, are independent of the choice of $\gamma$. 
If $\varphi$ does not have good reduction,
but after a change of coordinates by some $\gamma \in \GL_2(K)$ 
the map $\varphi^{\gamma}$ has good reduction, then $\varphi$ is said to have {\em potential good reduction}.  
    
A theorem of Rivera-Letelier in \cite{R-L0} (see \cite{B-R}, Corollary 9.27) says that when $P$ is of type II, 
then $\varphi(P) = P$  if and only if $\tphi_P$ is nonconstant. Rivera-Letelier  
shows that in that case  $\deg_\varphi(P) = \deg(\tphi_P)$;  he calls      
$P$ a {\em repelling fixed point} if $\deg(\tphi_P) \ge 2$, 
and an {\em indifferent fixed point} if $\deg(\tphi_P) = 1$.  
When $\varphi(P) = P$, the induced map $\varphi_* : T_P \rightarrow T_P$ on the tangent space 
comes from the action of $\tphi_P$ on $\PP^1(\tk)$.   
If $\varphi(P) \ne P$, the map $\tphi_P$ is constant, and in that case,
if $\tphi_P(z) \equiv a \in \PP^1(\tk)$, then $\varphi(P)$ belongs to the ball $B_P(\vv_a)^-$.   

\medskip
Another theorem of Rivera-Letelier 
(see \cite{R-L0}, Lemma 2.1, or \cite{B-R}, Proposition 9.41; 
see Faber \cite{Fab} I: Proposition 3.10  for the definitive version), 
says that for each $P \in \BPP_K$ and each $\vv \in T_P$, 
there are integers $m = m_\varphi(P,\vv) \ge 1$ and $s = s_\varphi(P,\vv) \ge 0$ 
such that 

\begin{itemize}
\item[(a)] for each $y \in B_{\varphi(P)}(\varphi_*(\vv))^-$ 
there are exactly $m + s$ solutions to $\varphi(x) = y$ in $B_P(\vv)^-$ 
(counted with multiplicities), and 

\item[(b)] for each $y \in \BPP_K \backslash \big(B_{\varphi(P)}(\varphi_*(\vv))^- \cup \{\varphi(P)\}\big)$,  
there are exactly $s$ solutions to $\varphi(x) = y$ in $B_P(\vv)^-$ 
(counted with multiplicities).
\end{itemize}
The number $m_\varphi(P,\vv)$ is called the {\em directional multiplicity} 
of $\varphi$ at $P$ in the direction $\vv$,
and $s_\varphi(P,\vv)$ is called the {\em surplus multiplicity} 
of $\varphi$ at $P$ in the direction $\vv$.
Several formulas relating $m_\varphi(P,\vv)$ 
to geometric quantities are given in (\cite{B-R}, Theorem 9.26).
In particular, when $P$ is of type II and $\varphi(P) = P$, then 
then $m_\varphi(P,\vv_a)$ is the algebraic multiplicity of $\tphi_P$ at $a$,
for each for $a \in \PP^1(\tk)$. 
An important theorem of Faber (\cite{Fab}, I: Lemma 3.17), 
says that when $\varphi(P) = P$, if $(F,G)$ is a normalized representation of $\varphi$ at $P$,
and if $\tF(X,Y) = \tA(X,Y) \tF_0(X,Y)$,
$\tG(X,Y) = \tA(X,Y) \tG_0(X,Y)$, then 
$s_\varphi(P,\vv_a)$ is the multiplicity of $a$ as a root of $\tA(X,Y)$.

\medskip

Let $\varphi(z) \in K(z)$ have degree $d \ge 2$, and let $(F,G)$
be a normalized representation of $\varphi$.  
Writing $F(X,Y) = f_d X^d +f_{d-1} X^{d-1}Y + \cdots + f_0 Y^d$ 
and  $G(X,Y) = g_d X^d + g_{d-1} X^{d-1}Y + \cdots + g_0 Y^d$,  
the resultant of $F$ and $G$ is 
\begin{equation} \label{FGResultant} 
\Res(F,G) \ = \ \det\Bigg( \ \left[ \begin{array}{cccccccc} 
                          f_d & f_{d-1} & \cdots &  f_1    &    f_0     &         &      &      \\
                              & f_d & f_{d-1}    & \cdots  &    f_1     &    f_0  &      &      \\
                              &     &            &         & \vdots     &         &      &      \\
                              &     &            &    f_d  &   f_{d-1}  & \cdots  & f_1  & f_0  \\
                          g_d & g_{d-1} & \cdots &  g_1    &    g_0     &         &      &      \\
                              & g_d & g_{d-1}    & \cdots  &    g_1     &    g_0  &      &      \\
                              &     &            &         & \vdots     &         &      &      \\
                              &     &            &    g_d  &   g_{d-1}  & \cdots  & g_1  & g_0 
                             \end{array} \right] \ \Bigg) \ .
\end{equation} 
The quantity $\ordRes(\varphi) := \ord(\Res(F,G))$
is independent of the choice of normalized representation; by construction, it is non-negative. 
It is well-known (see e.g. \cite{Sil}, Theorem 2.15) 
that $\varphi$ has good reduction if and only if $\ordRes(\varphi) = 0$.

The starting point for the investigation in \cite{RR-MRL} was the following observation.  
By standard formulas for the resultant (see for example (Silverman \cite{Sil}, Exercise 2.7, p.75)), 
for each $\gamma \in \GL_2(K)$ and each $\tau \in K^{\times} \cdot \GL_2(\cO)$,  one has 
\begin{equation*}
\ordRes(\varphi^{\gamma}) \ = \ \ordRes(\varphi^{\gamma \tau}) \ . 
\end{equation*}  
On the other hand, $K^{\times} \cdot \GL_2(\cO)$ is the stabilizer of the Gauss point.  
Since $\GL_2(K)$ acts transitively on the type II points in $\BPP_K$,   
the map $\gamma \mapsto \ordRes(\varphi^{\gamma})$
factors through a well-defined function \ $\ordRes_{\varphi}(\cdot)$ \ 
on type II points given by 
\begin{equation} \label{ordRes_varphiDef}
\ordRes_{\varphi}\big(\gamma(\zeta_G)\big) \ := \ \ordRes(\varphi^{\gamma}) \ .
\end{equation}

The main result in \cite{RR-MRL}
(a combination of \cite{RR-MRL}, Theorem 0.1 and Proposition 3.5) is 

\begin{theorem} \label{ResThm} Let $\varphi(z) \in K(z)$, and suppose $d = \deg(\varphi) \ge 2$.  
The function $\ordRes_{\varphi}(\cdot)$ on type {\rm II} points 
extends to a function $\ordRes_{\varphi} :\BPP_K \rightarrow [0,\infty]$
which is continuous for the strong topology, 
is finite on $\BHH_K$, and takes the value $\infty$ on $\PP^1(K)$.  The extended
function $f(\cdot) = \ordRes_\varphi(\cdot)$ has the following properties$:$  
\begin{itemize}
\item[(A)] $f(\cdot)$  is piecewise affine and convex upwards on each path in $\BPP_K$ 
relative to the logarithmic path distance$;$ the slope of each affine piece is an integer $m$  
satisfying $m \equiv d^2 + d \pmod{2d}$ and lying in the range $-(d^2+d) \le m \le (d^2 + d);$ 
the breaks between affine pieces occur at type {\rm II} points.   

\item[(B)] $\ordRes_\varphi(\cdot)$ achieves a minimum value on $\BPP_K$. 
The set $\MinResLoc(\varphi)$  where the minimum is taken on 
is a single type {\rm II} point if $d$ is even, 
and is either a single type {\rm II} point or a segment with type {\rm II} endpoints if $d$ is odd.  

\item[(C)] For each $a \in \PP^1(K)$, $f(\cdot)$ is strictly increasing away from the tree 
$\Gamma_{\Fix,\varphi^{-1}(a)} \subset \BPP_K$ 
spanned by the fixed points of $\varphi$ and the pre-images of $a$ in $\PP^1(K)$.
\end{itemize}
\nopagebreak
In particular, $\MinResLoc(\varphi)$ belongs to $\Gamma_{\Fix,\varphi^{-1}(a)}$, for each $a \in \PP^1(K)$.  
\end{theorem}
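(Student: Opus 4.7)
The plan is to reduce the analysis of $\ordRes_\varphi(\cdot)$ to how it varies along geodesics between type II points, to extend by continuity, and then to prove (C) by a slope-in-a-direction argument. Since $\GL_2(K)$ acts transitively on type II points and preserves the path distance $\rho$, it suffices to parametrize one family of geodesics: I would take $\gamma_t = \diag(\pi^t, 1)$ for $t \ge 0$, which traces out the ray $[\zeta_G, \zeta_{0,|\pi|^t}]$.

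For part (A), I would examine the normalized pair $(F^{\gamma_t}, G^{\gamma_t})$. Before normalization, the standard transformation law yields $\Res(F^{\gamma_t}, G^{\gamma_t}) = (\det \gamma_t)^{d^2-d}\Res(F,G)$, contributing $(d^2-d)t$ to the valuation of the resultant. Each coefficient of the unnormalized $(F^{\gamma_t}, G^{\gamma_t})$ has valuation affine in $t$, so the normalizing quantity $v_{\min}(t)$ -- the minimum of these valuations -- is concave piecewise affine with integer slopes, and it contributes $-2d \cdot v_{\min}(t)$ to $\ord \Res$. Hence $\ordRes_\varphi(\gamma_t(\zeta_G))$ is convex upward and piecewise affine in $t$, with breakpoints occurring at type II points (the values of $t$ where the minimizing coefficient switches). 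Inspecting which coefficients of $(F^{\gamma_t}, G^{\gamma_t})$ can carry the minimum valuation shows the admissible slopes are $d(d-1)-2dk$ for integers $k \in \{-1,0,\ldots,d\}$, giving the claimed range $[-(d^2+d),d^2+d]$ and the congruence $m \equiv d^2+d \pmod{2d}$.

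For part (B), the extension to all of $\BPP_K$ proceeds by density: type III and type IV points are handled by continuity and Cauchy arguments using boundedness of slopes; at a classical point $P \in \PP^1(K)$, taking $\gamma_n$ with $\gamma_n(\zeta_G) \to P$ forces the reduced numerator and denominator of $\varphi^{\gamma_n}$ to acquire a common factor of growing degree, so $\ordRes(\varphi^{\gamma_n}) \to \infty$. Convexity and continuity on $\BHH_K$, together with blow-up at $\PP^1(K)$, give existence of a minimum on $\BPP_K$. The shape of $\MinResLoc(\varphi)$ follows from the slope congruence: when $d$ is even, $d^2+d \equiv d \not\equiv 0 \pmod{2d}$, so no affine piece is flat, and the minimum is attained at an isolated type II point; when $d$ is odd, flat pieces are permitted, and the minimum set is either a single type II point or a closed segment between two type II breakpoints.

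For part (C), I would fix $a \in \PP^1(K)$ and a type II point $P \notin \Gamma_{\Fix,\varphi^{-1}(a)}$, and show that the slope of $\ordRes_\varphi$ at $P$ in every direction not pointing toward the tree is strictly positive. The product formula $\Res(F,G) = f_d^d g_d^d \prod_{i,j}(\alpha_i - \beta_j)$, combined with the identification of the roots of $F - zG$ and $F - aG$ with fixed points of $\varphi$ and preimages of $a$ respectively, lets me re-express the slope along a small movement out of $P$ as a signed count of these distinguished classical points distributed among the tangent directions at $P$. Since $P$ lies outside $\Gamma_{\Fix,\varphi^{-1}(a)}$, all such distinguished points lie in a single direction $\vv \in T_P$, and I expect this imbalance to force the required strict inequality in every other direction. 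The main obstacle will be making this slope-counting rigorous: the slope in an arbitrary direction $\vw \in T_P$ must be expressed as a weighted sum involving the directional multiplicities $m_\varphi(P,\cdot)$ and surplus multiplicities $s_\varphi(P,\cdot)$, and Rivera-Letelier's preimage-counting properties must then be invoked to rule out cancellations that would drop the slope to zero. The concluding assertion that $\MinResLoc(\varphi) \subseteq \Gamma_{\Fix,\varphi^{-1}(a)}$ for each $a$ follows immediately from (C), since a minimum cannot be attained at a point where the function strictly increases in some direction.
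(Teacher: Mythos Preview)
This theorem is not proved in the present paper: it is quoted from the companion paper \cite{RR-MRL} (a combination of Theorem~0.1 and Proposition~3.5 there), so there is no proof here to compare against. That said, your outline for (A) and (B) is essentially the argument one expects, and traces of it are visible in the slope computations of \S\ref{SlopeFormulaSection}: the key identity (cited here as \cite{RR-MRL}, formula~(13), and used e.g.\ in the proof of Proposition~\ref{ClassicalFixedPtSlope}) is
\[
\ordRes_\varphi(\zeta_{0,|A|}) - \ordRes_\varphi(\zeta_G) \;=\; (d^2+d)\,\ord(A) \;-\; 2d\cdot \min_{0\le k\le d}\bigl(\ord(A^k a_k),\ \ord(A^{k+1} b_k)\bigr),
\]
which is exactly your ``affine-plus-$(-2d)\cdot(\text{concave min})$'' decomposition. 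One small correction: the linear term is $(d^2+d)\,\ord(A)$, not $(d^2-d)\,t$ as you wrote; the extra $2d$ comes from the asymmetry between the $A^k$ and $A^{k+1}$ weightings of the $a_k$ and $b_k$ in the conjugated pair, and this is precisely what produces the range $-(d^2+d)\le m\le d^2+d$ and the congruence $m\equiv d^2+d\pmod{2d}$.

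Your sketch for (C) is the least developed part, and the product formula $\Res(F,G)=f_d^d g_d^d\prod(\alpha_i-\beta_j)$ is not quite the right tool: the $\alpha_i,\beta_j$ there are zeros and poles of $\varphi$, not fixed points and preimages of $a$. The argument that actually works (and is what the slope formulas in \S\ref{SlopeFormulaSection} encode) is to change coordinates so that the direction away from the tree is $\vv_\infty$ at $\zeta_G$, and then read off the slope directly from the displayed identity above: the slope in the direction $\vv_\infty$ is $-(d^2+d)+2d\cdot\max(\ell_1,\ell_2+1)$, where $\ell_1,\ell_2$ are determined by how many zeros of $\tF$ and $\tG$ lie at $\infty$. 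Since all fixed points and all preimages of $a$ lie in the \emph{opposite} direction $\vv_0$, one shows $\ell_1$ and $\ell_2$ are large enough to force this slope to be strictly positive. No appeal to directional or surplus multiplicities is needed at this stage; those refinements enter only in the sharper slope formulas of \S\ref{SlopeFormulaSection}, which are developed \emph{after} Theorem~\ref{ResThm} is in hand.
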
 

\noindent{The} aim of this paper is to explain the dynamical and geometric meaning of $\MinResLoc(\varphi)$.

\section{\bf The Identification Lemmas.} \label{IdentificationLemmaSection}

As will be seen, the surplus multiplicity $s_\varphi(P,\vv)$ plays an important role in the study of $\MinResLoc(\varphi)$.
The discovery which launched this investigation is the fact that there is a relation between directions $\vv \in T_P$ 
for which $s_\varphi(P,\vv) >0$, and directions containing type I fixed points.  This relation is expressed in two
lemmas which we call ``Identification Lemmas'', because they identify the directions which can have $s_\varphi(P,\vv) > 0$.  


\smallskip
If $P \in \BHH_K$ is a type II fixed point of $\varphi$, and $\tphi_P$ is the reduction of $\varphi$ at $P$,
we say that $P$ is id-indifferent for $\varphi$ if $\tphi_P$ is the identity map.  (A more general classification 
of fixed points is given in Definitions \ref{IndifferentClassificationDef}
and \ref{RepellingFixedPtClassification} of \S\ref{GammaFixRepelSection}.)

\begin{definition}[Directional and Reduced Fixed Point Multiplicities] \label{DFA_Defs} { \ } 

{\em Suppose $\varphi \in K(z)$ is nonconstant.

$(A)$ For each $P \in \BHH_K$, and each $\vv \in T_P$, 
we define the {\em directional fixed point multiplicity} $\#F_\varphi(P,\vv)$ to be the number of 
fixed points of $\varphi$ in $B_P(\vv)^-$ $($counting multiplicities$)$.

$(B)$ If $P \in \BHH_K$ is a type II fixed point of $\varphi$, which is not id-indifferent for $\varphi$,
let $\tphi_P$ be the reduction of $\varphi$ at $P$, and 
parametrize $T_P$ by $\PP^1(\tk)$ in such a way that $\varphi_*(\vv_a) = \vv_{\tphi_P(a)}$ for each $a \in \PP^1(\tk)$.
If $\vv = \vv_a$, we define the {\em reduced fixed point multiplicity} $\#\tF_\varphi(P,\vv)$ 
to be the multiplicity of $a$ as a fixed point of $\tphi_P$.}
\end{definition}   

Clearly $\sum_{\vv \in T_P} F_\varphi(P,\vv) = \deg(\varphi)+1$. If $P$ is a type II fixed point which is not
id-indifferent for $\varphi$, then  $\sum_{\vv \in T_P} \#\tF_\varphi(P,\vv) = \deg_\varphi(P)+1$ 

\begin{lemma}[First Identification Lemma] \label{FirstIdentificationLemma}
Suppose $P \in \BHH_K$ is of type {\rm II}, and that $\varphi(P) = P$, but $P$ is not id-indifferent.
Let $\vv \in T_P$. Then 
\begin{equation} \label{MultSumFormula} 
\#F_\varphi(P,\vv) \ = \ s_\varphi(P,\vv) \ + \ \#\tF_\varphi(P,\vv) \ .
\end{equation} 
In particular, $B_P(\vv)^-$ contains a type {\rm I} fixed point of $\varphi$ 
if and only if 

$(A)$ $\varphi_*(\vv) = \vv$, and/or 

$(B)$ $s_\varphi(P,\vv) > 0$. 
\end{lemma}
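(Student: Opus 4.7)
The plan is to translate both sides of the stated identity into multiplicities of roots of explicit homogeneous polynomials over $\tk$, and then to invoke Faber's theorem (quoted just before the lemma) together with the standard counting of roots of a primitive $\cO$-polynomial via its reduction. Since $\GL_2(K)$ acts transitively on type~II points and intertwines the formation of $\varphi^\gamma$, $\tphi_P$, $s_\varphi$, $\#F_\varphi$, and the tangent parametrization $T_P \leftrightarrow \PP^1(\tk)$, I first conjugate so that $P = \zeta_G$ and $(F,G)$ is a normalized representation of $\varphi$. The classical fixed points of $\varphi$ in $\PP^1(K)$, counted with multiplicity, are then precisely the roots in $\PP^1(K)$ of the homogeneous polynomial
\[
\Phi(X,Y) \ := \ Y F(X,Y) - X G(X,Y) \ \in \ \cO[X,Y],
\]
of degree $d+1$.

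The technical hinge is to verify that $\tPhi := Y\tF - X\tG \in \tk[X,Y]$ is non-zero. Writing $\tF = \tA \tF_0$ and $\tG = \tA \tG_0$ with $\gcd(\tF_0,\tG_0) = 1$, a vanishing $\tPhi$ would give $Y\tF_0 = X\tG_0$; coprimality of $\tF_0$ and $\tG_0$ then forces $\tF_0 = cX$ and $\tG_0 = cY$ for some unit $c \in \tk^{\times}$, making $\tphi_P = \id$, contrary to the hypothesis that $P$ is not id-indifferent. Hence some coefficient of $\Phi$ is a unit, and the standard reduction-of-roots result for primitive $\cO$-polynomials yields
\[
\#F_\varphi(P,\vv_a) \ = \ \operatorname{mult}_a(\tPhi) \qquad \text{for every } a \in \PP^1(\tk).
\]

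Finally, over $\tk[X,Y]$ we have the factorization
\[
\tPhi \ = \ \tA(X,Y) \cdot \bigl(Y\tF_0(X,Y) - X\tG_0(X,Y)\bigr) \ = \ \tA \cdot \Psi.
\]
Faber's theorem identifies $\operatorname{mult}_a(\tA)$ with $s_\varphi(P,\vv_a)$, while $\Psi$ is the homogeneous fixed-point polynomial of $\tphi_P$, so $\operatorname{mult}_a(\Psi) = \#\tF_\varphi(P,\vv_a)$ by the definition of the reduced fixed-point multiplicity. Adding the two contributions produces the identity of the lemma. The ``in particular'' clause then drops out immediately: $\#F_\varphi(P,\vv_a) > 0$ precisely when $s_\varphi(P,\vv_a) > 0$ or $\#\tF_\varphi(P,\vv_a) > 0$, and the latter is just the condition $\tphi_P(a) = a$, which by $\varphi_*(\vv_a) = \vv_{\tphi_P(a)}$ is the same as $\varphi_*(\vv_a) = \vv_a$. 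The only step that I expect to require genuine care is the non-vanishing of $\tPhi$; once that is in hand, the proof reduces to a clean three-way comparison of polynomial multiplicities, consistent with the summed identity $\sum_{\vv \in T_P} \#F_\varphi(P,\vv) = d+1 = (d-\deg_\varphi(P)) + (\deg_\varphi(P)+1)$.
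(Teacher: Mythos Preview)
Your proof is correct and follows essentially the same route as the paper's: reduce to $P=\zeta_G$, form the homogeneous fixed-point polynomial $YF-XG$ (the paper uses $XG-YF$, an irrelevant sign), verify its reduction is nonzero precisely because $P$ is not id-indifferent, and then read off the identity from the factorization $\tPhi = \tA\cdot(Y\tF_0 - X\tG_0)$ together with Faber's theorem. The only cosmetic difference is that you package the step ``primitive $\cO$-polynomial $\Rightarrow$ roots reduce with multiplicity'' as a standard fact, whereas the paper writes out the factorization $H=\prod(b_iX-a_iY)$ explicitly and checks the unit constant by hand; both are the same Gauss-lemma argument.
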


\begin{proof} 
Choose $\gamma \in \GL_2(K)$ with $\gamma(P) = \zeta_G$, and put $\Phi = \varphi^\gamma$;
then $\Phi(\zeta_G) = \zeta_G$.
Note that $\xi \in \PP^1(K)$ is fixed by  $\varphi$ if and only if $\gamma^{-1}(\xi)$
is fixed by $\Phi$, and that $\vv \in T_P$ is fixed by $\varphi_*$ if and only if $(\gamma^{-1})_*(\vv) \in T_{\zeta_G}$
is fixed by $\Phi_*$. Hence it suffices to prove the result with $\varphi$ replaced by $\Phi$.

Choose $F(X,Y), G(X,Y) \in \cO[X,Y]$ so $(F,G)$ is a normalized representation of $\Phi$. 
Let $\tF, \tG \in \tk[X,Y]$ be the reductions of $F, G$, 
and put $\tA(X,Y) = \GCD(\tF(X,Y), \tG(X,Y))$.  
Write $\tF(X,Y) = \tA(X,Y) \cdot \tF_0(X,Y)$ and  $\tG(X,Y) = \tA(X,Y) \cdot \tG_0(X,Y)$.  Then  $\tvarphi$
is the map on $\PP^1(\tk)$ defined by $(X,Y) \mapsto (\tF_0(X,Y):\tG_0(X,Y))$.  

Put $H(X,Y) = X G(X,Y) - Y F(X,Y)$, and $\tH_0(X,Y) = X \tG_0(X,Y) - Y \tF_0(X,Y)$. 
Here $H(X,Y) \ne 0$ since $\deg(\varphi) > 1$, 
and $\tH_0(X,Y) \ne 0$ since $\tphi(z) \not\equiv z$ by assumption.
The fixed points of $\Phi$ are the zeros of $H(X,Y)$ in $\PP^1(K)$, 
and the fixed points of $\tphi$ are the zeros of $\tH_0(X,Y)$ in $\PP^1(\tk)$. 
   
Reducing $H(X,Y)$ modulo $\fM$, we see that 
\begin{equation} \label{HF1} 
\tH(X,Y) \ = \ X \tG(X,Y) - Y \tF(X,Y) \ = \ \tA(X,Y) \cdot \tH_0(X,Y) \ .
\end{equation}     
Since $K$ is algebraically closed, $H(X,Y)$ factors over $K[X,Y]$ 
as a product of linear factors.
After scaling the factors if necessary, we can assume that the factorization has the form
\begin{equation} \label{HF2}
H(X,Y) \ = \ \prod_{i=1}^{d+1} (b_i X - a_i Y)  
\end{equation}
where $\max(|a_i|,|b_i|) = 1$ for each $i = 1, \ldots, d+1$.  We claim that $|\,C| = 1$ as well. 
To see this, note that 
if we choose $u,v$ in $\cO$ so that $(\tu:\tv)$ is not a zero of either $\tA(X,Y)$ or $\tH_0(X,Y)$, 
then $\tH(\tu,\tv) \ne 0$ by (\ref{HF1}).  
    
It follows that 
\begin{equation} \label{HF3} 
\tH(X,Y) = \ \prod_{i=1}^{d+1} (\tb_i X - \ta_i Y) \ \not\equiv \ 0 \ .
\end{equation}
Since $\tk[X,Y]$ is a unique factorization domain, 
comparing (\ref{HF1}) and (\ref{HF3}) 
we see that after reordering factors if necessary, there are  
constants $\tC_1, \tC_2 \in \tk^\times$ and an integer $0 \le n \le d$ such that 
\begin{equation} \label{SeparatedMultiplicity}
\tA(X,Y) \ = \tC_1 \cdot \prod_{i=1}^n (\tb_i X - \ta_i Y) \ , \quad 
\tH_0(X,Y) \ = \ \tC_2 \cdot \prod_{i=n+1}^{d+1} (\tb_i X - \ta_i Y) \ .
\end{equation}

The fixed points of $\Phi(X,Y)$ are the points $(a_i:b_i) \in  \PP^1(K)$, $i = 1, \ldots, d+1$.
By (\ref{HF1}) and (\ref{SeparatedMultiplicity}),  each fixed point of $\Phi(X,Y)$ 
specializes to a zero of $\tH_0(X,Y)$ or $\tA(X,Y)$,
and conversely each such zero is the specialization of a fixed point of $\Phi(X,Y)$.  
For a given $\vv \in T_{\zeta_G}$, if $\ta \in \PP^1(\tk)$ is such that $\vv = \vv_{\ta}$, 
then $\#F_\varphi(P,\vv)$ is the multiplicity of $\ta$ as a zero of $\tH(X,Y)$, 
$s_\varphi(P,\vv)$ is the multiplicity of $\ta$ as a zero of $\tA(X,Y)$, and $\#\tF_\varphi(P,\vv)$
is the multiplicity of $\ta$ as a root of $\tH_0(X,Y)$.   
Thus $\#F_\varphi(P,\vv) = s_\varphi(P,\vv) + \#\tF_\varphi(P,\vv)$. 
\end{proof}

\begin{lemma}[Second Identification Lemma] \label{SecondIdentificationLemma}
Suppose $P \in \BHH_K$ is of type {\rm II}, and that $\varphi(P) = Q \ne P$.
Let $\vv \in T_P$.  Then $B_P(\vv)^-$ contains a type {\rm I} fixed point of $\varphi$ 
iff 

$(A)$ $Q \in B_P(\vv)^-$, and/or 

$(B)$ $P \in B_Q(\varphi_*(\vv))^-$, and/or 

$(C)$ $s_\varphi(P,\vv) > 0$. 
\end{lemma}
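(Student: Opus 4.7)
The plan is to imitate the proof of the First Identification Lemma, adapting the factorization of $\tH$ to the case $\varphi(P)\ne P$, and then handling (B) via a separate ``next-order reduction'' computation.

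After conjugating so that $P=\zeta_G$ (writing $\Phi = \varphi^\gamma$), take a normalized representation $(F,G)$ of $\Phi$ and form $H = XG - YF$, whose zeros in $\PP^1(K)$ are the fixed points of $\Phi$. Since $\Phi(\zeta_G) = \gamma^{-1}(Q)\ne\zeta_G$, the reduction $\tphi$ is constant $\equiv a=(\alpha:\beta)\in\PP^1(\tk)$, so $\tF_0=\alpha$ and $\tG_0=\beta$ are constants and $\deg\tA=d$. Hence
\[
\tH \;=\; X\tG - Y\tF \;=\; \tA(X,Y)\cdot(\beta X - \alpha Y),
\]
a nonzero polynomial of degree $d+1$. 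Running the same ``generic evaluation'' step as in the First Identification Lemma shows $|C|=1$ in the factorization $H = C\prod_{i=1}^{d+1}(b_iX-a_iY)$ with $\max(|a_i|,|b_i|)=1$, so the reductions $(\ta_i:\tb_i)$ of the fixed points are precisely the roots of $\tH$ with matching multiplicities. Thus $B_P(\vv_c)^-$ contains a type~I fixed point iff $c$ is a root of $\tH$, iff $c$ is a root of $\tA$ (which, by Faber's theorem, is condition (C)) or $c=a$ (which, since $Q\in B_P(\vv_a)^-$, is condition (A)). This yields the equivalence ``fixed point in $B_P(\vv)^-$ $\iff$ (A) or (C),'' handling the ``only if'' direction of the stated iff and the (A)/(C) cases of the ``if'' direction.

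To cover the (B) case of the ``if'' direction, I argue that (B) implies (A) or (C). Choose $\gamma'$ with $\gamma'(\zeta_G)=\gamma^{-1}(Q)$, set $\psi=(\gamma')^{-1}\circ\Phi$, and note $\psi(\zeta_G)=\zeta_G$; its reduction $\tpsi$ describes $\varphi_*$ via $\varphi_*(\vv_c)=\vv_{\tpsi(c)}$ in the $\gamma'$-parametrization of $T_{\gamma^{-1}(Q)}$. When $a\in\tk$ I take $\gamma'(w)=\pi w+a^*$ with $|\pi|$ equal to the radius of $\gamma^{-1}(Q)$ and $a^*$ a lift of $a$; writing $F=\alpha A+F_1$, $G=\beta A+G_1$ with $A\in\cO[X,Y]$ a lift of $\tA$ and $F_1,G_1\in\fM\cO[X,Y]$, a direct check shows $\big((F-a^*G)/\pi,\;G\big)$ is a normalized representation of $\psi$. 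In this parametrization $\vv_P$ corresponds to $\vv_\infty$ (because $(\gamma')^{-1}(\zeta_G)$ has radius $1/|\pi|>1$), so (B) is the condition $\tpsi(c)=\infty$; since the denominator of $\tpsi$ is a scalar multiple of $\tA$, this forces $c$ to be a root of $\tA$, i.e., (C). When $c=a$, (A) holds. The case $a=\infty$ is handled by the symmetric construction $\gamma'(w)=w/\pi$: there $\vv_P$ corresponds to $\vv_0$ and $\tA$ sits in the numerator of $\tpsi$ instead.

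The main obstacle is the computation in the second step, where I must verify that the chosen $|\pi|$ really equals $|F-a^*G|_\infty$ (so that the normalization works) and that no cancellation between the reduced numerator and denominator of $\tpsi$ can hide its pole or zero at $c$ unless $c$ is in fact not a root of $\tA$. The bookkeeping is somewhat delicate, especially in the $a\in\{0,\infty\}$ case split, but is otherwise routine.
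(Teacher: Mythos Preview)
Your argument contains a genuine error: you apply Faber's theorem to identify ``$c$ is a root of $\tA$'' with condition (C), but Faber's theorem (as stated and used in this paper) requires $\varphi(P)=P$. When $\varphi(P)\ne P$, your $\tA$ has degree $d$, whereas $\sum_{\vv} s_\varphi(P,\vv)=d-\deg_\varphi(P)\le d-1$, so the roots of your $\tA$ cannot all be surplus directions. Concretely, take $\varphi(z)=z^2/\pi$ with $0<|\pi|<1$ and $P=\zeta_G$. A normalized representation is $(X^2,\pi Y^2)$, so $\tF=X^2$, $\tG=0$, your $\tA=X^2$, and $a=\infty$. The direction $\vv_0$ is a root of your $\tA$, yet $\varphi$ maps the open unit disc bijectively onto $D(0,1/|\pi|)^-$, so $s_\varphi(\zeta_G,\vv_0)=0$ and (C) fails. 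Nevertheless $B_{\zeta_G}(\vv_0)^-$ contains the fixed points $0$ and $\pi$, and one checks that (B) holds while (A) does not. Thus your intermediate equivalence ``fixed point $\iff$ (A) or (C)'' is false, and so is your claim ``(B) $\Rightarrow$ (A) or (C)''.

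What is actually true is that the roots of your $\tA$ correspond to (B) \emph{or} (C), not to (C) alone. To see this one must pass to the auxiliary map $\psi$ (your second step) and apply Faber's theorem to \emph{it}, since $\psi$ fixes $\zeta_G$: writing $\tF_\psi=\tA_\psi\tF_{0,\psi}$ and $\tG_\psi=\tA_\psi\tG_{0,\psi}$, the roots of $\tA_\psi$ give (C) and the roots of $\tG_{0,\psi}$ (the poles of $\tpsi$) give (B). Your $\tA$ is, up to a scalar, the product $\tA_\psi\cdot\tG_{0,\psi}$, not $\tA_\psi$ alone; this is also why your claim ``the denominator of $\tpsi$ is a scalar multiple of $\tA$'' fails --- you have not cancelled the common factor $\tA_\psi$. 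The paper's proof avoids the misstep by introducing $\psi$ (there called $\Phi$) at the outset and working with its normalized representation throughout, so that Faber's theorem is applicable and the factorization $\tH = X\cdot\tA_\Phi\cdot\tG_{0,\Phi}$ directly exhibits the three conditions (A), (C), (B).
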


\begin{proof} 
Let $Q = \varphi(P)$.  
We claim that there is a $\gamma \in \GL_2(K)$ 
such that $\gamma(\zeta_G) = P$ and $\gamma^{-1}(Q) = \zeta_{0,r)}$,  
for some $0 < r < 1$.  
To see this, take any $\tau_0 \in \GL_2(K)$ with $\tau_0(\zeta_G) = P$.
Then $\tau_0^{-1}(Q) \ne \zeta_G$, since $\tau_0$ is $1-1$.  
Let $\alpha \in \PP^1(K)$ be such that the path $[\alpha,\zeta_G]$ contains $\tau_0^{-1}(Q)$,
and choose $\tau_1 \in \GL_2(\cO)$ with $\tau_1(0) = \tau_0^{-1}(\alpha)$. (Such a $\tau_1$ exists because
$\GL_2(\cO)$ acts transitively on $\PP^1(K)$).  Setting $\gamma = \tau_0 \circ \tau_1$, 
we see that $\gamma(\zeta_G) = P$ and $\gamma(0) = \gamma_0(\tau_1(0)) = \alpha$.  This means that  
$\gamma^{-1}(Q) = \tau_1^{-1}(\tau_0^{-1}(Q))$ 
belongs to $\tau_1^{-1}([\alpha,\zeta_G]) = [0,\zeta_G]$, 
so $\gamma^{-1}(Q) = \zeta_{0,r)}$ for some $0 < r < 1$.  Here $r \in |K^{\times}|$, 
since $\zeta_{0,r} = \gamma^{-1}(\varphi(P))$ is of type II. 
Since the fixed points of $\varphi$ and conditions (A), (B), (C) in the Lemma are equivariant under conjugation, 
by replacing $\varphi$ with $\varphi^{\gamma} = \gamma^{-1} \circ \varphi \circ \gamma$, 
we can reduce to the case where $P = \zeta_G$ and $Q = \zeta_{0,r}$. 
 
Take any $c \in K^{\times}$ with $|\,c| = r$.  
Put $\gamma_1(z) = \id$ and $\gamma_2(z) = c z$;  then $\gamma_1(\zeta_G) = \zeta_G  = P$ and  
$\gamma_2(\zeta_G) = \zeta_{0,r} = Q$.  
Put $\Phi = \gamma_2^{-1} \circ \varphi \circ \gamma_1$, 
noting that $\varphi(z) = c \cdot \Phi(z)$.

Choose $F(X,Y), G(X,Y) \in \cO[X,Y]$ so $(F,G)$ is a normalized representation of $\Phi$. 
Let $\tF, \tG \in \tk[X,Y]$ be their reductions, 
and put $\tA(X,Y) = \GCD(\tF(X,Y), \tG(X,Y))$.  
Write $\tF(X,Y) = \tA(X,Y) \cdot \tF_0(X,Y)$,  $\tG(X,Y) = \tA(X,Y) \cdot \tG_0(X,Y)$.  The
reduction $\tPhi$ of $\Phi$ is the map $(X,Y) \mapsto (\tF_0(X,Y):\tG_0(X,Y))$, 
so by Faber's theorem (\cite{Fab}, I: Lemma 3.17), the directions $\vv_a \in T_P$ with $s_\varphi(P,\vv_a) > 0$
(which are the same as the ones with $s_\Phi(P,\vv_a) > 0$) 
are precisely those corresponding to points $a \in \PP^1(\tk)$ with $\tA(a) = 0$. 

Since $\varphi = c \cdot \Phi$, the pair $(cF,G)$ is a normalized representation of $\varphi$.    
The fixed points of $\varphi$ in $\PP^1(K)$ are the 
zeros $(a_i:b_i)$ of $H(X,Y) = X G(X,Y) - Y \cdot c F(X,Y)$.  
As in the proof of Lemma \ref{FirstIdentificationLemma}, we can write
\begin{equation} \label{HHF1}
H(X,Y) \ = \ \prod_{i=1}^{d+1} (b_i X - a_i Y)  
\end{equation}
where $\max(|a_i|,|b_i|) = 1$ for each $i = 1, \ldots, d+1$. 
Reducing $H(X,Y)$ modulo $\fM$, we see that 
\begin{equation} \label{HHF2} 
\tH(X,Y) = \ \prod_{i=1}^{d+1} (\tb_i X - \ta_i Y) \ \not\equiv \ 0 \ .
\end{equation}

On the other hand, since $H(X,Y) = X G(X,Y) - Y \cdot cF(X,Y)$ with $|\,c| < 1$, we also have 
\begin{equation} \label{HHF3} 
\tH(X,Y) \ = \ X \tG(X,Y) \ = \ X \cdot \tA(X,Y) \cdot \tG_0(X,Y) \ .
\end{equation}     
Comparing (\ref{HHF2}) and (\ref{HHF3}),
we see that after reordering the factors if necessary, there are 
constants $\tC_1, \tC_2, \tC_3 \in \tk^\times$ and an integer $1 \le n \le d$ such that in $\tk[X,Y]$ 
\begin{eqnarray} 
X & = & \tC_1 \cdot (\tb_1 X - \ta_1 Y) \ , \label{QH1} \\
\tA(X,Y) & = & \tC_2 \cdot \prod_{i=2}^n (\tb_i X - \ta_i Y) \ , \label{QH2} \\
\tG_0(X,Y) & = & \tC_3 \cdot \prod_{i=n+1}^{d+1} (\tb_i X - \ta_i Y) \ , \label{QH3} 
\end{eqnarray}
and each fixed point of $\varphi$ corresponds to a factor of one of these terms.  

From (\ref{QH1}) it follows that $|\,a_1| < 1$ and $|\,b_1| = 1$. Thus the fixed point 
$(a_1:b_1)$ belongs to $B_P(\vv_0)^-$, where $\vv_0 \in T_P$ 
is the tangent direction corresponding to $0 \in \PP^1(\tk)$.  We can express this in a coordinate-free way 
by noting that $B_P(\vv_0)^-$ is the ball containing $Q = \zeta_{0,r} = \varphi(P)$.

From (\ref{QH2}), and the fact that the zeros of $\tA(X,Y)$ correspond to the directions $\vv_a$ 
for which $s_\varphi(P,\vv_a) > 0$, 
we see that each direction $\vv \in T_P$ with $s_\varphi(P,\vv) > 0$
contains a fixed point of $\varphi$. 

Finally, from (\ref{QH3}), we see that each direction $\vv_a \in T_P$ corresponding to a zero of $\tG_0(X,Y)$ 
contains a fixed point of $\varphi$.  However, the zeros of $\tG_0(X,Y)$ are the poles of $\tPhi$,  
and if $\tPhi(a) = \infty$ then $\Phi_*(\vv_a) = \vv_{\infty} \in T_P$.  
Since $\varphi = c \cdot \Phi$, and $|\, c| < 1$,
it follows that $\varphi_*(\vv_a) \in T_Q = T_{\zeta_{0,r}}$ is the ``upwards'' 
direction $\vv_{Q,\infty} \in T_Q$.  
This can be expressed a coordinate-free manner by noting that $B_Q(\varphi_*(\vv_a))^- = B_Q(\vv_{Q,\infty})^-$ 
is the unique ball containing  $P = \zeta_G$.  
\end{proof}  

As an immediate consequence of Lemmas \ref{FirstIdentificationLemma} and \ref{SecondIdentificationLemma} 
we have  

\begin{corollary} \label{SurplusIdentificationCor}
Let $P$ be of type {\rm II}, and suppose $P$ is not id-indifferent for $\varphi$.  
Then for each $\vv \in T_P$ such that $s_\varphi(P,\vv) > 0$, 
the ball $B_P(\vv)^-$ contains a type {\rm I} fixed point of $\varphi$.  
\end{corollary}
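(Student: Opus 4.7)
The plan is to derive the corollary by a direct case split on whether $P$ is a fixed point of $\varphi$, invoking the First Identification Lemma in one case and the Second in the other. Since the hypothesis $s_\varphi(P,\vv)>0$ already matches condition (C) of the Second Identification Lemma and feeds directly into the equation of the First, the argument should be almost immediate; the corollary is really just a repackaging of Lemmas \ref{FirstIdentificationLemma} and \ref{SecondIdentificationLemma} that isolates the surplus multiplicity as a sufficient condition.

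First, I would handle the case $\varphi(P) = P$. Here the hypothesis that $P$ is not id-indifferent for $\varphi$ ensures that the First Identification Lemma applies. The formula
\[
\#F_\varphi(P,\vv) \ = \ s_\varphi(P,\vv) + \#\tF_\varphi(P,\vv)
\]
together with $s_\varphi(P,\vv) > 0$ and $\#\tF_\varphi(P,\vv) \geq 0$ forces $\#F_\varphi(P,\vv) \geq 1$. Since $\#F_\varphi(P,\vv)$ counts type I fixed points of $\varphi$ lying in $B_P(\vv)^-$, there must be at least one such fixed point.

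Second, I would handle the case $\varphi(P) \neq P$. Here the id-indifferent hypothesis is vacuous (id-indifference presupposes $P$ is fixed), so no extra verification is needed to apply the Second Identification Lemma. That lemma asserts that $B_P(\vv)^-$ contains a type I fixed point provided at least one of (A), (B), (C) holds; condition (C) is exactly $s_\varphi(P,\vv) > 0$, which is the standing hypothesis. So the conclusion follows immediately.

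I do not expect any real obstacle: the only subtlety is checking that the hypotheses of the two Identification Lemmas are legitimately available in their respective cases, which amounts to observing that the ``not id-indifferent'' assumption is precisely what is needed when $\varphi(P)=P$, and is irrelevant when $\varphi(P)\neq P$.
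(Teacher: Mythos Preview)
Your proposal is correct and matches the paper's approach exactly: the paper states the corollary as an immediate consequence of Lemmas~\ref{FirstIdentificationLemma} and~\ref{SecondIdentificationLemma} without writing out the details, and your case split on whether $\varphi(P)=P$ is precisely how one unpacks that.
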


\noindent{\bf Remark.}  Later, when we have proved the Tree Intersection Theorem (Theorem \ref{FixRepelThm})  
we will establish a Third Identification Lemma (Lemma \ref{ThirdIdentificationLemma}), 
dealing with $s_\varphi(P,\vv)$ for points $P$ which are id-indifferent. 


\section{Classification of Fixed points in $\BHH_K$, and the tree $\Gamma_{\Fix,\Repel}$} 
\label{GammaFixRepelSection} 

Recall that a fixed point $P$ of $\varphi$ in $\BHH_K$ is called 
{\em indifferent} if $\deg_\varphi(P) = 1$, and {\em repelling} if $\deg_\varphi(P) > 1$. 
In this section we refine these notions.  

 

\begin{definition}[Classification of Indifferent Fixed Points in $\BHH_K$] \label{IndifferentClassificationDef} 
{\em Let $P \in \BHH_K$ be a type {\rm II} indifferent fixed point of $\varphi$, and let $\tphi_P$ be the reduction
of $\varphi$ at $P$.  Then after an appropriate change of coordinates, $\tphi_P$ is of one of three types:

\begin{itemize}
\item[(A)] $\tphi_P(z) = z$;  in this case we will say $P$ is {\em id-indifferent} for $\varphi$.

\item[(B)] $\tphi_P(z) = \ta z$ for some $\ta \in \tk^\times$ with $\ta \ne 1$;  in this case we will say 
$P$ is {\em multiplicatively indifferent} for $\varphi$, and that $\tphi_P$ has reduced rotation number
$\ta$.  The reduced rotation number is only well-defined as an element of $\{\ta,\ta^{-1}\};$
if coordinates on $\PP^1(\tk)$ are changed by conjugating with $1/z$, then $\ta$ is replaced by $\ta^{-1}$. 
If we want to be more precise, we will proceed as follows. 
Note that the directions $\vv_0, \vv_\infty \in T_P$ corresponding to $0, \infty \in \PP^1(\tk)$ are the only
directions  $\vv \in T_P$ fixed by $\varphi_*$.  Let points 
$P_0 \in B_P(\vv_0)^-$, $P_\infty \in B_P(\vv_\infty)^-$ be given.  We will say that  
$\varphi$ has reduced rotation number $\ta$ for the axis $(P_0,P_\infty)$, and it 
has reduced rotation number $\ta^{-1}$ for the axis $(P_\infty,P_0)$.    

\item[(C)] $\tphi_P(z) = z + \ta$ for some $\ta \in \tk$ with $\ta \ne 0$;  in this case we will say that 
$P$ is {\em additively indifferent} for $\varphi$, with reduced translation number $\ta$.  
The reduced translation number is well-defined, independent of the choice of coordinates on $\PP^1(\tk)$
chosen so that $\tphi_P(\infty) = \infty$.  
\end{itemize} 
}
\end{definition} 

\begin{definition}[Reduced Multiplier] 
{\em Suppose $P \in \BHH_K$ is a type {\rm II} fixed point of $\varphi$ which is not id-indifferent,
and let $\vv \in T_P$ be a direction 
with $\varphi_*(\vv) = \vv$ and $m_\varphi(P,\vv) = 1$.  Let $\tphi_P$ be the reduction of $\varphi$ at
$P$; without loss, assume coordinates have been chosen so that $\vv = \vv_0$. 
Then $0$ is a fixed point of $\tphi_P$.   
Put $\ta = \tphi^{\prime}_P(0) \in \tk$.  We will call $\ta$ the {\em reduced multiplier} of $\varphi$
at $\vv$.}
\end{definition}

\begin{definition}[Classification of Repelling Fixed Points] 
\label{RepellingFixedPtClassification}

{\em Suppose $\varphi(z) \in K(z)$ has degree $d \ge 2$, 
and let $P \in \BHH_K$ be a repelling fixed point of $\varphi$. 
Call the directions $\vv_1, \ldots, \vv_m \in T_P$ such that $B_P(\vv_i)^-$ contains 
a type {\rm I} fixed point of $\varphi$, the {\em focal directions} at $P$ . 

$(A)$ We will say that $P$ is {\rm focused} $\!($or {\rm uni-focused}$)\!$ 
if it has a unique focal direction $\vv_1$.    

$(B)$ We will say that $P$ is {\rm bi-focused}
if it has exactly two focal directions $\vv_1, \vv_2$. 

$(C)$ We will say that $P$ is {\rm multi-focused} if it has $m \ge 3$ focal directions.
}
\end{definition}  

\begin{definition}[The trees $\Gamma_{\Fix,\Repel}$ and $\Gamma_{\Fix}$] \label{TreeDefinitions} 
{\em Suppose $\varphi(z) \in K(z)$ has degree $d \ge 2$.  
Let $\Gamma_{\Fix,\Repel}$  be the tree in $\BPP_K$
spanned by the type {\rm I} $($classical$)$ fixed points of $\varphi$ and the type 
{\rm II} repelling fixed points of $\varphi$ in $\BHH_K$. Let $\Gamma_{\Fix}$  
be the tree in $\BPP_K$ spanned by the type {\rm I} fixed points of $\varphi$.}
\end{definition} 

Clearly $\Gamma_{\Fix} \subset \Gamma_{\Fix,\Repel}$.  Since $\varphi$ has at most $d+1$ distinct type I
fixed points, $\Gamma_{\Fix}$ is a finitely generated tree.  It is possible that $\varphi$ has a single type I 
fixed point of multiplicity $d+1$, in which case $\Gamma_{\Fix}$ is reduced to a point.  

\smallskip
We next describe some properties of focused repelling fixed points.

\begin{proposition} \label{FocusedRepellingProp} 
A repelling fixed point of $\varphi$ in $\BHH_K$ is a focused repelling fixed point 
if and only if it does not belong to $\Gamma_{\Fix}$.  
Each focused repelling fixed point is an endpoint of $\Gamma_{\Fix,\Repel}$.
If $P$ is a focused repelling fixed point, with focus $\vv_1$, then   

$(A)$  $\varphi_*(\vv_1) = \vv_1$,  $m_\varphi(P,\vv_1) = 1$, and $\#\tF_\varphi(P,\vv_1) = \deg_\varphi(P) + 1 \ge 3$.

$(B)$  $s_{\varphi}(P,\vv_1) = d - \deg_{\varphi}(P)$.

$(C)$ For each $\vv \in T_P$ with $\vv \ne \vv_1$ we have $\varphi_*(\vv) \ne \vv$, 

\qquad \quad and $\varphi(B_P(\vv)^-)$ is the ball $B_P(\varphi_*(\vv))^-$.   

$(D)$ For each $\vw \in T_P$, there is at least one $\vv \in T_P$ with $\vv \ne \vv_1$ such that $\varphi_*(\vv) = \vw$.
\end{proposition}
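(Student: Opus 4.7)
The plan is to derive everything from the First Identification Lemma applied at $P$, combined with the global counts $\sum_{\vv \in T_P} \#F_\varphi(P,\vv) = d+1$ and $\sum_{\vv \in T_P} \#\tF_\varphi(P,\vv) = \deg_\varphi(P)+1$. The Lemma applies because $P$ is repelling, so $\deg(\tphi_P) = \deg_\varphi(P) \geq 2$ and in particular $P$ is not id-indifferent. For the preliminary equivalence, $\Gamma_{\Fix}$ is the convex hull of the type {\rm I} fixed points of $\varphi$ in $\BPP_K$; hence $P \notin \Gamma_{\Fix}$ is equivalent to all type {\rm I} fixed points lying in a single direction from $P$, which is precisely the focused condition.

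Assume $P$ is focused, with focus $\vv_1$. Summing the identity of Lemma \ref{FirstIdentificationLemma} over $\vv \in T_P$ yields $\sum_\vv s_\varphi(P,\vv) = d - \deg_\varphi(P)$. Since $\#F_\varphi(P,\vv) = 0$ for every $\vv \neq \vv_1$, the Lemma forces both $s_\varphi(P,\vv) = 0$ and $\#\tF_\varphi(P,\vv) = 0$ for such $\vv$: the second of these means $\varphi_*(\vv) \neq \vv$, giving the first half of (C), while the first concentrates all surplus at $\vv_1$, producing $s_\varphi(P,\vv_1) = d - \deg_\varphi(P)$, which is (B). Substituting back into the identification equation at $\vv_1$ gives $\#\tF_\varphi(P,\vv_1) = \deg_\varphi(P)+1 \geq 3$, so in particular $\varphi_*(\vv_1) = \vv_1$.

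The delicate assertion is $m_\varphi(P,\vv_1) = 1$ in (A). I would choose coordinates so that $\vv_1 = \vv_\infty$ and examine the chart expression $\psi(w) := 1/\tphi_P(1/w)$ near $w = 0$: the local degree $m_\varphi(P,\vv_1)$ equals the order of vanishing of $\psi(w)$ at $0$, while the fixed-point multiplicity of $\infty$ under $\tphi_P$ equals the order of vanishing of $\psi(w) - w$ at $0$. If $m_\varphi(P,\vv_1) \geq 2$, then $\psi(w) - w$ would have a simple zero at $0$, contradicting the fixed-point multiplicity $\deg_\varphi(P) + 1 \geq 3$ established above; hence $m_\varphi(P,\vv_1) = 1$.

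The remaining parts now follow. For the second half of (C), $s_\varphi(P,\vv) = 0$ combined with Rivera-Letelier's directional formulas for $m_\varphi$ and $s_\varphi$ yields $\varphi(B_P(\vv)^-) = B_P(\varphi_*(\vv))^-$ for $\vv \neq \vv_1$. For (D), the surjection $\varphi_* : T_P \to T_P$ has total degree $\deg_\varphi(P) \geq 2$, with $\vv_1$ contributing only multiplicity $1$ to the preimage of $\vv_1$; hence every $\vw \in T_P$ admits some preimage $\vv \neq \vv_1$. For the endpoint claim, no type {\rm I} fixed point lies outside direction $\vv_1$ by hypothesis; and if another type {\rm II} repelling fixed point $Q$ lay in a direction $\vv \neq \vv_1$ from $P$, then by (C) we would have $\varphi(Q) \in B_P(\varphi_*(\vv))^-$, a ball disjoint from $B_P(\vv)^-$ since $\varphi_*(\vv) \neq \vv$, contradicting $\varphi(Q) = Q$. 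Thus every generator of $\Gamma_{\Fix,\Repel}$ other than $P$ itself lies in $B_P(\vv_1)^-$, and $P$ is an endpoint. I expect the verification of $m_\varphi(P,\vv_1) = 1$ to be the main obstacle, as it is the one step requiring a separation of local-degree from fixed-point-multiplicity data.
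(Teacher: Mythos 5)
Your proof is correct and follows essentially the same route as the paper's: both arguments rest entirely on the First Identification Lemma at $P$, the global fixed-point counts, and the observation that a multiple fixed point of $\tphi_P$ cannot be a critical point (which gives $m_\varphi(P,\vv_1)=1$). The only cosmetic difference is ordering — the paper establishes the endpoint property and the uniqueness of the fixed direction first and then reads off $\#\tF_\varphi(P,\vv_1)=\deg_\varphi(P)+1$, whereas you obtain it by summing the identification identity over $T_P$ and deduce the endpoint claim last — but the content is identical.
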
 

\begin{proof} 
Let $P$ be a repelling fixed point of $\varphi$.  If $P \notin \Gamma_{\Fix}$ 
all the type I fixed points of $\varphi$ lie in a single ball $B_P(\vv_1)^-$, 
so $P$ is a focused repelling fixed point.  
Conversely, suppose $P$ is a focused repelling fixed point with focus $\vv_1$.  By assumption,
all the type I fixed points of $\varphi$ belong to $B_P(\vv_1)^-$, 
so $\Gamma_{\Fix} \subset B_P(\vv_1)^-$. Thus, $P \notin \Gamma_{\Fix}$.  

We next show that if $P$ is a focused repelling fixed point, it must be an endpoint of $\Gamma_{\Fix,\Repel}$, 
and the only $\vv \in T_P$ fixed by $\varphi_*$ is $\vv_1$. 
After a change of coordinates, we can assume that $P = \zeta_G$.  
Index directions $\vv \in T_P$ by points $\talpha \in \PP^1(\tk)$
and choose coordinates so that $\vv_1$ corresponds to $\talpha = 1$. 
Take any $\vv \in T_P$ with $\vv \ne \vv_1$.  
Since $B_P(\vv)^-$ contains no type I fixed points, Lemma \ref{FirstIdentificationLemma} 
shows that $s_\varphi(P,\vv) = 0$ and $\varphi_*(\vv) \ne \vv$.  
Thus $\varphi(B_P(\vv)^-)$ is a ball, necessarily $B_P(\varphi_*(\vv)^-)$.  
If $P$ were not an endpoint of $\Gamma_{\Fix,\Repel}$, there would be a direction   
$\vv \in T_P$ with $\vv \ne \vv_1$ such that $B_P(\vv)^-$ contained a repelling fixed point $Q$ of $\varphi$.
This is impossible, since $\varphi(B_P(\vv)^-) = B_P(\varphi_*(\vv))^-$ where $\varphi_*(\vv) \ne \vv$,
yet $Q \in B_P(\vv)^-$ and $\varphi(Q) = Q$.  From the fact that $\varphi_*(\vv) \ne \vv$ for all $\vv \ne \vv_1$,
it follows that $\varphi_*(\vv_1) = \vv_1$, since the action of $\varphi_*$ on $T_P$
corresponds to the action of the reduction $\tphi$ on $\PP^1(\tk)$, and $\tphi$ has at least one fixed point.  

Since $P$ is a repelling fixed point, necessarily $\deg_\varphi(P) \ge 2$.  Since $\vv_1$ 
is the only direction fixed by $\varphi_*$, $\talpha = 1$ is the only point of $\PP^1(\tk)$ 
fixed by $\tvarphi(z)$.  Thus, $\talpha = 1$ is a fixed point of $\tphi$ of multiplicity
$\deg(\tphi) + 1 = \deg_\varphi(P) + 1$.  This means $\varphi_*(\vv_1) = \vv_1$ and 
$\#\tF_{\varphi}(P,\vv_1) = \deg_\varphi(P) + 1 \ge 3$.  
Since $\talpha = 1$ is fixed point of $\tphi$ of multiplicity $> 1$, $\alpha$ is not a critical point of $\tphi$, 
so $m_\varphi(P,\vv_1) = 1$.  

By Lemma \ref{FirstIdentificationLemma}, $\vv_1$ is the only direction $\vv \in T_P$ for which $s_{\varphi}(P,\vv) > 0$, 
so $s_\varphi(P,\vv_1) = d - \deg_\varphi(P)$.

Finally, we show that for each $\vw \in T_P$, there is a $\vv \in T_P$
with $\vv \ne \vv_1$ satisfying $\varphi_*(\vv) = \vw$. 
If $\vw \ne \vv_1$, this is trivial, since $\varphi_*: T_P \rightarrow T_P$ is surjective and $\varphi_*(\vv_1) = \vv_1$.
If $\vw = \vv_1$, it follows from the fact that $m_{\varphi}(P,\vv_1) = 1$, since 
\begin{equation*}
\sum_{\vv \in T_P, \, \varphi_*(\vv) = \vv_1} m_\varphi(P,\vv) \ = \ \deg_\varphi(P) \ \ge \ 2 \ .
\end{equation*}
\vskip -.2 in
\end{proof}

Before proceeding further, it may be good to note that focused repelling fixed points can exist.  
Below, we describe a class of maps with a focused repelling fixed point:

\smallskip
\noindent{{\bf Example A.} (Maps with a focused repelling fixed point at $\zeta_G$).}

Fix $d \ge 2$, and let $\tL(X,Y) \in \tk[X,Y]$ be a homogeneous form of degree $d-1$  
with $\tL(X,Y) \ne \tC X^{d-1}$.  
Let $L_1(X,Y), L_2(X,Y) \in \cO[X,Y]$ be homogeneous forms of degree $d-1$
whose reductions satisfy 
\begin{equation*}
\tL_1(X,Y) \ \equiv \ \tL_2(X,Y) \ \equiv \ \tL(X,Y) \pmod{\fM} \ .
\end{equation*} 
Put 
\begin{equation*}
F(X,Y) \ = \ X  L_1(X,Y) \ , \qquad  G(X,Y) \ = \ X^d + Y L_2(X,Y) \ .
\end{equation*} 
For generic $L_1(X,Y), L_2(X,Y)$ we will have $\GCD(F,G) = 1$;  if this fails, 
it can be achieved by perturbing $L_1(X,Y)$ or $L_2(X,Y)$ slightly.
Assume that $\GCD(F,G) = 1$.    

Consider the map $\varphi$ with representation $(F,G)$.  
Write $\tF(X,Y) = \tA(X,Y) \tF_0(X,Y)$, $\tG(X,Y) = \tA(X,Y) \tG_0(X,Y)$.  
The reduced map $\tphi$ is represented by $(\tF_0,\tG_0)$. 
Since $\tF(X,Y) = X \tL(X,Y)$, $\tG(X,Y) = X^d + Y \tL(X,Y)$, we have   
\begin{equation*}
\tA(X,Y) \ = \ \GCD(\tF(X,Y),\tG(X,Y)) \ = \ \GCD(X^{d-1}, \tL(X,Y)) \ .
\end{equation*}  
Since $\tL(X,Y) \ne \tC X^{d-1}$, we must have $\tA(X,Y) = X^s$ for some $0 \le s \le d-2$.
Putting $n = d-s$, it follows that 
$\deg(\tphi) = n \ge 2$.  Thus $\zeta_G$ is a repelling fixed point of $\varphi$,
and $\deg_\varphi(\zeta_G) = n$.  Let $\vv_0 \in T_{\zeta_G}$ be the direction corresponding
to $0 \in \tk \subset \PP^1(\tk)$.  

The fixed points of $\varphi$ in $\PP^1(K)$ are the 
zeros of $H(X,Y) = X G(X,Y) - Y F(X,Y)$.  The reduction of $H(X,Y)$ is  
\begin{eqnarray}
\tH(X,Y) & = & X \tG(X,Y) - Y \tF(X,Y)  \label{HRedComp} \\
         & = & X\cdot (X^d + Y \tL_2(X,Y)) - Y \cdot (X \tL_1(X,Y) ) \ = \ X^{d+1} \notag
\end{eqnarray}    
since $\tL_1(X,Y) = \tL_2(X,Y)$. By the theory of Newton Polygons, the type I fixed points 
all belong to $B_{\zeta_G}(\vv_0)^-$.  
It follows that $\zeta_G$ is a focused repelling fixed point for $\varphi$.  To see this,
note that $\Gamma_\Fix$ is contained in  $B_{\zeta_G}(\vv_0)^-$, so $\zeta_G \notin \Gamma_\Fix$.
Since $\zeta_G$ is a repelling fixed point of $\varphi$, it belongs to $\Gamma_{\Fix,\Repel}$. 
By Proposition \ref{FocusedRepellingProp}, $\zeta_G$ must be an endpoint of $\Gamma_{\Fix,\Repel}$, 
hence a focused repelling fixed point.
   
By taking $\tL(X,Y) = X^s Y^{d-1-s}$ for a given integer $0 \le s \le d-2$, 
we can arrange that $\tA(X,Y) = X^s$.  
Thus for any pair $(n,s)$ with  $2 \le n \le d$ and $n + s = d$, 
there is a $\varphi \in K(z)$ of degree $d$ which has a focused repelling
fixed point at $\zeta_G$ with $\deg_\varphi(\zeta_G) = n$ and $s_\varphi(\zeta_G,\vv_0) = s$.   
\hfill $\Box$   

\medskip
We next consider the properties of bi-focused repelling fixed points.  
First, we will need a lemma.

\begin{lemma} \label{FocusedP1Lemma}  Suppose $P \in \BHH_K$ is of type {\rm II}, and $\vv \in T_P$.  
If there is a focused repelling fixed point $Q \in B_P(\vv)^-$ whose focus $\vv_1$ points towards $P$,
then $s_\varphi(P,\vv) > 0$.  
\end{lemma}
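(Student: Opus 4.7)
My plan is to establish the stronger statement that $\varphi(B_P(\vv)^-) = \BPP_K$, from which the lemma is immediate: by the defining property of the surplus multiplicity, every point $y$ in the non-empty set $\BPP_K \setminus (B_{\varphi(P)}(\varphi_*(\vv))^- \cup \{\varphi(P)\})$ has exactly $s_\varphi(P,\vv)$ preimages in $B_P(\vv)^-$ (counted with multiplicity), so surjectivity of $\varphi$ from $B_P(\vv)^-$ onto $\BPP_K$ forces $s_\varphi(P,\vv) \geq 1$.

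The first step is a purely tree-theoretic containment. Since the focus $\vv_1 \in T_Q$ points toward $P$, one has $P \in B_Q(\vv_1)^-$, and by hypothesis $Q \in B_P(\vv)^-$, so the unique path $[P,Q]$ realizes both $\vv$ at $P$ and $\vv_1$ at $Q$. For any $\vw \in T_Q$ with $\vw \ne \vv_1$, a path leaving $Q$ in direction $\vw$ departs from the segment $[P,Q]$ and hence cannot reach $P$; consequently $B_Q(\vw)^-$ lies in the same component of $\BPP_K \setminus \{P\}$ as $Q$ itself, namely $B_P(\vv)^-$. This yields
\[
B_P(\vv)^- \ \supseteq \ \{Q\} \ \cup \ \bigcup_{\vw \in T_Q,\, \vw \ne \vv_1} B_Q(\vw)^- \ .
\]

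The second step is to apply $\varphi$ and use Proposition~\ref{FocusedRepellingProp} to make the image of the right-hand side exhaust $\BPP_K$. Part (C) gives $\varphi(B_Q(\vw)^-) = B_Q(\varphi_*(\vw))^-$ for every $\vw \ne \vv_1$, while part (D) says each direction $\vu \in T_Q$ arises as $\varphi_*(\vw)$ for some $\vw \ne \vv_1$. Taking the union over all such $\vw$ therefore produces
\[
\bigcup_{\vw \ne \vv_1} \varphi(B_Q(\vw)^-) \ = \ \bigcup_{\vu \in T_Q} B_Q(\vu)^- \ = \ \BPP_K \setminus \{Q\} \ .
\]
Combining this with $\varphi(Q) = Q$ gives $\varphi(B_P(\vv)^-) = \BPP_K$, which completes the argument.

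I do not anticipate any real obstacle. The lemma is essentially a direct consequence of the structural surjectivity properties of focused repelling fixed points already packaged in Proposition~\ref{FocusedRepellingProp}(C),(D); the only subtle point is the tree-geometric step of locating each $B_Q(\vw)^-$ on the correct side of $P$, and this is handled cleanly by the uniqueness of paths in $\BPP_K$.
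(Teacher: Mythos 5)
Your proposal is correct and follows essentially the same route as the paper: both reduce the claim to showing $\varphi(B_P(\vv)^-) = \BPP_K$ via the containment $\BPP_K \setminus B_Q(\vv_1)^- \subseteq B_P(\vv)^-$, and then invoke Proposition~\ref{FocusedRepellingProp}(C),(D) together with $\varphi(Q)=Q$ to see that the image already exhausts $\BPP_K$. Your extra care in justifying the tree-geometric containment and in deducing $s_\varphi(P,\vv)>0$ from surjectivity via the definition of surplus multiplicity is sound and matches how the paper uses this equivalence elsewhere.
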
 

\begin{proof}  We must show that $\varphi(B_P(\vv)^-) = \BPP_K$.  
Since $(\BPP_K \backslash B_Q(\vv_1)^-) \subset B_P(\vv)^-$, it suffices to show that
\begin{equation*} 
\varphi( \BPP_K \backslash B_Q(\vv_1)^-) \ = \ \BPP_K \ .
\end{equation*} 
To see this, note that 
$\BPP_K \backslash B_Q(\vv_1)^- = \{Q\} \cup (\bigcup_{\vv \in T_Q, \vv \ne \vv _1} B_Q(\vv)^-)$.
By assumption $\varphi(Q) = Q$.
Since $\varphi_* : T_Q \rightarrow T_Q$ is surjective, Proposition \ref{FocusedRepellingProp} shows that 
for each $\vw \in T_Q$ there is at least one $\vv \in T_Q$ with $\vv \ne \vv_1$ 
for which  $\varphi_*(B_Q(\vv)^-) = B_Q(\vw)^-$. 
Since $\BPP_K = \{Q\} \cup (\bigcup_{\vw \in T_Q} B_Q(\vw)^-)$,  the claim follows.
\end{proof} 

\begin{proposition} \label{BeadProp}
A repelling fixed point of $\varphi$ in $\BHH_K$ is a bi-focused repelling fixed point if and only if it belongs 
$\Gamma_\Fix$, but is not a branch point of $\Gamma_{\Fix,\Repel}$.  
Suppose $P$ is a bi-focused repelling fixed point, and let $\vv_1, \vv_2 \in T_P$ be its focal directions.  Then  

$(A)$ For least one $\vv \in \{\vv_1,\vv_2\}$, we have
$\varphi_*(\vv) = \vv$, $m_\varphi(P,\vv) = 1$, and $\#\tF_\varphi(P,\vv) \ge 2$.

$(B)$ For each $\vv \in T_P$ with $\vv \ne \vv_1, \vv_2$, we have $\varphi_*(\vv) \ne \vv$ and $s_\varphi(P,\vv) = 0$.  
\end{proposition}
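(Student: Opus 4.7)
The plan is to prove the characterization first, then parts (B) and (A). One direction of the characterization is immediate: if $P$ is bi-focused with focal directions $\vv_1, \vv_2$, the paths from $P$ in these directions reach type I fixed points, placing $P \in \Gamma_\Fix$. Conversely, if $P$ is a repelling fixed point in $\Gamma_\Fix$ that is not a branch point of $\Gamma_{\Fix,\Repel}$, then Proposition \ref{FocusedRepellingProp} excludes $P$ from being focused (since focused fixed points are not in $\Gamma_\Fix$), so $P$ has at least $2$ focal directions; meanwhile the branch-point hypothesis caps the number of directions at $P$ in $\Gamma_{\Fix,\Repel} \supseteq \Gamma_\Fix$ at $2$, so $P$ has exactly $2$ focal directions.

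The principal obstacle is the remaining half of the forward implication: showing that if $P$ is bi-focused, it is not a branch point of $\Gamma_{\Fix,\Repel}$. I must rule out any type II repelling fixed point $Q \ne P$ lying in a non-focal direction $\vv$ at $P$, and will do this by case analysis on the type of $Q$. If $Q$ is bi- or multi-focused, then $Q \in \Gamma_\Fix$; but $\vv$ being non-focal forces $\Gamma_\Fix \cap B_P(\vv)^- = \emptyset$, contradiction. If $Q$ is focused with focus $\vv_1^Q$ pointing toward $P$, Lemma \ref{FocusedP1Lemma} gives $s_\varphi(P,\vv) > 0$, and then Lemma \ref{FirstIdentificationLemma} produces a type I fixed point in $B_P(\vv)^-$, contradiction. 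Finally, if $\vv_1^Q$ points away from $P$, all type I fixed points of $\varphi$ lie in $B_Q(\vv_1^Q)^- \subset B_P(\vv)^-$, contradicting that the focal directions $\vv_1, \vv_2$ (both distinct from $\vv$) each contain a type I fixed point.

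Part (B) is then a direct application of Lemma \ref{FirstIdentificationLemma}: for $\vv \ne \vv_1,\vv_2$, the ball $B_P(\vv)^-$ contains no type I fixed point, and since $P$ is not id-indifferent (being repelling), this rules out both $\varphi_*(\vv) = \vv$ and $s_\varphi(P,\vv) > 0$. For (A), (B) implies that the only directions fixed by $\varphi_*$ belong to $\{\vv_1,\vv_2\}$, so the identity
\begin{equation*}
\sum_{\vv \in T_P} \#\tF_\varphi(P,\vv) \ = \ \deg_\varphi(P) + 1 \ \geq \ 3
\end{equation*}
concentrates entirely on $\{\vv_1, \vv_2\}$, forcing at least one summand to be $\geq 2$. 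For the corresponding $\vv_i$, the reduced fixed point $\ta_i \in \PP^1(\tk)$ is a multiple root of $\tphi_P(z) - z$, so $\tphi_P'(\ta_i) = 1 \ne 0$; hence $\tphi_P$ is unramified at $\ta_i$, and since $m_\varphi(P,\vv_i)$ equals the local degree of $\tphi_P$ at $\ta_i$, we conclude $m_\varphi(P,\vv_i) = 1$.
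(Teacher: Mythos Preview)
Your proof is correct and tracks the paper's argument closely. For the characterization, the paper is slightly more economical: once $\vv$ is a non-focal direction, $B_P(\vv)^-$ contains no type I fixed points, so any repelling fixed point $Q$ there lies outside $\Gamma_\Fix$ and is therefore focused with focus pointing toward $\Gamma_\Fix$ (Proposition~\ref{FocusedRepellingProp}); your separate cases for bi-/multi-focused $Q$ and for a focused $Q$ whose focus points away from $P$ are thus not really needed, though they do no harm. Part (B) is identical to the paper's.

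For (A) your route is a bit different and arguably cleaner. The paper changes coordinates so that $P = \zeta_G$ with $0,\infty$ fixed, then uses the factorization $\tH(X,Y) = \tA(X,Y)\tH_0(X,Y)$ together with $\tH(X,Y) = \tc\,X^\ell Y^{d+1-\ell}$ to force $\tH_0(X,Y) = \th\,X^{\ell_0}Y^{\delta+1-\ell_0}$, whence one exponent is $\ge 2$. You instead invoke the abstract identity $\sum_{\vv} \#\tF_\varphi(P,\vv) = \deg_\varphi(P)+1 \ge 3$, which by (B) concentrates on $\{\vv_1,\vv_2\}$, and then deduce $m_\varphi(P,\vv_i)=1$ from $\tphi_P'(\ta_i)=1$. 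This avoids coordinates entirely; the paper's explicit factorization, on the other hand, makes the structure of $\tA$ and $\tH_0$ visible, which is in the spirit of how the surrounding sections are written.
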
 

\begin{proof} 
Suppose $P$ is a bi-focused repelling fixed point.  By definition, 
there are type I fixed points $\alpha_1, \alpha_2$ of $\varphi$ belonging to distinct directions in $T_P$,  
so $P \in \Gamma_\Fix$.  
However, $P$ cannot be a branch point of $\Gamma_{\Fix}$, because if it were, 
there would be at least three distinct directions in $T_P$ containing type I fixed points.
It also cannot be a branch point of $\Gamma_{\Fix,\Repel}$ which is not a branch point of $\Gamma_\Fix$, 
because if it were, there would be at least one branch of $\Gamma_{\Fix,\Repel} \backslash \Gamma_{\Fix}$
off $P$.  If $\vv \in T_P$ is the corresponding direction, 
then $B_P(\vv)^-$ would contain a type II repelling fixed point $Q$, but no type I fixed points.
Since $Q$ is a focused repelling fixed point, whose focus $\vv_1 \in T_Q$ points towards $\Gamma_{\Fix}$,  
by Lemma \ref{FocusedP1Lemma} we would have $s_\varphi(P,\vv) > 0$.  However, 
this contradicts Lemma \ref{FirstIdentificationLemma}
since $B_P(\vv)^-$ contains no type I fixed points.  

Conversely, suppose $P \in \Gamma_\Fix$ is a type II repelling fixed point, 
but is not a vertex of $\Gamma_{\Fix,\Repel}$. 
Then there are exactly two directions $\vv_1, \vv_2 \in T_P$ containing type I fixed points, 
so $P$ is a bi-focused repelling fixed point.   

\smallskip
To prove assertions (A) and (B), let $P$ be any bi-focused repelling fixed point.  After a change of coordinates,  
we can assume that $P = \zeta_G$, and that $0$ and  $\infty$ are fixed points of $\varphi$.
Let $(F,G)$ be a normalized representation of $\varphi$.  
Since $\varphi(z)$ fixes $P$, it has nonconstant reduction.  Thus 
there are nonzero homogeneous polynomials 
\begin{equation*} 
\tA(X,Y), \tF_0(X,Y), \tG_0(X,Y) \ \in \ \tk[X,Y]
\end{equation*} 
such that $(\tF,\tG) = (\tA \cdot \tF_0, \tA \cdot \tG_0)$,  with $\GCD(\tF_0,\tG_0) = 1$.
Since $P$ is a repelling fixed point,
we have $\delta := \deg_\varphi(P) \ge 2$, and $\deg(\tF_0) = \deg(\tG_0) = \delta$.  
Write $H(X,Y) = X G(X,Y) - Y F(X,Y)$ 
and put $\tH_0(X,Y) = X \tG_0(X,Y) - Y \tF_0(X,Y)$.  Then $\tH(X,Y) = \tA(X,Y) \cdot \tH_0(X,Y)$. 
Since  $0, \infty \in \PP^1(K)$ are fixed points of $\varphi$, and $P$ is a bi-focused repelling fixed point, 
$\vv_0, \vv_\infty \in T_P$ are the only directions $\vv \in T_P$ 
for which the balls $B_P(\vv)^-$ can contain type I fixed points. 
It follows from Lemma \ref{FirstIdentificationLemma} that $\tH(X,Y) = \tc\cdot X^\ell Y^{d+1 - \ell}$ for some 
$\tc \in \tk^\times$ and some $\ell$.  
Since $\tH_0(X,Y) | \tH(X,Y)$, we must have  
$\tH_0(X,Y) = \th \cdot X^{\ell_0} Y^{\delta + 1  - \ell_0}$ for some  $\th \in \tk^{\times}$ 
and some $0 \le \ell_0 \le \delta+1$.  
Since $\delta + 1 \ge 3$, either $\#\tF_\varphi(P,\vv_0) \ge 2$ or $\#\tF_\varphi(P,\vv_\infty) \ge 2$.
If $i \in \{1,2\}$ is such that $\#\tF_\varphi(P,\vv_i) \ge 2$ then necessarily $\varphi_*(\vv_i) = \vv_i$
and $m_\varphi(P,\vv_i) = 1$.  This proves assertion (A).

For each $\vv \in T_P$ with $\vv \ne \vv_0, \vv_\infty$, there are no type I fixed point in $B_P(\vv)^-$,
so Lemma \ref{FirstIdentificationLemma} shows that $\varphi_*(\vv) \ne \vv$ and $s_\varphi(P,\vv) = 0$.
Thus assertion (B) holds.  
\end{proof} 

Finally, we consider multi-focused repelling fixed points.  Recall that the valence of a point $P$ in a graph $\Gamma$
is the number of edges of $\Gamma$ emanating from $P$. 

\begin{proposition} \label{MultiFocusedProp}
A repelling fixed point of $\varphi$ in $\BHH_K$ is multi-focused if and only if it is a branch point of 
$\Gamma_\Fix$.  If $P$ is a multi-focused repelling fixed point, then its valence in $\Gamma_{\Fix,\Repel}$ 
is the same as its valence in $\Gamma_{\Fix}$. 
\end{proposition}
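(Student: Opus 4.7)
The plan is to deduce the ``if and only if'' from the definitions, and then establish the valence equality by ruling out any extra branches using Lemma \ref{FocusedP1Lemma} together with the First Identification Lemma.

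For the equivalence, note that by definition $P$ is multi-focused precisely when there are at least three distinct directions $\vv \in T_P$ such that $B_P(\vv)^-$ contains a type I fixed point of $\varphi$. On the other hand, since $P \in \BHH_K$ is not itself a type I point, $P$ lies in $\Gamma_\Fix$ iff at least two such directions exist, and in that case the valence of $P$ in $\Gamma_\Fix$ is exactly the number of directions at $P$ containing a type I fixed point. Hence $P$ is a branch point of $\Gamma_\Fix$ (valence $\ge 3$) iff $P$ is multi-focused.

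For the valence equality, since $\Gamma_\Fix \subseteq \Gamma_{\Fix,\Repel}$, the valence of $P$ in $\Gamma_{\Fix,\Repel}$ is at least its valence in $\Gamma_\Fix$; so it suffices to show that every direction $\vv \in T_P$ corresponding to an edge of $\Gamma_{\Fix,\Repel}$ at $P$ already corresponds to an edge of $\Gamma_\Fix$ at $P$. Equivalently, for each $\vv \in T_P$ for which $B_P(\vv)^-$ contains either a type I fixed point or a type II repelling fixed point, I would show $B_P(\vv)^-$ must in fact contain a type I fixed point. Suppose instead that $B_P(\vv)^-$ contains a type II repelling fixed point $Q$ but no type I fixed point. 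Since all type I fixed points then lie outside $B_P(\vv)^-$, viewed from $Q$ they all lie in the single direction $\vv_1 \in T_Q$ pointing back toward $P$. Thus $Q$ is a focused repelling fixed point with focus $\vv_1$ pointing toward $P$, and Lemma \ref{FocusedP1Lemma} yields $s_\varphi(P,\vv) > 0$. Because $P$ is itself a repelling fixed point, Rivera-Letelier's theorem gives $\deg(\tphi_P) = \deg_\varphi(P) \ge 2$, so $\tphi_P$ is not the identity and $P$ is not id-indifferent. Lemma \ref{FirstIdentificationLemma} then forces $B_P(\vv)^-$ to contain a type I fixed point of $\varphi$, contradicting our assumption.

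The only real subtlety is the verification that $Q$ is necessarily focused with focus pointing toward $P$, but this is immediate from the definition of focused together with the geometric observation that when all type I fixed points lie outside $B_P(\vv)^-$, they are all separated from $Q$ by the single tangent direction at $Q$ pointing toward $P$. With that in hand, the chain Lemma \ref{FocusedP1Lemma} $\Rightarrow$ $s_\varphi(P,\vv)>0$ $\Rightarrow$ Lemma \ref{FirstIdentificationLemma} closes the argument.
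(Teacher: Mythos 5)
Your proof is correct and follows essentially the same route as the paper: the equivalence is read off from the definitions, and the valence equality is obtained by showing that a direction at $P$ containing a repelling fixed point but no type I fixed point would force a focused repelling fixed point $Q$ with focus toward $P$, whence Lemma \ref{FocusedP1Lemma} gives $s_\varphi(P,\vv)>0$, contradicting Lemma \ref{FirstIdentificationLemma}. Your explicit remark that $P$ is not id-indifferent (being repelling) is a small point the paper leaves implicit, and it is the right thing to check before invoking Lemma \ref{FirstIdentificationLemma}.
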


\noindent{\bf Remark.}  The converse to the second assertion in Proposition \ref{MultiFocusedProp} is false.
There can be branch points of $\Gamma_{\Fix}$ 
which are indifferent fixed points of $\varphi$, and branch points which are moved by $\varphi$.  

\begin{proof}[Proof of Proposition \ref{MultiFocusedProp}.] 
If $P$ is a multi-focused repelling fixed point, then at least three directions in $T_P$ 
contain type I fixed points of $\varphi$, so $P$ is a branch point of $\Gamma_{\Fix}$.  Conversely,
if a repelling fixed point $P$ is a branch point of $\Gamma_{\Fix}$, at least three three directions in $T_P$ 
contain type I fixed points of $\varphi$, so $P$ is multi-focused.

The second assertion can be reformulated as saying that 
if $P$ is a multi-focused repelling fixed point of $\varphi$, then there are no 
branches of $\Gamma_{\Fix,\Repel} \backslash \Gamma_{\Fix}$ which 
fork off $\Gamma_{\Fix}$ at $P$.  Suppose to the contrary that there were such a branch, and let 
$\vv \in T_P$ be the corresponding direction.  By the same argument as in the proof of Proposition \ref{BeadProp},  
$B_P(\vv)^-$ would contain a type II repelling fixed point $Q$, but no type I fixed points.
Since $Q$ is a focused repelling fixed point, whose focus $\vv_1$ points towards $P$,  
by Lemma \ref{FocusedP1Lemma} we would have $s_\varphi(P,\vv) > 0$.  However, 
this contradicts Lemma \ref{FirstIdentificationLemma}
since $B_P(\vv)^-$ contains no type I fixed points. 
\end{proof} 

The fact that focused repelling fixed points are endpoints of $\Gamma_{\Fix,\Repel}$, 
while bi-focused repelling fixed points and multi-focused repelling fixed points belong to $\Gamma_{\Fix}$, 
leads one to ask about the nature of points of $\Gamma_{\Fix,\Repel} \backslash \Gamma_{\Fix}$
which are not endpoints of $\Gamma_{\Fix,\Repel}$: 

\begin{proposition} \label{IdIndiffOffFixProp} 
Suppose $Q$ is a focused repelling fixed point of $\varphi$, and 
let $Q_0$ be the nearest point to $P$ in $\Gamma_{\Fix}$.  
Then each type {\rm II} point in $(Q,Q_0]$
is an id-indifferent fixed point of $\varphi$.
\end{proposition}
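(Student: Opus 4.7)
The plan is to show, for an arbitrary type II point $P \in (Q,Q_0]$, that $P$ is a fixed point of $\varphi$ whose reduction $\tphi_P$ is the identity, by exhibiting a direction at $P$ with positive surplus multiplicity and no type I fixed points in the corresponding ball, then invoking the contrapositive of Corollary~\ref{SurplusIdentificationCor}.

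First I would fix the tree geometry. Since $Q$ is a focused repelling fixed point, Proposition~\ref{FocusedRepellingProp} tells us that all type I fixed points of $\varphi$ lie in the ball $B_Q(\vv_1)^-$, where $\vv_1$ is the unique focal direction at $Q$. Because $Q_0$ is the nearest point of $\Gamma_{\Fix}$ to $Q$, the segment $[Q,Q_0]$ leaves $Q$ in the direction $\vv_1$, so $P \in B_Q(\vv_1)^-$; equivalently, the direction $\vv_1 \in T_Q$ points toward $P$.

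Next let $\vv_Q \in T_P$ be the direction at $P$ pointing back toward $Q$. Then $Q \in B_P(\vv_Q)^-$, and since $Q$ is a focused repelling fixed point whose focus points toward $P$, Lemma~\ref{FocusedP1Lemma} gives
\begin{equation*}
s_\varphi(P,\vv_Q) \ > \ 0 \ .
\end{equation*}
On the other hand, $B_P(\vv_Q)^-$ contains no type I fixed point of $\varphi$: every such fixed point lies in $B_Q(\vv_1)^-$, and the path from $Q$ to any point of $B_Q(\vv_1)^-$ begins in direction $\vv_1$, hence (since $P$ lies on the sub-segment $[Q,Q_0] \subset B_Q(\vv_1)^-$) passes through $P$, so each type I fixed point lies in some direction at $P$ distinct from $\vv_Q$.

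These two observations are exactly what is needed for the contrapositive of Corollary~\ref{SurplusIdentificationCor}: a type II point $P$ with $s_\varphi(P,\vv_Q) > 0$ and no type I fixed point in $B_P(\vv_Q)^-$ must be id-indifferent for $\varphi$, which in particular means $\varphi(P) = P$ and $\tphi_P$ is the identity. The main obstacle, such as it is, is purely bookkeeping — verifying that $\vv_1$ really does point toward $P$ and that no type I fixed point lies in $B_P(\vv_Q)^-$; once the tree positions of $Q$, $P$, $Q_0$ and $\Gamma_{\Fix}$ are laid out, the proposition follows immediately from the Identification Lemmas and Lemma~\ref{FocusedP1Lemma}.
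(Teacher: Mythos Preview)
Your proof is correct and follows the same route as the paper: apply Lemma~\ref{FocusedP1Lemma} to get $s_\varphi(P,\vv_Q)>0$, observe $B_P(\vv_Q)^-$ contains no type~I fixed point, then use Corollary~\ref{SurplusIdentificationCor} (the paper invokes Lemmas~\ref{FirstIdentificationLemma} and~\ref{SecondIdentificationLemma} directly, which is the same thing). One sentence needs tightening: the claim ``the path from $Q$ to any point of $B_Q(\vv_1)^-$ \ldots passes through $P$'' is false as written---the correct reason $B_P(\vv_Q)^-$ contains no type~I fixed point is that every type~I fixed point lies in $\Gamma_{\Fix}$, and since $Q_0$ is the retraction of $Q$ to $\Gamma_{\Fix}$, the path from $Q$ to any such point passes through $Q_0$ and hence through $P\in(Q,Q_0]$.
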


\begin{proof}  Let $P$ be a type II point in $(Q,Q_0]$, 
and let $\vv \in T_P$ be the direction such that $Q \in B_P(\vv)^-$.  
The focus $\vv_1$ of $Q$ points towards $\Gamma_{\Fix}$,
and hence towards $P$.  By Lemma \ref{FocusedP1Lemma}, we have $s_\varphi(Q,\vv_P) > 0$.  If $\varphi(P) = P$ 
but $P$ is not id-indifferent, this contradicts Lemma \ref{FirstIdentificationLemma} since $B_P(\vv)^-$
does not contain any type I fixed points of $\varphi$.  If $\varphi(P) \ne P$, it contradicts Lemma 
\ref{SecondIdentificationLemma} for the same reason.
Thus, $P$ must be id-indifferent.
\end{proof}

\section{\bf The Tree Intersection Theorem.} \label{TreeTheoremSection} 

\smallskip 
In this section, we establish several important properties of $\Gamma_{\Fix,\Repel}$.
We first note that it never consists of a single point: 

\begin{lemma} Let $\varphi(z) \in K(z)$ have degree $d \ge 2$.  Then $\varphi$ either has at least 
two type {\rm I} fixed points, or it has one type {\rm I} fixed point 
and at least one type {\rm II} repelling fixed point.  In either case, $\Gamma_{\Fix,\Repel}$ is nontrivial.
\end{lemma}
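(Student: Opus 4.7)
The plan is to split into two cases based on the number of distinct type I fixed points of $\varphi$. Since $K$ is algebraically closed, $\varphi$ has $d+1 \geq 3$ type I fixed points counted with multiplicity, namely the zeros of the homogeneous polynomial $H(X,Y) = XG(X,Y) - YF(X,Y)$ for a normalized representation $(F,G)$. If at least two are distinct, then the nondegenerate segment between them lies in $\Gamma_\Fix \subseteq \Gamma_{\Fix,\Repel}$, and the conclusion is immediate.

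In the remaining case all $d+1$ fixed points coincide at some $\alpha \in \PP^1(K)$, and I must produce a type II repelling fixed point of $\varphi$; together with $\alpha$ it yields the nondegenerate segment in $\Gamma_{\Fix,\Repel}$. Conjugating to place $\alpha = \infty$, the multiplicity-$d+1$ condition at $\infty$ combined with $\deg \varphi = d$ forces $\varphi$ to take the form $\varphi(z) = z + c/P(z)$ for some $c \in K^\times$ and polynomial $P \in K[z]$ of degree $d-1$. (Expanding $H(1,w)$ in $w = 1/z$: the multiplicity-$d+1$ condition requires $g_d = 0$, $g_i = f_{i+1}$ for $i = 0,\dots,d-1$, and $f_0 \neq 0$, which forces $G = YL$ and $F = XL + f_0 Y^d$ for some homogeneous $L$ of degree $d-1$, hence the displayed form with $c = f_0$ and $P(z) = L(z,1)$.)

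Since $\deg P \geq 1$ and $K$ is algebraically closed, $P$ has a root $\beta \in K$ of some multiplicity $m \geq 1$. A translation placing $\beta = 0$ yields $\varphi(z) = z + c/(z^m Q(z))$ with $Q(0) \neq 0$. I would then choose $\lambda \in K^\times$ with $|\lambda|^{m+1} = |c|/|Q(0)|$ and set $\gamma(z) = \lambda z$; a direct calculation with the normalized representation of $\varphi^\gamma(z) = z + c/(\lambda^{m+1} z^m Q(\lambda z))$ shows that, after extracting common factors of $Y$ from the reductions of numerator and denominator, its reduction takes the form $\widetilde{\varphi^\gamma}(z) = z + A/z^m$ for some $A \in \tk^\times$, a rational map of degree $m+1 \geq 2$. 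Hence $\gamma(\zeta_G) = \zeta_{0,|\lambda|}$ is a type II repelling fixed point of $\varphi$, as required. The main obstacle is that the displayed reduction formula assumes the term $q_0 \lambda^{m+1} z^m$ dominates all the others in $\lambda^{m+1} z^m Q(\lambda z)$ on the unit circle, equivalently $|c|^i |q_i|^{m+1} \leq |q_0|^{i+m+1}$ for $1 \leq i \leq d-1-m$ where the $q_i$ are the coefficients of $Q$; when these inequalities fail I would instead select $\beta$ as a different root of $P$ dictated by the Newton polygon of $P$, and track how the Newton polygon centered at each root determines the dominant term of $\varphi^\gamma$ near $\zeta_G$, so that an analogous dominance condition—and hence a repelling reduction at the resulting $\zeta_{\beta',r'}$—is obtained.
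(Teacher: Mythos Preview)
Your reduction to the form $\varphi(z)=z+c/P(z)$ when $\infty$ is the unique type I fixed point is correct, but the explicit construction of a repelling type II point has a real gap. The prescription $|\lambda|^{m+1}=|c|/|Q(0)|$, with $m$ the \emph{algebraic} multiplicity of the chosen root $\beta$, need not yield a repelling reduction, and the Newton-polygon remedy you sketch (``select a different root'') cannot save it. Take $d=3$, $P(z)=z^2-\epsilon^2$ with $\Char(\tk)\ne 2$ and $|c|>|\epsilon|^3$. Both roots $\pm\epsilon$ have $m=1$; at $\beta=\epsilon$ one gets $Q(z)=z+2\epsilon$, $|q_0|=|\epsilon|$, $|\lambda|=(|c|/|\epsilon|)^{1/2}>|\epsilon|$, and your dominance inequality $|c|\,|q_1|^2\le|q_0|^3$ becomes $|c|\le|\epsilon|^3$, which fails. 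By symmetry it fails at $-\epsilon$ too, so no choice of root works. The actual repelling point is $\zeta_{0,|c|^{1/3}}$: at this scale \emph{both} roots lie inside the disk, and the reduction is $z+A/z^2$ of degree $3$. The relevant exponent is not the multiplicity of a single root but the number of roots of $P$ inside the disk at the chosen radius.

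A correct version of your argument runs as follows. After centering at any root of $P$, set $M(r)=|a|\,r^{1+k(r)}\prod_{|\beta_j|>r}|\beta_j|$, where $P(z)=a\prod_j(z-\beta_j)$ and $k(r)=\#\{j:|\beta_j|\le r\}$. The function $\log M$ is continuous and strictly increasing in $\log r$, with $M(r)\to 0$ as $r\to 0$ and $M(r)\to\infty$ as $r\to\infty$, so there is a unique $r_0$ with $M(r_0)=|c|$. One then checks that the reduction of $\varphi$ at $\zeta_{0,r_0}$ is $z+A/R(z)$ with $A\in\tk^\times$ and $\deg R=k(r_0)\ge 1$, hence of degree $k(r_0)+1\ge 2$; thus $\zeta_{0,r_0}$ is repelling.

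The paper bypasses all of this. It simply notes that a type I fixed point of multiplicity $d+1>1$ has multiplier $1$ and is therefore indifferent, and then invokes Rivera-Letelier's theorem that $\varphi$ always has at least one repelling fixed point in $\BPP_K$; since the unique type I fixed point is not repelling, the repelling point must lie in $\BHH_K$. Your approach is more constructive and avoids the black box, which has value, but as written it is incomplete.
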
 

\begin{proof}  
If $\varphi$ has at least two type I fixed points, we are done.  If it has only one,
that point is necessarily a fixed point of multiplicity $d+1$, hence has multiplier $1$ and 
is an indifferent fixed point.  By a theorem of Rivera-Letelier (see \cite{R-L1}, Theorem B, or \cite{B-R}, Theorem 10.82), 
$\varphi$ has at least one repelling fixed point in $\BPP_K$  
(which may be in either $\PP^1(K)$ or $\BHH_K$), so in this case 
$\varphi$ must have a repelling fixed point in $\BHH_K$.
\end{proof}

\begin{theorem}[The Tree Intersection Theorem] \label{FixRepelThm} 
Let $\varphi \in K(z)$ have degree $d \ge 2$. 
Then $\Gamma_{\Fix,\Repel}$ is the intersection of the trees 
$\Gamma_{\Fix,\varphi^{-1}(a)}$, for all $a \in \PP^1(K)$. 
\end{theorem}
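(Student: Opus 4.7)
The plan is to prove the two inclusions separately. For $\Gamma_{\Fix,\Repel} \subseteq \Gamma_{\Fix,\varphi^{-1}(a)}$ for every $a \in \PP^1(K)$: classical fixed points lie in both trees by definition. For a type II repelling fixed point $Q$, if $Q$ is multi-focused or bi-focused then $Q \in \Gamma_\Fix$ by Propositions \ref{MultiFocusedProp} and \ref{BeadProp}, so $Q$ belongs to every $\Gamma_{\Fix,\varphi^{-1}(a)}$. If $Q$ is uni-focused with focus $\vv_1$, all classical fixed points lie in $B_Q(\vv_1)^-$; I would use Proposition \ref{FocusedRepellingProp}(D) to produce, for each $a$, a direction $\vv \ne \vv_1$ at $Q$ with $\varphi_*(\vv)$ equal to the direction from $Q$ toward $a$, and property (a) of the surplus multiplicity then yields a classical preimage of $a$ in $B_Q(\vv)^-$. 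This places $Q$ on the convex hull of $\Fix \cup \varphi^{-1}(a)$.

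For the reverse inclusion, suppose $P \in \bigcap_a \Gamma_{\Fix,\varphi^{-1}(a)}$. If $P$ is of type I, I would choose $a \ne \varphi(P)$; since the type I points of $\Gamma_{\Fix,\varphi^{-1}(a)}$ are precisely the classical fixed points and preimages of $a$, $P$ is forced to lie in $\Fix$. Otherwise $P \in \BHH_K$, and I argue by contradiction, supposing $P \notin \Gamma_{\Fix,\Repel}$. Let $\vv_* \in T_P$ be the unique direction with $\Gamma_{\Fix,\Repel} \subset B_P(\vv_*)^-$; then $B_P(\vw)^-$ contains no classical fixed point and no type II repelling fixed point for any $\vw \ne \vv_*$. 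The task is to exhibit $a \in \PP^1(K)$ with $\varphi^{-1}(a) \subseteq B_P(\vv_*)^-$, yielding the required contradiction. By the formulas for $m_\varphi(P,\vw)$ and $s_\varphi(P,\vw)$, this reduces to ensuring $s_\varphi(P,\vw) = 0$ for every $\vw \ne \vv_*$ and choosing $a$ so that $a \notin B_{\varphi(P)}(\varphi_*(\vw))^-$ for each such $\vw$.

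I would split into three cases according to the behavior of $\varphi$ at $P$. If $\varphi(P) \ne P$, the Second Identification Lemma applied to each $\vw \ne \vv_*$ yields all three conclusions simultaneously: $s_\varphi(P,\vw) = 0$, $\varphi(P) \in B_P(\vv_*)^-$, and $\varphi_*(\vw) \ne \vu_0$ where $\vu_0 \in T_{\varphi(P)}$ is the direction back toward $P$; any $a \in B_{\varphi(P)}(\vu_0)^- \cap \PP^1(K)$ then works. If $\varphi(P) = P$ but $P$ is not id-indifferent, the First Identification Lemma rules out the multiplicatively indifferent case (both fixed directions of $\tphi_P$ would contain classical fixed points, violating uniqueness of $\vv_*$); hence $P$ is additively indifferent, $\vv_*$ must coincide with the unique fixed direction $\vv_\infty$ of $\tphi_P$, the Identification Lemma forces $s_\varphi(P,\vw) = 0$ for $\vw \ne \vv_*$, and $\tphi_P^{-1}(\infty) = \{\infty\}$ gives $\varphi_*(\vw) \ne \vv_*$, so any $a \in B_P(\vv_*)^- \cap \PP^1(K)$ suffices. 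The main obstacle is the id-indifferent case $\tphi_P = \id$, where the First Identification Lemma is unavailable because $\tH_0 = X\tG_0 - Y\tF_0$ vanishes identically; there I would derive a direct analogue via the factorization $\tF = \tA \cdot X$, $\tG = \tA \cdot Y$ in $\tk[X,Y]$, combined with Proposition \ref{IdIndiffOffFixProp} and Lemma \ref{FocusedP1Lemma}, to rule out $s_\varphi(P,\vw) > 0$ for $\vw \ne \vv_*$ by tracing any hypothetical surplus direction back to a classical or repelling fixed point inside $B_P(\vw)^-$, contradicting the position of $P$ with respect to $\Gamma_{\Fix,\Repel}$.
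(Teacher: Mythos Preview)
Your forward inclusion is fine, and in fact close in spirit to the paper's (the paper gives a single algebraic argument at $\zeta_G$ rather than splitting into focused/bi-focused/multi-focused, but your case split via Propositions~\ref{FocusedRepellingProp}, \ref{BeadProp}, \ref{MultiFocusedProp} is equally valid).

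The reverse inclusion, however, has a genuine gap in the id-indifferent case. You propose to rule out $s_\varphi(P,\vw)>0$ for $\vw\ne\vv_*$ by ``tracing back'' a surplus direction to a classical or repelling fixed point in $B_P(\vw)^-$, invoking the factorization $\tF=\tA\cdot X$, $\tG=\tA\cdot Y$ together with Proposition~\ref{IdIndiffOffFixProp} and Lemma~\ref{FocusedP1Lemma}. But none of these tools gives what you need. The factorization only tells you that surplus directions are roots of $\tA$; as Example~B in \S\ref{TreeTheoremSection} makes explicit, at an id-indifferent point the roots of $\tA$ (primary terms) and the locations of type~I fixed points (secondary terms) are completely decoupled, so no local algebraic argument at $P$ can produce a fixed point in $B_P(\vw)^-$. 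Proposition~\ref{IdIndiffOffFixProp} and Lemma~\ref{FocusedP1Lemma} both run the wrong way: they assume the existence of a focused repelling fixed point and deduce consequences, whereas you need to \emph{produce} one. What you are really trying to prove here is the Third Identification Lemma (Lemma~\ref{ThirdIdentificationLemma}), and the paper proves that lemma \emph{using} the Tree Intersection Theorem---so invoking it (or an unstated analogue) is circular.

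The paper sidesteps the id-indifferent difficulty entirely by a different strategy for the reverse inclusion: rather than taking an arbitrary $P\notin\Gamma_{\Fix,\Repel}$ and splitting into reduction types, it argues at the \emph{endpoints} of $\Gamma_{\Fix,\Repel}$. Those endpoints are either type~I fixed points or focused repelling fixed points---never id-indifferent. For a focused repelling fixed point $P$ with focus $\vv_1$, one picks $a_2,a_3$ in two distinct directions $\vv_2,\vv_3\ne\vv_1$; Proposition~\ref{FocusedRepellingProp}(C) forces the preimages of $a_2$ and of $a_3$ lying outside $B_P(\vv_1)^-$ into disjoint families of tangent directions, so $P$ is an endpoint of $\Gamma_{\Fix,\varphi^{-1}(a_2)}\cap\Gamma_{\Fix,\varphi^{-1}(a_3)}$ and hence of $\Gamma_0$. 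This endpoint argument never has to confront an id-indifferent $P$, which is precisely the obstruction that breaks your pointwise approach.
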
 

\begin{proof}  Let $\Gamma_0$ be the intersection of the trees $\Gamma_{\Fix,\varphi^{-1}(a)}$,  
for all $a \in \PP^1(K)$.  

\smallskip
We first show that $\Gamma_{\Fix,\Repel} \subseteq \Gamma_0$.  For this, 
it is enough to show that each repelling fixed point of $\varphi$ in $\BHH_K$ belongs to $\Gamma_0$,  
since $\Gamma_0$ is connected and clearly contains the type I fixed points. 
Let $P$ be a repelling fixed point of $\varphi$ in $\BHH_K$, and fix $a \in \PP^1(K)$.   
We claim that $P$ belongs to $\Gamma_{\Fix,\,\varphi^{-1}(a)}$. 

By a theorem of Rivera-Letelier (see \cite{R-L3}, Proposition 5.1, or \cite{B-R}, Lemma 10.80), $P$ is of type II.  
By (\cite{B-R}, Corollary 2.13(B)), there is a  
$\gamma \in \GL_2(K)$ for which $\gamma(\infty) = a$
and $\gamma(\zeta_G) = P$.  After conjugating $\varphi$ by $\gamma$, 
we can assume that $a = \infty$ and $P = \zeta_G$. 
Let $(F,G)$ be a normalized representation of $\varphi$.  
The poles of $\varphi$ are the zeros of $G(X,Y)$, 
and the fixed points of $\varphi$ are the zeros of 
$H(X,Y) :=  Y F(X,Y) - X G(X,Y)$.   
Let $\tF, \tG \in \tk[X,Y]$ 
be the reductions of $F$ and $G$. Put $\tA = \GCD\big(\tF, \tG\big)$,  
and write $\tF = \tA \cdot \tF_0$, $\tG = \tA \cdot \tG_0$.  
Then $\tphi$ is the map $(X,Y) \mapsto \big(\tF_0(X,Y), \tG_0(X,Y)\big)$ on $\PP^1(\tk)$. 
Since $P$ is a repelling fixed point of $\varphi$, 
we have $\td := \deg(\tphi) \ge 2$.

Put $\tH_0(X,Y) = Y \tF_0(X,Y) - X \tG_0(X,Y)$, so $\deg(\tH_0) = \td+1$.  
The fixed points of $\tphi$ are the zeros of $\tH_0(X,Y)$ in $\PP^1(\tk)$,  
listed with multiplicities. 
Since $\GCD(\tF_0,\tG_0) = 1$ and $\GCD(\tH_0,\tG_0) = \GCD(Y \tF_0,\tG_0)$, 
we must have $\GCD(\tH_0,\tG_0) = 1$ or $\GCD(\tH_0,\tG_0) = Y$.  

If $\GCD(\tH_0,\tG_0) = 1$, the fixed points and poles of $\tphi$ are disjoint.
Since each fixed point of $\tphi$ is the reduction of at least one fixed point of $\varphi$
(Lemma \ref{FirstIdentificationLemma}), 
and each pole of $\tphi$ is the reduction of at least one pole of $\varphi$, 
we conclude that $\varphi$ has a fixed point $z_0$ and a pole $z_1$ lying in different
directions in $T_P$.  Thus $P$ belongs to $[z_0,z_1]$,
and $P \in \Gamma_{\Fix,\, \varphi^{-1}(a)}$.

If $\GCD(\tH_0,\tG_0) = Y$, then $Y$ divides $\tG_0$, 
so $\widetilde{\infty}$ is a pole of $\tphi$. 
This means $\varphi$ has a pole $z_1$ in the ball $B_{\zeta_G}(\vv_\infty)^- \subset \PP^1(K)$.
On the other hand,  $Y^2$ cannot divide both $\tH_0$ and $\tG_0$.
If $Y^2$ does not divide $\tH_0$, then $\tphi$ has at least one fixed point in $\tk$,
so $\varphi$ has a fixed point $z_0$ in $\cO$.  
Since $z_0, z_1$ lie in different tangent directions at $\zeta_G$,
we conclude that $\zeta_G \in \Gamma_{\Fix,\, \varphi^{-1}(a)}$ in this case.
On the other hand, if $Y^2$ does not divide $\tG_0$, then $\tphi$ has at least one pole 
in $\tk$, so $\varphi$ has a pole $z_0 \in \cO$, and again
$P = \zeta_G \in  \Gamma_{\Fix,\, \varphi^{-1}(a)}$.

\smallskip
We next show that $\Gamma_{\Fix,\Repel} = \Gamma_0$. 
Since $\Gamma_{\Fix,\Repel} \subseteq \Gamma_0$, it will suffice to show that
each endpoint of $\Gamma_{\Fix,\Repel}$ which does not belong to $\Gamma_{\Fix}$  
is an endpoint of the intersection of some set of trees $\{ \Gamma_{\Fix,\varphi^{-1}(a_i)} : i \in I\}$,
for an appropriate index set $I$.

By Proposition \ref{FocusedRepellingProp}, 
each endpoint of $\Gamma_{\Fix,\Repel}$ not in $\Gamma_{\Fix}$ is a focused repelling fixed point of $\varphi$.  
Let $P \in \BHH_K$ be a focused repelling fixed point, with focus $\vv_1$.  
Choose distinct directions $\vv_2, \vv_3 \in T_P \backslash \{\vv_1\}$,  
and take $a_2 \in \PP^1(K) \cap  B_P(\vv_2)^-$, $a_3 \in \PP^1(K) \cap B_P(\vv_3)^-$.  We claim that $P$
is an endpoint of $\Gamma_{\Fix,\varphi^{-1}(a_2)} \cap \Gamma_{\Fix,\varphi^{-1}(a_3)}$.  To see this, 
note that if $\xi_2 \in \PP^1(K) \cap (\BPP_K \backslash B_P(\vv_1)^-)$ is a solution to $\varphi(\xi_2) = a_2$
and $\xi_3 \in \PP^1(K) \cap (\BPP_K \backslash B_P(\vv_1)^-)$ is a solution to $\varphi(\xi_3) = a_3$, 
the directions $\vw_2, \vw_3 \in T_P$ such that $\xi_2 \in B_P(\vw_2)^-$ and $\xi_3 \in B_P(\vw_3)^-$ 
are necessarily distinct.  This follows from Proposition \ref{FocusedRepellingProp}, which asserts that 
for each $\vw \in T_P$ with $\vw \ne \vv_1$, the image $\varphi(B_P(\vw)^-)$ is precisely $B_P(\varphi_*(\vw))^-$. 
\end{proof}

The tree $\Gamma_{\Fix,\Repel}$ is spanned by finitely many points:  

\begin{proposition} \label{FiniteGenerationProp} If $\varphi(z) \in K(z)$ has degree $d \ge 2$, then  
 
 $(A)$ $\varphi$ at most $d$ focused repelling fixed points, and 
 
 $(B)$ $\Gamma_{\Fix,\Repel}$ is a finitely generated tree with at most $2d+1$ endpoints. 
Each endpoint of\, $\Gamma_{\Fix,\Repel}$ is either a type {\rm I} fixed point or a type {\rm II} 
focused repelling fixed point.     
\end{proposition}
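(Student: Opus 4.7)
The plan is to prove Part (A) first and then derive Part (B) from it via the classifications in Section~\ref{GammaFixRepelSection}.

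For Part (A), let $P_1, \ldots, P_k$ denote the distinct focused repelling fixed points of $\varphi$. For each $i$, let $Q_i \in \Gamma_{\Fix}$ be the nearest point of $\Gamma_{\Fix}$ to $P_i$, and $\vw_i \in T_{Q_i}$ the direction from $Q_i$ toward $P_i$. In the main case, $Q_i$ is a type~II point of $\Gamma_{\Fix}$; by Proposition~\ref{IdIndiffOffFixProp} it is id-indifferent for $\varphi$, so $\varphi_*(\vw_i) = \vw_i$, and Lemma~\ref{FocusedP1Lemma} then gives $s_\varphi(Q_i, \vw_i) \ge 1$. The balls $B_{Q_i}(\vw_i)^-$ are pairwise disjoint: any common point of two of them would force the unique tree path $[Q_i, Q_j] \subset \Gamma_{\Fix}$ to exit $\Gamma_{\Fix}$, contradicting the definition of $Q_i, Q_j$ as nearest points.

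Now fix a type~I fixed point $\alpha \in \Gamma_{\Fix}$ (which exists because the fixed-point polynomial of $\varphi$ has degree $d+1 \ge 3$). Since $\alpha$ lies in $\Gamma_{\Fix}$, it lies outside every $B_{Q_i}(\vw_i)^-$, and $\alpha \ne Q_i = \varphi(Q_i)$ as $\alpha$ is of type~I. Applying the Rivera-Letelier surplus-multiplicity formula recorded just before Definition~\ref{DFA_Defs}, each branch $B_{Q_i}(\vw_i)^-$ contains exactly $s_\varphi(Q_i, \vw_i)$ preimages of $\alpha$, and these are distinct across $i$ by disjointness. Since $\alpha$ itself is a preimage of $\alpha$ contributing multiplicity $m_\varphi(\alpha) \ge 1$, and $\#\varphi^{-1}(\alpha) = d$ counted with multiplicity, I obtain
\[
  k \;\le\; \sum_{i=1}^{k} s_\varphi(Q_i,\vw_i) \;\le\; d - m_\varphi(\alpha) \;\le\; d-1,
\]
which yields the asserted bound $k \le d$.

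For Part (B), every endpoint of $\Gamma_{\Fix,\Repel}$ must be one of its generators. Type~I fixed points are automatic endpoints since their tangent spaces are one-dimensional. Among type~II repelling fixed points, Propositions~\ref{BeadProp} and \ref{MultiFocusedProp} place bi-focused and multi-focused rfp's in the interior of an edge or at a branch point of $\Gamma_{\Fix} \subseteq \Gamma_{\Fix,\Repel}$, so neither can be an endpoint; Proposition~\ref{FocusedRepellingProp} shows that every focused rfp is an endpoint. Combining this characterization with (A) and the bound $d+1$ on distinct type~I fixed points, the endpoint count is at most $(d+1) + d = 2d+1$, and finite generation follows because a tree with finitely many endpoints is finitely generated by them.

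The main obstacle is the degenerate case in which $\Gamma_{\Fix}$ consists of a single type~I point $\alpha$ of multiplicity $d+1$, so every $Q_i = \alpha$ is of type~I and Lemma~\ref{FocusedP1Lemma} cannot be applied at $Q_i$ directly. My plan for this case is to replace each $Q_i$ by a type~II id-indifferent point $R_i$ on the open segment $(P_i, \alpha)$ (whose existence is guaranteed by Proposition~\ref{IdIndiffOffFixProp}), chosen strictly beyond the common merging points of the paths $[P_j, \alpha]$ so that the balls $B_{R_i}(\vv_i)^-$ remain pairwise disjoint and each excludes $\alpha$; the remainder of the counting argument then proceeds verbatim.
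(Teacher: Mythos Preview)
Your argument for Part~(A) has a genuine gap: the balls $B_{Q_i}(\vw_i)^-$ need not be pairwise disjoint. The problem is that several focused repelling fixed points can retract to the \emph{same} point of $\Gamma_{\Fix}$ along the \emph{same} direction. This happens precisely in Example~C of \S\ref{ExamplesSection}: there all $d-1$ focused repelling fixed points $P_1,\ldots,P_{d-1}$ lie in distinct directions at $\zeta_G$, but $\zeta_G \notin \Gamma_{\Fix}$, so every path $[P_i,\Gamma_{\Fix}]$ first passes through $\zeta_G$ and then continues into $B_{\zeta_G}(\vv_\infty)^-$. Hence all the $Q_i$ coincide with the single retraction point of $\zeta_G$ onto $\Gamma_{\Fix}$, and all the $\vw_i$ agree; the balls $B_{Q_i}(\vw_i)^-$ are identical, not disjoint. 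Your sentence justifying disjointness (``the path $[Q_i,Q_j]\subset\Gamma_{\Fix}$ would exit $\Gamma_{\Fix}$'') silently assumes the $Q_i$ are distinct or the directions differ, which is exactly what fails here.

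Your degenerate-case fix --- pushing each basepoint up to an id-indifferent $R_i \in (P_i,Q_i]$ chosen past all merging points --- would in fact repair the main case as well, and you should apply it uniformly rather than only when $\Gamma_{\Fix}$ is a point. The paper avoids this bookkeeping entirely by working at the $P_i$ themselves: it picks $\alpha \in \PP^1(K)$ in a direction at some $P_1$ pointing \emph{away} from $\Gamma_{\Fix,\Repel}$, and uses Proposition~\ref{FocusedRepellingProp}(D) to find, for every focused repelling $P_j$, a direction $\vw_j \in T_{P_j}$ with $\vw_j \ne \vv_1$ and $\alpha \in \varphi(B_{P_j}(\vw_j)^-)$. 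Since each $P_j$ is an endpoint of $\Gamma_{\Fix,\Repel}$, these balls are automatically disjoint, and counting preimages of $\alpha$ gives the bound $d$ directly. Your Part~(B) is fine and matches the paper's.
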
 

\begin{proof}  If $\varphi(z)$ has no focused repelling fixed points, part (A) holds trivially.  
Otherwise, choose any focused repelling fixed point $P$.  By Proposition \ref{FocusedRepellingProp}, 
it is an endpoint of $\Gamma_{\Fix,\Repel}$;  let $\vv \in T_P$ be a direction pointing away from $\Gamma_{\Fix,\Repel}$.  
Fix a point $\alpha \in \PP^1(K) \cap B_P(\vv)^-$, 
and consider the solutions $\xi_1, \ldots, \xi_d$ to $\varphi(z) = \alpha$.  
By Proposition \ref{FocusedRepellingProp}, for each focused repelling fixed point $Q$ there is a 
direction $\vw_Q \in T_Q$ pointing away from $\Gamma_{\Fix,\Repel}$ such that $\alpha \in \varphi(B_Q(\vw_Q)^-)$;
it follows that some $\xi_i$ belongs to $B_Q(\vw_Q)^-$.  For distinct $Q$, the balls $B_Q(\vw_Q)^-$ are pairwise 
disjoint.  Thus $\varphi$ has at most $d$ focused repelling fixed points.  

Since $\varphi$ has at most $d+1$ type I fixed points, the tree $\Gamma_{\Fix,\Repel}$ is spanned by at most 
$2d+1$ points.  Its endpoints are clearly as claimed.
\end{proof}

Next we note some consequences of Theorem \ref{FixRepelThm}.  For us, the most important is

\begin{proposition} \label{MinResLocTreeThm} Let $\varphi(z) \in K(z)$ have degree $d \ge 2$.
Then $\MinResLoc(\varphi)$ is contained in $\Gamma_{\Fix,\Repel}$. 
\end{proposition}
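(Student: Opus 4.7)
The plan is to observe that this proposition is essentially an immediate corollary of the two major results already in hand: Theorem \ref{ResThm}(C) and the Tree Intersection Theorem (Theorem \ref{FixRepelThm}).

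First, I would recall from Theorem \ref{ResThm}(C) that for every $a \in \PP^1(K)$, the function $\ordRes_\varphi(\cdot)$ is strictly increasing away from the tree $\Gamma_{\Fix,\varphi^{-1}(a)}$. Consequently, any point at which $\ordRes_\varphi$ attains its minimum must lie in $\Gamma_{\Fix,\varphi^{-1}(a)}$; that is,
\begin{equation*}
\MinResLoc(\varphi) \ \subseteq \ \Gamma_{\Fix,\varphi^{-1}(a)} \qquad \text{for every } a \in \PP^1(K).
\end{equation*}
This inclusion is noted explicitly at the end of Theorem \ref{ResThm} and requires no further argument.

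Next, I would intersect over all $a \in \PP^1(K)$. This gives
\begin{equation*}
\MinResLoc(\varphi) \ \subseteq \ \bigcap_{a \in \PP^1(K)} \Gamma_{\Fix,\varphi^{-1}(a)} \ = \ \Gamma_{\Fix,\Repel},
\end{equation*}
where the equality is precisely the content of Theorem \ref{FixRepelThm}. This completes the proof.

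There is no real obstacle here: the whole point of Theorem \ref{FixRepelThm} was to identify the intersection of the trees $\Gamma_{\Fix,\varphi^{-1}(a)}$, and Theorem \ref{ResThm}(C) already confines $\MinResLoc(\varphi)$ to each individual tree. The work was in proving those two theorems; the present proposition is just their conjunction.
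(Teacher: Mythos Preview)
Your proposal is correct and matches the paper's own proof essentially verbatim: the paper also invokes Theorem~\ref{ResThm} to place $\MinResLoc(\varphi)$ in each $\Gamma_{\Fix,\varphi^{-1}(a)}$, then applies Theorem~\ref{FixRepelThm} to identify the intersection with $\Gamma_{\Fix,\Repel}$.
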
 

\begin{proof}  By Theorem \ref{ResThm}, $\MinResLoc(\varphi)$ is contained in $\Gamma_{\Fix,\varphi^{-1}(a)}$
for each $a \in \PP^1(K)$.  Hence it is contained in the intersection of those trees, which is $\Gamma_{\Fix,\Repel}$.
\end{proof}

Another consequence of Theorem \ref{FixRepelThm} is the  ``Identification Lemma'' 
for id-indifferent fixed points:

\begin{lemma}[Third Identification Lemma] \label{ThirdIdentificationLemma} 
Suppose $P$ is a type {\rm II} id-indifferent fixed point of $\varphi$.
Then for each $\vv \in T_P$ such that $s_\varphi(P,\vv) > 0$, the ball $B_P(\vv)^-$
contains either a type {\rm I} fixed point or a type {\rm II} focused repelling fixed point of $\varphi$.
\end{lemma}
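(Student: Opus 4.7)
The plan is to argue by contradiction: assume that $s_\varphi(P,\vv)>0$ but $B_P(\vv)^-$ contains neither a type I fixed point nor a type II focused repelling fixed point of $\varphi$, and derive a contradiction by locating a focused repelling fixed point in $B_P(\vv)^-$ anyway.

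First I would unpack the hypothesis that $P$ is id-indifferent. Because $\tphi_P$ is the identity on $\PP^1(\tk)$, the induced map $\varphi_*$ fixes every direction at $P$; in particular $\varphi_*(\vv)=\vv$, so the inside ball $B_{\varphi(P)}(\varphi_*(\vv))^-$ appearing in properties (a)--(b) of the surplus multiplicity (recalled in Section~\ref{DefinitionSection}) coincides with $B_P(\vv)^-$. Writing $m=m_\varphi(P,\vv)\ge 1$ and $s=s_\varphi(P,\vv)\ge 1$, the Rivera-Letelier--Faber counts then give that every $y\in B_P(\vv)^-$ has $m+s\ge 2$ preimages in $B_P(\vv)^-$, while every $y\in\BPP_K\setminus(B_P(\vv)^-\cup\{P\})$ has $s\ge 1$ preimages there. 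Consequently $\varphi(B_P(\vv)^-)\supseteq\BPP_K\setminus\{P\}$, i.e. $\varphi$ restricts to a surjection $B_P(\vv)^-\twoheadrightarrow\BPP_K\setminus\{P\}$.

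Next I would invoke Theorem~\ref{IndifFixedPtThm}, the fixed point theorem for balls mapped onto $\BPP_K$ proved earlier in this same section as an application of the Tree Intersection Theorem. The surjectivity established above places $B_P(\vv)^-$ in the hypothesis of that theorem, producing a repelling fixed point $Q$ of $\varphi$ inside $B_P(\vv)^-$. Since the standing assumption rules out type I fixed points in $B_P(\vv)^-$, $Q$ lies in $\BHH_K$, and by the Rivera-Letelier theorem cited in Section~\ref{DefinitionSection} it is of type II. To finish, I use that $B_P(\vv)^-$ is a path-component of $\BPP_K\setminus\{P\}$ and that all type I fixed points of $\varphi$ lie outside it: the tree $\Gamma_\Fix$ spanned by those points therefore lies entirely in $\BPP_K\setminus B_P(\vv)^-$, whence $Q\notin\Gamma_\Fix$. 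Proposition~\ref{FocusedRepellingProp} then forces $Q$ to be a focused repelling fixed point of $\varphi$ inside $B_P(\vv)^-$, contradicting the standing assumption.

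The main obstacle I anticipate is making sure Theorem~\ref{IndifFixedPtThm} really yields a \emph{repelling} fixed point and not merely an indifferent one in $\BHH_K$. If the cleanest form of that theorem produces only an indifferent fixed point $Q'\in B_P(\vv)^-$, then one would argue, via the First and Second Identification Lemmas applied at $Q'$, that $Q'$ must itself be id-indifferent (otherwise those lemmas would put a type I fixed point in $B_P(\vv)^-$, contradicting the assumption), and that $Q'$ possesses some direction $\vv'$ with $s_\varphi(Q',\vv')>0$ pointing strictly deeper into $B_P(\vv)^-$. One then iterates the construction. Finite generation of $\Gamma_{\Fix,\Repel}$ (Proposition~\ref{FiniteGenerationProp}), together with the fact that nested type II points along a common path are separated by positive logarithmic path distance, should force the descent to terminate, yielding a focused repelling fixed point inside $B_P(\vv)^-$.
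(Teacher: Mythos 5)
Your main route is circular. You invoke Theorem~\ref{IndifFixedPtThm} (the Full Image Fixed Point Theorem) to produce a repelling fixed point in $B_P(\vv)^-$, but in the paper that theorem is proved \emph{by combining} Lemmas~\ref{FirstIdentificationLemma}, \ref{SecondIdentificationLemma} \emph{and} \ref{ThirdIdentificationLemma}: the id-indifferent case of Theorem~\ref{IndifFixedPtThm} has no proof independent of the very lemma you are trying to establish. Your fallback paragraph does not repair this, because it still presupposes that Theorem~\ref{IndifFixedPtThm} (or something like it) hands you an initial fixed point $Q'$ inside $B_P(\vv)^-$; and the termination of your proposed descent is only asserted (``should force the descent to terminate''), not argued --- a priori nothing rules out an infinite nested chain of id-indifferent points.

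The idea you are missing is to use the Tree Intersection Theorem directly. Since $s_\varphi(P,\vv)>0$, every value $a\in\PP^1(K)$ has a preimage in $B_P(\vv)^-$; as some other direction at $P$ contains a type I fixed point, the path from that preimage to that fixed point passes through $P$, so $P\in\Gamma_{\Fix,\varphi^{-1}(a)}$ for every $a$, i.e.\ $P\in\Gamma_{\Fix,\Repel}$. This alone is not enough ($P$ is only a boundary point of $B_P(\vv)^-$), so one pushes inward: because $P$ is id-indifferent, $m_\varphi(P,\vv)=1$, and Rivera-Letelier's annulus theorem gives a segment $[P,Q]\subset \{P\}\cup B_P(\vv)^-$ fixed pointwise by $\varphi$; for any type II point $Z\in(P,Q)$ one checks $s_\varphi(Z,\vw)>0$ in the direction $\vw$ toward $Q$ (otherwise $\varphi(B_P(\vv)^-)=B_P(\vv)^-$, contradicting $s_\varphi(P,\vv)>0$), and the same intersection argument puts $Z\in\Gamma_{\Fix,\Repel}$. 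Thus $\Gamma_{\Fix,\Repel}$ meets the interior of $B_P(\vv)^-$, hence has an endpoint there, and by Proposition~\ref{FiniteGenerationProp} that endpoint is either a type I fixed point (excluded by hypothesis) or a focused repelling fixed point. Your final step --- that a repelling fixed point in a ball with no type I fixed points must be focused, via Proposition~\ref{FocusedRepellingProp} --- is fine, but you need a non-circular way to produce that fixed point first.
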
 

\begin{proof} Suppose $s_\varphi(P,\vv) > 0$.  This means $\varphi(B_P(\vv)^-) = \BPP_K$.  
If $B_P(\vv)^-$ contains a type I fixed point, we are done.  

If not, then there must be some $\vv_0 \in T_P$
with $\vv_0 \ne \vv$ such that $B_P(\vv_0)^-$ contains a type I fixed point.
Since $\varphi(B_P(\vv)^-) = \BPP_K$, for each $a \in \PP^1(K)$ there is a solution 
to $\varphi(x) = a$ in $B_P(\vv)^-$.  The path from this solution to the type I fixed point passes through
$P$, so $P \in \Gamma_{\Fix,\varphi^{-1}(a)}$.  Letting $a$ vary, we see that 
\begin{equation*}
P \ \in \ \bigcap_{a \in \PP^1(K)} \Gamma_{\Fix,\varphi^{-1}(a)} \  = \ \Gamma_0 \ = \Gamma_{\Fix,\Repel} \ .
\end{equation*}

Since $P$ is id-indifferent, we must have $m_{\varphi}(P,\vv) = 1$.  
Hence by a theorem of Rivera-Letelier (see \cite{R-L1}, \S4, or \cite{B-R}, Theorem 9.46)
there is a  $Q \in B_P(\vv)^-$ such that $\varphi$ maps the annulus $\Ann(P,Q)$ to itself  
and fixes each point in $[P,Q]$.  Take any type II point $Z \in (P,Q)$, 
and let $\vw \in T_Z$ be the direction towards $Q$. We claim that $s_\varphi(Z,\vw) > 0$.  If not, 
$\varphi$ maps $B_Z(\vw)^-$ to a ball, and that ball must be $B_Z(\vw)^-$ since $\varphi(Z) = Z$
and $\varphi_*(\vw) = \vw$.  However, this means that 
\begin{eqnarray*}
\varphi(B_P(\vv)^-) & = & \varphi(\Ann(P,Q) \cup B_Z(\vw)^-)
                    \ = \ \varphi(\Ann(P,Q)) \cup \varphi(B_Z(\vw)^-) \\
                    & = & \Ann(P,Q) \cup B_Z(\vw)^- \ = \ B_P(\vv)^- \ ,
\end{eqnarray*}
which contradicts that $s_\varphi(P,\vv) > 0$.  Hence it must be that $s_\varphi(Z,\vw) > 0$, 
and by the same argument as above, it follows that $Z \in \Gamma_{\Fix,\Repel}$.  

Thus $\Gamma_{\Fix,\Repel}$ contains points in $B_P(\vv)^-$, 
so it has an endpoint in $B_P(\vv)^-$.  
Since $B_P(\vv)^-$ does not contain type I fixed points, 
the endpoint must be a focused repelling fixed point.  
\end{proof} 

As Lemma \ref{ThirdIdentificationLemma} shows, when $P$ is a type II id-indifferent fixed point, 
the linkage between directions $\vv \in T_P$ for which $s_\varphi(P,\vv) > 0$ 
and directions containing a type I fixed point breaks down.  
It turns out that it is the `primary terms' in a normalized representation at $P$ 
which determine when $s_\varphi(P,\vv) > 0$, 
and the `secondary terms' which determine the type I fixed points.  
This is made precise by the following class of examples:
  
\medskip
\noindent{{\bf Example B.} (Maps with an id-indifferent fixed point at $\zeta_G$.)}
\smallskip

Fix $d \ge 2$, and let $\tA(X,Y) \in \tk[X,Y]$ be a nonzero homogeneous form of degree $d-1$. 
Lift $\tA(X,Y)$ to $A(X,Y) \in \cO[X,Y]$, and fix an element $\pi \in \cO$ with $\ord(\pi) > 0$.  
Let $F_1(X,Y), G_1(X,Y) \in \cO[X,Y]$ be arbitrary homogeneous forms of degree $d$.  
Put
\begin{eqnarray*}
F(X,Y) & = & A(X,Y) \cdot X + \pi F_1(X,Y) \ , \\
G(X,Y) & = & A(X,Y) \cdot Y + \pi G_1(X,Y) \ ,
\end{eqnarray*}  
For generic $F_1(X,Y), G_1(X,Y)$ we will have $\GCD(F,G) = 1$;  
if that is the case, let $\varphi$ be the map with normalized representation $(F,G)$.  Then  
\begin{enumerate}
\item $\big(\tF(X,Y),\tG(X,Y)\big) \ = \ \tA(X,Y) \cdot \big(X,Y\big)$, \ while 
\item $H(X,Y) \ = \ X G(X,Y) - Y F(X,Y) \ = \ \pi \big(X G_1(X,Y) - Y F_1(X,Y)\big)$ \ . 
\end{enumerate} 
Thus the directions $\vv \in T_{\zeta_G}$ with $s_\varphi(\zeta_G,\vv) > 0$ come from the roots of $\tA(X,Y)$, 
while the type I fixed points of $\varphi$ are the roots of $X G_1(X,Y) - Y F_1(X,Y)$. 
\hfill $\Box$

\medskip

By combining the three Identification Lemmas, we obtain a new type of fixed point theorem   
for balls which $\varphi$ maps onto $\BPP_K$.  
Previously known fixed point theorems 
(see \cite{B-R}, Theorems 10.83, 10.85, and 10.86) have all concerned domains $D \subset \BPP_K$
whose image $\varphi_*(D)$ is another domain, with $D \subset \varphi_*(D)$ or $\varphi_*(D) \subset D$,
or closed sets $X$ with $\varphi(X) \subseteq X$.  

\begin{theorem}[Full Image Fixed Point Theorem] \label{IndifFixedPtThm} 
Let $\varphi(z) \in K(z)$, with $\deg(\varphi) \ge 2$.
Suppose $P \in \BPP_K$ and $\vv \in T_P$.  If $\varphi(B_P(\vv)^-) = \BPP_K$,
then $B_P(\vv)^-$ contains either a $($classical$)$ fixed point of $\varphi$ in $\PP^1(K)$, 
or a repelling fixed point of $\varphi$ in $\BHH_K$.
\end{theorem}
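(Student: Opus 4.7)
The plan is to split the argument by the type of $P$: the type {\rm II} case will use the three Identification Lemmas, while the type {\rm III} case will use the Tree Intersection Theorem, and the remaining types are essentially automatic.

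For $P$ of type {\rm II}, the first step is to observe that $s_\varphi(P,\vv) > 0$. Since $\varphi$ preserves type, $\varphi(P)$ is again of type {\rm II} and so has infinitely many tangent directions, making the set $\BPP_K \setminus (B_{\varphi(P)}(\varphi_*(\vv))^- \cup \{\varphi(P)\})$ nonempty; by the Rivera-Letelier description of directional and surplus multiplicities recalled in \S\ref{DefinitionSection}, the hypothesis $\varphi(B_P(\vv)^-) = \BPP_K$ will force a preimage in $B_P(\vv)^-$ of any $y$ in this set, so the surplus count is at least one. Depending on the fixed-point status of $P$, one of the three Identification Lemmas then applies: Lemma \ref{SecondIdentificationLemma} if $\varphi(P) \ne P$, Lemma \ref{FirstIdentificationLemma} if $\varphi(P) = P$ and $P$ is not id-indifferent, and Lemma \ref{ThirdIdentificationLemma} if $P$ is id-indifferent. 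In every case one obtains either a classical fixed point or (in the id-indifferent case) possibly a type {\rm II} focused repelling fixed point inside $B_P(\vv)^-$.

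For $P$ of type {\rm I} or type {\rm IV}, $T_P$ is a singleton and $B_P(\vv)^- = \BPP_K \setminus \{P\}$. Rivera-Letelier's existence theorem (\cite{B-R}, Theorem 10.82) supplies a repelling fixed point $Q \in \BPP_K$, which is classical (type {\rm I}) or of type {\rm II} in $\BHH_K$. If $Q \ne P$ then $Q \in B_P(\vv)^-$ and we are done; the only obstruction is $Q = P$, which forces $P$ to be a classical repelling fixed point, hence a simple root of $\varphi(z) - z$, so one of the remaining $d$ classical fixed points (counted with multiplicity) lies in $B_P(\vv)^-$. Note that the hypothesis is not even needed in these cases.

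The main obstacle is the type {\rm III} case, where $T_P = \{\vv, \vv'\}$ so that $B_P(\vv)^-$ is only half of $\BPP_K$. I would argue by contradiction, supposing that $B_P(\vv)^-$ contains neither a classical nor a type {\rm II} repelling fixed point of $\varphi$. Since $P$ is of type {\rm III} it is not classical, so every classical fixed point of $\varphi$ then lies in $B_P(\vv')^-$. For each $a \in \PP^1(K)$ the hypothesis provides a preimage $x_a \in B_P(\vv)^-$ of $a$, and the unique path from $x_a$ to any classical fixed point must pass through $P$; hence $P \in \Gamma_{\Fix,\varphi^{-1}(a)}$ for every $a$. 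The Tree Intersection Theorem (Theorem \ref{FixRepelThm}) then forces $P \in \Gamma_{\Fix,\Repel}$. However, Proposition \ref{FiniteGenerationProp} classifies the endpoints of $\Gamma_{\Fix,\Repel}$ as type {\rm I} fixed points or type {\rm II} focused repelling fixed points, so the type {\rm III} point $P$ cannot be an endpoint. Since $|T_P| = 2$, the tree $\Gamma_{\Fix,\Repel}$ must extend through $P$ in both directions, yielding an endpoint inside $B_P(\vv)^-$ that is classical or type {\rm II} focused repelling, contradicting the assumption.
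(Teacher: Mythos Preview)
Your proof is correct, and your type {\rm II} argument coincides with the paper's. The difference lies in how you handle the remaining types. The paper treats types {\rm III} and {\rm IV} uniformly by a \emph{reduction to type {\rm II}}: it picks a nearby type {\rm II} point $Z \in (P,Q) \subset B_P(\vv)^-$ on a short segment where $\varphi$ maps the annulus $\Ann(P,Q)$ to an annulus, and shows that if $\vw \in T_Z$ points away from $P$ then $\varphi(B_Z(\vw)^-) = \BPP_K$ as well (since otherwise $\varphi(B_P(\vv)^-)$ would miss $\varphi(P)$); the type {\rm II} case then applies at $Z$. You instead absorb type {\rm IV} into the type {\rm I} argument via Rivera-Letelier's existence theorem (legitimate, since a type {\rm IV} point can never itself be repelling), and for type {\rm III} you invoke the Tree Intersection Theorem directly to place $P$ in $\Gamma_{\Fix,\Repel}$ and then use the endpoint classification of Proposition \ref{FiniteGenerationProp} to find an endpoint of $\Gamma_{\Fix,\Repel}$ inside $B_P(\vv)^-$. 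Your type {\rm III} route is clean and in fact mirrors the strategy used in the proof of Lemma \ref{ThirdIdentificationLemma}; the paper's annulus reduction is more self-contained (relying only on local mapping properties rather than the full strength of Theorem \ref{FixRepelThm} and Proposition \ref{FiniteGenerationProp}), but both approaches are valid.
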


\begin{proof} 
Given points $P, Q \in \PP^1_\Berk$ with $Q \ne P$, 
write $\Ann(P,Q)$ for the component of $\BPP_K \backslash \{P,Q\}$ containing $(P,Q)$.  

Assume $\varphi(B_P(\vv)^-) = \BPP_K$. 
If $P$ is of type II, then $s_\varphi(P,\vv) > 0$ and 
the result follows by combining Lemmas \ref{FirstIdentificationLemma}, 
\ref{SecondIdentificationLemma} and \ref{ThirdIdentificationLemma}.  
If $P$ is of type III or IV, one 
reduces to the case of type II as follows.  
There is a point $Q \in B_P(\vv)^-$ 
such that $\varphi(\Ann(P,Q)) = \Ann(\varphi(P),\varphi(Q))$.  
Take any type II point  $Z \in (P,Q)$ and let $\vw \in T_Z$ be the direction towards $Q$.  
We claim that $\varphi(B_{Z}(\vw)^-) = \BPP_K$.  Otherwise, $\varphi_*(B_Z(\vw)^-)$ would be the 
ball $B_{\varphi(Z)})(\varphi_*(\vw))^-)$. By the mapping properties of annuli $\varphi_*(\vw)$
is the direction in $T_{\varphi(Z))}$ containing $\varphi(Q)$, so 
\begin{equation*} 
\varphi(B_P(\vv)^-) \ = \ \varphi(\Ann(P,Q)) \cup \varphi(B_Z(\vw)^-) 
                      \ = \ \Ann(\varphi(P),\varphi(Q)) \cup B_{\varphi(Z)}(\varphi_*(\vw))^- 
\end{equation*}  
omits the point $\varphi(P)$, contradicting that $\varphi(B_P(\vv)^-) = \BPP_K$.  

If $P$ is of type I, and $B_P(\vv)^- = \BPP_K \backslash \{P\}$ 
contains no type I fixed points of $\varphi$, then $P$ is the only type I fixed point of $\varphi$. 
Hence it is a fixed point of multiplicity $d+1 > 1$ and necessarily has multiplier $1$.  
By a theorem of Rivera-Letelier (see \cite{R-L1}, Theorem B, or \cite{B-R}, Theorem 10.82) 
$\varphi$ has at least one repelling fixed point in $\BPP_K$, 
so it has a repelling fixed point in $\BHH_K$, which clearly lies in $B_P(\vv)^-$.  
\end{proof}  

The author does not know whether Theorem \ref{IndifFixedPtThm} can be generalized to 
domains $D$ with $\varphi(D) = \BPP_K$, when $D$ has more than one boundary point.  

\section{\bf Slope Formulas.} \label{SlopeFormulaSection}

In this section we establish formulas for the slope of $\ordRes_\varphi(\cdot)$ 
at a point $P \in \BHH_K$, in a direction $\vv \in T_P$.  These formulas will be used 
in the proof of the Weight Formula in Section \ref{WeightFormulaSection}, and in the proof of the balance conditions 
characterizing $\MinResLoc(\varphi)$ in Section \ref{BalanceConditionSection}.

\smallskip 

Let $f(z)$ be a function on $\BHH_K$, and write $\rho(P,Q)$ for the logarithmic path distance.  
Given a point $P \in \BHH_K$, and a direction $\vv \in T_P$, 
the slope of $f$ at $P$ in the direction $\vv$ is defined to be 
\begin{equation} \label{SlopeDef} 
\partial_\vv f(P) \ = \ \lim_{\substack{Q \rightarrow P \\ Q \in B_P(\vv)^-}} \frac{F(Q) - F(P)}{\rho(Q,P)}
\end{equation}
provided the limit exists.   
For any two points $P_1, P_2 \in B_P(\vv)^-$, the paths $[P,P_1]$ and $[P,P_2]$ share a common initial segment, 
so the limit in (\ref{SlopeDef}) exists if and only if it exists for $Q$ restricted to $[P,P_1]$.  Since $\rho(P,Q)$
is invariant under the action of $\GL_2(K)$ on $\BHH_K$, 
$\partial_\vv F(P)$ is independent of the choice of coordinates.  

\smallskip
For the remainder of this section we will take $f(\cdot) = \ordRes_{\varphi}(\cdot)$. 

We first consider slopes for a type I point $Q$.  
Since $\ordRes_\varphi(Q) = \infty$, we ``draw back into $\BHH_K$'' along a path $[Q,Q_1]$ 
and compute the slope at points $P \in (Q,Q_1)$:

\begin{proposition} \label{ClassicalFixedPtSlope}
Let $Q$ be a point of type {\rm I}.  Then there is a point $Q_1 \in \BHH_K$ such that 
for each $P \in (Q,Q_1)$, the slope of $f(\cdot) = \ordRes_\varphi(\cdot)$ at $P$, 
in the direction $\vv_1 \in T_P$ which points towards $Q_1$, is
\begin{equation*}
\partial_{\vv_1}f(P) \ = \ 
          \left\{ \begin{array}{ll} -(d^2 - d) & \text{if \ $Q$ is fixed by $\varphi$,} \\
                                    -(d^2 + d) & \text{if \ $Q$ is not fixed by $\varphi$.}
        \end{array} \right. 
\end{equation*}  
\end{proposition}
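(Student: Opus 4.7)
The plan is to reduce to the case $Q = \infty$ by a change of coordinates in $\GL_2(K)$, then parametrize the path from $\zeta_G$ out toward $\infty$ by $P(t) = \zeta_{0,|c|}$ with $t = \ord(c)$, and finally read off the slope of $\ordRes_\varphi$ from an explicit formula for $\ordRes_\varphi(P(t))$. The reduction is legitimate because $\GL_2(K)$ acts transitively on $\PP^1(K)$, the function $\ordRes_\varphi$ is equivariant under conjugation, and the logarithmic path distance $\rho$ is preserved.

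For each $t \in \ord(K^\times)$, choose $c \in K^\times$ with $\ord(c) = t$ and set $\gamma(z) = cz$, so $\gamma(\zeta_G) = P(t)$. A (not necessarily normalized) homogeneous representation of $\varphi^\gamma$ is $(F(cX,Y),\, cG(cX,Y))$. Two elementary resultant identities --- the substitution identity $\Res(F(cX,Y), G(cX,Y)) = c^{d^2}\Res(F,G)$, obtained by comparing roots of $F$ and $G$, and the bihomogeneity $\Res(F,\lambda G) = \lambda^d \Res(F,G)$ --- combine to give
$$\ord\bigl(\Res\bigl(F(cX,Y),\, cG(cX,Y)\bigr)\bigr) \;=\; (d^2+d)\,t + \ord(\Res(F,G)).$$
Because the resultant is homogeneous of total degree $2d$ in the pair of coefficient vectors, passing from this unnormalized representation to the normalized one contributes a correction $-2d\,m(t)$, where $m(t) = \min_i\bigl(\ord(f_i) + i t,\; \ord(g_i) + (i+1)t\bigr)$.

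The core step is the asymptotic analysis of $m(t)$ as $t \to -\infty$, which splits precisely along the dichotomy of the proposition. Writing $F = \sum f_i X^i Y^{d-i}$ and $G = \sum g_i X^i Y^{d-i}$, note that $\varphi(\infty) = \infty$ if and only if $g_d = 0$ (and then $f_d \ne 0$, since $\gcd(F,G) = 1$). If $g_d \ne 0$, then for $t$ sufficiently negative every other term in the definition of $m(t)$ strictly exceeds $\ord(g_d) + (d+1)t$, so $m(t) = \ord(g_d) + (d+1)t$. If $g_d = 0$, the dominant $F$- and $G$-terms both have slope $d$, and a similar comparison gives $m(t) = C + dt$ for a constant $C$ and $t$ sufficiently negative. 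Substituting into $\ordRes_\varphi(P(t)) = (d^2+d)t + \ord(\Res(F,G)) - 2d\,m(t)$ yields, on an interval $(-\infty,t_1)$,
$$\ordRes_\varphi(P(t)) \;=\; \begin{cases} -(d^2-d)\,t + \mathrm{const}, & g_d = 0,\\ -(d^2+d)\,t + \mathrm{const}, & g_d \ne 0.\end{cases}$$

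Finally I would translate this into the asserted slope. The direction $\vv_1$ at $P(t)$ points toward $Q_1$, hence away from $Q = \infty$; this corresponds to $r = |c|$ decreasing, i.e.\ to $t$ increasing. Since $\rho(P(t), P(t')) = |t - t'|$, one has $\partial_{\vv_1}\ordRes_\varphi(P(t)) = \tfrac{d}{dt}\ordRes_\varphi(P(t))$, which equals the coefficient of $t$ above. Taking $Q_1 = P(t_1)$ with $t_1$ negative enough that $m(\cdot)$ has entered its asymptotic regime on all of $(Q,Q_1)$ completes the proof. The main technical obstacle is the bookkeeping around the normalization factor $m(t)$ and the signs introduced by the conventions $\ord(x) = -\log|x|$ and the orientation of $\vv_1$; once these are tracked, the result reduces to a finite comparison of affine functions of $t$, and a quick check confirms that both slopes lie in the range $[-(d^2+d),(d^2+d)]$ and are congruent to $d^2+d \pmod{2d}$, consistent with Theorem \ref{ResThm}(A).
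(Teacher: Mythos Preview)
Your proof is correct and follows essentially the same approach as the paper's own proof: both arguments reduce to a fixed type~I point by a $\GL_2(K)$ change of coordinates, use the explicit formula $\ordRes_\varphi(\zeta_{0,|A|}) = \ordRes(F,G) + (d^2+d)\ord(A) - 2d\,m(\ord(A))$, and then carry out a two-case asymptotic analysis of the min-term $m$ according to whether or not $Q$ is a fixed point. The only cosmetic difference is that the paper normalizes $Q = 0$ and sends $r \to 0$, whereas you normalize $Q = \infty$ and send $t \to -\infty$; the sign and orientation bookkeeping you flag is handled correctly in both versions.
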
 

\begin{proof}
After a change of coordinates, we can assume that $Q = 0$.  Since $\ordRes_\varphi(\cdot)$ 
is piecewise affine on paths in $\BHH_K$ (relative to the logarithmic path distance),   
it suffices to prove the result when $P = \zeta_{0,r}$ is a type II point with $r$
sufficiently small, and $\vv_1 \in T_P$ is the direction $\vv_\infty$.  

Let $(F,G)$ be a normalized representation of $\varphi$, 
and write $F(X,Y) = a_d X^d + \cdots a_0 Y^d$, $G(X,Y) = b_d X^d + \cdots + b_0 Y^d$.  
Take $A \in K^{\times}$ and put $r = |A|$\,;  then 
\begin{eqnarray*}
\ordRes_\varphi(\zeta_{0,r}) & = & \ordRes(F,G) \ + \ (d^2+d) \ord(A) \\
     &  & \qquad  - 2d \min\big(\min_{0 \le \ell \le d} \ord(A^\ell a_\ell),\min_{0 \le \ell \le d} \ord(A^{\ell+1} b_\ell) \big) \ .
\end{eqnarray*}  

First suppose that $Q$ is fixed by $\varphi$, so $\varphi(0) = 0$.  
In this situation $a_0 = 0$ and $b_0 \ne 0$, so  
if $r = |A|$ is sufficiently small, then 
$\min\big(\min_{0 \le \ell \le d} \ord(A^\ell a_\ell),\min_{0 \le \ell \le d} \ord(A^{\ell+1} b_\ell) \big)$ 
coincides with 
\begin{equation*}
\min(\ord(A a_1), \ord(A b_0)) \ = \ \ord(A) + \min(\ord(a_1), \ord(b_0)) \ . 
\end{equation*} 
For such $r$ we have 
\begin{equation*}
\ordRes_\varphi(\zeta_{0,r}) \ = \ \ordRes(F,G) \ - 2d  \min(\ord(a_1), \ord(b_0))\ \ + \ (d^2-d) \ord(A) \ , 
\end{equation*}  
and since $\ord(A)$ measures the logarithmic path distance and increases as $r \rightarrow 0$,
the slope of $\ordRes_\varphi(\cdot)$ at $\zeta_{0,r}$ in the direction $\vv_\infty$ is $-(d^2-d)$. 

Next suppose $Q$ is not fixed by $\varphi$, so $\varphi(0) \ne 0$. In this case $a_0 \ne 0$.
If $r = |A|$ is sufficiently small, then 
$\min\big(\min_{0 \le \ell \le d} \ord(A^\ell a_\ell),\min_{0 \le \ell \le d} \ord(A^{\ell+1} b_\ell) \big) \ = \ \ord(a_0)$. 
For such $r$ we have 
\begin{equation*}
\ordRes_\varphi(\zeta_{0,r}) \ = \ \ordRes(F,G) \ - 2d  \, \ord(a_0)  \ + \ (d^2+d) \ord(A) \ , 
\end{equation*}  
so the slope of $\ordRes_\varphi(\cdot)$ at $\zeta_{0,r}$ in the direction $\vv_\infty$ is $-(d^2+d)$. 
\end{proof} 

\begin{definition} \label{FRDefs} 
{\em For a point $P \in \Gamma_{\Fix,\Repel}$, 
we write $T_{P,\FR}$ for the tangent space to $P$ in $\Gamma_{\Fix,\Repel}$, the set of directions $\vv \in T_P$
such that that there is an edge of $\Gamma_{\Fix,\Repel}$ emanating from $P$ in the direction $\vv$.

We  write $v_{\FR}(P)$ for the valence of $P$ in $\Gamma_{\Fix,\Repel}$, the number of edges of $\Gamma_{\Fix,\Repel}$
emanating from $P$.}
\end{definition}  

\begin{definition} \label{ShearingDirectionDef} 
{\em If $P$  is a fixed point  of  $\varphi$  in  $\BPP_K$ $($of any type {\rm I}, {\rm II}, {\rm III}, or {\rm IV}$)$,  
a direction  $\vv  \in  T_P$  will be called a {\rm shearing direction}  if  there is a type {\rm I} 
fixed point in $B_P(\vv)^-$, but  $\varphi_*(\vv) \ne \vv$.  
Let  $N_\Shearing(P)$ be the number of shearing directions in  $T_P$.}   
\end{definition}    

\begin{proposition} \label{NonIdIndiffFixedSlope}
Let $P \in \BHH_K$ be a type {\rm II} fixed point of $\varphi$ which is not id-indifferent.  
Then for each $\vv \in T_P$, the slope of $f(\cdot) = \ordRes_\varphi(\cdot)$ at $P$ in the direction $\vv$ is 
\begin{eqnarray} 
\partial_{\vv}f(P) 
& = & (d^2 \, - \, d) \, - \, 2d \cdot \#F_\varphi(P,\vv) \, + 2d \cdot \max(1,\#\tF_\varphi(P,\vv)\big) \ . \notag \\
& = & (d^2 \, - \, d) - 2d \cdot s_{\varphi}(P,\vv) + 2d \cdot 
\left\{ \begin{array}{ll} 0 & \text{if $\varphi_*(\vv) = \vv$ \ ,} \\
                          1 & \text{if $\varphi_*(\vv) \ne \vv$ \ .} 
        \end{array} \right.  \label{FF1B} 
\end{eqnarray}  
Furthermore, if $P \in \Gamma_{\Fix,\Repel}$, then 
\begin{equation} \label{NonIdFixedLaplacian} 
\sum_{\vv \in T_{P,\FR}} \partial_{\vv}f(P) \ = \ 
(d^2-d) \cdot (v_{\FR}(P) - 2) + 2d \cdot \big(\deg_{\varphi}(P) - 1 + N_{\Shearing}(P) \big) 
\end{equation} 
\end{proposition}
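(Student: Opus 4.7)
The plan is to reduce to the case $P = \zeta_G$, $\vv = \vv_0$ by conjugation, apply the formula for $\ordRes_\varphi(\zeta_{0,|A|})$ used in the proof of Proposition \ref{ClassicalFixedPtSlope}, and then sum over directions in $T_{P,\FR}$.

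First I would choose $\gamma \in \GL_2(K)$ with $\gamma(\zeta_G) = P$ and $\sigma \in \GL_2(\cO)$ such that $(\gamma\sigma)^{-1}_*$ sends $\vv$ to $\vv_0 \in T_{\zeta_G}$; replacing $\varphi$ by $\varphi^{\gamma\sigma}$ changes nothing on either side of the claimed formula, since $\ordRes_\varphi(\cdot)$, the logarithmic path distance, the multiplicities $s_\varphi, \#F_\varphi, \#\tF_\varphi$, and the condition $\varphi_*(\vv) = \vv$ are all equivariant under the $\GL_2(K)$-action. Thus we may assume $P = \zeta_G$ and $\vv = \vv_0$. Let $(F,G)$ be a normalized representation of $\varphi$, write $F(X,Y) = \sum_\ell a_\ell X^\ell Y^{d-\ell}$, $G(X,Y) = \sum_\ell b_\ell X^\ell Y^{d-\ell}$, and put $u = \ord(A)$. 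The formula from the proof of Proposition \ref{ClassicalFixedPtSlope} gives
\begin{equation*}
\ordRes_\varphi(\zeta_{0,|A|}) - \ordRes_\varphi(\zeta_G) \ = \ (d^2+d)\,u \ - \ 2d \cdot M(u),
\end{equation*}
where $M(u) = \min_{0 \le \ell \le d}\bigl(\ell u + \ord(a_\ell),\,(\ell+1) u + \ord(b_\ell)\bigr)$. Normalization gives $M(0) = 0$; since $M$ is piecewise linear and concave, its right derivative at $0$ is the minimum of the slopes of the affine pieces achieving the minimum at $u = 0$, namely $M'(0^+) = \min(s_F,\, s_G + 1)$ where $s_F := \ord_X \tF$ and $s_G := \ord_X \tG$, so $\partial_{\vv_0} f(\zeta_G) = (d^2+d) - 2d \min(s_F, s_G+1)$.

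Next I would invoke the hypothesis that $P$ is a non-id-indifferent fixed point and write $\tF = \tA \cdot \tF_0$, $\tG = \tA \cdot \tG_0$ with $\GCD(\tF_0, \tG_0) = 1$. By Faber's theorem, $s := \ord_X \tA = s_\varphi(P,\vv_0)$, so $s_F = s + \ord_X \tF_0$ and $s_G = s + \ord_X \tG_0$, and at most one of $\ord_X \tF_0, \ord_X \tG_0$ is positive. If $\varphi_*(\vv_0) = \vv_0$, i.e.\ $\tphi_P(0) = 0$, then $\tF_0(0,1) = 0$ and $\tG_0(0,1) \ne 0$, so $\ord_X \tF_0 \ge 1$ and $\ord_X \tG_0 = 0$, giving $M'(0^+) = s+1$ and $\partial_{\vv_0} f(\zeta_G) = (d^2-d) - 2ds$. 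If $\varphi_*(\vv_0) \ne \vv_0$, then $\tphi_P(0) \ne 0$ forces $\tF_0(0,1) \ne 0$, so $\ord_X \tF_0 = 0$ and $\partial_{\vv_0} f(\zeta_G) = (d^2+d) - 2ds$. This is precisely the second form of (\ref{FF1B}). The equivalence with the first form is a two-line check using Lemma \ref{FirstIdentificationLemma}: if $\varphi_*(\vv) = \vv$ then $\#\tF_\varphi(P,\vv) \ge 1$, so $\max(1,\#\tF_\varphi) = \#\tF_\varphi$ and $\#F_\varphi = s + \#\tF_\varphi$ gives $(d^2-d) - 2d\,\#F_\varphi + 2d\,\#\tF_\varphi = (d^2-d) - 2ds$; if $\varphi_*(\vv) \ne \vv$ then $\#\tF_\varphi = 0$, $\#F_\varphi = s$, and $(d^2-d) - 2ds + 2d = (d^2+d) - 2ds$.

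For the Laplacian identity (\ref{NonIdFixedLaplacian}), I would sum the formula over $\vv \in T_{P,\FR}$. Propositions \ref{FocusedRepellingProp}--\ref{IdIndiffOffFixProp} and Lemma \ref{FocusedP1Lemma}, together with Lemma \ref{FirstIdentificationLemma}, imply that since $P$ is non-id-indifferent and lies in $\Gamma_{\Fix,\Repel}$, either $P$ is a focused repelling fixed point (so $v_\FR(P) = 1$, the unique direction in $T_{P,\FR}$ is the focus $\vv_1$ with $\varphi_*(\vv_1) = \vv_1$, and $N_\Shearing(P) = 0$) or $P \in \Gamma_\Fix$ and no branch of $\Gamma_{\Fix,\Repel} \setminus \Gamma_\Fix$ emanates from $P$ --- indeed, such a branch would force $s_\varphi(P,\vv) > 0$ in a direction $\vv \in T_P$ whose ball contains no type I fixed point, violating Lemma \ref{FirstIdentificationLemma}. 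In either case every $\vv$ with $s_\varphi(P,\vv) > 0$ lies in $T_{P,\FR}$, so
\begin{equation*}
\sum_{\vv \in T_{P,\FR}} s_\varphi(P,\vv) \ = \ \sum_{\vv \in T_P} s_\varphi(P,\vv) \ = \ \deg(\tA) \ = \ d - \deg_\varphi(P),
\end{equation*}
and every shearing direction lies in $T_{P,\FR}$, giving $\#\{\vv \in T_{P,\FR} : \varphi_*(\vv) \ne \vv\} = N_\Shearing(P)$. Summing the second form of (\ref{FF1B}) yields
\begin{equation*}
\sum_{\vv \in T_{P,\FR}} \partial_\vv f(P) \ = \ v_\FR(P)(d^2 - d) \ - \ 2d\bigl(d - \deg_\varphi(P)\bigr) \ + \ 2d \cdot N_\Shearing(P),
\end{equation*}
which rearranges to (\ref{NonIdFixedLaplacian}). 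The hard part is isolating $T_{P,\FR}$ cleanly in the two structurally different cases: verifying that for a focused repelling endpoint (where $v_\FR(P) - 2 = -1$ and only one slope contributes) the arithmetic still reproduces the claim requires careful use of Proposition \ref{FocusedRepellingProp}(A,B), while excluding stray branches of $\Gamma_{\Fix,\Repel}\setminus\Gamma_\Fix$ at branch points of $\Gamma_\Fix$ is precisely the point where the First Identification Lemma is combined with Lemma \ref{FocusedP1Lemma}.
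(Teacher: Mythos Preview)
Your proof is correct and follows essentially the same approach as the paper's: reduce to $P=\zeta_G$, $\vv=\vv_0$, compute the right derivative of $\ordRes_\varphi$ along $[0,\zeta_G]$ as $(d^2+d)-2d\min(\ord_X\tF,\ord_X\tG+1)$, and reinterpret this via Faber's theorem and the First Identification Lemma. The only cosmetic differences are that you derive the second form of (\ref{FF1B}) first and then check equivalence with the first (the paper goes the other way), and that for (\ref{NonIdFixedLaplacian}) you give a more explicit case analysis to ensure $T_{P,\FR}$ captures all directions with $s_\varphi(P,\vv)>0$ and all shearing directions, whereas the paper appeals directly to Lemma~\ref{FirstIdentificationLemma} (which already forces any such $\vv$ to point toward a type~I fixed point and hence lie in $T_{P,\FR}$); your extra care here is harmless but not needed.
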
 

\begin{proof} 
After a change of coordinates, we can assume that $P = \zeta_G$ and $\vv = \vv_0$.  

Let $(F,G)$  be a normalized representation of $\varphi$; write $F(X,Y) = a_d X^d + \cdots + a_0 Y^d$,
$G(X,Y) = b_d X^d + \cdots + b_0 Y^d$. For each $A \in K^\times$ we have 
\begin{eqnarray*}
\ordRes_\varphi(\zeta_{0,|A|}) - \ordRes_\varphi(\zeta_G) & = &  (d^2+d) \ord(A) \\
& & \quad \ - \ 2d \cdot \min\big(\min_{0 \le i \le d} \ord(A^i a_i),\min_{0 \le j \le d} \ord(A^{j+1} b_j) \big) 
\end{eqnarray*}  
By hypothesis, the reductions $\tF$, $\tG$ are nonzero;  thus
there are indices $i, j$ such that $\ta_i \ne 0$ and $\tb_j \ne 0$, or equivalently that $\ord(a_i) = 0$
and $\ord(b_j) = 0$.  Let $\ell_1$ be least index $i$ for which $\ord(a_i) = 0$ 
and let $\ell_2$ be the least index $j$ for which $\ord(b_j) = 0$.  
If $\ord(A) > 0$ and $\ord(A)$ is sufficiently small, then  
\begin{eqnarray*}
\min\big(\min_{0 \le i \le d} \ord(A^i a_i),\min_{0 \le j \le d} \ord(A^{j+1} b_j) \big) & = & 
\min\big(\ord(A^{\ell_1} a_{\ell_1}), \ord(A^{\ell_2 + 1} b_{\ell_2}) \big) \\ 
& = &  \min(\ell_1, \ell_2 + 1) \cdot \ord(A) \ .  
\end{eqnarray*} 
It follows that the slope of $\ordRes_\varphi(\cdot)$ at $P$ in the direction $\vv = \vv_0$ is 
\begin{equation} \label{RawFormula1}
d^2 + d - 2d \cdot \min(\ell_1, \ell_2 + 1) \ . 
\end{equation} 

We will now reformulate (\ref{RawFormula1}) using dynamical invariants.  
Letting $\tF,\tG \in \tk[X,Y]$ be the reductions of $F, G$, we can factor 
$\tF = \tA \cdot \tF_0$, $\tG = \tA \cdot \tG_0$ where $\tA = \GCD(\tF,\tG)$. 
Write $\ord_X(\tF)$ (resp. $\ord_X(\tG)$) for the power that $X$ occurs as a factor of $\tF$
(resp. $\tG$). Using Faber's theorem (\cite{Fab}, I: Lemma 3.17), it follows that  
\begin{equation*}
s_\varphi(P,\vv_0) \ = \ \ord_X(\tA) 
\ = \  \min\big(\ord_X(\tF), \ord_X(\tG\big)  \ = \ \min(\ell_1,\ell_2) \ .   
\end{equation*} 
On the other hand, since $P$ is not id-indifferent for $\varphi$, by Lemma \ref{FirstIdentificationLemma}  
\begin{equation*}
s_\varphi(P,\vv_0) \ = \ \#F_\varphi(P,\vv_0) - \#\tF_\varphi(P,\vv_0) \ . 
\end{equation*}

Note that $\#\tF_\varphi(P,\vv_0) = 0$ holds if and only if $\ord_X(\tF_0) = 0$,
which in turn holds if and only if $\ell_1 \le \ell_2$.  
Thus, if $\#\tF_\varphi(P,\vv_0) = 0$, then $\ell_1 \le \ell_2$ and  
 $\min(\ell_1, \ell_2+1) = \ell_1 = s_\varphi(P,\vv_0) = \#F_\varphi(P,\vv_0)$.   
On the other hand, if $\#\tF_\varphi(P,\vv_0) > 0$ then $\ell_2 < \ell_1$ and $s_\varphi(P,\vv_0) = \ell_2$,
so $\min(\ell_1, \ell_2 + 1) = \ell_2 + 1 = s_\varphi(P,\vv_0) + 1 = \#F_\varphi(P,\vv_0) - \#\tF_\varphi(P,\vv_0) + 1$.
It follows that  
\begin{eqnarray}
\min(\ell_1, \ell_2 + 1) & = & \left\{ \begin{array}{ll} 
                \#F_\varphi(P,\vv_0) & \text{ if $\#\tF_\varphi(P,\vv_0) = 0$ , } \\
                \#F_\varphi(P,\vv_0) - \#\tF_\varphi(P,\vv_0) + 1 & 
                             \text{ if $\#\tF_\varphi(P,\vv_0) > 0$  } \end{array} \right. \notag \\
               & = &  \#F_\varphi(P,\vv_0) - \max\big(1,\#\tF_\varphi(P,\vv_0)\big) + 1 \ . \label{ell1ell2Formula}
\end{eqnarray}
Inserting (\ref{ell1ell2Formula}) in (\ref{RawFormula1}) yields the first formula in (\ref{FF1B}).
The second formula in (\ref{FF1B}) follows from 
$s_{\varphi}(P,\vv) = \#F_{\varphi}(P,\vv) - \#\tF_{\varphi}(P,\vv)$, since $\varphi_*(\vv) = \vv$ 
if and only if $\#\tF_{\varphi}(P,\vv) > 0$. 

\smallskip
If $P \in \Gamma_{\Fix,\Repel}$, note that by Lemma \ref{FirstIdentificationLemma},
for a direction $\vv \in T_{P,\FR}$ then  $\varphi_*(\vv) \ne \vv$ if and only if $\vv$ is a shearing direction.
To obtain (\ref{NonIdFixedLaplacian}), sum the second formula in (\ref{FF1B}) over all $\vv \in T_{P,\FR}$, getting    
\begin{equation} \label{FF1C} 
\sum_{\vv \in T_{P,\FR}} \partial_{\vv}f(P) 
\ = \ (d^2-d) \cdot v_{\FR}(P) - 2d \cdot \sum_{\vv \in T_{P,\FR}} s_{\varphi}(P,\vv) + 2d \cdot N_{\Shearing}(P) 
\end{equation}    
By Lemma \ref{FirstIdentificationLemma}, $T_{P,\FR}$ contains all $\vv \in T_P$ such that $s_{\varphi}(P,\vv) > 0$. 
It follows that 
\begin{equation*}
\sum_{\vv \in T_{P,\FR}} s_{\varphi}(P,\vv) \ = \ d - \deg_{\varphi}(P) \ . 
\end{equation*} 
Inserting this in (\ref{FF1C}), and doing some algebra, yields (\ref{NonIdFixedLaplacian}). 
\end{proof}  

\begin{proposition} \label{IdIndiffFixedSlope} 
Let $P \in \BHH_K$ be a type {\rm II} id-indifferent fixed point of $\varphi$.  
Then for each $\vv \in T_P$, the slope of $f(\cdot) = \ordRes_\varphi(\cdot)$ at $P$ in the direction $\vv$ is 
\begin{equation} \label{FF2A} 
\partial_{\vv}f(P) \ = \ (d^2 \, - \, d) \, - \, 2d \cdot s_\varphi(P,\vv) \ .
\end{equation}  
Furthermore, if $P \in \Gamma_{\Fix,\Repel}$, then 
\begin{equation}\label{IdFixedLaplacian} 
\sum_{\vv \in T_{P,\FR}} \partial_{\vv}f(P) \ = \ (d^2-d) \cdot (v_{\FR}(P) - 2) 
\end{equation} 
\end{proposition}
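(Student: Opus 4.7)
The plan is to mirror the proof of Proposition \ref{NonIdIndiffFixedSlope}, exploiting the simplification provided by the id-indifferent hypothesis. After a $\GL_2(K)$ change of coordinates I may assume $P = \zeta_G$ and $\vv = \vv_0$. Choosing a normalized representation $(F,G)$ of $\varphi$ with $F = \sum a_i X^i Y^{d-i}$ and $G = \sum b_j X^j Y^{d-j}$, the id-indifferent hypothesis forces $(\tF_0, \tG_0) = (X, Y)$ in $\tk[X,Y]$ (up to a common unit), and therefore $\tF = \tA \cdot X$ and $\tG = \tA \cdot Y$, where $\tA = \GCD(\tF,\tG)$ has degree $d-1$.

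The central observation is that by Faber's theorem, $s := s_\varphi(P,\vv_0) = \ord_X(\tA)$. Writing $\tA = X^s \tA'$ with $X \nmid \tA'$ (so that $\tA'$ has a nonzero $Y^{d-1-s}$ coefficient), the factorizations $\tF = X^{s+1} \tA'$ and $\tG = X^s Y \tA'$ pin down the valuations of the coefficients: $\ord(a_i) > 0$ for $i \le s$ with $\ord(a_{s+1}) = 0$, and $\ord(b_j) > 0$ for $j < s$ with $\ord(b_s) = 0$. Hence the indices from the previous proof satisfy $\ell_1 = s+1$ and $\ell_2 = s$, so that $\min(\ell_1, \ell_2+1) = s+1$. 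The identity
\begin{equation*}
\ordRes_\varphi(\zeta_{0,|A|}) - \ordRes_\varphi(\zeta_G) \ = \ \big[(d^2+d) - 2d \min(\ell_1, \ell_2+1)\big] \cdot \ord(A)
\end{equation*}
then yields a slope of $(d^2+d) - 2d(s+1) = (d^2 - d) - 2d \cdot s_\varphi(P,\vv_0)$ in the direction $\vv_0$, which is exactly (\ref{FF2A}).

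For the sum formula (\ref{IdFixedLaplacian}) I will sum (\ref{FF2A}) over $\vv \in T_{P,\FR}$. Because $\deg_\varphi(P) = \deg(\tphi_P) = 1$, the total surplus is $\sum_{\vv \in T_P} s_\varphi(P,\vv) = d - 1$, so it suffices to show that every $\vv \in T_P$ with $s_\varphi(P,\vv) > 0$ already lies in $T_{P,\FR}$. This is precisely the content of the Third Identification Lemma (Lemma \ref{ThirdIdentificationLemma}): any such $\vv$ points toward either a type I fixed point or a type II focused repelling fixed point, and both kinds of points are endpoints of $\Gamma_{\Fix,\Repel}$. Summing (\ref{FF2A}) over $T_{P,\FR}$ then gives $(d^2-d) \cdot v_{\FR}(P) - 2d(d-1) = (d^2-d)(v_{\FR}(P) - 2)$, as required.

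I do not anticipate a serious obstacle: the computation is structurally identical to the one in Proposition \ref{NonIdIndiffFixedSlope}, with the shearing contribution in (\ref{FF1B}) disappearing because $\varphi_*(\vv) = \vv$ holds universally when $\tphi_P$ is the identity. The one place where the id-indifferent case genuinely needs new input is the appeal to Lemma \ref{ThirdIdentificationLemma} in the Laplacian calculation, since the First Identification Lemma no longer constrains the locations of the surplus directions.
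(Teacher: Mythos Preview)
Your proposal is correct and follows essentially the same approach as the paper's proof: reduce to $P=\zeta_G$, $\vv=\vv_0$, use the id-indifferent factorization $\tF=\tA\cdot X$, $\tG=\tA\cdot Y$ together with Faber's theorem to compute $\ell_1=s+1$, $\ell_2=s$, and then invoke the Third Identification Lemma to sum over $T_{P,\FR}$. Your commentary on why Lemma~\ref{ThirdIdentificationLemma} is the genuinely new ingredient (the First Identification Lemma being unavailable here) matches the paper's logic exactly.
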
 

\begin{proof} 
After a change of coordinates, we can assume that $P = \zeta_G$ and $\vv = \vv_0$.  

Let $(F,G)$  be a normalized representation of $\varphi$; write $F(X,Y) = a_d X^d + \cdots + a_0 Y^d$,
$G(X,Y) = b_d X^d + \cdots + b_0 Y^d$. Letting $\ell_1$ (resp. $\ell_2$) be the least index such that $\ord(a_i) = 0$ 
(resp. $\ord(b_j) = 0$), just as in Proposition \ref{NonIdIndiffFixedSlope} one sees that
the slope of $\ordRes_\varphi(\cdot)$ at $P$ in the direction $\vv = \vv_0$ is 
\begin{equation} \label{RawFormula2}
d^2 + d - 2d \cdot \min(\ell_1, \ell_2 + 1) \ . 
\end{equation} 

Since $P$ is id-indifferent for $\varphi$, the reductions $\tF$, $\tG$ are nonzero, and
if $\tA = \GCD(\tF,\tG)$ then $\tF(X,Y) = X \cdot \tA(X,Y)$ and $\tG(X,Y) = Y \cdot \tA(X,Y)$.
Thus $\ell_1 = \ord_X(\tA(X,Y)) + 1$ and $\ell_2 = \ord_X(\tA(X,Y))$.  By Faber's theorem (\cite{Fab}, I: Lemma 3.17),
we have $s_\varphi(P,\vv_0) = \ord_X(\tA)$.  Hence  
\begin{equation} \label{ell1ell2Formula2}
\min(\ell_1, \ell_2 + 1) \ = \ \ord_X(\tA) + 1 \ = \ s_\varphi(P,\vv_0) + 1 \ . 
\end{equation} 
Inserting (\ref{ell1ell2Formula2}) in (\ref{RawFormula2}) yields (\ref{FF2A}).

\smallskip
When $P \in \Gamma_{\Fix,\Repel}$, to obtain (\ref{IdFixedLaplacian}), sum (\ref{FF2A}) over all $\vv \in T_{P,\FR}$, 
getting 
\begin{equation} \label{FF2C} 
\sum_{\vv \in T_{P,\FR}} \partial_{\vv}f(P) 
\ = \ (d^2-d) \cdot v_{\FR}(P) - 2d \cdot \sum_{\vv \in T_{P,\FR}} s_{\varphi}(P,\vv) 
\end{equation}    
By Lemma \ref{ThirdIdentificationLemma}, $T_{P,\FR}$ contains all $\vv \in T_P$ such that $s_{\varphi}(P,\vv) > 0$. 
Since $\deg_{\varphi}(P) = 1$, it follows that 
\begin{equation*}
\sum_{\vv \in T_{P,\FR}} s_{\varphi}(P,\vv) \ = \ d - \deg_{\varphi}(P) \ = \ d - 1 \ . 
\end{equation*} 
Inserting this in (\ref{FF2C}), and simplifying, yields (\ref{IdFixedLaplacian}). 
\end{proof} 

\begin{proposition} \label{NonFixedSlope}
Let $P \in \BHH_K$ be a type {\rm II} point with $\varphi(P) \ne P$.    
Then for each $\vv \in T_P$, the slope of \,$f(\cdot) = \ordRes_\varphi(\cdot)$ at $P$ in the direction $\vv$ is 
\begin{equation} \label{FF3A} 
\partial_{\vv}f(P) \ = \ d^2 \, + \, d \, - \, 2d \cdot \#F_\varphi(P,\vv) \ .
\end{equation} 
Furthermore, if $P \in \Gamma_{\Fix,\Repel}$, then 
\begin{equation}\label{NonFixedLaplacian} 
\sum_{\vv \in T_{P,\FR}} \partial_{\vv}f(P) \ = \ (d^2-d) \cdot (v_{\FR}(P) - 2) \, + \, 2d \cdot (v_{\FR}(P) - 2) \ .
\end{equation}  
\end{proposition}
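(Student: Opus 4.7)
The plan is to follow the strategy of Propositions \ref{NonIdIndiffFixedSlope} and \ref{IdIndiffFixedSlope}. First I would use the $\GL_2(K)$-conjugation invariance of $\ordRes_\varphi(\cdot)$ to reduce to $P = \zeta_G$; since $\GL_2(\cO)$ acts transitively on $T_{\zeta_G}$, a further conjugation reduces to $\vv = \vv_0$. Let $(F,G)$ be a normalized representation of $\varphi$ with $F(X,Y) = \sum_i a_i X^i Y^{d-i}$ and $G(X,Y) = \sum_j b_j X^j Y^{d-j}$. Exactly as in the earlier propositions, for $A \in K^\times$ with $\ord(A)>0$ small,
\begin{equation*}
\ordRes_\varphi(\zeta_{0,|A|}) - \ordRes_\varphi(\zeta_G) \ = \ (d^2+d)\ord(A) \ - \ 2d \min(M_1, M_2) \ ,
\end{equation*}
where $M_1 = \min_i \ord(A^i a_i)$ and $M_2 = \min_j \ord(A^{j+1} b_j)$, so computing $\partial_{\vv_0} f(P)$ amounts to extracting the $\ord(A)$-slope of $\min(M_1,M_2)$ as $\ord(A) \to 0^+$.

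The new feature is that $\varphi(P) \ne P$ forces the reduction $\tphi$ of $\varphi$ at $P$ to be constant, equal to some $\ta \in \PP^1(\tk)$ which specifies the direction from $P$ toward $Q := \varphi(P)$. I would handle three sub-cases by direct computation. If $\ta = 0$, then $\tF = 0$ and $\tG \ne 0$; with $\ell_2 = \ord_X(\tG)$ one finds that $M_1$ is bounded below by a positive constant while $M_2 = (\ell_2+1)\ord(A)$, giving slope $d^2+d-2d(\ell_2+1)$, and $\tH(X,Y) := X\tG - Y\tF = X\tG$ has $\ord_X(\tH) = \ell_2+1$. If $\ta = \infty$, the dual computation gives slope $d^2+d-2d\ell_1$ with $\ell_1 = \ord_X(\tF)$ and $\tH = -Y\tF$, so $\ord_X(\tH) = \ell_1$. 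If $\ta \in \tk^\times$, then after rescaling $\tF = \ta\tA$ and $\tG = \tA$, so $\ell_1 = \ell_2 = s := \ord_X(\tA)$, $\min(M_1,M_2) = s\,\ord(A)$, and $\tH = \tA(X - \ta Y)$ has $\ord_X(\tH) = s$ since $\ta \ne 0$. In every case the slope equals $d^2 + d - 2d \cdot \ord_X(\tH)$, and the standard identification $\ord_X(\tH) = \#F_\varphi(P,\vv_0)$ (via the factorization $H = \prod(b_i X - a_i Y)$, since a fixed point $(a_i:b_i)$ lies in $B_P(\vv_0)^-$ exactly when its factor in $\tH$ reduces to a multiple of $X$) then proves (\ref{FF3A}).

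To establish (\ref{NonFixedLaplacian}), assume $P \in \Gamma_{\Fix,\Repel}$. Summing (\ref{FF3A}) over $\vv \in T_{P,\FR}$ yields
\begin{equation*}
\sum_{\vv \in T_{P,\FR}} \partial_\vv f(P) \ = \ (d^2+d) v_{\FR}(P) \ - \ 2d \sum_{\vv \in T_{P,\FR}} \#F_\varphi(P,\vv) \ .
\end{equation*}
The crux is the identity $\sum_{\vv \in T_{P,\FR}} \#F_\varphi(P,\vv) = d+1$. Since $P$ is a type II non-fixed point, Proposition \ref{FiniteGenerationProp} forces $P$ to not be an endpoint of $\Gamma_{\Fix,\Repel}$; hence each of the $d+1$ type I fixed points of $\varphi$, being itself an endpoint of $\Gamma_{\Fix,\Repel}$, lies along an edge emanating from $P$, i.e., in some direction $\vv \in T_{P,\FR}$. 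Conversely, any $\vv \notin T_{P,\FR}$ points into a component of $\BPP_K \setminus \Gamma_{\Fix,\Repel}$ and so $\#F_\varphi(P,\vv) = 0$. Substituting gives $(d^2+d) v_{\FR}(P) - 2d(d+1) = (d^2+d)(v_{\FR}(P)-2)$, which algebraically coincides with $(d^2-d)(v_{\FR}(P)-2) + 2d(v_{\FR}(P)-2)$.

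The main obstacle is sub-case (iii) of the slope computation: even though $\tF$ and $\tG$ both carry the factor $\tA$, one must verify that the cancellation in $\tH = \tA(X - \ta Y)$ yields $\ord_X(\tH) = s$ rather than $s+1$, which is precisely where $\ta \in \tk^\times$ (as opposed to $\ta = 0$) is decisive. Once the three cases are unified into the single formula (\ref{FF3A}), the Laplacian identity is a short combinatorial consequence.
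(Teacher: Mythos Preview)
Your proposal is correct. The key difference from the paper is in how you organize the case analysis for (\ref{FF3A}). The paper first conjugates so that $\varphi(P) = \zeta_{0,r}$ lies in the direction $\vv_0$ (i.e., forces $\ta = 0$), introduces the auxiliary map $\Phi = c^{-1}\varphi$ with $\Phi(\zeta_G)=\zeta_G$, uses the normalized representation $(cF,G)$ to obtain $\tH = X\tG$, and then computes the slope in each of the three directions $\vv_0$, $\vv_a$ ($a\in\tk^\times$), $\vv_\infty$ by further $\GL_2(\cO)$-conjugations. You do the dual: you fix the direction $\vv = \vv_0$ once and for all, and instead split according to where $\ta = \tphi(P)$ sits in $\PP^1(\tk)$. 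Your route avoids the auxiliary $\Phi$ and the extra conjugations, and the three cases collapse cleanly into the single identity $\partial_{\vv_0}f(P) = d^2+d-2d\,\ord_X(\tH)$. The paper's route has the advantage of making the key factorization $\tH = X\tG$ visible in one stroke. For (\ref{NonFixedLaplacian}) the two arguments are essentially the same; your observation that each type~I fixed point, being in $\Gamma_{\Fix,\Repel}$, must lie in some $\vv\in T_{P,\FR}$ is exactly what the paper's citation of Lemma~\ref{SecondIdentificationLemma} amounts to, and the appeal to Proposition~\ref{FiniteGenerationProp} is not actually needed.
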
 
 
\begin{proof}  To prove (\ref{FF3A}), we use the machinery from Lemma \ref{SecondIdentificationLemma}.  
As in that lemma, we first choose $\gamma \in \GL_2(K)$ such that $\gamma(\zeta_G) = P$ 
and $\gamma^{-1}(\varphi(P))  = \zeta_{0,r}$, for some $r \in |K^\times|$ with $0 < r < 1$.  
By replacing $\varphi$ with $\varphi^\gamma$ we can assume that $P = \zeta_G$ and $\varphi(P) = \zeta_{0,r}$. 

Let $c \in K^{\times}$ be such that $|c| = r$;  put $\Phi(z) = (1/c) \varphi(z)$.  
Then $\Phi(\zeta_G) = \zeta_G$, so $\Phi$ has nonconstant reduction, and $\varphi(z) = c \cdot \Phi(z)$. 
Let $(F,G)$ be a normalized representation of $\Phi$;  then $(cF,G)$ is a normalized representation of $\varphi$. 
Using this representation, put $H(X,Y) = X G(X,Y) - Y \cdot cF(X,Y)$;  this yields 
$\tH(X,Y) \ = \ X \cdot \tG(X,Y)$. 
On the other hand, if the type I fixed points of $\varphi$ (listed with multiplicities) are 
$(a_i:b_i)$ for $i = 1, \ldots, d+1$, and are normalized so that $\max(|a_i|,|b_i|) = 1$ for each $i$,
there is a constant $C$ with $|C| = 1$ such that $H(X,Y) = C \cdot \prod_{i=1}^{d+1} (b_i X -a_i Y)$.  
Reducing this $\pmod{\fM}$ gives 
\begin{equation} \label{Key1B} 
X \tG(X,Y) \ = \ \tH(X,Y) \ = \ \tC \cdot \prod_{i=1}^{d+1} (\tb_i X - \ta_i Y) \ . 
\end{equation}  

We will now consider what this means for the slope of $\ordRes_\varphi(\cdot)$ at $P$ in a direction $\vv \in T_P$.  
We parametrize the directions $\vv \in T_P$ by points $a \in \PP_1(\tk)$.  We will consider three cases, 
corresponding to the directions $\vv_0$, $\vv_a$ for $0 \ne a \in \tk$, and $\vv_\infty$.

\smallskip
First consider the direction $\vv_0 \in T_P$.  We use the normalized representation $(cF,G)$ for $\varphi$,
expanding $cF(X,Y) = a_d X^d + \cdots + a_0 Y^d$, $G(X,Y) = b_d X^d + \cdots + b_0 Y^d$.  As usual, 
for each $A \in K^{\times}$,  
\begin{eqnarray}
\ordRes_\varphi(\zeta_{0,|A|}) - \ordRes_\varphi(\zeta_G) & = &  (d^2+d) \ord(A) \notag \\
& & \ \ \ - \ 2d \cdot \min\big(\min_{0 \le i \le d} \ord(A^i a_i),\min_{0 \le j \le d} \ord(A^{j+1} b_j) \big) 
\label{ordResFormula} 
\end{eqnarray}  
Let $N = N_0 = \#F_\varphi(P,\vv_0)$
be the number of fixed points of $\varphi$ in $B_P(\vv_0)^-$.  By (\ref{Key1B}), we have  
$X^{N-1} ||\, \tG(X,Y)$, so $\ord(b_\ell) > 0$ for $\ell = 0, \ldots, N-2$ and $\ord(b_{N-1}) = 0$.
Since $|\,c| < 1$ we see that $\ord(a_\ell) > 0$ for all $\ell$.  It follows that if $\ord(A) > 0$ 
and $\ord(A)$ is sufficiently small, then 
\begin{equation*} 
\min\big(\min_{0 \le i \le d} \ord(A^i a_i),\min_{0 \le j \le d} \ord(A^{j+1} b_j) \big) 
\ = \ \ord(A^{(N-1) + 1} b_{N-1}) \ = \ N \cdot \ord(A) \ .
\end{equation*} 
Inserting this in (\ref{ordResFormula}) shows that the slope of $\ordRes_\varphi(\cdot)$ at $P$ in the direction
$\vv_0$ is 
\begin{equation} \label{FF3A0} 
\partial_{\vv_0}f(P) \ = \ d^2 + d - 2d \cdot N_0 \ = \ d^2 + d - 2d \cdot \#F_\varphi(P,\vv_0) \ .
\end{equation} 

\smallskip
Next consider a direction $\vv_a \in T_P$, where $0 \ne a \in \tk$.  
Choose an $\alpha \in \cO$ with $\talpha = a$, and conjugate $\varphi$ by 
\begin{equation*} 
\gamma_\alpha \ = \ \left[ \begin{array}{cc} 1 & \alpha \\ 0 & 1 \end{array} \right] \ .
\end{equation*}
 A normalized representation for $\varphi_\alpha := (\varphi)^{\gamma_\alpha}$ is given by 
\begin{equation*}
\left( \!\! \begin{array}{c} F_\alpha(X,Y) \\ G_\alpha(X,Y) \end{array} \!\! \right)  =  
 \left[ \begin{array}{cc} 1 & -\alpha \\ 0 & 1 \end{array} \right] \! \cdot \!
 \left(\!\! \begin{array}{c} cF \\ G \end{array} \!\! \right) \! \cdot \!
 \left[ \begin{array}{cc} 1 & \alpha \\ 0 & 1 \end{array} \right]   =  
\left( \!\! \begin{array}{c} cF(X+ \alpha Y,Y) - \alpha G(X+\alpha Y,Y) \\ G(X+\alpha Y,Y) \end{array} \!\! \right) \ .
\end{equation*} 
Reducing this gives  $\tF_\alpha(X,Y) = - a \tG(X + aY, Y)$ and $\tG_\alpha(X,Y) = \tG(X + a Y, Y)$. 

Let $N = N_a = \#F_\varphi(P,\vv_a)$ be the number of fixed points of $\varphi$ in $B_P(\vv_a)^-$;
from (\ref{Key1B}) it follows that $(X-aY)^N ||\, \tG(X,Y)$. 
The direction $\vv_\alpha$ for $\varphi$ pulls back to $\vv_0$ for $\varphi_\alpha$,
so $N = \#F_{\varphi_\alpha}(P,\vv_0)$.
Thus $X^N ||\, \tF_\alpha(X,Y)$ and $X^N ||\, \tG_\alpha(X,Y)$.  Expanding  
$F_\alpha(X,Y) = a_d X^d + \cdots + a_0 Y^d$, $G_\alpha(X,Y) = b_d X^d + \cdots + b_0 Y^d$,
we see that $\ord(a_\ell), \ord(b_\ell) > 0$ for $\ell = 0, \ldots, N-1$, while $\ord(a_N) = \ord(b_N) = 0$. 
If $\ord(A) > 0$ is sufficiently small, it follows that 
\begin{equation*} 
\min\big(\min_{0 \le i \le d} \ord(A^i a_i),\min_{0 \le j \le d} \ord(A^{j+1} b_j) \big) 
\ = \ \ord(A^N a_N) \ = \ N \cdot \ord(A) \ .
\end{equation*}  
Inserting this in (\ref{ordResFormula}) shows that the slope of $\ordRes_\varphi(\cdot)$ at $P$ in the direction
$\vv_a$ is 
\begin{equation} \label{FF3Aa} 
\partial_{\vv_a}f(P) \ = \ 
d^2 + d - 2d \cdot N_a \ = \ d^2 + d - 2d \cdot \#F_\varphi(P,\vv_a) \ .
\end{equation} 

\smallskip
Finally, consider the direction $\vv_\infty \in T_P$.  
Conjugate $\varphi$ by 
\begin{equation*} 
\gamma_\infty \ = \ \left[ \begin{array}{cc} 0 & 1 \\ 1 & 0 \end{array} \right] \ .
\end{equation*}
 A normalized representation for $\varphi_\infty := (\varphi)^{\gamma_\infty}$ is given by 
\begin{equation*}
\left( \!\! \begin{array}{c} F_\infty(X,Y) \\ G_\infty(X,Y) \end{array} \!\! \right)  =  
 \left[ \begin{array}{cc} 0 & 1 \\ 1 & 0 \end{array} \right] \! \cdot \!
 \left(\!\! \begin{array}{c} cF \\ G \end{array} \!\! \right) \! \cdot \!
 \left[ \begin{array}{cc} 0 & 1 \\ 1 & 0 \end{array} \right]   =  
\left( \!\! \begin{array}{c} G(Y,X) \\ c F(Y,X) \end{array} \!\! \right) \ .
\end{equation*} 
Reducing this gives  $\tF_\infty(X,Y) = \tG(Y, X)$ and $\tG_\infty(X,Y) = 0$. 

Let $N =  N_\infty = \#F_\varphi(P,\vv_\infty)$ be the number of fixed points of $\varphi$ in $B_P(\vv_\infty)^-$;
by formula (\ref{Key1B}) we have $Y^N ||\, \tG(X,Y)$. 
The direction $\vv_\infty$ for $\varphi$ pulls back to $\vv_0$ for $\varphi_\infty$,
so $N = \#F_{\varphi_\infty}(P,\vv_0)$.
Thus $X^N ||\, \tF_\infty(X,Y)$.  Writing 
$F_\infty(X,Y) = a_d X^d + \cdots + a_0 Y^d$ and $G_\infty(X,Y) = b_d X^d + \cdots + b_0 Y^d$, 
we see that $\ord(a_\ell) > 0$ for $\ell = 0, \ldots, N-1$, while $\ord(a_N) = 0$; 
we have $\ord(b_\ell) > 0$ for all $\ell$.
Thus if $\ord(A) > 0$ is sufficiently small,
\begin{equation*} 
\min\big(\min_{0 \le i \le d} \ord(A^i a_i),\min_{0 \le j \le d} \ord(A^{j+1} b_j) \big) 
\ = \ \ord(A^N a_N) \ = \ N \cdot \ord(A) \ .
\end{equation*}  
Inserting this in (\ref{ordResFormula}) shows that the slope of $\ordRes_\varphi(\cdot)$ at $P$ in the direction
$\vv_\infty$ is 
\begin{equation} \label{FF3Ainfty} 
\partial_{\vv_\infty}f(P) \ = \ 
d^2 + d - 2d \cdot N_\infty \ = \ d^2 + d - 2d \cdot \#F_\varphi(P,\vv_\infty) \ .
\end{equation}

Combining (\ref{FF3A0}), (\ref{FF3Aa}) and (\ref{FF3Ainfty})  yields (\ref{FF3A}). 

\smallskip
If $P \in \Gamma_{\Fix,\Repel}$, to obtain (\ref{IdFixedLaplacian}), 
sum (\ref{FF3A}) over all $\vv \in T_{P,\FR}$, getting 
\begin{equation} \label{FF3C} 
\sum_{\vv \in T_{P,\FR}} \partial_{\vv}f(P) 
\ = \ (d^2+d) \cdot v_{\FR}(P) - 2d \cdot \sum_{\vv \in T_{P,\FR}} \#F_{\varphi}(P,\vv)  
\end{equation}    
By Lemma \ref{SecondIdentificationLemma}, $T_{P,\FR}$ contains all $\vv \in T_P$ such that $\#F_{\varphi}(P,\vv)  > 0$.  
Since $\varphi$ has $d+1$ type I fixed points (counting multiplicities), it follows that
\begin{equation*}
\sum_{\vv \in T_{P,\FR}} \#F_{\varphi}(P,\vv) \ = \ d \, + \, 1 \ . 
\end{equation*} 
Inserting this in (\ref{FF3C}), and doing some algebra, yields (\ref{NonFixedLaplacian}).
\end{proof} 

\section{The Weight Formula and the Crucial Set.} \label{WeightFormulaSection} 

In this section, we compute the Laplacian 
of the restriction of $\ordRes_\varphi(\cdot)$ to the tree $\Gamma_{\Fix,\Repel}$.
This leads to a natural definition of weights $w_\varphi(P)$ for points $P \in \BPP_K$,
and a ``Weight Formula'' which says that the sum of the weights over all $P \in \BPP_K$ is $d-1$.
One consequence of this formula is that  
 $\varphi$ can have at most $d-1$ repelling fixed points in $\BHH_K$.


If $\Gamma \subset \BHH_K$ is a finite graph, let $\CPA(\Gamma)$ be the space of functions on $\Gamma$
which are continuous and piecewise affine (with a finite number of pieces) with respect to the metric on $\Gamma$
induced by the logarithmic path distance.  For each $P \in \Gamma$, we write $T_{P,\Gamma}$ for the tangent 
space to $P$ in $\Gamma$, namely the set of $\vv \in T_P$ such that there is an edge of $\Gamma$
emanating from $P$ in the direction $\vv$.  If the valence of $P$ in $\Gamma$ is $v(P)$, 
then $T_{P,\Gamma}$ has $v(P)$ elements.  
If $f \in \CPA(\Gamma)$, the Laplacian of $F$ (as defined in \cite{B-R0}) is the measure
\begin{equation} \label{LaplacianDef} 
\Delta_{\Gamma}(f) \ = \ \sum_{P \in \Gamma} -\big(\sum_{\vv \in T_{P,\Gamma}} \, \partial_\vv(f)(P)\big)\, \delta_P(z)
\end{equation}
where $\delta_P(z)$ is the Dirac measure at $P$.  Here $\Delta_\Gamma(F)$ is a discrete measure, since the inner 
sum in (\ref{LaplacianDef}) is $0$ at any $P$ which is not a branch point of $\Gamma$ and where $F$ does not change 
slope.  It is well-known (c.f. \cite{B-R0}), and easy to see, that $\Delta_\Gamma(F)$ has total mass $0$.  Indeed, if one 
is given a partition of $\Gamma$ into finitely many segments $[P_i,Q_i]$ (disjoint except for their endpoints),
such that the restriction of $F$ to $[P_i,Q_i]$ is affine for each $i$, then the slopes of $F$ at $P_i$ and $Q_i$, 
in the directions pointing into $[P_i,Q_i]$, are negatives of each other.  

\begin{definition} {(Weights).} \label{WeightDefs}  {\em For each $P \in \BPP_K$, 
 the {\rm weight} $w_\varphi(P)$ is the following non-negative integer$:$ 
 
$(1)$ If $P \in \BHH_K$ and $P$ is fixed by $\varphi$, define  
\begin{equation*} 
              w_\varphi(P) \ =  \ \deg_{\varphi}(P) - 1 + N_\Shearing(P) \ .
\end{equation*}     

$(2)$ If $P \in \BHH_K$ and $P$ is not fixed by $\varphi$, 
let $v(P)$ be the number of directions $\vv \in T_P$ such that $B_P(\vv)^-$ contains a
type {\rm I} fixed point of $\varphi$, and define  
\begin{equation*}
w_\varphi(P) \ =  \ \max(0,v(P)-2) \ .
\end{equation*} 

$(3)$ If $P \in \PP^1(K)$, define  $w_\varphi(P) = 0$.}  
\end{definition} 

\noindent{This definition} of weights is motivated by Propositions \ref{NonIdIndiffFixedSlope}, \ref{IdIndiffFixedSlope}, 
and \ref{NonFixedSlope}.   

\smallskip
We begin by giving explicit formulas for the weights in some cases, 
and characterizing the points with positive weight and the points with weight zero:  

\begin{proposition}[Properties of Weights] \label{WeightProperties} 
Let $P \in \BPP_K$.  If $P$ is a focused repelling fixed point, then $w_\varphi(P) = \deg_{\varphi}(P) - 1$.  
If $P$ is an additively indifferent fixed point in $\Gamma_{\Fix}$, or is a multiplicatively indifferent 
fixed point which is a branch point of $\Gamma_{\Fix}$, then $w_\varphi(P) = N_{\Shearing}(P)$. 
If $P$ is an id-indifferent fixed point, then $w_\varphi(P) = 0$.  If $P$ is a non-fixed branch point of $\Gamma_{\Fix}$,
then $w_\varphi(P) = v(P) - 2$. 
In general 

$(A)$ $w_\varphi(P) > 0$ if and only if  

\qquad $(1)$ $P$ is a type {\rm II} repelling fixed point of $\varphi$, or

\qquad $(2)$ $P$ is an additively indifferent fixed point of $\varphi$ which belongs to $\Gamma_{\Fix}$, or

\qquad $(3)$ $P$ is a multiplicatively indifferent fixed point of $\varphi$ 

\qquad \qquad which is a branch point of $\Gamma_{\Fix}$, or 

\qquad $(4)$ $P$ is a branch point of $\Gamma_{\Fix}$ which is not fixed by $\varphi$.  

$(B)$  $w_\varphi(P) = 0$ if and only if 

\qquad $(1)$ $P \notin \Gamma_{\Fix,\Repel}$, or 

\qquad $(2)$ $P$ is not of type {\rm II}, or  

\qquad $(3)$ $P$ is an additively indifferent fixed point which is not in $\Gamma_{\Fix}$, or 

\qquad $(4)$ $P$ is a multiplicatively indifferent fixed point 

\qquad \qquad which is not a branch point of $\Gamma_{\Fix}$, or 

\qquad $(5)$ $P$ is an id-indifferent fixed point, or  

\qquad $(6)$ $P$ is not fixed by $\varphi$, and is not a branch point of $\Gamma_{\Fix}$.
\end{proposition}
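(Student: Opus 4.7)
\emph{Proof plan.} The plan is to verify the explicit weight formulas by a case analysis on the type of $P$ and, when $P$ is a type~II fixed point, on its classification from Section~\ref{GammaFixRepelSection}; the characterizations $(A)$ and $(B)$ then follow by assembling the case outcomes. For $P \in \PP^1(K)$ the assertion is immediate from Definition~\ref{WeightDefs}. For $P$ of type~III or IV, $P$ is not fixed by $\varphi$ (fixed points of $\varphi$ in $\BHH_K$ are type~II), so $w_\varphi(P) = \max(0, v(P)-2)$, and $v(P) \le |T_P| \le 2$ forces $w_\varphi(P) = 0$. For non-fixed type~II points, the formula gives $w_\varphi(P) > 0$ iff $v(P) \ge 3$ iff $P$ is a branch point of $\Gamma_{\Fix}$, and then $w_\varphi(P) = v(P)-2$.

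For type~II repelling fixed points, I would first handle the focused case. If $P$ is focused with focus $\vv_1$, Proposition~\ref{FocusedRepellingProp}(A) gives $\varphi_*(\vv_1) = \vv_1$, so $\vv_1$ is not shearing; since no other direction hosts a type~I fixed point, $N_\Shearing(P) = 0$ and $w_\varphi(P) = \deg_\varphi(P) - 1$. For bi-focused or multi-focused $P$, the bound $\deg_\varphi(P) \ge 2$ alone gives $w_\varphi(P) \ge 1$, so every type~II repelling fixed point has positive weight, yielding item $(A)(1)$.

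The cases requiring the most care are the indifferent fixed points, where $\deg_\varphi(P) = 1$ forces $w_\varphi(P) = N_\Shearing(P)$. The key observation is that every $\varphi_*$-fixed direction $\vv$ at $P$ has $\#\tF_\varphi(P,\vv) \ge 1$, hence by Lemma~\ref{FirstIdentificationLemma} contains a type~I fixed point, and is therefore not shearing. So $N_\Shearing(P)$ counts exactly the directions carrying type~I fixed points other than the $\varphi_*$-fixed ones. The id-indifferent case is then immediate: every direction is $\varphi_*$-fixed, so $N_\Shearing(P) = 0$. In the multiplicatively indifferent case $\tphi_P(z) = \ta z$, the two $\varphi_*$-fixed directions $\vv_0, \vv_\infty$ are simple fixed points of $\tphi_P$ and hence each carry a type~I fixed point, so $P \in \Gamma_{\Fix}$ automatically, and $N_\Shearing(P) > 0$ iff a third direction carries a type~I fixed point iff $P$ is a branch point of $\Gamma_{\Fix}$. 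In the additively indifferent case $\tphi_P(z) = z + \ta$, the unique $\varphi_*$-fixed direction $\vv_\infty$ is a double fixed point of $\tphi_P$ and so carries a type~I fixed point; if $P \notin \Gamma_{\Fix}$ then all type~I fixed points lie in a single direction, which must therefore be $\vv_\infty$, forcing $N_\Shearing(P) = 0$; if $P \in \Gamma_{\Fix}$ then some direction other than $\vv_\infty$ carries a type~I fixed point, and that direction is shearing, giving $N_\Shearing(P) \ge 1$.

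The main obstacle is the bookkeeping in the indifferent cases: one must verify that the $\varphi_*$-fixed directions really do carry type~I fixed points (so as not to be miscounted as shearing) and, conversely, that whenever $P$ lies outside $\Gamma_{\Fix}$ no ``extra'' direction can carry a type~I fixed point without creating a second such direction and so contradicting $P \notin \Gamma_{\Fix}$. Once these facts are in place, the explicit formulas in the proposition follow directly, and $(A)$ and $(B)$ are obtained by collecting the cases where $w_\varphi(P) > 0$ and $w_\varphi(P) = 0$ respectively.
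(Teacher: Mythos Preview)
Your approach is essentially the same as the paper's --- a case analysis on the type of $P$ and on the classification of fixed points --- and the indifferent cases are handled correctly, with the same key observation (every $\varphi_*$-fixed direction carries a type~I fixed point via Lemma~\ref{FirstIdentificationLemma}) used in the same way.

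There is, however, a genuine error in your treatment of types~III and~IV. You assert that ``fixed points of $\varphi$ in $\BHH_K$ are type~II'', but this is false: what Rivera-Letelier proves is that \emph{repelling} fixed points in $\BHH_K$ are type~II. Indifferent fixed points of type~III or~IV do occur. Consequently, for such $P$ the relevant clause of Definition~\ref{WeightDefs} is clause~(1), not clause~(2), and one must show $w_\varphi(P) = \deg_\varphi(P) - 1 + N_\Shearing(P) = 0$ rather than $\max(0,v(P)-2)=0$. The correct argument (which the paper uses) cites Rivera-Letelier's result that any type~III or~IV fixed point has $\deg_\varphi(P)=1$ and has every tangent direction fixed by $\varphi_*$; hence $N_\Shearing(P)=0$ and $w_\varphi(P)=0$. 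Your conclusion happens to coincide, but your route does not establish it, and in particular your argument gives no reason why a type~III fixed point could not have a shearing direction.
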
 
                                      
\begin{proof} 
If $P$ is a focused repelling fixed point, the unique direction $\vv \in T_P$ such that $B_P(\vv)^-$ contains 
type I fixed points is fixed by $\varphi_*$, so $N_{\Shearing}(P) = 0$ and $w_\varphi(P) = \deg_{\varphi}(P) - 1$.  
If $P$ is an additively indifferent or multiplicatively indifferent fixed point, then $\deg_{\varphi}(P) = 1$
so $w_\varphi(P) = N_{\Shearing}(P)$.  If $P$ is id-indifferent, then $\deg_{\varphi}(P) = 1$ 
and each $\vv \in T_P$ is fixed by $\varphi_*$, so $N_{\Shearing}(P) = 0$; 
thus $w_\varphi(P) = 0$.  If $P$ is a non-fixed branch point of $\Gamma_{\Fix}$, then $v(P) > 2$ so $w_\varphi(P) = v(P) -2$.
 
Clearly each type II repelling fixed point has $w_\varphi(P) > 0$. 
By results of Rivera-Letelier (see \cite{R-L1}, Lemmas 5.3 and 5.4, or \cite{B-R}, Lemma 10.80), 
each fixed point of type III or IV  has degree $1$ and each of its tangent directions
is fixed by $\varphi_*$, hence $w_\varphi(P) = 0$.  
By definition, each point of type I has $w_\varphi(P) = 0$.

Each additively indifferent fixed point $P \notin \Gamma_\Fix$
has type I fixed points in exactly one tangent direction, 
and Lemma \ref{FirstIdentificationLemma} shows that direction is fixed 
by $\varphi_*$;  hence $w_\varphi(P) = 0$.   
On the other hand, each additively indifferent fixed point $P \in \Gamma_\Fix$ 
has type I fixed points in at least two tangent directions, one of which must be a shearing direction  
since an additively indifferent fixed point has exactly 
one fixed direction (with multiplicity $2$);  hence $w_\varphi(P) > 0$.  

Likewise, a multiplicatively indifferent fixed point $P$
has two fixed directions, and Lemma \ref{FirstIdentificationLemma} 
shows each of them must contain type I fixed points;  
thus $P$ belongs to $\Gamma_\Fix$.
If $P$ is not a branch point of $\Gamma_\Fix$, its two fixed directions are the only ones containing type I 
fixed points, so it has no shearing directions, and $w_\varphi(P) = 0$.  If $P$ is a branch point of $\Gamma_\Fix$, 
there is at least one $\vv \in T_P$ containing a type I fixed point besides the two fixed directions, 
and that direction is a shearing direction, so $w_\varphi(P) > 0$. 

If $P \in \BHH_K$ is not fixed by $\varphi$, there are three possibilities.  If $P \notin \Gamma_{\Fix}$,
there is only one direction $\vv \in T_P$ containing type I fixed points, so $v(P) = 1$, giving $w_\varphi(P) = 0$.
If $P \in \Gamma_{\Fix}$ but $P$ is not a branch point of $\Gamma_{\Fix}$, then $v(P) = 2$, giving $w_\varphi(P) = 0$.
If $P$ is a branch point of $\Gamma_{\Fix}$, then $v(P) \ge 3$, so $w_\varphi(P) > 0$. 
\end{proof}

Our main result is

\begin{theorem}[Weight Formula] \label{WeightFormulaTheorem} 
Let $\varphi(z) \in K(z)$ have degree $d \ge 2$.  
Then the following weight formula holds\,$:$
\begin{equation} \label{WeightFormula}
            \sum_{P \in \,\BPP_K}  w_\varphi(P) \ = \ d-1 \ .
\end{equation}
Equivalently, 
\begin{eqnarray} 
 & & \sum_{\substack{\text{\rm focused repelling} \\ \text{\rm fixed points}} }
              \big(\deg_\varphi(P) - 1\big) \ + \ 
    \sum_{\substack{\text{\rm bi-focused and} \\ \text{\rm multi-focused fixed points}}} \!\!\!\!\!\!
              \big(\deg_\varphi(P) - 1 + N_{\Shearing}(P)\big) \notag \\
 & & + \ \sum_{ \substack{ \text{\rm  additively indifferent}  \\
                       \text{\rm fixed points in $\Gamma_\Fix$} } }  N_{\Shearing}(P) 
            \ + \ \sum_{ \substack{ \text{\rm  multiplicatively indifferent} \\
                  \text{\rm fixed branch points of $\Gamma_\Fix$} } }  N_{\Shearing}(P)  \notag \\
 & & \qquad \qquad \qquad  + \ \sum_{\text{\rm non-fixed branch points of $\Gamma_\Fix$}} \big(v(P)-2\big) 
            \ = \ d-1 \ . \label{ExpandedWeightFormula}
\end{eqnarray}
\end{theorem}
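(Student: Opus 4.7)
The plan is to compute the discrete Laplacian of $f := \ordRes_\varphi(\cdot)$ restricted to a truncated version of $\Gamma_{\Fix,\Repel}$ and to extract the Weight Formula from the vanishing of its total mass. Since $f \equiv \infty$ at the $r$ type I endpoints $\alpha_1,\ldots,\alpha_r$ of $\Gamma_{\Fix,\Repel}$ (each a classical fixed point of $\varphi$, by Proposition \ref{FiniteGenerationProp}), I first truncate: for each $j$, I choose a type II point $\alpha_j^*$ on the edge from $\alpha_j$ into the tree, close enough that the slope of $f$ at $\alpha_j^*$ in the direction pointing away from $\alpha_j$ equals $-(d^2-d)$; such a point exists by Proposition \ref{ClassicalFixedPtSlope}. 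Let $\Gamma^*$ be the resulting finite metric tree, with $r$ truncated endpoints and $s$ focused-repelling-fixed-point endpoints. Then $f|_{\Gamma^*}$ is continuous, finite, and piecewise affine, and $\Delta_{\Gamma^*}(f)$ is a discrete measure of total mass zero.

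I next tabulate the local mass $m(Q) := -\sum_{\vv \in T_{Q,\Gamma^*}} \partial_\vv f(Q)$ at each vertex of $\Gamma^*$ using the slope formulas of Section \ref{SlopeFormulaSection}. At a truncated endpoint $\alpha_j^*$, the single direction in $\Gamma^*$ contributes slope $-(d^2-d)$, giving $m(\alpha_j^*) = d^2-d$. At a focused repelling endpoint $P$, the lone focal direction $\vv_1$ satisfies $s_\varphi(P,\vv_1) = d-\deg_\varphi(P)$ and $\varphi_*(\vv_1)=\vv_1$ by Proposition \ref{FocusedRepellingProp}; the raw slope formula (\ref{FF1B}) then yields $m(P) = (d^2-d)+2d(1-\deg_\varphi(P))$. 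At an interior vertex $P$, the summed slope formulas (\ref{NonIdFixedLaplacian}), (\ref{IdFixedLaplacian}), (\ref{NonFixedLaplacian}) give
\begin{equation*}
m(P) \;=\; (d^2-d)\bigl(2-v_\FR(P)\bigr) \;+\; 2d \cdot \varepsilon(P) \,,
\end{equation*}
where $\varepsilon(P)$ equals $-(\deg_\varphi(P)-1+N_\Shearing(P))$, $0$, or $2-v_\FR(P)$ according to whether $P$ is a non-id-indifferent fixed point, an id-indifferent fixed point, or not fixed by $\varphi$.

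Now I sum $\sum_Q m(Q) = 0$ over all vertices of $\Gamma^*$ and split the resulting identity into its $(d^2-d)$ and $2d$ contributions. The Euler identity $\sum_Q(v(Q)-2)=-2$ for a finite tree, applied to $\Gamma^*$ with its $r+s$ endpoints of valence $1$, gives $\sum_{Q \text{ interior}}\bigl(2-v_\FR(Q)\bigr) = 2-r-s$. Hence the total $(d^2-d)$ coefficient collapses to $r+s+(2-r-s)=2$, and vanishing of $\sum_Q m(Q)$ forces the $2d$ coefficient to equal $-(d-1)$. Rearranging and negating, this reads
\begin{equation*}
\sum_{\substack{\text{focused}\\\text{repelling}}} \!\!\bigl(\deg_\varphi(P)-1\bigr) \,+\, \sum_{\substack{\text{non-id-indiff.}\\\text{fixed interior}}} \!\!\bigl(\deg_\varphi(P)-1+N_\Shearing(P)\bigr) \,+\, \sum_{\substack{\text{non-fixed}\\\text{interior}}} \!\!\bigl(v_\FR(P)-2\bigr) \;=\; d-1\,.
\end{equation*}

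Finally, I identify this left-hand side with $\sum_{P \in \BPP_K} w_\varphi(P)$ via Proposition \ref{WeightProperties}. The one subtlety is that for a non-fixed interior vertex $P$ of $\Gamma_{\Fix,\Repel}$, the valence $v_\FR(P)$ must be shown to equal the count $v(P)$ of directions containing a type I fixed point used in the weight definition. This follows from Proposition \ref{IdIndiffOffFixProp}: any non-fixed type II point of $\Gamma_{\Fix,\Repel}$ must lie in $\Gamma_\Fix$, and any $\Gamma_{\Fix,\Repel}$-edge at such a $P$ leading toward a focused repelling fixed point $Q$ passes strictly through the nearest $\Gamma_\Fix$-point $Q_0$ to $Q$ (else $P$ itself would be id-indifferent, contradicting non-fixed), and $Q_0 \in \Gamma_\Fix$ then connects via $\Gamma_\Fix$ to a type I fixed point lying in the same direction from $P$. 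All id-indifferent fixed points and remaining points of $\BPP_K$ contribute $0$ on both sides by Proposition \ref{WeightProperties}. The main obstacle is this last bookkeeping step, together with the careful case analysis of local Laplacian masses for the three species of interior vertices; once those are in hand, the rest is a direct computation from the slope formulas of Section \ref{SlopeFormulaSection}.
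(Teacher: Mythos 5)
Your proposal is correct and follows essentially the same route as the paper: truncate $\Gamma_{\Fix,\Repel}$ near its type I endpoints, compute the Laplacian of $\ordRes_\varphi(\cdot)$ on the resulting finite tree via the slope formulas of \S\ref{SlopeFormulaSection}, invoke total mass zero together with the Euler identity for trees, and then match the surviving $2d$-terms with the weights using Proposition \ref{WeightProperties} and the $v_{\FR}(P)=v(P)$ identification from Proposition \ref{IdIndiffOffFixProp}. The only differences are organizational (you treat focused repelling endpoints via the raw formula (\ref{FF1B}) rather than the unified identity, and separate the $(d^2-d)$ and $2d$ bookkeeping more explicitly), which does not change the argument.
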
 

\begin{corollary}\label{RepellingFixedPtCor}
Let $\varphi(z) \in K(z)$ have degree $d \ge 2$. Then 
\begin{equation} \label{RepellingFixedPtFormula}
\sum_{ \substack{ \text{\rm repelling fixed points} \\ \text{$P \in \BHH_K$} }} (\deg_\varphi(P) -1 ) \ \le \ d-1 \ .
\end{equation} 
In particular $\varphi$ can have most $d-1$  repelling fixed points in $\BHH_K$. 
\end{corollary}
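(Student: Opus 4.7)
The plan is to derive the corollary directly from the Weight Formula (Theorem \ref{WeightFormulaTheorem}) together with the explicit weight computation for repelling fixed points recorded in Proposition \ref{WeightProperties}. The key observation is that every repelling fixed point in $\BHH_K$ is of type II (by Rivera-Letelier) and is fixed by $\varphi$, so case (1) of Definition \ref{WeightDefs} applies and gives $w_\varphi(P) = \deg_\varphi(P) - 1 + N_\Shearing(P)$. Since $N_\Shearing(P) \ge 0$, we obtain the pointwise inequality
\begin{equation*}
w_\varphi(P) \ \ge \ \deg_\varphi(P) - 1
\end{equation*}
for each repelling fixed point $P \in \BHH_K$.

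Next, I would invoke the non-negativity of the weights $w_\varphi(Q) \ge 0$ at all other points of $\BPP_K$ (immediate from Definition \ref{WeightDefs}), so that
\begin{equation*}
\sum_{\substack{\text{repelling fixed pts} \\ P \in \BHH_K}} \bigl(\deg_\varphi(P) - 1\bigr)
\ \le \ \sum_{\substack{\text{repelling fixed pts} \\ P \in \BHH_K}} w_\varphi(P)
\ \le \ \sum_{P \in \BPP_K} w_\varphi(P) \ = \ d-1,
\end{equation*}
where the final equality is the Weight Formula \eqref{WeightFormula}. This establishes \eqref{RepellingFixedPtFormula}.

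For the cardinality bound, note that for every repelling fixed point $P \in \BHH_K$ one has $\deg_\varphi(P) \ge 2$, so $\deg_\varphi(P) - 1 \ge 1$. Consequently the number of repelling fixed points in $\BHH_K$ is bounded above by $\sum (\deg_\varphi(P) - 1)$, which by the first inequality is at most $d-1$.

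There is no real obstacle here: once the Weight Formula is in hand and one knows the explicit form of $w_\varphi(P)$ at type II repelling fixed points, the corollary is an immediate two-line consequence. The only minor subtlety worth flagging is the appeal to Rivera-Letelier's theorem to ensure that repelling fixed points in $\BHH_K$ are necessarily of type II (and thus covered by case (1) of Definition \ref{WeightDefs}); this was already used in the proof of Theorem \ref{FixRepelThm}, so it is legitimate to cite it.
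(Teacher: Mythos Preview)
Your proof is correct and is exactly the argument the paper has in mind: the corollary is stated immediately after the Weight Formula with no separate proof, since it follows at once from \eqref{WeightFormula} together with the inequality $w_\varphi(P)\ge \deg_\varphi(P)-1$ for repelling fixed points and the non-negativity of all weights. Your only superfluous step is the appeal to Rivera-Letelier for type~II: case~(1) of Definition~\ref{WeightDefs} applies to any fixed point in $\BHH_K$, so the type is irrelevant here.
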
 

\smallskip 
In Example C of \S\ref{ExamplesSection} 
we will see that for each $d \ge 2$, there are functions $\varphi$ of degree $d$ 
which have $d-1$ repelling fixed points in $\BHH_K$, so Corollary \ref{RepellingFixedPtCor} is sharp.  

\smallskip
Before proving the weight formula, 
we will need an identity relating the number of endpoints of a tree to the valences of 
its internal branch points.  If $\Gamma$ is a finite graph, and $P \in \Gamma$, 
the valence $v_{\Gamma}(P)$ is the number
of edges of $\Gamma$ incident at $P$.

\begin{lemma}  \label{VertexSumFormula} 
Let $\Gamma$ be a finite tree with $D$ endpoints.  Then 
\begin{equation} \label{EulerTreeFormula} 
D \ - \ \sum_{\substack{\text{\rm branch points} \\ P \in \, \Gamma}} (v_{\Gamma}(P)-2)  \ = \ 2 \ .
\end{equation}  
\end{lemma}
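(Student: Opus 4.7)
The plan is to deduce the identity from the two standard combinatorial facts about finite trees: the Euler relation $V = E + 1$, and the handshake lemma $\sum_P v_\Gamma(P) = 2E$. Since valence-$2$ vertices contribute $0$ to the sum on the left, they play no role, so first I would reduce to a combinatorial model of $\Gamma$ whose vertex set consists exactly of the endpoints and the branch points. Concretely, subdivide (or un-subdivide) $\Gamma$ so that every vertex has valence $1$ or valence $\geq 3$, collapsing any chains of valence-$2$ points into single edges. This does not change either $D$ or the sum $\sum_{\text{branch}}(v_\Gamma(P) - 2)$.

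Next, let $B$ denote the number of branch points in this model, so the total vertex count is $V = D + B$, and let $E$ denote the number of edges. Since $\Gamma$ is a connected, finite, acyclic graph, one has $V - E = 1$, hence $E = D + B - 1$. By the handshake lemma applied to the model,
\begin{equation*}
\sum_{P \in \Gamma} v_\Gamma(P) \ = \ 2E \ = \ 2D + 2B - 2 \ .
\end{equation*}
Splitting the left-hand sum into its contributions from endpoints (each contributing $1$) and from branch points yields
\begin{equation*}
D \ + \ \sum_{\substack{\text{branch points} \\ P \in \Gamma}} v_\Gamma(P) \ = \ 2D + 2B - 2 \ ,
\end{equation*}
and subtracting $2B = \sum_{\text{branch}} 2$ from both sides gives $\sum_{\text{branch}}(v_\Gamma(P) - 2) = D - 2$, which is exactly (\ref{EulerTreeFormula}) after rearrangement.

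There is essentially no obstacle here: the lemma is the standard Euler-characteristic count for a finite tree. The only care needed is to confirm that the formula is applied to a genuine tree with $D \ge 1$ endpoint (so that $V, E \ge 1$); the degenerate case of a tree consisting of a single point is excluded in the intended application because the preceding lemma guarantees that $\Gamma_{\Fix,\Repel}$ is nontrivial, and more generally the identity (\ref{EulerTreeFormula}) is naturally read for trees with at least one edge.
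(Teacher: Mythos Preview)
Your proof is correct and essentially matches the paper's first argument: both use the handshake lemma together with the Euler relation for a tree (the paper phrases it as $V-E+F=2$ with $F=1$ for a planar tree, which amounts to your $V=E+1$). The paper additionally offers a short induction on the number of endpoints as a second proof.
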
  

\begin{proof} This follows from Euler's formula $V - E + F = 2$ for planar graphs.  

If $B$ is the number of branch points of $\Gamma$, 
then $V = D + B$.  Each edge has two endpoints, so $E = (\sum_{\text{vertices}} v(P))/2$.  Since $\Gamma$ is a tree, 
if it is embedded as a planar graph then $F = 1$.  Inserting these in Euler's formula yields  
\begin{equation} \label{RevisedEuler}
2D + 2 B - \sum_{\text{\rm endpoints}} v(P) - \sum_{\text{\rm branch points}} v(P) \ = \ 2 \ .
\end{equation}
At each endpoint we have $v(P) = 1$, so $\sum_{\text{\rm endpoints}} v(P) = D$.  There are $B$ branch points, so 
$2B = \sum_{\text{\rm branch points}}  2$.  Combining terms in (\ref{RevisedEuler}) 
gives (\ref{EulerTreeFormula}).

\smallskip
It is also easy to prove Lemma \ref{VertexSumFormula} by induction.  
When $D = 2$, then $\Gamma$ is a segment with no branch points,
and (\ref{EulerTreeFormula}) holds trivially.  Fix $D \ge 2$, and suppose (\ref{EulerTreeFormula}) holds for all trees 
with $D$ endpoints.  A tree $\Gamma$ with $D+1$ endpoints can be gotten by attaching a new edge to a tree $\Gamma_0$
with $D$ endpoints.  If the edge is attached at an existing branch point, the valence of that branch point 
increases by $1$, and $D$ increases by $1$, so (\ref{EulerTreeFormula}) continues to hold.  
If it is attached at an interior 
point of some edge, it creates a new branch point with valence $v(P) = 3$;  
since $v(P) - 2 = 1$ both $D$ and the sum over branch points increase by $1$, 
and again (\ref{EulerTreeFormula}) continues to hold.
\end{proof}

\begin{proof}[Proof of Theorem \ref{WeightFormulaTheorem}]
The idea is to restrict $\ordRes_\varphi(\cdot)$ to $\Gamma_{\Fix,\Repel}$, take its Laplacian,
and simplify.  Since $\Gamma_{\Fix,\Repel}$ has branches of infinite length, in order to apply the theory of 
graph Laplacians for metrized graphs from \cite{B-R0}, 
we cut off the type I endpoints of $\Gamma_{\Fix,\Repel}$, 
obtaining a finite metrized tree $\Gamma_{\hFR}$. 
We then restrict $\ordRes_\varphi(\cdot)$ to $\Gamma_{\hFR}$, and take its Laplacian there.  

For each type I fixed point $\alpha_i$ of $\varphi$, choose a type II 
point $Q_i \in \Gamma_{\Fix,\Repel}$ close enough to $\alpha_i$ that 
\begin{enumerate} 
\item there are no branch points of $\Gamma_{\Fix,\Repel}$ in  $[Q_i,\alpha_i]$, and 
\item at each $P \in [Q_i,\alpha_i)$, the slope of $\ordRes_\varphi(\cdot)$ at $P$  in the (unique) 
direction $\vv \in T_P$ pointing away from $\alpha_i$ in $\Gamma_{\Fix,\Repel}$ is $-(d^2 -d)$.
\end{enumerate}
Since any two paths emanating from $\alpha_i$ share a common initial segment, 
Proposition \ref{ClassicalFixedPtSlope} shows that such $Q_i$ exist.  

Let $\Gamma_{\hFR}$
be the subtree of $\Gamma_{\Fix,\Repel}$ spanned by the focused repelling fixed points 
and the points $Q_i$. Suppose there are $D_1$ distinct type I fixed points 
(ignoring multiplicities) and $D_2$ focused repelling fixed points.  Then 
the number of endpoints of $\Gamma_{\hFR}$ is 
\begin{equation*}
D = \ D_1 + D_2 \ .
\end{equation*}  
Let $f(\cdot)$ be the restriction of $\ordRes_\varphi(\cdot)$ to $\Gamma_{\FR}$.  
Then $f$ belongs to $\CPA(\Gamma_{\hFR})$.

For each  $P \in \Gamma_{\hFR}$, write $T_{P,\hFR}$ for its tangent space in $\Gamma_{\hFR}$  
(the set of $\vv \in T_P$ such that there is an edge of $\Gamma_{\hFR}$ emanating from $P$
in the direction $\vv$).  If $P \in \Gamma_{\hFR} \backslash \{Q_1, \ldots, Q_{D_1}\}$, 
then $T_{P,\hFR} = T_{P,\FR}$ and the valence of $P$ in $\Gamma_{\hFR}$ coincides with $v_{\FR}(P)$.    
If $P \in \{Q_1, \ldots, Q_{D_1}\}$ then 
$T_{P,\hFR}$ consists of the single direction $\vv_{P,1} \in T_P$ pointing into $\Gamma_{\hFR}$.  

For the Laplacian $\Delta_{\hFR}(f)$ we have  
\begin{eqnarray}
\lefteqn{ -\Delta_{\hFR}(f) \ = } &  & \notag \\
& & \quad \sum_{ P \in \,\{Q_1, \ldots, Q_{D_1}\} } \partial_{\vv_{P,1}}f(P) \, \delta_P(z) 
   \ +  \sum_{ P \in \,\Gamma_{\hFR} \backslash  \{Q_1, \ldots, Q_{D_1}\} } 
       \big(\sum_{\vv \in T_{P,\FR}} \partial_{\vv}f(P) \big) \, \delta_P(z) \ . \label{ExpandedLaplacian}
\end{eqnarray} 
By Proposition \ref{ClassicalFixedPtSlope}, if $P \in \,\{Q_1, \ldots, Q_{D_1}\}$ 
then $\partial_{\vv_{P,1}}f(P) = -(d^2-d)$.  
We claim that if $P \in \Gamma_{\hFR} \backslash  \{Q_1, \ldots, Q_{D_1}\}$, then 
\begin{equation} \label{DeltaID} 
\sum_{\vv \in T_{P,\FR}} \partial_{\vv}f(P) \ = \ (d^2-d) \cdot (v_{\FR}(P) - 2) \ + \ 2d \cdot w_\varphi(P) \ . 
\end{equation} 

If $P \in \Gamma_{\hFR} \backslash  \{Q_1, \ldots, Q_{D_1}\}$ is of type II  
and is a repelling fixed point, or is a multiplicatively or additively indifferent fixed point, 
then (\ref{DeltaID}) follows from Proposition \ref{NonIdIndiffFixedSlope} and the definition of $w_\varphi(P)$. If $P$ is 
an id-indifferent fixed point, then (\ref{DeltaID}) follows from Proposition \ref{IdIndiffFixedSlope} since $w_\varphi(P) = 0$
by Proposition \ref{WeightProperties}.

If $P \in \Gamma_{\hFR} \backslash  \{Q_1, \ldots, Q_{D_1}\}$ is a type II point with $\varphi(P) \ne P$, 
then by Propositions \ref{FocusedRepellingProp} and \ref{IdIndiffOffFixProp},
$P$ belongs to $\Gamma_{\Fix}$ and is not a point where a branch of $\Gamma_{\Fix,\Repel} \backslash \Gamma_{\Fix}$
attaches to $\Gamma_{\Fix}$.  
It follows that $v_{\FR}(P)$ (which is the valence of $P$ in $\Gamma_{\hFR}$ and $\Gamma_{\Fix,\Repel}$) 
coincides with $v(P)$ (its valence in $\Gamma_{\Fix}$). 
Hence (\ref{DeltaID}) follows from Proposition  \ref{NonFixedSlope} and the definition of $w_\varphi(P)$.

If $P \in \Gamma_{\hFR} \backslash  \{Q_1, \ldots, Q_{D_1}\}$ is a type III point, then $v_{\FR}(P) = 2$ and $w_\varphi(P) = 0$, 
so the right side of (\ref{DeltaID}) is $0$.  The left side of (\ref{DeltaID}) is also $0$, 
since $\ordRes_{\varphi}(\cdot)$ can change slope only at points of type II (see Theorem \ref{ResThm}). 
Each $P \in \Gamma_{\hFR} \backslash  \{Q_1, \ldots, Q_{D_1}\}$ is either of type II or type III,
so this establishes (\ref{DeltaID}) in all cases.

\smallskip
Since $\Delta_{\hFR}(f)$ has total mass $0$, it follows from (\ref{ExpandedLaplacian}) and (\ref{DeltaID}) that 
\begin{equation} \label{FBAX} 
\ D_1 \cdot (-(d^2-d)) \ + \ 
\sum_{ P \in \,\Gamma_{\hFR} \backslash  \{Q_1, \ldots, Q_{D_1}\} } 
      \big((d^2-d) \cdot (v_{\FR}(P) - 2) \ + \ 2d \cdot w_\varphi(P)\big) \ = \ 0 \ .
\end{equation} 
If $P$ is a focused repelling fixed point, then $v_{\FR}(P) = 1$,
so $v_{\FR}(P) - 2 = -1$.  If $P$ is not an endpoint or a branch point of $\Gamma_{\hFR}$ then $v_{\FR}(P) - 2 = 0$.
Moving the terms in (\ref{FBAX}) involving $(d^2-d)$ to the right side, 
and noting that there are $D_2$ focused repelling fixed points, it follows that 
\begin{equation} \label{FBAX2}   
 \sum_{ P \in \,\Gamma_{\hFR} \backslash  \{Q_1, \ldots, Q_{D_1}\} }  2d \cdot w_\varphi(P) 
\ = \ (d^2 - d) \cdot \Big( D_1 + D_2 - \sum_{\text{branch points of $\Gamma_{\hFR}$}} (v_{\FR}(P) - 2) \Big) \ .
\end{equation}
By Lemma \ref{VertexSumFormula} the sum on the right side of (\ref{FBAX2}) equals $2$.  Dividing through by $2d$ gives
\begin{equation} \label{FBAX3} 
\sum_{ P \in \,\Gamma_{\hFR} \backslash  \{Q_1, \ldots, Q_{D_1}\} } w_\varphi(P) \ = \ d \, - \, 1 \ . 
\end{equation} 

If we let the endpoints points $Q_i$ approach the type I fixed points $\alpha_i$,  
the corresponding graphs $\Gamma_{\hFR}$ exhaust $\Gamma_{\Fix,\Repel} \cap \BHH_K$.  
By Proposition \ref{WeightProperties}, we have $w_\varphi(P) = 0$ for each type I fixed point and  
for each $P \in \BPP_K \backslash \Gamma_{\Fix,\Repel}$.
Thus 
\begin{equation*} 
\sum_{P \in \, \BPP_K} w_\varphi(P) \ = \ d \, - \, 1 \ , 
\end{equation*}
which is (\ref{WeightFormula}).  The expanded form of the weight formula (\ref{ExpandedWeightFormula}) 
follows from (\ref{WeightFormula}) and the formulas for weights in Proposition \ref{WeightProperties}.  
\end{proof}

\begin{definition} \label{CrucialSetDef} 
The set of points in $P \in \BPP_K$ with weight $w_\varphi(P) > 0$ will be called the {\em crucial set} \,$\cCr(\varphi)$,
and the weights $w_\varphi(P)$ will be called the {\em crucial weights}.  
The probability measure 
\begin{equation} \label{CrucialMeasure}
\nu_\varphi \ = \ \frac{1}{d-1} \sum_{P \in \, \BPP_K} w_\varphi(P) \, \delta_P(z) 
\end{equation} 
will be called the {\em crucial measure} of $\varphi$.
\end{definition}

The crucial set consists of the repelling fixed points in $\BHH_K$, 
the indifferent fixed points with a shearing direction, and the branch points of $\Gamma_{\Fix}$ which 
are moved by $\varphi$.  It has at most $d-1$ elements.  
If $\varphi$ has potential good reduction, it consists of a single point.  However, it can also 
consist of a single point in many other ways;
see Example G in \S\ref{ExamplesSection}.

\medskip
If $\Gamma$ is a finite metrized graph, there is another measure of total mass $1$ attached to $\Gamma$, 
its ``Canonical Measure'' $\mu_{\Gamma,\Can}$ (see \cite{B-R0}).  When $\Gamma$ is a tree, 
\begin{equation*}
\mu_{\Gamma,\Can} \ = \ \frac{1}{2} \sum_{P \in \Gamma} (2-v_{\Gamma}(P)) \, \delta_P(z) \ .
\end{equation*} 
Note that $\mu_{\Gamma,\Can}$  gives mass $1/2$ to each endpoint 
of $\Gamma$ and negative mass $1-(v_{\Gamma}(P)/2))$ to each branch point, so it is not in general a probability measure.  
The fact that $\mu_{\Gamma,\Can}$ has total mass $1$ follows from Lemma \ref{VertexSumFormula}. 
When $\Gamma = \Gamma_{\hFR}$, we will write $\mu_{\hFR,\Can}$ for its canonical measure. 
 
Using the measures $\mu_{\hFR,\Can}$ and $\nu_{\varphi}$, 
we can decompose the Laplacian of $\ordRes_{\varphi}(\cdot)$ on $\Gamma_{\hFR}$ into a background part which depends only 
on the branching of $\Gamma_{\hFR}$, and a part which depends on the dynamics of $\varphi$, as follows:  

\begin{corollary} \label{LaplacianDecomp} Let $\varphi(z) \in K(z)$ have degree $d \ge 2$. 
Define $\Gamma_{\hFR}$ as above, and let $f(\cdot)$ be the restriction of $\ordRes_{\varphi}(\cdot)$ 
to $\Gamma_{\hFR}$.    Then 
\begin{equation*}
\Delta_{\hFR}(f) \ = \ 2(d^2-d) \cdot (\mu_{\hFR,\Can} - \nu_{\varphi} ) \ .
\end{equation*} 
\end{corollary}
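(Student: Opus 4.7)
The plan is to leverage the Laplacian computation already carried out in the proof of Theorem \ref{WeightFormulaTheorem} and match it against the expansion of the right-hand side. Since both sides are discrete measures supported on the vertices of $\Gamma_{\hFR}$, it suffices to compare the coefficient of $\delta_P(z)$ at each $P$.

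First I would write out $\Delta_{\hFR}(f)$ explicitly via the definition (\ref{LaplacianDef}). At each cutoff endpoint $Q_i$ the only direction in $T_{Q_i,\hFR}$ is $\vv_{Q_i,1}$, and Proposition \ref{ClassicalFixedPtSlope} gives the slope $-(d^2-d)$ there, so $Q_i$ contributes $(d^2-d)\delta_{Q_i}(z)$ to $\Delta_{\hFR}(f)$. At every other vertex $P$, the identity (\ref{DeltaID})---which was the core output of the Weight Formula proof, packaging Propositions \ref{NonIdIndiffFixedSlope}, \ref{IdIndiffFixedSlope}, and \ref{NonFixedSlope}---evaluates $\sum_{\vv\in T_{P,\FR}}\partial_\vv f(P)$ as $(d^2-d)(v_{\FR}(P)-2) + 2d\,w_\varphi(P)$. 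Negating these yields an explicit description of $\Delta_{\hFR}(f)$ as a sum of Dirac masses with coefficients expressed purely in terms of $v_{\FR}(P)$ and $w_\varphi(P)$.

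Next I would expand $2(d^2-d)\mu_{\hFR,\Can}$ using the canonical-measure formula for a tree. Because each $Q_i$ is an endpoint of $\Gamma_{\hFR}$ (so $v_{\hFR}(Q_i)=1$) and at every other $P\in\Gamma_{\hFR}$ one has $v_{\hFR}(P)=v_{\FR}(P)$, the $(d^2-d)$-terms in the previous paragraph match $2(d^2-d)\mu_{\hFR,\Can}$ term by term. For the crucial measure, the identity $2(d^2-d)/(d-1)=2d$ converts the $w_\varphi$-terms into $-2(d^2-d)\nu_\varphi$. The one bookkeeping point requiring care is that every $P$ with $w_\varphi(P)>0$ must lie in the interior $\Gamma_{\hFR}\backslash\{Q_1,\ldots,Q_{D_1}\}$ rather than coinciding with a cutoff point $Q_i$; this is guaranteed by Proposition \ref{WeightProperties}(B), which locates the crucial set in $\Gamma_{\Fix,\Repel}\cap\BHH_K$, together with our freedom to take the $Q_i$ as close to the type I fixed points $\alpha_i$ as desired.

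Subtracting the two expansions then yields the claimed identity at each $P \in \Gamma_{\hFR}$ simultaneously. There is no real obstacle to this argument: in essence the corollary is the Laplacian computation (\ref{FBAX}) of Theorem \ref{WeightFormulaTheorem} repackaged in measure-theoretic form, before one invokes the total-mass-zero condition to extract the scalar Weight Formula. The genuine work has already been done in the slope computations of Section \ref{SlopeFormulaSection} and in the Identification Lemmas that underlie (\ref{DeltaID}).
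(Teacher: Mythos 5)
Your proof is correct and is exactly the paper's argument: the paper's own proof is the one-line remark that the corollary is a reformulation of (\ref{ExpandedLaplacian}) using (\ref{DeltaID}) and the definitions of $\mu_{\hFR,\Can}$ and $\nu_\varphi$, which is precisely the pointwise matching of Dirac coefficients you carry out. Your bookkeeping observation that the crucial set avoids the cutoff points $Q_i$ is a worthwhile detail the paper leaves implicit.
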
 

\begin{proof} This is a reformulation of (\ref{ExpandedLaplacian}), using (\ref{DeltaID}) 
and the definitions of $\mu_{\hFR,\Can}$ and $\nu_{\varphi}$.  

\end{proof} 

\section{Characterizations of the Minimal Resultant Locus} \label{CharacterizationSection}

In this section, we establish a dynamical characterization and a 
moduli-theoretic characterization of $\MinResLoc(\varphi)$. 

\smallskip
We first give the dynamical characterization.  Before stating it, we need two definitions.  

\begin{definition}[Barycenter] \label{BarycenterDef}
{\em The barycenter of a finite positive measure $\nu$ on $\BPP_K$ is the set of points $Q \in \BPP_K$
such that for each direction $\vv \in T_Q$, at most half the mass of $\nu$ lies in $B_Q(\vv)^-$.}  
\end{definition}

The notion of the barycenter of a measure on $\BPP_K$ is due to Benedetto and Rivera-Letelier,
in unpublished work on the ``$s \log(s)$'' bound for the number of $k$-rational preperiodic points of $\varphi(z) \in k(z)$,
when $k$ is a number field.  

\begin{definition}[The Crucial Tree] \label{CrucialTreeDef}
{\em The crucial tree $\Gamma_\varphi$ is the subtree of $\Gamma_{\Fix,\Repel}$ 
spanned by the crucial set of $\varphi$. We define the {\rm vertices} of $\Gamma_\varphi$ to be  
the points of the crucial set $($whether they are endpoints or interior points of $\Gamma_\varphi)$, 
and the branch points of $\Gamma_\varphi$.  The {\em edges} of $\Gamma_\varphi$ are  
the closed segments between adjacent vertices.    
}
\end{definition}

\begin{theorem}[\rm Dynamical Characterization of $\MinResLoc(\varphi)$] \label{BaryCenterTheorem}  
Let $\varphi(z) \in K(z)$ have degree $d \ge 2$.  
Then $\MinResLoc(\varphi)$ is the barycenter of the crucial measure $\nu_\varphi$.  Equivalently, a point 
$Q \in \BPP_K$ belongs to $\MinResLoc(\varphi)$ if and only if for each $\vw \in T_Q$ 
\begin{equation} \label{WeightBalanceFormula}
\sum_{P \in B_Q(\vw)^-} w_\varphi(P) \ \le \ \frac{d-1}{2} \ .
\end{equation} 
If $d$ is even, $\MinResLoc(\varphi)$ is a vertex 
o\!f the crucial tree $\Gamma_\varphi$.  
If $d$ is odd, $\MinResLoc(\varphi)$ is either a vertex
or an edge o\!f \,$\Gamma_\varphi$.  
\end{theorem}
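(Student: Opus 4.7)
The plan is to reduce the condition $Q\in\MinResLoc(\varphi)$ to a statement about directional slopes of $f:=\ordRes_\varphi(\cdot)$ on $\Gamma_{\Fix,\Repel}$, compute those slopes via the Laplacian decomposition of Corollary~\ref{LaplacianDecomp}, and translate the resulting inequalities into the barycenter condition on $\nu_\varphi$.

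By Proposition~\ref{MinResLocTreeThm}, $\MinResLoc(\varphi)\subset \Gamma_{\Fix,\Repel}$, and since $f$ is convex upward on paths (Theorem~\ref{ResThm}(A)), a point $Q\in\Gamma_{\Fix,\Repel}$ lies in $\MinResLoc(\varphi)$ if and only if $\partial_{\vw}f(Q)\ge 0$ for every $\vw\in T_Q$. For $\vw\notin T_{Q,\FR}$ the ball $B_Q(\vw)^-$ meets $\Gamma_{\Fix,\Repel}$ trivially and hence contains no weight, while the slope formulas of Propositions~\ref{NonIdIndiffFixedSlope}, \ref{IdIndiffFixedSlope}, and \ref{NonFixedSlope} give $\partial_{\vw}f(Q)\ge d^2-d>0$. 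Both the minimality and the barycenter condition are therefore automatic in such directions, reducing the problem to $\vw\in T_{Q,\FR}$.

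For the key step, enlarge the cut-off tree $\Gamma_{\hFR}$ used in the proof of Theorem~\ref{WeightFormulaTheorem} so that $Q$ lies strictly interior to $\Gamma_{\hFR}$. A standard telescoping of the definition of $\Delta_{\hFR}$ shows that for any $g\in\CPA(\Gamma_{\hFR})$ and any $\vw\in T_{Q,\hFR}$,
\begin{equation*}
\partial_{\vw}g(Q) \ = \ \Delta_{\hFR}(g)\bigl(B_Q(\vw)^-\cap\Gamma_{\hFR}\bigr).
\end{equation*}
Applied to $g=f$, Corollary~\ref{LaplacianDecomp} gives
\begin{equation*}
\partial_{\vw}f(Q) \ = \ 2(d^2-d)\bigl(\mu_{\hFR,\Can}(B_Q(\vw)^-)-\nu_\varphi(B_Q(\vw)^-)\bigr).
\end{equation*}
Now apply Lemma~\ref{VertexSumFormula} to the finite subtree $\overline{B_Q(\vw)^-\cap\Gamma_{\hFR}}$, in which $Q$ has valence $1$ while every other vertex retains its $\Gamma_{\hFR}$-valence: if $D_\vw$ is the number of endpoints of $\Gamma_{\hFR}$ lying in $B_Q(\vw)^-$, one reads off $\sum_{P\in B_Q(\vw)^-}(v_{\hFR}(P)-2)=D_\vw-1$, and consequently $\mu_{\hFR,\Can}(B_Q(\vw)^-)=\tfrac12$. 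Substituting yields the clean formula
\begin{equation*}
\partial_{\vw}f(Q) \ = \ (d^2-d)\bigl(1-2\,\nu_\varphi(B_Q(\vw)^-)\bigr),
\end{equation*}
so $\partial_{\vw}f(Q)\ge 0$ is equivalent to $\sum_{P\in B_Q(\vw)^-}w_\varphi(P)\le (d-1)/2$. This establishes the balance condition (\ref{WeightBalanceFormula}) and identifies $\MinResLoc(\varphi)$ with the barycenter of $\nu_\varphi$.

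The shape statements follow by a parity argument. On the interior of an edge of $\Gamma_\varphi$ no crucial points lie, so $f$ is affine there and $\nu_\varphi(B_Q(\vw)^-)$ is constant for each of the two directions $\vw\in T_{Q,\FR}$; since the two masses are complementary and sum to $d-1$, the barycenter inequality in both directions forces each to equal $(d-1)/2$, which is impossible when $d$ is even. Hence for even $d$, $\MinResLoc(\varphi)$ reduces to a single vertex of $\Gamma_\varphi$; for odd $d$, the equality $(d-1)/2=(d-1)/2$ is locally constant along the open edge and, by checking the barycenter inequality at the two endpoints (where the mass in the direction along the edge is still $(d-1)/2$ while the remaining directions carry mass at most $(d-1)/2-w_\varphi(v)$), persists to the closed edge, so $\MinResLoc(\varphi)$ is either a vertex or a full edge of $\Gamma_\varphi$. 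The principal obstacle is the identity $\mu_{\hFR,\Can}(B_Q(\vw)^-)=1/2$; once it is in hand via Lemma~\ref{VertexSumFormula}, the rest is bookkeeping.
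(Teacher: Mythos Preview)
Your argument is correct and is essentially the paper's proof, reorganized. The paper builds, for each direction $\vw\in T_{Q,\FR}$, a fresh truncated subtree $\widehat\Gamma\subset B_Q(\vw)^-$, computes $\partial_\vw f(Q)$ as the negative of the total Laplacian mass of $f$ on $\widehat\Gamma$ via formula (\ref{DeltaID}), and then invokes Lemma~\ref{VertexSumFormula} to collapse the branching terms to the constant $1$. You instead work once on the global truncated tree $\Gamma_{\hFR}$, apply the telescoping identity $\partial_\vw f(Q)=\Delta_{\hFR}(f)(B_Q(\vw)^-)$, and feed in the ready-made decomposition $\Delta_{\hFR}(f)=2(d^2-d)(\mu_{\hFR,\Can}-\nu_\varphi)$ from Corollary~\ref{LaplacianDecomp}; Lemma~\ref{VertexSumFormula} then enters only to show $\mu_{\hFR,\Can}(B_Q(\vw)^-)=\tfrac12$. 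The net computation is identical, but your packaging isolates the clean slope identity $\partial_\vw f(Q)=(d^2-d)\bigl(1-2\nu_\varphi(B_Q(\vw)^-)\bigr)$, which the paper obtains only implicitly in (\ref{FG1})--(\ref{FG2}). One small caveat: when $Q$ is a focused repelling fixed point it is an endpoint of $\Gamma_{\Fix,\Repel}$ and cannot be made strictly interior to $\Gamma_{\hFR}$, but your telescoping identity and the $\mu_{\hFR,\Can}$ computation both remain valid at endpoints, so this does not affect the argument. The shape/parity discussion matches the paper's.
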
 

\begin{proof} To show that $\MinResLoc(\varphi)$ is the barycenter of $\nu_{\varphi}$, we must show that  
for each $Q \in \BPP_K$, then $Q \in \MinResLoc(\varphi)$ if and only if for each $\vw \in T_Q$,
\begin{equation*}
\nu_\varphi(B_Q(\vw)^-) \ \le \ 1/2 \ .
\end{equation*} 
If $Q \notin \Gamma_{\Fix,\Repel}$ this is trivial:  
$Q \notin \MinResLoc(\varphi)$ since $\MinResLoc(\varphi) \subset \Gamma_{\Fix,\Repel}$, 
while if $\vw \in T_Q$ is the direction towards $\Gamma_{\Fix,\Repel}$ then $\nu_\varphi(B_Q(\vv)^-) = 1$.   
Similar reasoning applies when $Q \in \PP^1(K)$.   

Suppose $Q \in \Gamma_{\Fix,\Repel} \cap \BHH_K$, and write $f(\cdot) = \ordRes_{\varphi}(\cdot)$.  
Then $Q \in \MinResLoc(\varphi)$ if and only if 
$\partial_{\vw}f(Q) \ge 0$ for each $\vw \in T_Q$.    
If $\vw \in T_Q$ points away from $\Gamma_{\Fix,\Repel}$ then 
$\partial_{\vw} f(Q) > 0$ and $\nu_\varphi(B_Q(\vw)^-) = 0$.  
Hence it is enough to show that for each $\vw \in T_{Q,\FR}$, we have $\partial_{\vw} f(Q) \ge 0$
if and only if  $\nu_\varphi(B_Q(\vw)^-) \le 1/2$,  or equivalently that (\ref{WeightBalanceFormula}) holds.  
For this, we use an argument like the one in the proof of 
Theorem \ref{WeightFormulaTheorem}.  

Let $\Gamma$ be the graph consisting of 
the part of $\Gamma_{\Fix,\Repel}$ in $B_Q(\vw)^-$, together with $Q$.  The endpoints of $\Gamma$ 
are $Q$ and the type I fixed points and focused repelling fixed points of $\varphi$ in $B_Q(\vw)^-$.
Let $D_1^{\prime}$ be the number of type I endpoints, and let $D_2^{\prime}$ be the number of focused repelling 
endpoints, so $\Gamma$ has $D^\prime = 1 + D_1^{\prime} + D_2^{\prime}$ endpoints in all.   

Suppose the type I fixed points of $\varphi$ in $B_Q(\vw)^-$ are $\alpha_1, \ldots, \alpha_{D_1^{\prime}}$.
For each $\alpha_i$, choose a type II point $Q_i$ in $\Gamma_{\Fix,\Repel}$ close enough to $\alpha_i$ that 
\begin{enumerate} 
\item there are no branch points of $\Gamma$ in  $[Q_i,\alpha_i]$, 
\item at each $P \in [Q_i,\alpha_i)$, the slope of $\ordRes_\varphi(\cdot)$ at $P$  in the (unique) 
direction $\vv_i \in T_P$ pointing away from $\alpha_i$ in $\Gamma$ is $-(d^2 -d)$, and
\item each $P \in [Q_i,\alpha_i]$ has weight $w_\varphi(P) = 0$.
\end{enumerate}
Since only finitely many points have positive weight, 
Proposition \ref{ClassicalFixedPtSlope} shows that such $Q_i$ exist.  
Let $\hGamma$ be the finite metrized graph gotten by cutting 
the terminal segments $(Q_i,\alpha_i]$ off of $\Gamma$.  

To simplify notation, write $Q_0$ for $Q$, and let $\vv_0 = \vw \in T_Q$. 
Then the Laplacian $\Delta_{\hGamma}(f)$ satisfies   
\begin{eqnarray}
\lefteqn{ -\Delta_{\hGamma}(f) \ = } &  & \notag \\
& & \quad \sum_{ P \in \,\{Q_0,Q_1, \ldots, Q_{D_1^{\prime}}\} } \partial_{\vv_i}f(P) \, \delta_P(z) 
   \ +  \sum_{ P \in \,\hGamma \backslash  \{Q_0,Q_1, \ldots, Q_{D_1^{\prime}}\} } 
       \big(\sum_{\vv \in T_{P,\FR}} \partial_{\vv}f(P) \big) \, \delta_P(z) \ . \label{ExpandedLaplacianII}
\end{eqnarray} 
Put $L = \partial_{\vw}f(Q) = \partial_{\vv_0}f(Q_0)$.  
By Proposition \ref{ClassicalFixedPtSlope}, if $P \in \,\{Q_1, \ldots, Q_{D_1^{\prime}}\}$ 
then $\partial_{\vv_i}f(P) = -(d^2-d)$. By formula (\ref{DeltaID}), if 
$P \in \hGamma \backslash  \{Q_0,Q_1, \ldots, Q_{D_1^{\prime}}\}$, then 
\begin{equation*} 
\sum_{\vv \in T_{P,\FR}} \partial_{\vv}f(P) \ = \ (d^2-d) \cdot (v_{\hGamma}(P) - 2) \ + \ 2d \cdot w_\varphi(P) \ . 
\end{equation*} 
Inserting these values in (\ref{ExpandedLaplacianII}) and using that $\Delta_{\hGamma}(f)$ has total mass $0$, 
we see that 
\begin{equation*} 
0 \ = \ L \ + \ D_1^{\prime} \cdot (-(d^2-d)) +  
     \sum_{ P \in \,\hGamma \backslash  \{Q_0,Q_1, \ldots, Q_{D_1^{\prime}}\} } 
          \big((d^2-d) \cdot (v_{\hGamma}(P) - 2) \ + \ 2d \cdot w_\varphi(P) \big) \ .
\end{equation*} 
If $P \in \hGamma$ is not an endpoint or a branch
point, then $(d^2-d) \cdot (v_{\hGamma}(P) - 2) = 0$. 
There are $D_2^{\prime}$ focused repelling endpoints  of $\hGamma$ in $B_Q(\vw)^-$, 
and for each of them we have $(d^2-d) \cdot (v_{\hGamma}(P) - 2) = -(d^2-d)$.   Hence 
\begin{equation} \label{FG1} 
 L \ = \ 
   \big(D_1^{\prime} + D_2^{\prime} - \sum_{\text{branch points of $\hGamma$}} (v_{\hGamma}(P) - 2)\big) \cdot (d^2-d) 
          \ - \ 2d \cdot \!\!\sum_{P \in \,\hGamma \cap B_P(\vw)^-} w_\varphi(P) \ .  
\end{equation} 
Since $\hGamma$ has $D^{\prime} = 1 + D_1^{\prime} + D_2^{\prime}$ endpoints, Lemma \ref{VertexSumFormula} gives
\begin{equation} \label{FG2}   
D_1^{\prime} + D_2^{\prime} - \sum_{ \text{branch points of $\hGamma$} } (v_{\hGamma}(P) - 2) \ = \ 1 \ .
\end{equation} 

It follows from (\ref{FG1}) and (\ref{FG2}) that $L \ge 0$ if and only if (\ref{WeightBalanceFormula}) holds.  
Thus $Q$ belongs to $\MinResLoc(\varphi)$ if and only if $Q$ is in the barycenter of $\nu_{\varphi}$.

\smallskip 
Clearly $\MinResLoc(\varphi) \subseteq \Gamma_\varphi$. 
To see that $\MinResLoc(\varphi)$ is either a vertex or an edge of $\Gamma_\varphi$,
note that as a point $P$ moves along $\Gamma_\varphi$, the distribution of $\nu_\varphi$-mass 
in the various directions $\vv \in T_P$ can change only when $P$ passes through a vertex.

If $\MinResLoc(\varphi)$ consists of a vertex of $\Gamma_\varphi$, we are done.  
Otherwise, $\MinResLoc(\varphi)$ contains a point $Q$ in the interior of an edge $e$ of $\Gamma_\varphi$.
Since there are precisely two directions $\vv \in T_Q$  for which the balls $B_Q(\vv)^-$ 
can contain $\nu_\varphi$-mass, and since each has mass at most $1/2$, each must have mass exactly $1/2$.  
This continues to hold for all $P$ in the interior of $e$, but it changes when $P$ reaches an endpoint of $e$.

At an endpoint $P_0$ of $e$, for the direction $\vv_0 \in T_{P_0}$ pointing into $e$ 
we still have $\nu_\varphi(B_{P_0}(\vv_0)^-) = 1/2$, so for all $\vv \in T_{P_0}$ with $\vv \ne \vv_0$
we necessarily have $\nu_\varphi(B_P(\vw)^-) \le 1/2$, and $P_0$ belongs to $\MinResLoc(\varphi)$.
If $P_0$ is an endpoint of $\Gamma_\varphi$,
this is all that needs to be said. If $P_0$ is an vertex of $\Gamma_\varphi$ which is not a branch point,
then $P_0$ belongs to the crucial set, so $\nu_{\varphi}(\{P_0\}) > 0$.  
Hence when $P$ moves outside $e$, for the direction 
$\vv \in T_P$ pointing towards $e$ we will have $\nu_\varphi(B_P(\vv)^-) > 1/2$, 
so $P \notin \MinResLoc(\varphi)$.
Finally, if $P_0$ is a branch point of $\Gamma_\varphi$,
there are at least two directions $\vv_1, \vv_2 \in T_{P_0}$ with $\vv_1, \vv_2 \ne \vv_0$
such that $\nu_\varphi(B_{P_0}(\vv_i)^-) > 0$.  Hence when $P$ moves outside $e$, 
for the direction $\vv \in T_P$ pointing towards $e$, we will again have $\nu_\varphi(B_P(\vv)^-) > 1/2$, 
and $P \notin \MinResLoc(\varphi)$.
\end{proof} 

\medskip
We next give the moduli-theoretic characterization of $\MinResLoc(\varphi)$.

The basic theorem in Geometric Invariant Theory (GIT) concerning moduli spaces of rational functions 
is due to Silverman. Let $R$ be a commutative ring with $1$, 
and let $\varphi : \PP^1_R \rightarrow \PP^1_R$ be a morphism of degree $d \ge 2$.  
Fixing homogeneous coordinates on $\PP^1_R$,
the map $\varphi$ corresponds to a pair of homogeneous functions 
$F(X,Y) = f_d X^d + \cdots + f_0 Y^d$, $G(X,Y) = g_d X^d + \cdots + g_0 Y^d$ in $R[X,Y]$ 
such that $\Res(F,G)$ is a unit in $R$. 
Writing  $f = (f_d, \ldots, f_0)$, $g = (g_d, \cdots, g_0)$, let
\begin{equation*}  
Z_\varphi \ = \ Z_{f,g} \ = \ (f_d : \cdots : f_0 : g_d : \cdots : g_0) \ \in \ \PP^{2d+1}(R) 
\end{equation*} 
be the point corresponding to $\varphi$.  
If $\mu = \left( \begin{array}{cc} \alpha & \beta \\ \gamma & \delta \end{array} \right) \in \GL_2(R)$, 
the usual conjugation action of $\mu$ on $\varphi$  defined by 
\begin{eqnarray*}
\varphi^{\mu} & = & \left( \begin{array}{cc} \delta & -\beta \\ -\gamma & \alpha \end{array} \right) 
                          \circ \left( \begin{array}{c} F \\ G \end{array} \right) \circ 
                     \left(  \begin{array}{cc} \alpha & \beta \\ \gamma & \delta \end{array} \right) \\
   & = & \left( \begin{array}{c} \delta F(\alpha X + \beta Y, \gamma X + \delta Y)
                                       - \beta G(\alpha X + \beta Y, \gamma X + \delta Y) \\
                              - \gamma F(\alpha X + \beta Y, \gamma X + \delta Y)
                                       + \alpha G(\alpha X + \beta Y, \gamma X + \delta Y) \end{array} \right)
\end{eqnarray*} 
induces an algebraic action of $\GL_2$ on $\PP^{2d+1}$.  If $R = \Omega$ is an algebraically closed field, the groups $\GL_2(\Omega)$ and $\SL_2(\Omega)$ have the same orbits in $\PP^{2d+1}(\Omega)$.  
For technical reasons, in GIT it is better to work with the 
action of $\SL_2$;  Silverman (\cite{Sil2}, Theorems 1.1 and 1.3) proves

\begin{theorem}[Silverman] \label{ModuliThm} There are open subschemes of $\PP^{2d+1}/\Spec(\ZZ)$
\begin{equation*}
\Rat_d \ \subseteq \ (\PP^{2d+1})^s \ \subseteq \ (\PP^{2d+1})^{ss}
\end{equation*}  
$($the subschemes of rational morphisms of degree $d$, stable points, and semistable points$)$, 
which are invariant under the conjugation action of $\SL_2$, such that the quotients
\begin{equation*}
M_d = \Rat_s/\SL_2,  \qquad M_d^s = (\PP^{2d+1})^s/\SL_2, \quad \text{and} \quad  M_d^{ss} = (\PP^{2d+1})^{ss}/\SL_2
\end{equation*}
exist. $M_d$ is a dense open subset of $M_d^s$ and $M_d^{ss}$. 
$M_d$ and $M_d^s$ are geometric quotients, and $M_d^{ss}$ is a categorical quotient
which is proper and of finite type over $\ZZ$. 
\end{theorem}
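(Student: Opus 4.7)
The plan is to recognize Theorem~\ref{ModuliThm} as an application of Mumford's Geometric Invariant Theory and proceed in three stages: set up the linearization, identify the (semi)stable locus via the Hilbert--Mumford numerical criterion, and then invoke Mumford's main theorem to extract the quotients. First I would note that $\Rat_d$ is the distinguished open subscheme of $\PP^{2d+1}$ cut out by the non-vanishing of the universal resultant polynomial $\Res(F,G) \in \ZZ[f_d,\ldots,f_0,g_d,\ldots,g_0]$; this is already $\SL_2$-invariant because the conjugation action $\varphi \mapsto \varphi^\mu$ multiplies $\Res(F,G)$ by $\det(\mu)^{d(d+1)}$, which is $1$ on $\SL_2$.

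Next I would set up a linearization. The conjugation representation of $\SL_2$ on pairs $(F,G)$ of binary forms of degree $d$ is polynomial, so it gives a natural $\SL_2$-linearization of $\cO_{\PP^{2d+1}}(1)$ (possibly after passing to a Veronese twist to kill the center). With this linearization I would apply the Hilbert--Mumford numerical criterion. Every non-trivial $1$-parameter subgroup of $\SL_2$ is conjugate to the diagonal $\lambda(t) = \diag(t,t^{-1})$; its weights on the coordinates $(f_i, g_j)$ of $\PP^{2d+1}$ are explicit integer functions of $i$ and $j$. I would tabulate these weights and compute, for each point $Z_\varphi$ with $\Res(F,G) \ne 0$, the maximum over $\lambda$ of $\mu(Z_\varphi,\lambda)$. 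The arithmetic/geometric structure of the resultant forces this maximum to be strictly negative on $\Rat_d$, so $\Rat_d \subseteq (\PP^{2d+1})^s$. One then \emph{defines} $(\PP^{2d+1})^s$ and $(\PP^{2d+1})^{ss}$ as the stable and semistable loci produced by the criterion; by construction both are open and $\SL_2$-invariant, and the chain $\Rat_d \subseteq (\PP^{2d+1})^s \subseteq (\PP^{2d+1})^{ss}$ is tautological.

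With the loci in hand I would invoke Mumford's main GIT theorem (see \cite{Sil2} and the GIT references therein): $(\PP^{2d+1})^{ss} \to M_d^{ss}$ exists as a categorical quotient, it is projective (hence proper) and of finite type over $\ZZ$ since $\PP^{2d+1}$ is projective over $\ZZ$; the restriction to the stable locus is a geometric quotient $M_d^s$, and the further restriction to $\Rat_d$ is a geometric quotient $M_d$ on which stabilizers are finite. Density of $M_d$ in $M_d^s$ and $M_d^{ss}$ follows from density of $\Rat_d$ in $(\PP^{2d+1})^{ss}$, which in turn holds because the resultant is a single non-trivial polynomial so its vanishing locus is a proper closed subscheme.

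The step I expect to be the main obstacle is the numerical criterion computation showing $\Rat_d \subseteq (\PP^{2d+1})^s$: one must check that no one-parameter subgroup can destabilize a genuinely degree-$d$ pair $(F,G)$, despite the fact that individual components $f_i$ or $g_j$ can vanish. The combinatorics of the weight diagram, together with the transformation law of the resultant under scaling, is exactly what forces strict stability, and handling this cleanly (especially in mixed or positive characteristic, working over $\Spec(\ZZ)$) is the delicate part.
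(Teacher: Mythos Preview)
Your proposal is a reasonable sketch of the standard GIT argument, and it is essentially the approach Silverman takes in \cite{Sil2}. However, you should be aware that the present paper does \emph{not} prove Theorem~\ref{ModuliThm} at all: it is stated as a cited result, attributed to Silverman (\cite{Sil2}, Theorems~1.1 and~1.3), and used as a black box in the moduli-theoretic characterization of $\MinResLoc(\varphi)$. No proof or proof sketch appears in the paper.

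So there is no ``paper's own proof'' to compare against. Your outline---set up the $\SL_2$-linearization on $\cO_{\PP^{2d+1}}(1)$, apply the Hilbert--Mumford numerical criterion via the diagonal one-parameter subgroup, and invoke Mumford's main theorem---is the correct strategy and matches what Silverman does. The weight computation you flag as the delicate step is indeed carried out explicitly in \cite{Sil2}, and the paper later quotes the outcome of that computation as Proposition~\ref{SilCriteria}. If you were asked to supply a proof here, your sketch would be appropriate; but in the context of this paper the theorem is simply imported from the literature.
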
 

Geometrical and categorical quotients are quotients with certain desirable properties 
(see \cite{Mum} for the definitions).  
The fact that $M_d$ is a geometric quotient includes the fact over any  
algebraically closed field $\Omega$, the $\SL_2(\Omega)$ orbits rational functions of degree $d$ 
are in $1 - 1$ correspondence with the points of $M_d(\Omega)$.  
The spaces $M_d^s$ and $M_d^{ss}$ are called the 
spaces of stable and semi-stable conjugacy classes of rational maps, respectively.  
Loosely, the stable locus $(\PP^{2d+1})^s$ is the largest subscheme such that for any algebraically closed field $\Omega$, 
the $\SL_2(\Omega)$-orbits of points in $(\PP^{2d+1})^s(\Omega)$ are closed and are in $1 - 1$ correspondence 
with the points of $M_d^s(\Omega)$. Again loosely, 
the semi-stable locus $(\PP^{2d+1})^{ss}$ is the largest subscheme for which 
a quotient makes sense:  $\SL_2(\Omega)$-orbits of points in $(\PP^{2d+1})^{ss}(\Omega)$ need not be closed, 
but if one defines two orbits to be equivalent if their closures meet, points of $M_d^{ss}(\Omega)$ 
correspond to equivalence classes of $\SL_2(\Omega)$-orbits.  Each equivalence class of orbits in 
$(\PP^{2d+1})^{ss}(\Omega)$ contains a unique minimal closed orbit. 

\smallskip
Recall that $K$ is a complete, algebraically closed
nonarchimedean valued field with ring of integers $\cO$ and residue field $\tk$:


\begin{definition}[Semi-stable and Stable Reduction] \label{SemiStabilityDef}
{\em Let $\varphi(z) \in K(z)$ have degree $d \ge 2$, and let $(F,G)$ be a normalized
representation of $\varphi$. Writing $F(X,Y) = f_d X^d + \cdots + f_0 Y^d$, $G(X,Y) = g_d X^d + \cdots g_0 Y^d$,
let $Z_\varphi = (f_d: \cdots : f_0 : g_d : \cdots : g_0) \in \PP^{2d+1}(K)$ be the point corresponding to $\varphi$. 
We will say that $\varphi$ has {\rm semi-stable reduction} if   
\begin{equation*} 
\tZ_\varphi \ = \ (\tf_d: \cdots : \tf_0 : \tg_d : \cdots : \tg_0) \ \in \ \PP^{2d+1}(\tk)  
\end{equation*} 
belongs to $(\PP^{2d+1})^{ss}(\tk)$. We will say that $\varphi$ has {\rm stable reduction} 
if $\tZ_\varphi$ belongs to $(\PP^{2d+1})^s(\tk)$.
}
\end{definition}  

Making precise the Hilbert-Mumford numerical criteria, 
Silverman (\cite{Sil2}, Proposition 2.2) gives necessary and sufficient conditions for a point 
to be semi-stable or stable:
 
\begin{proposition}[Silverman] \label{SilCriteria} Let $\SL_2$ act on $\PP^{2d+1}$ 
as above, and suppose $\tZ \in \PP^{2d+1}(\tk)$.  Then 
    
$(A)$ $\tZ$ belongs to $(\PP^{2d+1})^{ss}(\tk)$ if and only if for each $\ttau \in \SL_2(\tk)$, 
when $\tZ^{\ttau}$ is written as $(\ta_d : \cdots : \ta_0 : \tb_d : \cdots : \tb_0)$,   
either there is some $k$ with $(d+1)/2 \le k \le d$ 
such that $\ta_k \ne 0$, or there is some $k$ with $(d-1)/2 \le k \le d$ such that $\tb_k \ne 0$.  

$(B)$ $\tZ$ belongs to $(\PP^{2d+1})^{s}(\tk)$ if and only if for each $\ttau \in \SL_2(\tk)$, 
when $\tZ^{\ttau}$ is written as $(\ta_d : \cdots : \ta_0 : \tb_d : \cdots : \tb_0)$,  
either there is some $k$ with $(d+1)/2 < k \le d$ 
such that $\ta_k \ne 0$, or there is some $k$ with $(d-1)/2 < k \le d$ such that $\tb_k \ne 0$.  
\end{proposition}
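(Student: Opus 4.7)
The plan is to apply the Hilbert--Mumford numerical criterion to the linear action of $\SL_2$ on the affine cone over $\PP^{2d+1}$ induced by the conjugation $\varphi \mapsto \varphi^\mu$ described just before Theorem~\ref{ModuliThm}. The criterion asserts that $\tZ \in \PP^{2d+1}(\tk)$ is semi-stable (resp.\ stable) if and only if for every non-trivial one-parameter subgroup $\lambda \colon \GG_m \to \SL_2$, the function $\mu(\tZ,\lambda) := -\min\{w_i : \tz_i \ne 0\}$ is $\ge 0$ (resp.\ $> 0$), where the $w_i$ are the $\lambda$-weights of the homogeneous coordinates. The task reduces to computing these weights for the conjugation action and quantifying appropriately over all 1-PS.

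The first step is to reduce to the standard 1-PS $\lambda_0(t) = \diag(t,t^{-1})$. Since $\tk$ is algebraically closed and all maximal tori of $\SL_2$ are $\SL_2(\tk)$-conjugate, every non-trivial 1-PS of $\SL_2$ over $\tk$ has the form $g\, \lambda_0(t^{\pm 1})\, g^{-1}$ for some $g \in \SL_2(\tk)$. The equivariance $\mu(\tZ,\, g \lambda g^{-1}) = \mu(\tZ^{g^{-1}},\lambda)$ then converts the Hilbert--Mumford criterion into a condition on conjugates $\tZ^{\ttau}$ with respect to the single 1-PS $\lambda_0$. The Weyl element $w \in \SL_2(\tk)$ satisfies $w \lambda_0 w^{-1} = \lambda_0^{-1}$, and inverting a 1-PS negates all weights; so once the conjugates $\tZ^{\ttau}$ for all $\ttau \in \SL_2(\tk)$ are considered, the two \emph{a~priori} distinct conditions ``some non-zero coordinate has non-positive $\lambda_0$-weight'' and ``some non-zero coordinate has non-negative $\lambda_0$-weight'' become equivalent. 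This is what allows the statement to be formulated with $\ge 0$ rather than $\le 0$.

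The next step is the direct weight computation. Inserting $\mu = \lambda_0(t)$ (so $\alpha = t$, $\delta = t^{-1}$, $\beta = \gamma = 0$) into the conjugation formula yields
\begin{equation*}
F^{\lambda_0(t)}(X,Y) \ = \ t^{-1}\, F(tX, t^{-1}Y) \ = \ \sum_{k=0}^{d} t^{\,2k-d-1}\, a_k\, X^k Y^{d-k},
\end{equation*}
\begin{equation*}
G^{\lambda_0(t)}(X,Y) \ = \ t\, G(tX, t^{-1}Y) \ = \ \sum_{k=0}^{d} t^{\,2k-d+1}\, b_k\, X^k Y^{d-k} \ .
\end{equation*}
Hence under $\lambda_0$ the coordinate $a_k$ carries weight $2k-d-1$ and the coordinate $b_k$ carries weight $2k-d+1$.

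Reading off the criterion, a non-zero $\ta_k$ has non-negative $\lambda_0$-weight iff $2k-d-1 \ge 0$, i.e.\ $k \ge (d+1)/2$; and a non-zero $\tb_k$ has non-negative $\lambda_0$-weight iff $2k-d+1 \ge 0$, i.e.\ $k \ge (d-1)/2$. Strict positivity replaces each of these inequalities by its strict form. Quantifying the resulting condition over all $\ttau \in \SL_2(\tk)$ yields (A) for semi-stability and (B) for stability. The main obstacle is bookkeeping of sign conventions: one must verify that the weights extracted from the specific conjugation formula of the paper agree with the normalization of the Hilbert--Mumford function, and that the parity-dependent cutoff values $(d\pm 1)/2$ really correspond to non-negative weights under $\lambda_0$ rather than under its inverse --- the reconciliation of these two possibilities being precisely what the Weyl-element argument in the second paragraph accomplishes.
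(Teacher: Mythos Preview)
Your argument is correct. You apply the Hilbert--Mumford numerical criterion directly, reduce to the diagonal one-parameter subgroup $\lambda_0$, compute the weights $2k-d-1$ on $a_k$ and $2k-d+1$ on $b_k$ from the explicit conjugation formula, and then use the Weyl element $w$ (with $w\lambda_0 w^{-1}=\lambda_0^{-1}$) to show that, once one quantifies over all $\ttau\in\SL_2(\tk)$, the condition ``some nonzero coordinate has non-positive $\lambda_0$-weight'' is interchangeable with ``some nonzero coordinate has non-negative $\lambda_0$-weight.'' That last point is the only delicate step, and your justification is sound: testing $\tZ^{\ttau}$ against $\lambda_0^{-1}$ is the same as testing $\tZ^{\ttau w}$ against $\lambda_0$, and $\ttau\mapsto\ttau w$ is a bijection of $\SL_2(\tk)$. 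One small imprecision: a general nontrivial 1-PS of $\SL_2$ is conjugate to $\lambda_0^{n}$ for some nonzero $n\in\ZZ$, not just $\lambda_0^{\pm 1}$; but since scaling $n$ by a positive integer only rescales all weights by that positive factor, the sign conditions are unaffected and your reduction goes through.

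The paper takes a different route: it does not rederive the criterion at all, but simply cites Silverman's Proposition~2.2 in \cite{Sil2} and explains two notational translations (Silverman states the contrapositive, characterizing unstable and not-stable points, and he indexes coefficients in the opposite order). Your approach is more self-contained and actually exhibits the weight calculation, which is useful for seeing where the thresholds $(d\pm 1)/2$ come from; the paper's approach has the virtue of brevity and of crediting the source of the result.
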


\begin{proof}  This is (\cite{Sil2}, Proposition 2.2) in the special case $\Omega = \tk$, 
with two  modifications.
Silverman formulates conditions (A) and (B) as characterizing 
the ``unstable'' and ``not stable'' points of $\PP^{2d+1}(\tk)$.   
Our assertions are the contrapositives of his: by definition, ``semi-stable'' is ``not unstable'' 
and ``stable'' is ``not `not stable' ''.
Second, Silverman writes rational functions as $\varphi(z) = (a_0 z^d + \cdots + a_d)/(b_0 z^d + \cdots + b_d)$, 
indexing coefficients in the opposite order than we do. We have adjusted his coefficient ranges 
to match our notation.
\end{proof} 

The connection between semi-stability and having minimal resultant is due to Szpiro, Tepper, and Williams, 
who proved the implication ``semi-stable reduction $\Rightarrow$ minimal resultant'' in the context 
of  rational functions over a number field or the function field of a curve, 
using a moduli-theoretic argument  
(see (\cite{STW}, Theorem 3.3); their result holds for morphisms $\varphi : \PP^n \rightarrow \PP^n$ 
in arbitrary dimension $n$).

\begin{theorem}[\rm Moduli-Theoretic Characterization of $\MinResLoc(\varphi)$] \label{ModuliTheorem}
Suppose $\varphi(z) \in K(z)$ has degree $d \ge 2$.  Let $P \in \BHH_K$ be a point of type {\rm II}, and let 
$\gamma \in \GL_2(K)$ be such that $P = \gamma(\zeta_G)$.  Then 

$(A)$ $P$ belongs to $\MinResLoc(\varphi)$ if and only if $\varphi^\gamma$ is has semi-stable reduction.

$(B)$ If $P$ belongs to $\MinResLoc(\varphi)$, 
then $P$ is the unique point in $\MinResLoc(\varphi)$ if and only if  
$\varphi^\gamma$ has stable reduction.
\end{theorem}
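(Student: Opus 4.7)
\emph{Proof plan.} Both conditions in the statement are equivariant under conjugation by $\GL_2(K)$: the identity $\ordRes_{\varphi^\sigma}(Q) = \ordRes_\varphi(\sigma(Q))$ gives $\MinResLoc(\varphi^\sigma) = \sigma^{-1}(\MinResLoc(\varphi))$, while $Z_{\varphi^\sigma}$ is the image of $Z_\varphi$ under the conjugation action on $\PP^{2d+1}$, on which the diagonal $K^{\times} \subset \GL_2(K)$ acts trivially (so $\GL_2$- and $\SL_2$-orbits agree). Replacing $\varphi$ by $\varphi^\gamma$, it therefore suffices to prove the theorem when $P = \zeta_G$ and $\gamma = \id$, i.e.\ to show that $\zeta_G \in \MinResLoc(\varphi)$ iff $\tZ_\varphi$ is semi-stable, and that $\MinResLoc(\varphi) = \{\zeta_G\}$ iff $\tZ_\varphi$ is stable.

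By Theorem \ref{ResThm}, $\zeta_G \in \MinResLoc(\varphi)$ iff $\partial_\vv\ordRes_\varphi(\zeta_G) \ge 0$ for every $\vv \in T_{\zeta_G}$, and since $\ordRes_\varphi$ is piecewise affine and convex upward on paths, $\MinResLoc(\varphi) = \{\zeta_G\}$ iff each such slope is strictly positive. Using the identification $T_{\zeta_G} \cong \PP^1(\tk)$ and the surjection $\SL_2(\cO) \twoheadrightarrow \SL_2(\tk)$, every direction has the form $\tau_*(\vv_\infty)$ for some $\tau \in \SL_2(\cO)$; combined with the chain-rule identity $\partial_{\tau_*(\vv_\infty)}\ordRes_\varphi(\zeta_G) = \partial_{\vv_\infty}\ordRes_{\varphi^\tau}(\zeta_G)$, this reduces the problem to computing the slope at $\zeta_G$ in the single reference direction $\vv_\infty$, for every conjugate $\varphi^\tau$ as $\tau$ ranges over $\SL_2(\cO)$.

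The central calculation is to carry out the resultant expansion from the proofs of Propositions \ref{NonFixedSlope}, \ref{NonIdIndiffFixedSlope}, and \ref{IdIndiffFixedSlope} along the half-line from $\zeta_G$ toward $\infty$ (i.e.\ on $\zeta_{0,|A|}$ with $\ord(A) < 0$ and $|\ord(A)|$ small). Writing a normalized representation as $F = \sum_k a_k X^k Y^{d-k}$, $G = \sum_k b_k X^k Y^{d-k}$, and setting $L_1 = \max\{k : \ta_k \ne 0\}$, $L_2 = \max\{k : \tb_k \ne 0\}$ (with an empty $\max$ taken to be $-\infty$), the minima $\min_\ell \ord(A^\ell a_\ell)$ and $\min_\ell \ord(A^{\ell+1} b_\ell)$ are realized at $\ell = L_1$ and $\ell = L_2$ respectively, yielding the uniform slope formula
\begin{equation*}
\partial_{\vv_\infty}\ordRes_\varphi(\zeta_G) \ = \ 2d\,\max(L_1,\, L_2 + 1) \ - \ (d^2 + d).
\end{equation*}
This is $\ge 0$ iff $L_1 \ge (d+1)/2$ or $L_2 \ge (d-1)/2$, i.e.\ iff some $\ta_k$ with $k \ge (d+1)/2$ is nonzero or some $\tb_k$ with $k \ge (d-1)/2$ is nonzero, which is precisely the Hilbert--Mumford condition from Proposition \ref{SilCriteria}(A) for $\tZ_\varphi$ to be semi-stable. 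Applying the same formula to $\varphi^\tau$ for each $\tau \in \SL_2(\cO)$, and noting that reduction commutes with the $\SL_2$-action so $\tZ_{\varphi^\tau} = (\tZ_\varphi)^{\ttau}$, recovers Silverman's ``for all $\ttau \in \SL_2(\tk)$'' formulation and proves part $(A)$; part $(B)$ is the same argument with $\ge$ replaced by $>$ and Silverman's (A) replaced by the strict criterion (B). The main obstacle is the bookkeeping: matching Silverman's coefficient indexing (opposite to ours, as flagged in the proof of Proposition \ref{SilCriteria}) with the indices appearing in the slope formula, and tracking the asymmetric ``$+1$'' in $\max(L_1, L_2 + 1)$, which encodes the weight difference between numerator and denominator of $\varphi = F/G$ under the diagonal one-parameter subgroup.
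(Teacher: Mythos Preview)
Your proposal is correct and follows essentially the same argument as the paper: reduce to $P=\zeta_G$, characterize membership in $\MinResLoc(\varphi)$ by nonnegativity of all directional slopes, use $\SL_2(\cO)$-conjugation to reduce to the single direction $\vv_\infty$, and compare the resulting slope condition with Silverman's numerical criterion. The only cosmetic difference is that you package the computation via the explicit formula $\partial_{\vv_\infty}\ordRes_\varphi(\zeta_G)=2d\max(L_1,L_2+1)-(d^2+d)$, whereas the paper analyzes the max in (\ref{fIncrease}) term by term without naming $L_1,L_2$; the content is the same.
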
 

\begin{proof}  

We begin by proving part (A). Note that $P \in \MinResLoc(\varphi)$ 
if and only if $\partial_{\vv}\ordRes_\varphi(P) \ge 0$ for each $\vv \in T_P$. 
After replacing $\varphi$ with $\varphi^\gamma$,
we can assume that $P = \zeta_G$.  
Let $Z_\varphi \in \PP^{2d+1}(K)$ be the point corresponding to $\varphi$,
and let $\tZ_{\varphi} \in \PP^{2d+1}(\tk)$ be its reduction.  
We will index directions $\vv \in T_{\zeta_G}$ by points $a \in \PP^1(\tk)$. 

Fix a direction $\vv_a \in T_{\zeta_G}$, and choose $\ttau \in \SL_2(\tk)$ so that $\ttau(\infty) = a$.  
We will show that $\partial_{\vv_a}\ordRes_\varphi(\zeta_G) \ge 0$ if and only if the condition of 
Proposition \ref{SilCriteria}(A) holds for $(\tZ_{\varphi})^{\ttau}$.  

Lift $\ttau$ to $\tau \in \SL_2(\cO)$.  Let $(F,G)$ be a normalized representation of $\varphi^{\tau}$, 
and write $F(X,Y) = a_d X^d + \cdots + a_0 Y^d$, $G(X,Y) = b_d X^d + \cdots + b_0 Y^d$. 
Since the action of $\SL_2$ commutes with reduction, it follows that 
\begin{equation*} 
(\tZ_{\varphi})^{\ttau} \ = \ \widetilde{(Z_{\varphi^{\tau}})} \ = \ (\ta_d : \cdots \ta_0 : \tb_d : \cdots : \tb_0) \ .
\end{equation*} 
Since $\tau_*(\vv_\infty) = \vv_a$, the function $\ordRes_{\varphi}(\cdot)$ is non-decreasing in the direction
$\vv_a$ at $\zeta_G$ if and only if $\ordRes_{\varphi^\tau}(\cdot)$ 
is non-decreasing in the direction $\vv_\infty$.  Take $A \in K^{\times}$ with $|A| \ge 1$.  
By (\cite{RR-MRL}, formula (13)),  we have 
\begin{eqnarray}
\lefteqn{\ordRes_{\varphi^\tau}(\zeta_{0,|A|}) - \ordRes_{\varphi^\tau}(\zeta_G)} \qquad \qquad \qquad & & \notag \\
& = & (d^2 + d) \ord(A) - 2d \cdot \min_{0 \le k \le d} \big(\ord(A^k a_k), \ord(A^{k+1} b_k)\big) \notag \\
& = & \max_{0 \le i \le d} \big( (d^2 + d - 2d k) \ord(A) - 2d \, \ord(a_k), \notag \\ 
& & \qquad \qquad \qquad (d^2 + d - 2d (k+1)) \ord(A) - 2d \, \ord(b_k) \big) \ . \label{fIncrease} 
\end{eqnarray}
The terms in (\ref{fIncrease}) which are non-decreasing as $|A|$ increases are the ones involving $\ord(a_k)$
for $(d+1)/2 \le k \le d$ and the ones involving $\ord(b_k)$ for $(d-1)/2 \le k \le d$.

Since $(F,G)$ is normalized, we have $\ord(a_k) \ge 0$, $\ord(b_k) \ge 0$ for all $k$, and 
there is at least one $k$ for which $\ord(a_k) = 0$ or $\ord(b_k) = 0$.   
Hence the terms  with $\ord(a_k) > 0$ or $\ord(b_k) > 0$ 
cannot be the maximal ones in (\ref{fIncrease}) when $\ord(A)$ is near $0$.  
It follows that $\partial_{\vv_\infty}\ordRes_{\varphi^\tau}(\zeta_G) \ge 0$ if and only if 
\begin{equation*}
\left\{ \begin{array}{l}
           \text{ $\ord(a_k) = 0$ for some $k$ with $(d+1)/2 \le k \le d$, or } \\
           \text{ $\ord(b_k) = 0$ for some $k$ with $(d-1)/2 \le k \le d$.  } 
     \end{array} \right.
\end{equation*}  
Since $\ord(a_k) = 0$ iff $\ta_k \ne 0$, and $\ord(b_k) = 0$ iff $\tb_k \ne 0$,
these are precisely the conditions on $(\tZ_\varphi)^{\ttau}$ from Proposition \ref{SilCriteria}(A).

Since $\tau_*(\vv_\infty)$ runs over all $\vv_a \in T_P$ as $\ttau$ runs over $\SL_2(\tk)$, 
it follows that $P$ belongs to $\MinResLoc(\varphi)$ if and only if $\varphi^\gamma$ has semi-stable reduction. 

\smallskip
Part (B) is proved similarly, using that $P$ is the unique point in $\MinResLoc(\varphi)$ if and only if 
$\partial_{\vv}\ordRes_{\varphi}(P) > 0$ for each $\vv \in T_P$, and that 
the terms in (\ref{fIncrease}) which are increasing as $|A|$ increases are the ones involving $\ord(a_k)$
for $(d+1)/2 < k \le d$ and the ones involving $\ord(b_k)$ for $(d-1)/2 < k \le d$.
\end{proof} 


\section{Supplemental Balance Conditions} \label{BalanceConditionSection} 

Theorem \ref{BaryCenterTheorem} gives necessary and sufficient `balance conditions' 
for a point $P \in \BPP_K$ 
to belong to $\MinResLoc(\varphi)$, in terms of the crucial weights $w_\varphi(P)$.
In this section we will give alternate balance conditions.
They are easier to check, but only apply in certain cases. 
 

\smallskip
When $P$ is a type II fixed point, there are balance conditions for $P$ to belong to $\MinResLoc(\varphi)$ 
in terms of the surplus multiplicities $s_\varphi(P,\vv)$:

\begin{proposition} \label{SvarphiProp} Let $\varphi(z) \in K(z)$ have degree $d \ge 2$, 
and let $P \in \BHH_K$ be of type {\rm II}.  Suppose $\varphi(P) = P$.  Then $P \in \MinResLoc(\varphi)$
if and only if for each $\vv \in T_P$, either 

$(1)$ $s_\varphi(P,\vv) \le \frac{d-1}{2}$, or 

$(2)$ $s_\varphi(P,\vv) \le \frac{d+1}{2}$ and $\varphi_*(\vv) \ne \vv$.

\noindent{Furthermore,} $\MinResLoc(\varphi) = \{P\}$ if and only if $(1)$ and $(2)$ hold with strict inequality.
\end{proposition}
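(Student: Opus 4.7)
The plan is to reduce everything to the slope formulas of \S\ref{SlopeFormulaSection}, combined with the characterization of $\MinResLoc(\varphi)$ via slopes. Since $\ordRes_\varphi(\cdot)$ is piecewise affine and convex upward on paths in $\BHH_K$ by Theorem \ref{ResThm}, a type II point $P$ lies in $\MinResLoc(\varphi)$ if and only if $\partial_{\vv} f(P) \ge 0$ for every $\vv \in T_P$, where $f = \ordRes_\varphi$; and $\MinResLoc(\varphi) = \{P\}$ if and only if $\partial_{\vv} f(P) > 0$ strictly for every $\vv \in T_P$ (upward convexity then forces $f$ to be strictly increasing along every path emanating from $P$).

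I would next split into cases depending on whether $P$ is id-indifferent. When $P$ is not id-indifferent, the second form of the slope formula in Proposition \ref{NonIdIndiffFixedSlope} gives
\[
\partial_{\vv} f(P) \ = \ (d^2-d) \, - \, 2d \cdot s_\varphi(P,\vv) \ + \ 2d \cdot \epsilon(\vv),
\]
where $\epsilon(\vv) = 0$ if $\varphi_*(\vv) = \vv$ and $\epsilon(\vv) = 1$ otherwise. So $\partial_{\vv} f(P) \ge 0$ translates to: $s_\varphi(P,\vv) \le (d-1)/2$ when $\varphi_*(\vv) = \vv$, and $s_\varphi(P,\vv) \le (d+1)/2$ when $\varphi_*(\vv) \ne \vv$. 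These are precisely conditions (1) and (2); the ``either/or'' phrasing in the proposition captures the case split, since under $\varphi_*(\vv) = \vv$ only (1) can hold, while under $\varphi_*(\vv) \ne \vv$ condition (1) implies (2) so the effective requirement is $s_\varphi(P,\vv) \le (d+1)/2$.

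When $P$ is id-indifferent, the reduction $\tphi_P$ is the identity, so $\varphi_*(\vv) = \vv$ for every $\vv \in T_P$, which makes (2) vacuous and leaves only (1). Proposition \ref{IdIndiffFixedSlope} gives $\partial_{\vv} f(P) = (d^2-d) - 2d \cdot s_\varphi(P,\vv)$, whose nonnegativity is equivalent to $s_\varphi(P,\vv) \le (d-1)/2$ — exactly condition (1). Thus the unified statement holds in both cases.

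For the uniqueness assertion, I would simply repeat the argument with strict inequalities: $\partial_{\vv} f(P) > 0$ is equivalent to $s_\varphi(P,\vv) < (d-1)/2$ in the $\varphi_*(\vv) = \vv$ case, and to $s_\varphi(P,\vv) < (d+1)/2$ together with $\varphi_*(\vv) \ne \vv$ in the other case, which is precisely the strict form of (1) and (2). There is no serious obstacle here — the whole argument is a translation of the slope formulas into arithmetic conditions on $s_\varphi(P,\vv)$; the only point that requires care is observing that in the id-indifferent case every direction is fixed by $\varphi_*$ so that (2) never enters, and that when $\varphi_*(\vv) \ne \vv$ condition (2) is genuinely weaker than (1) so the disjunction is non-trivial.
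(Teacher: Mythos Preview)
Your proof is correct and follows essentially the same approach as the paper: characterize membership in $\MinResLoc(\varphi)$ via nonnegativity of the slopes $\partial_{\vv}\ordRes_\varphi(P)$, then translate the slope formulas of Propositions~\ref{NonIdIndiffFixedSlope} and~\ref{IdIndiffFixedSlope} into the inequalities (1) and (2). If anything, you are slightly more thorough than the paper, which only spells out the non-id-indifferent case in the proof of this proposition (the id-indifferent case being absorbed into Corollary~\ref{SpecialBalanceConditionsCor}); your explicit handling of both cases is a welcome clarification.
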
 

\begin{proof} Writing $f(\cdot) = \ordRes_\varphi(\cdot)$, we have $P \in \MinResLoc(\varphi)$
if and only if $\partial_\vv f(P) \ge 0$ for each $\vv \in T_P$, 
and $\MinResLoc(\varphi) = \{P\}$ if and only if $\partial_\vv f(P) > 0$ for each $\vv \in T_P$.   
Suppose $\varphi(P) = P$.  
If $P$ is not id-indifferent, then by Proposition \ref{NonIdIndiffFixedSlope}, for each $\vv \in T_P$ 
\begin{equation*}
\partial_\vv f(P) \ = \ (d^2 -d) - 2d \cdot s_{\varphi}(P,\vv) + 2d \cdot 
\left\{ \begin{array}{ll} 0 & \text{if $\varphi_*(\vv) = \vv$ \ ,} \\
                          1 & \text{if $\varphi_*(\vv) \ne \vv$ \ ,} 
        \end{array} \right.  
\end{equation*}
which translates easily into conditions (1) and (2).
\end{proof} 

\begin{corollary} \label{SpecialBalanceConditionsCor} Let $\varphi(z) \in K(z)$ have degree $d \ge 2$,
and let $P \in \BHH_K$ be of type {\rm II}.
If $P$ is id-indifferent, then $P \in \MinResLoc(\varphi)$ if and only if 
$s_\varphi(P,\vv) \le \frac{d-1}{2}$ for each $\vv \in T_P$,
and $\MinResLoc(\varphi) = \{P\}$ if and only if  $s_\varphi(P,\vv) < \frac{d-1}{2}$ for each $\vv \in T_P$.

If $P$ is a focused repelling fixed point,  
then $P \in \MinResLoc(\varphi)$ if and only if $\deg_\varphi(P) \ge \frac{d+1}{2}$,
and $\MinResLoc(\varphi) = \{P\}$ if and only if $\deg_\varphi(P) > \frac{d+1}{2}$.   
\end{corollary}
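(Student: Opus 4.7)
The plan is to treat the two cases separately, each by combining a slope formula from Section \ref{SlopeFormulaSection} with the criterion that a type II point $P$ lies in $\MinResLoc(\varphi)$ precisely when $\partial_\vv \ordRes_\varphi(P) \ge 0$ for every $\vv \in T_P$, with strict inequality characterizing the situation $\MinResLoc(\varphi) = \{P\}$.

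For the id-indifferent case, one cannot directly appeal to Proposition \ref{SvarphiProp}, since the proof of that proposition explicitly restricts to the case where $P$ is not id-indifferent. Instead, I would invoke Proposition \ref{IdIndiffFixedSlope} directly: it asserts that $\partial_\vv f(P) = (d^2-d) - 2d \cdot s_\varphi(P,\vv)$ for every $\vv \in T_P$, and the sign conditions $\partial_\vv f(P) \ge 0$ and $\partial_\vv f(P) > 0$ then translate immediately into the stated inequalities on $s_\varphi(P,\vv)$.

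For the focused repelling case, I would use Proposition \ref{FocusedRepellingProp} to tabulate what happens in each tangent direction. At the unique focus $\vv_1$ we have $\varphi_*(\vv_1) = \vv_1$ and $s_\varphi(P,\vv_1) = d - \deg_\varphi(P)$, while at every other $\vv \in T_P$, Lemma \ref{FirstIdentificationLemma} combined with the fact that $B_P(\vv)^-$ contains no type I fixed point yields $s_\varphi(P,\vv) = 0$ and $\varphi_*(\vv) \ne \vv$. Clause $(2)$ of Proposition \ref{SvarphiProp} is then automatically satisfied, with strict inequality, at every $\vv \ne \vv_1$, so the entire balance condition at $P$ collapses to clause $(1)$ applied at $\vv_1$: the inequality $d - \deg_\varphi(P) \le (d-1)/2$, which rearranges to $\deg_\varphi(P) \ge (d+1)/2$. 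The strict version likewise yields $\deg_\varphi(P) > (d+1)/2$.

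No serious obstacle is anticipated; the corollary is essentially a direct unpacking of Propositions \ref{FocusedRepellingProp}, \ref{SvarphiProp}, and \ref{IdIndiffFixedSlope}. The only subtlety is remembering to bypass Proposition \ref{SvarphiProp} in the id-indifferent case and use Proposition \ref{IdIndiffFixedSlope} instead, because the id-indifferent hypothesis forces $\varphi_*(\vv) = \vv$ for every $\vv \in T_P$, so clause $(2)$ of Proposition \ref{SvarphiProp} cannot be triggered and only clause $(1)$ survives.
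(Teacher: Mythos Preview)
Your proposal is correct and follows essentially the same route as the paper: both cases are handled by reading off the relevant slope formula (Proposition \ref{IdIndiffFixedSlope} for the id-indifferent case, Proposition \ref{FocusedRepellingProp} fed into Proposition \ref{SvarphiProp} for the focused repelling case) and translating the sign of $\partial_\vv \ordRes_\varphi(P)$ into the stated inequalities. The only cosmetic difference is that the paper, after invoking Proposition \ref{IdIndiffFixedSlope}, still phrases the id-indifferent conclusion as ``following from Proposition \ref{SvarphiProp}'' (using that $\varphi_*(\vv)=\vv$ forces clause~(2) to be vacuous), whereas you convert the slope inequality directly---your version is arguably cleaner given that the proof of Proposition \ref{SvarphiProp} as written only treats the non-id-indifferent case.
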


\begin{proof} 
By Proposition \ref{IdIndiffFixedSlope},  when $P$ is id-indifferent 
$\partial_\vv f(P) = (d^2 - d) - 2d \cdot s_\varphi(P,\vv)$ for each $\vv \in T_P$.
Since $\varphi_*(\vv) = \vv$ for each $\vv \in T_P$, the assertions in the Corollary follow
from Proposition \ref{SvarphiProp}.

When $P$ is a focused repelling fixed point
with focus $\vv_1$, then Proposition \ref{FocusedRepellingProp} gives $s_\varphi(P,\vv_1) = d - \deg_\varphi(P)$, 
while $s_\varphi(P,\vv) = 0$ for each $\vv \in T_P$ with $\vv \ne \vv_1$.  Since $\varphi_*(\vv_1) = \vv_1$,
Proposition \ref{SvarphiProp} shows that $P \in \MinResLoc(\varphi)$ if and only if $\deg_\varphi(P) \ge (d+1)/2$, 
and that $\MinResLoc(\varphi) = \{P\}$ if and only if $\deg_\varphi(P) > (d+1)/2$.        
\end{proof}  

When $P$ is a type II point which is moved by $\varphi$, or is fixed but is not id-indifferent, 
there are  balance conditions for $P$ to belong to $\MinResLoc(\varphi)$ in terms of 
the directional fixed point multiplicities $\#F_\varphi(P,\vv)$ and $\#\tF_\varphi(P,\vv)$:

\begin{proposition} \label{FProp} Let $\varphi(z) \in K(z)$ have degree $d \ge 2$, 
and let $P \in \BHH_K$ be of type {\rm II}.  

If $\varphi(P) \ne P$, then $P \in \MinResLoc(\varphi)$
if and only if  $\#F_\varphi(P,\vv) \le \frac{d+1}{2}$ for each $\vv \in T_P$,
and $\MinResLoc(\varphi) = \{P\}$ if and only if  $\#F_\varphi(P,\vv) <\frac{d+1}{2}$ for each $\vv \in T_P$.

If $\varphi(P) = P$ but $P$ is not id-indifferent, then $P \in \MinResLoc(\varphi)$ if and only if 
for each $\vv \in T_P$, either 

$(1)$ $\#F_\varphi(P,\vv) \le \frac{d+1}{2}$, or 

$(2)$ $\#F_\varphi(P,\vv) > \frac{d+1}{2}$ but $\#F_\varphi(P,\vv) - \#\tF_\varphi(P,\vv) \le \frac{d-1}{2}$. 

\noindent{Furthermore,} $\MinResLoc(\varphi) = \{P\}$ if and only if for each $\vv \in T_P$, either 

$(1^\prime)$ $\#F_\varphi(P,\vv) < \frac{d+1}{2}$, or 

$(2^\prime)$ $\#F_\varphi(P,\vv) \ge \frac{d+1}{2}$ but $\#F_\varphi(P,\vv) - \#\tF_\varphi(P,\vv) < \frac{d-1}{2}$.
\end{proposition}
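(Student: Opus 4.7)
The proof reduces to direct computation from the slope formulas proven in Section \ref{SlopeFormulaSection}, together with the general criterion that $P \in \MinResLoc(\varphi)$ if and only if $\partial_{\vv} f(P) \ge 0$ for every $\vv \in T_P$ (with $f = \ordRes_\varphi$), and $\MinResLoc(\varphi) = \{P\}$ if and only if strict inequality holds in every direction.

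First I would handle the case $\varphi(P) \ne P$, which is essentially immediate. By Proposition \ref{NonFixedSlope},
\[
\partial_{\vv} f(P) \ = \ d^2 + d - 2d \cdot \#F_\varphi(P,\vv) \ ,
\]
so $\partial_{\vv} f(P) \ge 0$ reads $\#F_\varphi(P,\vv) \le (d+1)/2$, and $\partial_{\vv} f(P) > 0$ reads $\#F_\varphi(P,\vv) < (d+1)/2$. That gives both assertions in the first half of the proposition.

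Next I would treat the case $\varphi(P) = P$ with $P$ not id-indifferent. Proposition \ref{NonIdIndiffFixedSlope} gives
\[
\partial_{\vv} f(P) \ = \ (d^2 - d) - 2d \cdot \#F_\varphi(P,\vv) + 2d \cdot \max\bigl(1,\#\tF_\varphi(P,\vv)\bigr) \ ,
\]
so the question becomes whether
\[
\#F_\varphi(P,\vv) - \max\bigl(1,\#\tF_\varphi(P,\vv)\bigr) \ \le \ \tfrac{d-1}{2}
\]
for every $\vv \in T_P$ (with strict inequality for the uniqueness statement). The only subtlety is the split in the $\max$: I would separate the two cases $\#\tF_\varphi(P,\vv) = 0$ (where the inequality becomes $\#F_\varphi(P,\vv) \le (d+1)/2$) and $\#\tF_\varphi(P,\vv) \ge 1$ (where it becomes $\#F_\varphi(P,\vv) - \#\tF_\varphi(P,\vv) \le (d-1)/2$).

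Finally I would verify that this case split is exactly the disjunction (1) or (2). If $\#F_\varphi(P,\vv) \le (d+1)/2$, then condition (1) is satisfied and the slope inequality holds automatically: either $\#\tF_\varphi(P,\vv) = 0$, in which case the inequality is precisely $\#F_\varphi(P,\vv) \le (d+1)/2$, or $\#\tF_\varphi(P,\vv) \ge 1$, in which case $\#F_\varphi(P,\vv) - \#\tF_\varphi(P,\vv) \le (d+1)/2 - 1 = (d-1)/2$. If instead $\#F_\varphi(P,\vv) > (d+1)/2$, then the $\#\tF_\varphi(P,\vv) = 0$ sub-case already forces the slope to be negative (so $P \notin \MinResLoc(\varphi)$), while in the $\#\tF_\varphi(P,\vv) \ge 1$ sub-case the slope condition reduces to $\#F_\varphi(P,\vv) - \#\tF_\varphi(P,\vv) \le (d-1)/2$, which is precisely condition (2). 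The same bookkeeping with strict inequalities yields $(1')$ and $(2')$ for the uniqueness statement. The main (minor) obstacle is keeping the strict and non-strict cases straight and making sure the $\#\tF_\varphi(P,\vv) = 0$ sub-case does not spuriously validate condition (2); once the two sub-cases are laid out side by side, the equivalence is immediate.
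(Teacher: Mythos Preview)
Your proposal is correct and follows essentially the same approach as the paper: invoke the convexity criterion ($P \in \MinResLoc(\varphi)$ iff all directional slopes are nonnegative), plug in the slope formulas from Propositions \ref{NonFixedSlope} and \ref{NonIdIndiffFixedSlope}, and unpack the $\max(1,\#\tF_\varphi(P,\vv))$ into two cases. The only cosmetic difference is that the paper splits at $\#\tF_\varphi(P,\vv) \le 1$ versus $\ge 2$, whereas you split at $=0$ versus $\ge 1$; since the two formulas for $\max(1,\#\tF_\varphi(P,\vv))$ agree when $\#\tF_\varphi(P,\vv)=1$, both splits lead to the same conclusion.
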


\noindent{Note} that in Proposition \ref{FProp}, 
since there are exactly $d+1$ fixed points of $\varphi$ (counting multiplicities), 
there can be at most one $\vv \in T_P$ with $\#F_\varphi(P,\vv) > (d+1)/2$.

\begin{proof}  Writing $f(\cdot) = \ordRes_\varphi(\cdot)$, again we have $P \in \MinResLoc(\varphi)$
if and only if $\partial_\vv f(P) \ge 0$ for each $\vv \in T_P$, 
and $\MinResLoc(\varphi) = \{P\}$ if and only if $\partial_\vv f(P) > 0$ for each $\vv \in T_P$.

If $\varphi(P) \ne P$, then for each $\vv \in T_P$ we have 
$\partial_\vv f(P) = (d^2 + d) - 2d \cdot \#F_\varphi(P,\vv)$ by Proposition \ref{NonFixedSlope}, and the assertions 
in the Proposition follow immediately.

If $\varphi(P) = P$ but $P$ is not id-indifferent, then by Proposition \ref{NonIdIndiffFixedSlope} 
for each $\vv \in T_P$ we have
\begin{equation*}
\partial_\vv f(P) \ = \ (d^2-d) - 2d \cdot \#F_\varphi(P,\vv) + 2d \cdot \max\big(1,\#\tF_\varphi(P,\vv)\big) \ .
\end{equation*} 
Thus when $\#\tF_\varphi(P,\vv) \le 1$, we have $\partial_\vv f(P) \ge 0$  if and only 
if $\#F_\varphi(P,\vv) \le (d+1)/2$.  When $\#\tF_\varphi(P,\vv) \ge 2$, we have 
$\partial_\vv f(P) \ge 0$ if and only if $\#F_\varphi(P,\vv) - \#\tF_\varphi(P,\vv) \le (d-1)/2$. However, if 
$\#F_\varphi(P,\vv) \le (d+1)/2$ then $\#F_\varphi(P,\vv) - \#\tF_\varphi(P,\vv) \le (d-1)/2$ is automatic, 
so we only need the more complicated condition $(2)$ when condition $(1)$ fails.  
 In the situation where it is required that $\partial_\vv f(P) > 0$, 
 similar arguments yield $(1^\prime)$ and $(2^\prime)$,
\end{proof} 

{\noindent {\bf Remark.}} In Proposition \ref{IndiffBalanceProp} we will give  
balance conditions for an id-indifferent fixed point $P$ to belong to $\MinResLoc(\varphi)$, 
using directional fixed point multiplicities for points $Q$ in the boundary of 
the component of the `locus of id-indifference' $U_{\id}(P)$.  

\section{\bf Persistence Lemmas and the Locus of Id-Indifference.} \label{PersistenceTheoremSection} 

In this section, we establish three ``Persistence Lemmas'' which shed light on the dynamics of $\varphi$ 
near the part of $\Gamma_{\Fix,\Repel}$ fixed by $\varphi$.  
We show that various reduction behaviors at $P$, including id-indifference
and rotational indifference, propagate to nearby points.  
In the following section we give applications of these lemmas. 

\smallskip
So far we have only defined id-indifference, rotational indifference, and additive indifference for type II 
points.  To formulate these notions for points of types III and IV, we follow a suggestion of 
Xander Faber, using his inclusion map for $\BPP_K$ under base change.  In the following result,
if $L$ is any complete, algebraically closed, nonarchimedean valued field, we use a subscript $L$ to denote
objects ($\BPP_K$, discs, balls, tangent spaces, etc.) associated to $L$.  
Faber (\cite{Fab}, I: Theorem 4.1 and Corollary 4.4) shows

\begin{proposition}[Faber] \label{FaberProp} 
For each extension $L/K$ of complete, algebraically closed, nonarchimedean valued fields
$($with the valuation on $L$ normalized so that it extends the valuation on $K)$, 
there is a canonical inclusion map $\iota_K^L : \BPP_K \rightarrow \BPP_L$
with the following properties.

$(1)$ Let $\{D_K(a_i,r_i)\}_{i \in \NN}$ be a decreasing sequence of discs in $K$,
and let $\{D_L(a_i,r_i)\}_{i \in \NN}$ be the corresponding sequence of discs 
$($with the same centers and radii$)$ in $L$.
Let $\zeta_{a,r,K} \in \BPP_K$ $($resp. $\zeta_{a,r,L} \in \BPP_L)$ be the points
associated to these sequences by Berkovich's classification theorem $($\cite{B-R}, {\rm p.5}$)$. 
Then $\iota_K^L(\zeta_{a,r,K}) = \zeta_{a,r,L}$.  In particular, $\iota_K^L$ extends the 
natural inclusion $\PP^1(K) \hookrightarrow \PP^1(L)$.

$(2)$ If $M/L$ is a further extension of complete, algebraically closed, nonarchimedean valued fields, 
then $\iota_K^M = \iota_L^M \circ \iota_K^L$.

$(3)$ $\iota_K^L$ is continuous for the weak topologies on $\BPP_K$ and $\BPP_L$.
In particular $\iota_K^L(\BPP_K)$ is a compact subset of $\BPP_L$ for the weak topology.

$(4)$ $\iota_K^L$ is an isometry for the logarithmic path distance $\rho(x,y)$.

$(5)$ Write $\iota = \iota_K^L$.  For each $P \in \BPP_K$, 
there is an injective map $\iota_* : T_{P,K} \rightarrow T_{\iota(P),L}$ on the tangent spaces, 
such that $\iota_*(B_{P,K}(\vv)^-) \subset B_{\iota(P),L}(\iota_*(\vv))^-$ for each $\vv \in T_{P,K}$.

$(6)$ If $\varphi_K(z) \in K(z)$ has degree $d \ge 1$ and $\varphi_L(z) \in L(z)$ is given by extension of scalars, 
and if we write $\iota$ for $\iota_K^L$, then 

\quad $(a)$ $\varphi_L \circ \iota = \iota \circ \varphi_K;$

\quad $(b)$ For each $P \in \BPP_K$ we have $\deg_{\iota(P)}(\varphi_L) = \deg_P(\varphi_K);$ 

\quad $(c)$ For each $P \in \BPP_K$ and each $\vv \in T_{P,K}$, 

\qquad \quad we have $m_{\varphi_L}(\iota(P), \iota_*(\vv)) = m_{\varphi_K}(P,\vv)$ and 
$s_{\varphi_L}(\iota(P), \iota_*(\vv)) = s_{\varphi_K}(P,\vv)$.
\end{proposition}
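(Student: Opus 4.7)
The plan is to use Berkovich's seminorm description of $\BPP_K$ and extend seminorms from $K[T]$ to $L[T]$. Identifying $\BPP_K \setminus \{\infty\}$ with the space of bounded multiplicative seminorms on $K[T]$ extending $|\cdot|$, I would define $\iota = \iota_K^L$ on $\BerkA^1_K$ as follows: if $P$ corresponds to a disc or nested sequence of discs $\{D_K(a_i, r_i)\}$, let $\|\cdot\|_{\zeta_{a_i, r_i, L}}$ denote the Gauss-type seminorm on $L[T]$ attached to the $L$-disc of the same center and radius, and set
\[
 \|f\|_{\iota(P)} \ := \ \lim_{i \to \infty} \|f\|_{\zeta_{a_i, r_i, L}}
\]
for $f \in L[T]$. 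The limit exists because the seminorms are monotonically decreasing in $i$. Extend $\iota$ to all of $\BPP_K$ by sending $\infty_K \mapsto \infty_L$.

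Properties (1) and (2) are immediate from the construction, since the seminorm formula depends only on centers and radii, which are preserved under composition of scalar extensions. For (3), the weak topology on $\BPP_K$ is generated by the evaluation functionals $P \mapsto |f|_P$ for $f \in K(T)$; under $\iota$, each such $K$-functional is the pullback of the corresponding $L$-functional on $\BPP_L$, giving continuity, and $\iota(\BPP_K)$ is then compact as the continuous image of a compact space. For (4), the logarithmic path distance between two nested disc-points $\zeta_{a, r_1}$ and $\zeta_{a, r_2}$ equals $|\log(r_2/r_1)|$, computed from radii alone; a general geodesic $[\zeta_{a_1, r_1, K}, \zeta_{a_2, r_2, K}]$ passes through the meeting point $\zeta_{a_1, s, K} = \zeta_{a_2, s, K}$ with $s = \max(r_1, r_2, |a_1 - a_2|)$, decomposing into two nested segments whose lengths are preserved.

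For (5), I would define $\iota_*$ by sending the class of a $K$-path $(P, x]$ to the class of $(\iota(P), \iota(x)]$; isometry (4) ensures well-definedness, and two diverging $K$-paths yield diverging $L$-paths because the median of $\{P, x_1, x_2\}$ is determined by the three pairwise distances, all of which are preserved, so the meeting point being $P$ forces the $L$-meeting point to be $\iota(P)$. The ball containment $\iota_*(B_{P,K}(\vv)^-) \subset B_{\iota(P),L}(\iota_*(\vv))^-$ is then immediate. For (6)(a), the action of a rational function on a seminorm is given by the pullback formula $\|g\|_{\varphi(P)} = \|g \circ \varphi\|_P$, which commutes with scalar extension. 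For (6)(b) and (6)(c), local degrees and directional/surplus multiplicities admit Rivera-Letelier's characterizations as preimage counts in small balls; since $K$ is algebraically closed, preimages under $\varphi_K$ of any $K$-point are already $K$-points, and these counts transfer to $L$-preimage counts via the ball containment in (5), using that the divisor-theoretic data defining $\varphi$ is given by the same polynomial equations over $K$ and $L$.

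The hard part will be the behavior of type IV points under base change. A nested sequence with empty intersection in $K$ may acquire a nonempty intersection in $L$, so that $\iota$ sends a type IV $K$-point to a point of type I, II, or III in $L$. The seminorm construction still produces a well-defined multiplicative seminorm on $L[T]$, but properties (5) and (6) must be verified across such type changes, where the local structure of the Berkovich tree at $\iota(P)$ differs markedly from that at $P$ (for example, $T_{P,K}$ is a singleton while $T_{\iota(P), L}$ can be rich). A careful case analysis is required, but this should ultimately be tractable since preimages under $\varphi$ and all relevant multiplicities are algebraic invariants determined by the coefficients of the normalized representation $(F, G)$, hence insensitive to the ambient algebraically closed field.
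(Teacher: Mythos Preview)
The paper does not supply its own proof of this proposition: it is stated as a result of Faber and attributed to \cite{Fab}, I: Theorem~4.1 and Corollary~4.4, with no argument given in the present paper. So there is nothing here to compare your proposal against in terms of the paper's reasoning.

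That said, your sketch is broadly the right strategy and in fact matches Faber's own approach: construct $\iota$ via the seminorm description, observing that a nested family of $K$-discs determines a nested family of $L$-discs with the same centers and radii, and read off properties (1)--(4) from the fact that the relevant invariants (radii, pairwise distances, evaluation of $K$-polynomials) are unchanged. Your handling of (5) via medians and of (6) via preimage counts is also the natural one. You have correctly identified the only genuinely delicate point, namely that a type~IV point over $K$ can become type~I, II, or III over $L$; the verification that (5) and (6) survive this type change is where the real work lies, and your remark that the multiplicities are determined by the coefficients of a normalized representation $(F,G)$ is the key observation that makes it go through. One small gap: in (6)(b),(c) you appeal to preimage counts in small balls, but to transfer these counts from $K$ to $L$ you need more than the containment $\iota(B_{P,K}(\vv)^-)\subset B_{\iota(P),L}(\iota_*(\vv))^-$; you also need that no new $L$-preimages of a $K$-point appear in the $L$-ball outside the image of the $K$-ball, which follows because $\varphi$ has the same degree $d$ over both fields and the $d$ preimages of a $K$-point are already $K$-rational.
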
 

If $P \in \BPP_K$ is a point of type III or IV, there is an always extension $L/K$ of complete,
algebraically closed valued fields such that $\iota_K^L(P)$ is of type II in $\BPP_L$. 
When $P$ is of type III, such an $L$ can be obtained by taking the completion of the algebraic closure
of the quotient field of the ring $K_r$ constructed by Berkovich in (\cite{Berk}, p.21);  when $P$ is of type IV, 
$L$ can be obtained by taking the completion of the algebraic closure of the field $K(u)$ constructed
by Kaplansky in (\cite{Kapl}, I: Theorem 2, p.306).  

By iterating such constructions and using Zorn's Lemma, one can obtain a maximally complete algebraically closed
nonarchimedean field $L/K$ whose value group is $\RR$ and whose residue field is the same as the residue field $\tk$
of $K$.  Since $\RR$ is divisible and $\tk$ is algebraically closed, such an $L$ satisfies Kaplansky's ``Hypothesis A''
(\cite{Kapl}, I: p.312), so it is unique up to isomorphism by (\cite{Kapl}, I: Theorem 5, p. 312).  For this $L$,
every $P \in \BPP_L$ is either of type I or type II.  

For any $L$ such that $\iota_K^L(P)$ is of type II, $\varphi_L(z)$ has a normalized representation 
$(F_L(X,Y),G_L(X,Y))$ over $\cO_L$, which can be used to define the reduction $\tphi_P(z)$ at $P$.  Since 
any two such fields $L_1, L_2$ can be embedded
in a common field $L_3$ by Kaplansky's results, and since $(F_L,G_L)$ is unique up to scaling 
by a unit in $\cO_L^{\times}$, the 
type of reduction $\varphi$ has at $P$ is independent of the choice of $L$.  

\begin{definition}[Generalized Reduction Types]
{\em Let $\varphi(z) \in K(z)$ have degree $d \ge 1$, and let $P \in \HH_{\Berk,K}$.  Suppose $\varphi(P) = P$. 
Let $L/K$ be an extension of complete, algebraically closed nonarchimedean valued fields such that
$\iota_K^L(P)$ is of type {\rm II} in $\BPP_L$.  
We will call $P$ id-indifferent,
multiplicatively indifferent, additively indifferent, or repelling for $\varphi(z)$,  
according as \,$\iota_K^L(P) \in \BPP_L$ has the corresponding property for $\varphi_L(z)$.

If $P$ is multiplicatively indifferent, and $\iota_K^L(P)$ has reduced rotation number $\tlambda$ for an axis,
we will say that $P$ has reduced rotation number $\tlambda$ for that axis.  
}   
\end{definition} 

\smallskip
In the following, given $K$, 
we write $\cB_{\rho}(P,\varepsilon)^- = \{ z \in \BHH_K : \rho(z,P) < \varepsilon\}$ 
for the ball in the strong topology corresponding to $P \in \BHH_K$ and $\varepsilon \in \RR_{> 0}$. 

\begin{lemma}[First Persistence Lemma] \label{FirstPersistenceLemma}
Let $\varphi(z) \in K(z)$ have degree $d \ge 2$.   
Suppose $P \in \BHH_K$ is id-indifferent for $\varphi$. 
Then there is a ball $\cB_{\rho}(P,\varepsilon)^-$  
such that each $Q \in \cB_{\rho}(P,\varepsilon)^-$ is id-indifferent for $\varphi$. 
\end{lemma}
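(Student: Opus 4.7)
The plan is to recast id-indifference as an algebraic condition on a normalized representation that is manifestly stable under small conjugations. After a coordinate change we may assume $P = \zeta_G$ is of type {\rm II}; the cases of types {\rm III} and {\rm IV} will be handled at the end by Faber's base-change construction (Proposition \ref{FaberProp}). For a normalized representation $(F,G)$ of $\varphi$ at $\zeta_G$, id-indifference is equivalent to the fixed-point polynomial $H(X,Y) := YF(X,Y) - XG(X,Y)$ having every coefficient in $\fM$: indeed $\tH = Y\tF - X\tG$ vanishes iff $\tF = X\tA$ and $\tG = Y\tA$ for some nonzero $\tA \in \tk[X,Y]$ of degree $d-1$. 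Let $C := \min\{\ord(c) : c \text{ a coefficient of } H\}$, so that $C > 0$.

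The core of the proof is a single algebraic identity. Parametrize nearby type {\rm II} points by conjugations $\gamma_{a,c}(z) = cz + a$ with $a \in \cO$ and $c \in \cO \setminus \{0\}$; then $Q := \gamma_{a,c}(\zeta_G) = \zeta_{a,|c|}$ and $\rho(P,Q) = \lambda := \ord(c) > 0$. A direct computation with the homogeneous lifts gives
\begin{equation*}
(F_Q,G_Q) \ = \ \bigl(F(cX+aY,Y) - aG(cX+aY,Y),\ cG(cX+aY,Y)\bigr)
\end{equation*}
together with the clean identity
\begin{equation*}
YF_Q(X,Y) - XG_Q(X,Y) \ = \ H(cX+aY,\,Y)\,.
\end{equation*}
Since $a,c \in \cO$, every coefficient on the right has valuation $\ge C$. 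Persistence of id-indifference at $Q$ therefore reduces to showing that the normalization factor $\alpha := \min\{\ord(\text{coeffs of }F_Q\text{ and }G_Q)\}$ satisfies $\alpha < C$, for then after dividing by an element of valuation $\alpha$ the polynomial $YF_Q - XG_Q$ retains coefficients of valuation $\ge C - \alpha > 0$.

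To bound $\alpha$, write $\tF = X\tA$, $\tG = Y\tA$, and let $j_0 \le d-1$ be the \emph{largest} index with $\ta_{j_0} \ne 0$. Expanding gives
\begin{equation*}
[X^{j_0}Y^{d-j_0}]\,G_Q \ = \ c^{j_0+1} \sum_{i \ge j_0} g_i \binom{i}{j_0} a^{i-j_0}\,,
\end{equation*}
and the maximality of $j_0$ together with $\tg_d = 0$ forces the inner sum to reduce modulo $\fM$ to $\tg_{j_0} = \ta_{j_0} \ne 0$. Hence this particular coefficient has valuation exactly $(j_0+1)\lambda \le d\lambda$, so $\alpha \le d\lambda$. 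Taking $\varepsilon := C/d$, every such $Q$ with $\rho(P,Q) < \varepsilon$ is id-indifferent.

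This covers all nearby type {\rm II} points lying in a direction $\vv_{\tilde a} \in T_{\zeta_G}$ with $\tilde a \in \tk$. The remaining direction $\vv_\infty$ is handled by pre-conjugating $\varphi$ by $z \mapsto 1/z$, which fixes $\zeta_G$, preserves id-indifference, and preserves the invariant $C$ (the polynomial $H$ merely has its variables swapped), thereby reducing the super-disk case to the sub-disk case. For $P$ or $Q$ of type {\rm III} or {\rm IV}, pass to a complete algebraically closed extension $L/K$ in which the relevant Berkovich point becomes type {\rm II}; Faber's Proposition \ref{FaberProp} ensures that $\iota_K^L$ is a $\rho$-isometry and that id-indifference transfers between $K$ and $L$ by definition, so the type {\rm II} argument applies over $L$. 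The only real subtlety is the normalization estimate above: one must choose $j_0$ maximal, so that the leading-order substitution cannot accidentally cancel and the coefficient in question really has valuation exactly $(j_0+1)\lambda$ rather than something larger that would force a worse bound on $\alpha$.
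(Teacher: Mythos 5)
Your proof is correct. It shares the paper's overall architecture (reduce to a type {\rm II} point at $\zeta_G$ via Faber's base change and a coordinate change, conjugate by an affine map $z \mapsto cz+a$ to reach nearby points, and control the normalization constant of the conjugated pair), but the key device is genuinely different. The paper's proof approximates $(F,G)$ by the exactly id-indifferent pair $(\hA\cdot X,\hA\cdot Y)$ for a lift $\hA$ of $\tA$, measures the error $\eta=\min(\ord(F-\hF),\ord(G-\hG))$, and tracks how $\eta$ and the normalization factor interact under the diagonal conjugation (getting $\varepsilon=\eta/(d+1)$). You instead work with the intrinsic invariant $C=\min\ord$ of the coefficients of the fixed-point form $H=YF-XG$, whose vanishing mod $\fM$ \emph{characterizes} id-indifference, and exploit the exact transformation law $YF_Q-XG_Q=H(cX+aY,Y)$, which makes the ``numerator'' estimate trivial and shifts all the work to bounding the normalization constant $\alpha$; your largest-index argument correctly shows the sum $\sum_{i\ge j_0}g_i\binom{i}{j_0}a^{i-j_0}$ reduces to $\ta_{j_0}\ne 0$ (since $\tg_i=\ta_i=0$ for $j_0<i\le d-1$ and $\tg_d=0$), giving $\alpha\le d\lambda$ and $\varepsilon=C/d$. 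Your invariant $C$ has the advantage of being independent of the choice of lift (indeed $C\ge\eta$ for any lift, since $Y\hF-X\hG=0$), and the identity for $H$ under conjugation is a clean observation; the paper's approach has the advantage of producing, along the way, an explicit approximate model for the reduction at every nearby point, which it reuses in Corollary \ref{FirstPersistenceCor} and in the proofs of the Second and Third Persistence Lemmas. Your handling of the $\vv_\infty$ direction by conjugating with $z\mapsto 1/z$ (which merely swaps the variables of $H$) and of types {\rm III}/{\rm IV} by base change is also sound.
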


\begin{proof} 
By extending $K$ and using Proposition \ref{FaberProp} we can assume $P$ is of type II.  
Since the path metric $\rho(x,y)$ is invariant under $\GL_2(K)$, 
after a change of coordinates we can arrange that $P = \zeta_G$.  
Let $(F,G)$ be a normalized representation 
of $\varphi$.  Since $\zeta_G$ is id-indifferent, there is a nonzero homogeneous polynomial $\tA(X,Y) \in \tk[X,Y]$
of degree $d-1$ such that $\tF(X,Y) = \tA(X,Y) \cdot X$ and $\tG(X,Y) = \tA(X,Y) \cdot Y$. 
Let $\hA(X,Y) \in \cO[X,Y]$ be a homogeneous polynomial of degree $d-1$ lifting $\tA(X,Y)$, 
and put $\hF(X,Y) = \hA(X,Y) \cdot X$, $\hG(X,Y) = \hA(X,Y) \cdot Y$.  
Write $F(X,Y) = a_d X^d + \cdots + a_0 Y^d$, $\hF(X,Y) = \ha_d X^d + \cdots + \ha_0 Y^d$,
$G(X,Y) = b_d X^d + \cdots + b_0 Y^d$, $\hG(X,Y) = \hb_d X^d + \cdots + \hb_0 Y^d$. 
Since $(F,G)$ and $(\hF,\hG)$ have the same reductions mod $\fM$, there is an $\eta > 0$ such that
\begin{equation*}
\ord(a_i - \ha_i) \ge \eta, \ \ord(b_i-\hb_i) \ge \eta \quad \text{for $i = 0, \ldots, d$.} 
\end{equation*}
We will abbreviate this by saying $\ord(F-\hF) \ge \eta$, $\ord(G-\hG) \ge \eta$.  

Put $\varepsilon = \eta/(d+1) > 0$. 
To prove the lemma it will suffice to show that each $Q \in \cB_\rho(P,\varepsilon)^-$ is id-indifferent. 
Fix such a $Q$;  after extending $K$ if necessary, we can assume $Q$ is of type II, 
so $Q \in [\alpha,\zeta_G]$ for some $\alpha \in \PP^1(K)$.  

Since $\GL_2(\cO)$ acts transitively on type I points, there is a $\gamma \in \GL_2(\cO)$ with $\gamma(0) = \alpha$.
Write $\gamma = \left(\!\! \begin{array}{cc} a & b \\ c & d \end{array} \!\!\right)$, and  
define $F^\gamma, G^\gamma, \hF^\gamma, \hG^\gamma, \hA^\gamma \in \cO[X,Y]$ by 
\begin{equation*}
\left(\begin{array}{cc} F^\gamma \\ G^\gamma \end{array} \right) \ = \ 
\gamma^{-1} \circ \left(\begin{array}{cc} F \\ G \end{array} \right) \circ \gamma \ , \qquad
\left(\begin{array}{cc} \hF^\gamma \\ \hG^\gamma \end{array} \right) \ = \ 
\gamma^{-1} \circ \left(\begin{array}{cc} \hF \\ \hG \end{array} \right) \circ \gamma \ ,
\end{equation*} 
and $\hA^\gamma(X,Y) = A(aX+bY,cX+dY)$.  
Then $(F^\gamma,G^\gamma)$ is a normalized representation of $\varphi^\gamma$.
It is easy to see that
\begin{equation*}
\hF^\gamma(X,Y) = \hA^\gamma(X,Y) \cdot X \ , \quad \hG^\gamma(X,Y) = \hA^\gamma(X,Y) \cdot Y \ , 
\end{equation*}   
and that $\ord(F^\gamma-\hF^\gamma) \ge \eta$, $\ord(G^\gamma-\hG^\gamma) \ge \eta$.  
Hence after changing coordinates by $\gamma$ and replacing $(F,G)$, $(\hF,\hG)$ and $\hA$ with 
$(F^\gamma,G^\gamma)$, $(\hF^\gamma,\hG^\gamma)$ and $\hA^\gamma$, 
we can assume that $Q \in [0,\zeta_G]$.

Since $Q \in \cB_\rho(\zeta_G,\varepsilon)^- \bigcap [0,\zeta_G]$, 
there is a $\beta \in |K^\times|$ with $0 \le \ord(\beta) < \varepsilon$ such that $Q = \zeta_{0,|\beta|}$.
Put 
\begin{equation*}
\tau \ = \ \left( \begin{array}{cc} \beta & 0 \\ 0 & 1 \end{array} \right) \ \in \ \GL_2(K) \ ,
\end{equation*} 
so $\tau(\zeta_G) = \zeta_{0,|\beta|} = Q$, and let $\hA^\tau(X,Y) = A(\beta X, Y)$.  
Then $F^\tau(X,Y) = \beta^{-1} F(\beta X,Y)$, $G^\tau(X,Y) = G(\beta X,Y)$, 
$\hF^\tau(X,Y) = \hA^\tau(X,Y) \cdot X$, and $\hG^\tau(X,Y) = \hA^\tau(X,Y) \cdot Y$.
 
Write $\delta = \ord(\beta)$. The pair $(F^\tau,G^\tau)$ is not in general normalized, but 
\begin{equation*} 
\ord(F^\tau - \hF^\tau), \ \ord(G^\tau - \hG^\tau) \ \ge \ \eta - \delta  \ > \ 0 \ . 
\end{equation*} 
If we write $\hA(X,Y) = c_{d-1} X^{d-1} + c_{d-2} X^{d-2} Y + \cdots + c_0 Y^{d-1}$, then 
\begin{equation*} 
\hA^\tau(X,Y) \ = \ c_{d-1} \beta^{d-1} X^{d-1} + c_{d-2} \beta^{d-2} X^{d-2} Y + \cdots + c_0 Y^{d-1} \ .
\end{equation*} 
Since each $c_k$ belongs to $\cO$ and at least one $c_k$ belongs to $\cO^\times$, it follows that 
$0 \le \ord(\hA^\tau) \le (d-1) \delta$.  Fix $\mu \in K^\times$ with $|\mu| = \ord(\hA^\tau)$.  
Then $\mu^{-1} \hA^\tau, \mu^{-1} \hF^\tau, \mu^{-1} G^\tau \in \cO[X,Y]$ and 
\begin{equation*} 
\ord(\mu^{-1} \hA^\tau) \ = \  \ord(\mu^{-1} \hF^\tau) \ = \ \ord(\mu^{-1} \hG^\tau) \ = \ 0 \ .
\end{equation*} 
Furthermore 
\begin{equation*} 
\ord(\mu^{-1} F^\tau - \mu^{-1} \hF^\tau), \ \ord(\mu^{-1} G^\tau - \mu^{-1} \hG^\tau) 
\ \ge \ \eta - d \delta \ > \ \eta - d \varepsilon \ = \ \varepsilon \ > \ 0 \ ,
\end{equation*}
so $\mu^{-1} F^\tau, \mu^{-1} G^\tau \in \cO[X,Y]$ and $\ord(\mu^{-1} F^\tau) = \ord(\mu^{-1} G^\tau) = 0$.
Thus  $(\mu^{-1} F^\tau, \mu^{-1}G^\tau)$ is a normalized representation of $\varphi^\tau$, 
and 
\begin{eqnarray*} 
\mu^{-1} F^\tau & \equiv & \mu^{-1} \hF^\tau \ \equiv \ \mu^{-1} \hA^\tau \cdot X  \pmod{\fM_K} \ , \\
\mu^{-1} G^\tau & \equiv & \mu^{-1} \hG^\tau \ \equiv \ \mu^{-1} \hA^\tau \cdot Y  \pmod{\fM_K} \ .
\end{eqnarray*} 
It follows that $Q$ is id-indifferent for $\varphi$. 
\end{proof} 

For future use, note that in Lemma \ref{FirstPersistenceLemma} we have actually proved the following: 

\begin{corollary} \label{FirstPersistenceCor}  Suppose $\zeta_G$ is id-indifferent for $\varphi$, 
and $(F,G)$ is a normalized representation of $\varphi$.  Write $\tF(X,Y) = \tA(X,Y) \cdot X$ and  
$\tG(X,Y) = \tA(X,Y) \cdot Y$, and let $\hA(X,Y) \in \cO[X,Y]$ be a homogeneous lift of $\tA(X,Y)$
of degree $d-1$.  Put $\hF(X,Y) = \hA(X,Y) \cdot X$, $\hG(X,Y) = \hA(X,Y) \cdot Y$, and let 
$\eta = \min(\ord(F-\hF), \ord(G-\hG)) > 0$.  Take $\varepsilon = \eta/(d+1)$.  
Then each $Q \in \cB_\rho(\zeta_G,\varepsilon)^-$ is id-indifferent for $\varphi$.
\end{corollary}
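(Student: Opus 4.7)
The plan is simply to extract this corollary from the proof of Lemma \ref{FirstPersistenceLemma}. In that proof, after moving the given id-indifferent point $P$ to the Gauss point via $\GL_2(K)$, one chose a normalized representation $(F,G)$, lifted $\tA$ to a form $\hA\in\cO[X,Y]$, set $\hF=\hA\cdot X$, $\hG=\hA\cdot Y$, defined $\eta = \min(\ord(F-\hF),\ord(G-\hG))$, and took $\varepsilon = \eta/(d+1)$. The rest of the proof then showed that \emph{every} $Q\in \cB_\rho(\zeta_G,\varepsilon)^-$ is id-indifferent. Since the hypothesis of the corollary already puts us at $P=\zeta_G$ with a specific choice of $(F,G), \hA, \eta$, the conclusion is exactly what the body of the lemma's argument produced.

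To make the statement self-contained, I would walk through the argument once more in this streamlined setting. First, extending $K$ if necessary (Proposition \ref{FaberProp}) we may assume $Q$ is of type II, so $Q\in[\alpha,\zeta_G]$ for some $\alpha\in\PP^1(K)$ with $\rho(Q,\zeta_G)<\varepsilon$. Choose $\gamma\in\GL_2(\cO)$ with $\gamma(0)=\alpha$; conjugation by $\gamma$ sends the pair $(F,G)$ to a normalized representation $(F^\gamma,G^\gamma)$ and sends $(\hF,\hG)=(\hA\cdot X,\hA\cdot Y)$ to $(\hA^\gamma\cdot X,\hA^\gamma\cdot Y)$, where $\hA^\gamma(X,Y)=\hA(\gamma(X,Y))$. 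Because $\gamma$ has entries in $\cO$, the order estimates are preserved: $\ord(F^\gamma-\hF^\gamma),\ord(G^\gamma-\hG^\gamma)\ge\eta$. So after this harmless change of coordinates we may assume $Q\in[0,\zeta_G]$, i.e.\ $Q=\zeta_{0,|\beta|}$ for some $\beta\in K^\times$ with $0\le\delta:=\ord(\beta)<\varepsilon$.

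Now conjugate by $\tau=\mathrm{diag}(\beta,1)$ so that $\tau(\zeta_G)=Q$. A direct computation gives $F^\tau(X,Y)=\beta^{-1}F(\beta X,Y)$, $G^\tau(X,Y)=G(\beta X,Y)$, $\hF^\tau=\hA^\tau\cdot X$, $\hG^\tau=\hA^\tau\cdot Y$, and $\ord(F^\tau-\hF^\tau),\ord(G^\tau-\hG^\tau)\ge \eta-\delta$. The coefficients of $\hA^\tau$ are $c_i\beta^i$ for $i=0,\ldots,d-1$, so $0\le \mu:=\ord(\hA^\tau)\le (d-1)\delta$. Pick $m\in K^\times$ with $\ord(m)=\mu$. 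Then $(m^{-1}F^\tau,m^{-1}G^\tau)$ lies in $\cO[X,Y]$, and the key inequality
\[
\ord(m^{-1}F^\tau-m^{-1}\hF^\tau),\ \ord(m^{-1}G^\tau-m^{-1}\hG^\tau)\ \ge\ \eta-d\delta\ >\ \eta-d\varepsilon\ =\ \varepsilon\ >\ 0
\]
holds precisely because we chose $\varepsilon=\eta/(d+1)$. This shows $(m^{-1}F^\tau,m^{-1}G^\tau)$ is a normalized representation of $\varphi$ at $Q$ whose reduction has the form $m^{-1}\hA^\tau\cdot(X,Y)$, witnessing that $Q$ is id-indifferent.

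There is no serious obstacle; the only step deserving care is the bookkeeping with $\eta$, $\delta$, and $\mu$. The choice $\varepsilon=\eta/(d+1)$ is dictated by the worst case $\mu=(d-1)\delta$, which absorbs $d-1$ factors of $\delta$, together with one more $\delta$ from conjugating $F$ by $\tau$; the resulting tolerance $\eta-d\delta$ must remain positive for all $\delta<\varepsilon$, forcing $d\varepsilon<\eta$, and slightly more stringently $(d+1)\varepsilon\le\eta$ so that the displayed lower bound is itself at least $\varepsilon$ (useful for iterating the argument or for keeping track of how the reduction behaves across the ball). The $\GL_2(\cO)$ step requires nothing beyond observing that $\GL_2(\cO)$ stabilizes $\zeta_G$ and acts $\cO$-linearly on coefficients, so both the id-like shape $(\hA\cdot X,\hA\cdot Y)$ and the $\eta$-approximation are preserved.
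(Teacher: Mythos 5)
Your proposal is correct and matches the paper exactly: the paper itself offers no separate argument for Corollary \ref{FirstPersistenceCor}, simply remarking that the proof of Lemma \ref{FirstPersistenceLemma} already establishes it, and your walkthrough reproduces that proof step for step (the $\GL_2(\cO)$ reduction to $Q\in[0,\zeta_G]$, the conjugation by $\mathrm{diag}(\beta,1)$, the renormalization by $\mu$, and the key estimate $\eta-d\delta>\eta-d\varepsilon=\varepsilon>0$). Your closing remark on why $\varepsilon=\eta/(d+1)$ rather than $\eta/d$ is chosen is a fair gloss and does not affect correctness.
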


\begin{definition}[The Locus of Id-indifference]
{\em Let $U_{\id}$ be the set of all $P \in \BHH_K$ such that $P$ is id-indifferent for $\varphi$. 
We call $U_{\id}$ the {\rm locus of id-indifference} for $\varphi$.} 
\end{definition} 

\begin{corollary} \label{IdIndiffOpenCor} Let $\varphi(z) \in K(z)$ have degree $d \ge 2$. 
Then $U_{\id} \subset \BHH_K$ is open for the strong topology.  
\end{corollary}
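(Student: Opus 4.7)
The plan is simply to observe that Corollary \ref{IdIndiffOpenCor} is an immediate consequence of the First Persistence Lemma. Recall that the strong topology on $\BHH_K$ is the one induced by the logarithmic path distance $\rho(x,y)$, so a subset $U \subseteq \BHH_K$ is strongly open precisely when for each $P \in U$ there exists some $\varepsilon > 0$ with $\cB_\rho(P,\varepsilon)^- \subseteq U$.

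Given an arbitrary $P \in U_{\id}$, by definition $P$ is id-indifferent for $\varphi$, so Lemma \ref{FirstPersistenceLemma} supplies an $\varepsilon > 0$ such that every $Q \in \cB_\rho(P,\varepsilon)^-$ is also id-indifferent for $\varphi$. By definition of $U_{\id}$, this means $\cB_\rho(P,\varepsilon)^- \subseteq U_{\id}$. Since $P$ was arbitrary, $U_{\id}$ contains a strong-topology neighborhood of each of its points, hence is strongly open.

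There is no real obstacle here: the Persistence Lemma has been stated so that it produces exactly the $\rho$-ball needed for strong openness, and the corollary is just a reformulation of that content in topological language. The only thing to note is that $U_{\id} \subset \BHH_K$ rather than $\BPP_K$, which is consistent because id-indifference is only defined at points of $\BHH_K$ (via the base-change construction of Proposition \ref{FaberProp}), and because the strong topology on $\BHH_K$ is precisely the metric topology of $\rho$.
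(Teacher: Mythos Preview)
Your proof is correct and matches the paper's own argument exactly: the paper simply states that the corollary is immediate from Lemma~\ref{FirstPersistenceLemma}, which is precisely what you have written out in detail.
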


\begin{proof} This is immediate from Lemma \ref{FirstPersistenceLemma}.
\end{proof} 

Given $P \in U_{\id}$,  we write $U_{\id}(P)$ for the connected component of $U_{\id}$ containing $P$. 

\begin{lemma}[Second Persistence Lemma] \label{SecondPersistenceLemma}
Suppose $\varphi(z) \in K(z)$ has degree $d \ge 2$.
Let $P \in \BHH_K$ be a type {\rm II} fixed point of $\varphi$ which is not id-indifferent, 
and let $\vv \in T_P$ be a direction 
with $\varphi_*(\vv) = \vv$. Indexing directions in $T_P$ by points of $\PP^1(\tk)$,
write $\vv = \vv_a;$ then $\tphi(a) = a$. Let $\tlambda \in \tk$ be the multiplier of $\tphi$ at $a$. Then 

$(A)$ $\tlambda = 0$ 
       if and only if $m_\varphi(P,\vv) > 1$.  
       In this case $\#\tF_\varphi(P,\vv) = 1$.

$(B)$ $\tlambda = 1$  
       if and only if $m_\varphi(P,\vv) = 1$ and there is a segment $(P,P_0) \subset B_P(\vv)^-$ 
       such that each \,$Q \in (P,P_0)$ is id-indifferent.  
       In this case $\#\tF_\varphi(P,\vv) \ge 2$.  
       
$(C)$ $\tlambda \in \tk^{\times}$ with $\tlambda \ne 1$ 
       if and only if $m_\varphi(P,\vv) = 1$ and there is a segment $(P,P_0) \subset B_P(\vv)^-$ 
       such that each $Q \in (P,P_0)$ is multiplicatively indifferent, 
       with reduced rotation number $\tlambda$ for the axis $(P,P_0)$.  
       In this case $\#\tF_\varphi(P,\vv) = 1$.    
\end{lemma}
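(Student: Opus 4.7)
The plan is to reduce to $P = \zeta_G$ and $a = 0$ (so $\vv = \vv_0$) by conjugation, fix a normalized representation $(F,G)$ of $\varphi$, and write $\tF = \tA \tF_0$, $\tG = \tA \tG_0$ with $\GCD(\tF_0, \tG_0) = 1$. Since $\tphi_P(0) = 0$, one has $\tF_0(X,Y) = X^m \tF_1(X,Y)$ with $\tilde\mu_1 := \tF_1(0,1) \ne 0$, and I set $\tilde\mu_0 := \tG_0(0,1) \ne 0$. By the formulas in (\cite{B-R}, Theorem 9.26), $m = m_\varphi(P,\vv_0)$. Computing the affine reduction gives $\tphi_P(z) = z^m \tF_1(z,1)/\tG_0(z,1)$, so $\tlambda = 0$ when $m \ge 2$ and $\tlambda = \tilde\mu_1/\tilde\mu_0$ when $m = 1$; this yields the multiplier equivalences in (A), (B), (C). The identity $\tphi_P(z) - z = z\bigl( z^{m-1}\tF_1(z,1)/\tG_0(z,1) - 1 \bigr)$ gives the asserted $\#\tF_\varphi(P,\vv)$ counts, since the parenthesized factor vanishes at $z=0$ precisely when $m = 1$ and $\tlambda = 1$.

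The substance lies in the persistence claims in (B) and (C). Set $s := s_\varphi(P,\vv_0)$; Faber's theorem gives $\tA(X,Y) = X^s \tA'(X,Y)$ with $\talpha' := \tA'(0,1) \ne 0$, so $\tF = X^{s+1}\tA'\tF_1$ and $\tG = X^s \tA' \tG_0$. Writing $F = \sum f_i X^i Y^{d-i}$, $G = \sum g_i X^i Y^{d-i}$ with $\alpha_i := \ord(f_i)$, $\beta_i := \ord(g_i)$, this forces $\alpha_i > 0$ for $i \le s$ and $\beta_i > 0$ for $i < s$, while $\tf_{s+1} = \talpha' \tilde\mu_1$ and $\tg_s = \talpha' \tilde\mu_0$ are units whose ratio is $\tlambda$. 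For $r \in (r_0,1)$ (with $r_0$ to be chosen) set $Q = \zeta_{0,r}$, pick $c \in K^\times$ with $|c|=r$, and conjugate by $\tau = \diag(c,1)$. The pair $(c^{-1}F(cX,Y), G(cX,Y))$ represents $\varphi^\tau$, and scaling by $c^{-s}$ yields a candidate normalized representation whose $(X^iY^{d-i})$-coefficients have valuations $(i-1-s)\delta + \alpha_i$ and $(i-s)\delta + \beta_i$ respectively, with $\delta := \ord(c) > 0$. These vanish at $i = s+1$ (first slot) and $i = s$ (second slot), contributing the units $\tf_{s+1}$ and $\tg_s$; all other valuations are strictly positive provided
\begin{equation*}
0 \ < \ \delta \ < \ \min\Big( \min_{i \le s,\,\alpha_i > 0} \frac{\alpha_i}{s+1-i}, \ \min_{i < s,\,\beta_i > 0} \frac{\beta_i}{s-i} \Big),
\end{equation*}
which is guaranteed for $r$ close enough to $1$. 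The reduction of the normalized pair is then $(\tf_{s+1} X^{s+1} Y^{d-s-1},\ \tg_s X^s Y^{d-s})$; removing the common factor $X^s Y^{d-s-1}$ gives $\tphi_Q(z) = (\tf_{s+1}/\tg_s)z = \tlambda z$. Hence $Q$ is id-indifferent when $\tlambda = 1$, and multiplicatively indifferent with reduced rotation number $\tlambda$ on the axis $(P,P_0)$ when $\tlambda \in \tk^\times \setminus \{1\}$. For $Q \in (P,P_0)$ of type {\rm III} or {\rm IV}, Proposition \ref{FaberProp} lets the conclusion pass through by base change.

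The main obstacle is the coefficient-valuation bookkeeping: one must verify that for $r$ close enough to $1$ only the two designated ``leading'' coefficients survive in the reduction, so that after removing the GCD the reduced pair produces exactly the linear map $z \mapsto \tlambda z$ and not some shifted variant. A secondary care point is matching the axis-orientation convention of Definition \ref{IndifferentClassificationDef} with the direction labels at $Q$ inherited from $\tau_*$, to see that the rotation number comes out as $\tlambda$ rather than $\tlambda^{-1}$ for the axis $(P,P_0)$.
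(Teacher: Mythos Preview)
Your argument is correct and follows essentially the same strategy as the paper: reduce to $P=\zeta_G$, $\vv=\vv_0$, read off (A) and the $\#\tF_\varphi$ counts from the local form of $\tphi_P$, then conjugate by $\tau=\diag(c,1)$ and show that for $|c|$ close to $1$ the reduction at $Q=\zeta_{0,|c|}$ is $z\mapsto\tlambda z$. The only real difference is in the bookkeeping for this last step: the paper lifts the factorization $\tA\cdot\tF_0$, $\tA\cdot\tG_0$ to $\hA\hF_0$, $\hA\hG_0\in\cO[X,Y]$, sets $\eta=\min(\ord(F-\hA\hF_0),\ord(G-\hA\hG_0))$, and works with the approximating pair under conjugation; you instead track the valuations $\alpha_i,\beta_i$ of the individual coefficients of $F,G$ directly and isolate the two surviving terms $\tf_{s+1},\tg_s$ after scaling by $c^{-s}$. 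Both routes yield the same normalized reduction $(\tf_{s+1}X^{s+1}Y^{d-s-1},\,\tg_sX^sY^{d-s})$ and hence $\tphi_Q(z)=\tlambda z$, so the approaches are equivalent in substance.
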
 

\begin{proof}
The proof is similar to that of Lemma \ref{FirstPersistenceLemma}. 

After a change of coordinates, we can assume that $P = \zeta_G$ and that $\vv = \vv_0$.  Let $(F,G)$
be a normalized representation of $\varphi$.  
By hypothesis, there are nonzero homogeneous polynomials $\tA(X,Y), \tF_0(X,Y), \tG_0(X,Y) \in \tk[X,Y]$, 
with $\GCD(\tF_0,\tG_0) = 1$ and $D := \deg(\tF_0) = \deg(\tG_0) = \deg_\varphi(P) \ge 2$,  
such that $\tF = \tA \cdot \tF_0$ and $\tG = \tA \cdot \tG_0$.  Since $\varphi_*(\vv_0) = \vv_0$, 
if we write $\tF_0 = \tf_D X^D + \cdots + \tf_1 X Y^{D-1} + \tf_0 Y^D$ and
$\tG_0 = \tg_D X^D + \cdots + \tg_0 Y^D$, then 
$\tf_0 = 0$ and $\tg_0 \ne 0$.  After scaling $F$ and $G$ by a common unit, we can assume that $\tg_0 = 1$.
In this situation, $\tf_1 = \tlambda$ is the multiplier of $\tphi$ at $z = 0$. 

By definition, $m_\varphi(P,\vv)$ is the multiplicity of $z = 0$ as a root of $\tphi$, 
so $m_\varphi(P,\vv) > 0$ if and only if $\tlambda = 0$.  If $\tlambda = 0$, then $\#\tF_\varphi(P,\vv) = 1$
since a fixed point of $\tphi$ can have multiplicity $\ge 2$ only if its multiplier is $1$. 

Henceforth assume $\tlambda \ne 0$.  
Lift $\tA(X,Y)$, $\tF_0(X,Y)$, and $\tG_0(X,Y)$ to homogeneous polynomials 
$\hA(X,Y)$, $\hF_0(X,Y)$, and $\hG_0(X,Y)$ in $\cO[X,Y]$, 
and write $\hA(X,Y) = \ha_m X^m + \cdots + \ha_1 X Y^{m-1} + \ha_0 Y^m$, 
$\hF_0(X,Y) = \hf_D X^D + \cdots + \hf_1 X Y^{D-1} + \hf_0 Y^D$, 
and $\hG_0(X,Y) = \hg_D X^D + \cdots + + \hg_1 X Y^{D-1} + \hg_0 Y^D$, 
where $m + D = d = \deg(\varphi)$.  Without loss, we can assume that $\hf_0 = 0$ and $\hg_0 = 1$.  
By hypothesis, at least one of the $\ha_i$ is a unit in $\cO$; $\hf_1$ is a unit since $\tlambda = 0$,
and clearly $\hg_0$ is a unit.  
Put $\hF = \hA \cdot \hF_0$, $\hG = \hA \cdot \hG_0$. 
Then $F \equiv \hF \pmod{\fM}$ and $G \equiv \hG \pmod{\fM}$;  let  
\begin{equation*}
\eta \ = \ \min\big(\ord(F - \hF), \ord(G-\hG)\big) \ > \ 0 \ . 
\end{equation*}

Given $Q \in (0,\zeta_G)$, we have $Q = \zeta_{0,r}$ for some $0 < r < 1$.  After enlarging $K$ 
we can assume that $r \in |K^{\times}|$.  
Take $t \in K$ with $r = |t|$ 
and put $\gamma = \left( \begin{array}{cc} t & 0 \\ 0 & 1 \end{array} \right) \in \GL_2(K)$;
then $\gamma(\zeta_G) = Q$. 
Let $F^t(X,Y) = t^{-1}F(tX,Y)$, $G^t(X,Y) = G(tX,Y)$; 
then $(F^t,G^t)$ is a representation (not in general normalized) of $\varphi$ at $Q$
(that is, a representation of $\varphi^{\gamma}$).  Let 
\begin{equation*} 
\hA^t(X,Y) = \hA(tX,Y), \quad 
\hF_0^t(X,Y) = t^{-1} \hF_0(tX,Y), \quad \hG_0^t(X,Y) \ = \ \hG(tX,Y) \ . 
\end{equation*}

If $Q$ is close enough to $\zeta_G$, we can use $\hA$, $\hF_0$ and $\hG_0$ to obtain 
a normalized representation for $\varphi$ at $Q$. 
Put $\delta = \ord(t)$, and assume $0 < (d+1) \delta <  \eta$.  
Then $ F^t, G^t, \hA^t, \hF_0^t, \hG_0^t \in \cO[X,Y]$ and 
\begin{equation*}
\min\big(\ord(F^t - (\hA^t \cdot \hF_0^t)), \ord(G^t-(\hA^t \cdot \hG_0^t))\big) \ \ge \ \eta - \delta \ > \ 0 \ .
\end{equation*} 
Since $\hA^t(X,Y) = \ha_m t^m X^m + \cdots + \ha_1 t XY^{m-1} + \ha_0 Y^m$ 
and at least one $\ha_i$ is a unit, it follows that
$\ord(\hA^t) \le m \cdot \eta \le d \cdot \eta$.  
Since $\hf_1$ and $\hg_0$ are units in $\cO$ and $\hf_0 = 0$, 
we have 
$\ord(\hF^t) = \ord(\hG^t) = 0$. Let $\beta \in K^\times$ satisfy $\ord(\beta) = \ord(\hA^t)$, 
and put $\hA_t = \beta^{-1} \cdot \hA^t$, $F_t = \beta^{-1} \cdot F^t$, and $G_t = \beta^{-1} \cdot G^t$.  
Then 
\begin{equation} \label{BFNG1}
\min\big(\ord(F_t - (\hA_t \cdot \hF_0^t)), \ord(G_t-(\hA_t \cdot \hG_0^t))\big) 
\ \ge \ \eta - (d+1) \delta \ > \ 0 \ .
\end{equation} 
By construction, $\ord(\hA_t) = 0$.  Since $\hf_0 = 0$ and $\hf_1$ is a unit, we have  
$\ord(\hF_0^t) = 0$; since $\ord(\hg_0) = 0$ we have $\ord(\hG_0^t) = 0$.  By Gauss's lemma,
$\ord(\hA_t \cdot \hF_0) = \ord(\hA_t \cdot \hG_0) = 0$.  It follows from (\ref{BFNG1}) that 
$\ord(F_t) = \ord(G_t) = 0$, so  $(F_t,G_t)$ is a normalized representation for $\varphi$ at $Q$.

Recall that  $\hf_0 = 0$, $\hg_0 = 1$, and $\hf_1 \equiv \tlambda \pmod{\fM}$.  Since $\ord(t) > 0$,  we have 
\begin{eqnarray*}
\hF_0^t(X,Y) & = & \hf_D t^{D-1} X^D + \cdots + \hf_1 X Y^{D-1} 
     \qquad \quad \equiv \ \tlambda X Y^{D-1} \pmod{\fM} \ , \\
\hG_0^t(X,Y) & = & \hg_D t^{D} X^D + \cdots + \hg_1 t X Y^{D-1} + Y^D 
      \ \, \equiv \ Y^{D} \pmod{\fM} \ .
\end{eqnarray*} 
Letting $\tA_t(X,Y) \in \tk[X,Y]$ be the reduction of $\hA_t \pmod{\fM}$, it follows from (\ref{BFNG1}) that 
\begin{eqnarray*}
F_t(X,Y) & \equiv & \big( \tA_t(X,Y) \cdot Y^{D-1} \big) \cdot \tlambda X \pmod{\fM} \ , \\ 
G_t(X,Y) & \equiv & \big( \tA_t(X,Y) \cdot Y^{D-1} \big) \cdot Y \pmod{\fM} \ . 
\end{eqnarray*}   
Thus the reduction of $\varphi$ at $Q$ 
(or equivalently of $\varphi^{\gamma}$ at $\zeta_G$) is $\tphi_P(z) = \tlambda z$.  

\smallskip
Fix $t_0$ with $\ord(t_0) = \eta/(d+1)$, and put $P_0 = \zeta_{0,|t_0|}$;
recall that $P = \zeta_G$ by our initial reductions.   

If $\tlambda = 1$, then $\varphi$ has id-indifferent reduction at each $Q \in (P,P_0)$. 
Our assumption that
$P$ is not id-indifferent for $\varphi$ means that $\tH_0(X,Y) = X \tG_0(X,Y) - Y\tF_0(X,Y) \ne 0$,
and our assumptions that $\tf_0 = 0$, $\tf_1 = \tlambda = 1$, and $\tg_0 = 1$ mean that $X^2 | \tH_0(X,Y)$.  
Thus $\#\tF_\varphi(P,\vv) \ge 2$.

If $\tlambda \in \tk^\times$ but $\tlambda \ne 1$,  
then $\varphi$ has multiplicatively indifferent reduction, 
with reduced rotation number $\tlambda$ for the axis $(P,P_0)$, at each $Q \in (P,P_0)$.    
At $P$, since $\tf_1 \ne 1$ the reduction $\tphi_P$ is not tangent to the identity 
at $z=0$, and so $\#\tF_\varphi(P,\vv) = 1$. 
\end{proof} 

\begin{corollary} \label{FocusedBoundaryCor}
Let $\varphi(z) \in K(z)$ have degree $d \ge 2$.  Then each focused or bi-focused repelling 
fixed point of $\varphi$ is a boundary point of $U_{\id}$.  
\end{corollary}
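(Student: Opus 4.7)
The plan is to show two things for any focused or bi-focused repelling fixed point $P$: first, that $P \notin U_{\id}$, and second, that every strong-topology neighborhood of $P$ meets $U_{\id}$. Combined these give $P \in \partial U_{\id}$. The first assertion is immediate: if $P$ were id-indifferent, the reduction $\tphi_P$ would be the identity map, hence of degree $1$, contradicting $\deg_{\varphi}(P) \ge 2$, which holds because $P$ is a repelling fixed point.

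For the second assertion, I would apply the Second Persistence Lemma (Lemma \ref{SecondPersistenceLemma}(B)) after producing a focal direction $\vv \in T_P$ with $\varphi_*(\vv) = \vv$, $m_{\varphi}(P,\vv) = 1$, and $\#\tF_\varphi(P,\vv) \ge 2$. Once such a $\vv$ is found, the trichotomy in Lemma \ref{SecondPersistenceLemma} forces the reduced multiplier $\tlambda$ at $\vv$ to equal $1$: cases (A) and (C) each conclude with $\#\tF_\varphi(P,\vv) = 1$, which is incompatible with $\#\tF_\varphi(P,\vv) \ge 2$. Hence case (B) must hold, producing a segment $(P,P_0) \subset B_P(\vv)^-$ of id-indifferent points. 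This segment accumulates at $P$ in the logarithmic path metric, so $P \in \overline{U_{\id}}$.

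The production of the focal direction $\vv$ is handled separately in the two cases, and is where the structural propositions of \S\ref{GammaFixRepelSection} are invoked. If $P$ is a focused repelling fixed point with focus $\vv_1$, Proposition \ref{FocusedRepellingProp}(A) directly gives $\varphi_*(\vv_1) = \vv_1$, $m_\varphi(P,\vv_1) = 1$, and $\#\tF_\varphi(P,\vv_1) = \deg_\varphi(P) + 1 \ge 3$, and we take $\vv = \vv_1$. If $P$ is bi-focused with focal directions $\vv_1, \vv_2$, Proposition \ref{BeadProp}(A) guarantees that at least one of them, say $\vv_i$, satisfies $\varphi_*(\vv_i) = \vv_i$, $m_\varphi(P,\vv_i) = 1$, and $\#\tF_\varphi(P,\vv_i) \ge 2$, and we take $\vv = \vv_i$.

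There is no real obstacle here; the work was done upstream. As a sanity check in the focused case one can also argue directly from Proposition \ref{IdIndiffOffFixProp}: the nearest point $Q_0 \in \Gamma_{\Fix}$ to $P$ lies strictly away from $P$ (since $P \notin \Gamma_{\Fix}$), and every type II point of $(P,Q_0]$ is id-indifferent, giving an explicit sequence in $U_{\id}$ converging to $P$ and confirming the conclusion obtained via Lemma \ref{SecondPersistenceLemma}(B).
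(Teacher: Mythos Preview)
Your proof is correct and follows essentially the same approach as the paper: use Propositions \ref{FocusedRepellingProp} and \ref{BeadProp} to produce a focal direction $\vv$ with $\varphi_*(\vv)=\vv$, $m_\varphi(P,\vv)=1$, and $\#\tF_\varphi(P,\vv)\ge 2$, then invoke Lemma \ref{SecondPersistenceLemma}(B) to obtain a segment of id-indifferent points accumulating at $P$, while $P\notin U_{\id}$ since it is repelling. Your explicit elimination of cases (A) and (C) of the trichotomy and the alternative sanity check via Proposition \ref{IdIndiffOffFixProp} in the focused case are nice touches, but the core argument matches the paper's.
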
 

\begin{proof}  By Propositions \ref{FocusedRepellingProp} and \ref{BeadProp}, if $P$ is a focused
or bi-focused repelling fixed point of $\varphi$, there is a direction $\vv \in T_P$ 
such that $m_\varphi(P,\vv) = 1$ and $\#\tF_\varphi(P,\vv) \ge 2$. 
By Lemma \ref{SecondPersistenceLemma} this can happen if and only if there is a segment $(P,P_0) \subset B_P(\vv)^-$
such that each $Q \in (P,P_0)$ is id-indifferent.  Thus $P$ is in the closure of $U_{\id}$.  However, 
$P \notin U_{\id}$ since $P$ is repelling.  Thus $P \in \partial U_{\id}$.  
\end{proof} 

\begin{corollary} \label{NoTypeIIICor} Let $\varphi(z) \in K(z)$ have degree $d \ge 2$.
Then no boundary point of $U_{\id}$ can be of type {\rm III}.  
\end{corollary}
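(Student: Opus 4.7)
The plan is to argue by contradiction: assume $P \in \partial U_{\id}$ is of type III and derive a contradiction. Since $U_{\id}$ is open by Lemma~\ref{FirstPersistenceLemma}, we have $P \notin U_{\id}$, and we may fix a sequence $Q_n \in U_{\id}$ with $Q_n \to P$ in the strong topology. Each $Q_n$ is a fixed point of $\varphi$ (id-indifferent points are fixed), so by continuity $\varphi(P) = P$; the cited results of Rivera-Letelier then force $\deg_\varphi(P) = 1$ and $\varphi_*(\vv) = \vv$ for both $\vv \in T_P$. To bring type II machinery to bear, I will invoke Proposition~\ref{FaberProp} to extend $K$ to a maximally complete algebraically closed nonarchimedean field $L$ with value group $\RR$ and residue field $\tk$ (as constructed in the paper), so that $P' := \iota_K^L(P)$ becomes a type II point of $\BPP_L$ and every point of $\BPP_L$ is of type I or II.

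With coordinates on $\BPP_L$ chosen so that the images $\iota_*(\vv_1), \iota_*(\vv_2) \in T_{P',L}$ correspond to $0, \infty \in \PP^1(\tk)$, the reduction $\tphi_{P'}$ is a degree $1$ rational map fixing both $0$ and $\infty$, so $\tphi_{P'}(z) = \tlambda z$ for some $\tlambda \in \tk^{\times}$. If $\tlambda = 1$ then $P'$ is id-indifferent, making $P$ id-indifferent and contradicting $P \notin U_{\id}$; hence $\tlambda \ne 1$ and $P'$ is multiplicatively indifferent. By pigeonhole on the two-element set $T_P$, we may assume all $Q_n$ lie in $B_P(\vv_1)^-$; set $\vw := \iota_*(\vv_1)$. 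Lemma~\ref{SecondPersistenceLemma}(C) applied at $P'$ in direction $\vw$ then furnishes a segment $(P', P_0) \subset B_{P',L}(\vw)^-$ along the main axis consisting of multiplicatively indifferent points (with reduced rotation number $\tlambda$ or $\tlambda^{-1}$, in either case $\ne 1$), none of which is id-indifferent.

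Writing $P' = \zeta_{0,r}$ with $\vw$ the direction towards $0$, I split into two cases. If infinitely many $\iota(Q_n) = \zeta_{0,s_n}$ sit on the main axis, then for $n$ large they lie in the multiplicatively indifferent segment, directly contradicting $\iota(Q_n) \in U_{\id,L}$. The remaining off-axis case, where $\iota(Q_n) = \zeta_{a_n,s_n}$ with $0 < s_n < |a_n| < r$, is the main obstacle. The convergence $\iota(Q_n) \to P'$ forces $s_n \to r$, hence $|a_n| \to r$ as well, so the branching point $\zeta_{0,|a_n|}$ on the main axis eventually lies in the multiplicatively indifferent segment and is a type II non-id-indifferent fixed point of $\varphi_L$ whose reduction is again $\tlambda z$. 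The direction $\vv^{(n)} \in T_{\zeta_{0,|a_n|}}$ towards $\iota(Q_n)$ corresponds to an element of $\tk^{\times}$ (neither $0$ nor $\infty$), and since $\tlambda \ne 1$ the map $(\varphi_L)_*$ does not fix $\vv^{(n)}$.

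Because $\iota(Q_n)$ is a fixed point of $\varphi_L$ inside $B_{\zeta_{0,|a_n|}}(\vv^{(n)})^-$, the alternative $s_{\varphi_L}(\zeta_{0,|a_n|},\vv^{(n)}) = 0$ would force $\iota(Q_n)$ into the disjoint ball $B_{\zeta_{0,|a_n|}}((\varphi_L)_*(\vv^{(n)}))^-$, an impossibility; hence $s > 0$, and Lemma~\ref{FirstIdentificationLemma} produces a type I fixed point $\alpha_n \in \PP^1(K)$ of $\varphi$ inside $B_{\zeta_{0,|a_n|}}(\vv^{(n)})^-$. The ultrametric inequality turns the membership $|\alpha_n - a_n| < |a_n|$ into $|\alpha_n| = |a_n|$. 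Since $\varphi \in K(z)$ has at most $d+1$ classical fixed points, the set $\{|\alpha_n|\}$ is a \emph{finite} subset of $|K^\times|$, so $|\alpha_n| = |a_n| \to r$ forces $r$ itself to lie in $|K^\times|$ --- contradicting $P$ being of type III. The decisive step is precisely this off-axis extraction of a classical fixed point at each branching point: it converts a dynamical hypothesis about id-indifferent accumulation into the arithmetic constraint $r \in |K^\times|$, which a type III radius cannot satisfy.
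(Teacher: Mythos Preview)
Your argument is correct, but it works considerably harder than necessary, and the extra work stems from not exploiting one structural fact: a connected subset of a tree is convex. Since $U_{\id}(P)$ is connected and $Q$ lies in its boundary, the entire half-open segment $(Q,P]$ is contained in $U_{\id}(P)$. (Concretely: for $R_n\in U_{\id}(P)$ with $R_n\to Q$, the median $M_n$ of $P,Q,R_n$ lies on $[P,R_n]\subset U_{\id}(P)$ by convexity, lies on $[Q,P]$, and satisfies $\rho(Q,M_n)\le\rho(Q,R_n)\to 0$.) This is exactly how the paper proceeds: once $\iota(Q)$ is shown to be multiplicatively indifferent, Lemma~\ref{SecondPersistenceLemma}(C) produces a segment $(Q,Q_1)$ in the direction of $P$ consisting of multiplicatively indifferent points, and since $(Q,Q_1)$ meets $(Q,P]\subset U_{\id}(P)$, those points are simultaneously id-indifferent --- an immediate contradiction. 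There is no off-axis case to consider.

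Your off-axis analysis is nonetheless a valid and rather nice alternative: forcing a classical fixed point $\alpha_n$ near each branching point $B_n=\zeta_{0,|a_n|}$ via $s_\varphi>0$ and Lemma~\ref{FirstIdentificationLemma}, and then using the finiteness of the classical fixed points to pin $r$ into $|K^\times|$, is a genuinely different mechanism. Two small points of care: first, your coordinate conventions drift --- after choosing coordinates so that $\iota_*(\vv_1),\iota_*(\vv_2)$ correspond to $0,\infty$, the point $P'$ sits at $\zeta_G$, not $\zeta_{0,r}$; the cleanest fix is to change coordinates only over $K$ (so $P=\zeta_{0,r}$ with $r\notin|K^\times|$) and let $\iota$ carry everything to $L$. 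Second, when you invoke Lemma~\ref{FirstIdentificationLemma} to get $\alpha_n\in\PP^1(K)$, you should note that $B_n$ is already type~II over $K$ (since $|a_n|\in|K^\times|$), and that $s_\varphi(B_n,\vv^{(n)})>0$ over $K$ follows from Proposition~\ref{FaberProp}(6c); then the lemma applies over $K$ directly and $\alpha_n\in\PP^1(K)$ is immediate. With those clarifications your proof stands, but the convexity route is shorter.
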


\begin{proof} 
If $Q$ were a type III boundary point of a component $U_{\id}(P)$, by continuity it would be fixed by $\varphi$. 
The tangent space $T_Q$ contains precisely two directions $\vv_1, \vv_2$.  
By a result of Rivera-Letelier (see \cite{R-L1}, Lemmas 5.3 and 5.4, or \cite{B-R}, Lemma 10.80), 
$Q$ is an indifferent fixed point of $\varphi$
and $\varphi_*(\vv_1) = \vv_1$, $\varphi_*(\vv_2) = \vv_2$.  

Let $L$ be a complete, algebraically closed nonarchimedan valued field containing $K$  
such that $\iota_K^L(Q)$ is of type II.  Write $\iota$ for $\iota_K^L$.  
By Proposition \ref{FaberProp}.(6b), $\iota(Q)$ is still
an indifferent fixed point of $\varphi$, and $\iota_*(\vv_1), \iota_*(\vv_2) \in T_{Q,L}$ 
are both fixed by $(\varphi_L)_*$.  

Clearly $\iota(Q)$ cannot be additively indifferent,
since an additively indifferent fixed point has only one fixed direction 
in its tangent space.  If $\iota(Q)$ were multiplicatively indifferent, 
then $\varphi_L$ would would have reduced a rotation number $\tlambda \ne 0$ at $\iota(Q)$, and  
$\iota_*(\vv_1), \iota_*(\vv_2)$ would be the only fixed directions in $T_{Q,L}$. 
Suppose $\vv_1$ is the direction pointing into $U_{\id}(P)$. 
By Lemma \ref{SecondPersistenceLemma} there would be a point $Q_1$ in the direction $\vv_1$
such that each point of $(Q,Q_1)$ would be multiplicatively indifferent with reduced 
rotation number $\tlambda$.  This contradicts that each point of $(Q,Q_1) \cap U_{\id}(P)$ is id-indifferent.
Hence $\iota(Q)$ cannot be multiplicatively indifferent.  

The only remaining possibility is that $Q$ is id-indifferent.  However by Proposition \ref{FirstPersistenceLemma},
there would then be a ball $\cB_\rho(Q,\eta)^-$ such that each $T \in \cB_\rho(Q,\eta)^-$ was id-indifferent,
and this contradicts that $Q$ is a boundary point of $U_{\id}(P)$.  

Thus, no boundary point of $U_{\id}$ can be of type III.
\end{proof} 

\noindent{\bf Remark.}  Suppose $P \in \BHH_K$ is a boundary point of $U_{\id}$, and $\vv \in T_P$ 
is the direction pointing into $U_{\id}$.  Then $\varphi_*(\vv) = \vv$.  If we extend $K$ so that
$P$ becomes type II, then by Lemma \ref{SecondPersistenceLemma} we have $m_{\varphi}(P,\vv) = 1$ 
and $\#\tF_{\varphi}(P,\vv) \ge 2$.  Put $\tM = \#\tF_{\varphi}(P,\vv)$.   
Using the methods of Lemma \ref{SecondPersistenceLemma}, 
it can be shown that there are an $\varepsilon > 0$ and
a point $P_0 \in B_P(\vv)^-$ such that $U_{\id} \bigcap B_P(\vv)^- \bigcap \cB_\rho(P,\varepsilon)^-$ 
is the `cone with sides of slope $\tM-1$', given by 
\begin{equation} \label{FCone1}
\Big( \bigcup_{S \in (P,P_0)} \{Q \in \BHH_K : [P,S] \cap (S,Q] = \phi, \rho(S,Q) < (\tM-1) \cdot \rho(P,S) \} \Big) 
\cap \cB_\rho(P,\varepsilon)^- \ .
\end{equation} 
We will not need this, so we omit the proof. However, we note that it shows one could define id-indifference
for points of type III and IV without using Faber's base change map, by saying that $Q \in U_{\id}$ 
if and only if there is a neighborhood $\cB_\rho(Q,\varepsilon)^-$ such that each type II 
point $P \in \cB_\rho(Q,\varepsilon)^-$ is id-indifferent for $\varphi$.

\smallskip
Lastly, we give a description of 
the locus of id-indifference as it approaches a type I fixed point.  
Recall that if $\alpha \in \PP^1(K)$ satisfies $\varphi(\alpha) = \alpha$, and if coordinates are chosen
so that $\alpha \ne \infty$, then the {\em multiplier} of $\alpha$ is the derivative 
$\lambda = \lambda_\alpha = \varphi^{\prime}(\alpha)$.  It is independent of the choice of coordinates.  
By standard terminology, 
the fixed point $\alpha$ is {\em superattracting} if $\lambda = 0$, {\em attracting} if $0 < |\lambda| < 1$, 
{\em indifferent} if $|\lambda| = 1$, and {\em repelling} if $|\lambda| > 1$.  We refine the classification
of indifferent fixed points  as follows:

\begin{definition} \label{ReducedMultiplierDef} 
{\em Suppose $\alpha \in \PP^1(K)$ is an indifferent fixed point of $\varphi(z)$ 
with multiplier $\lambda$;
we call its reduction $\tlambda \in \tk$ the {\rm reduced multiplier} of $\alpha$.  Then  

$(1)$ If $\tlambda = 1$, we say $\alpha$ is $\tid$-indifferent;
  
$(2)$ If $\tlambda \ne 1$, we say $\alpha$ is $\trot$-indifferent.  

\noindent{If $P_0 \in \BHH_K$ and $r > 0$,}
 we define the {\em strong tube} $T((\alpha,P_0),r)^-$ to be 
the union of the balls $\cB_\rho(Q,r)^-$ for all $Q \in (\alpha,P_0)$.} 
\end{definition}

\begin{lemma}[Third Persistence Lemma] \label{ThirdPersistenceLemma} 
Let $\varphi(z) \in K(z)$ have degree $d \ge 2$.  
Suppose $\alpha \in \PP^1(K)$ is a type {\rm I} fixed point
of $\varphi$, with multiplier $\lambda$.  Then 

$(A)$ $\alpha$ is $\tid$-indifferent $($that is, $\tlambda = 1)$ 
if and only if $\alpha$ is a boundary point $U_{\id}$.  
In that case, there are a $P \in U_{\id}$ and an $r > 0$ 
such that $U_{\id}$ contains the strong tube $T((\alpha,P),r)^-$.  
If \,$\lambda = 1$, there is 
a sequence of points $\{P_n\}_{n \ge 1}$ in $(\alpha,P)$, converging to $\alpha$,
such that $U_{\id}$ contains the strong tube $T((\alpha,P_n),n)^-$ for each $n$.  

$(B)$ $\alpha$ is $\trot$-indifferent, with reduced multiplier $\tlambda \ne 1$,  
if and only if there is a $P \in \BHH_K$ such that each $Q \in (\alpha,P)$ is 
multiplicatively indifferent and has reduced rotation number $\tlambda$ for the axis $(\alpha,P)$.
\end{lemma}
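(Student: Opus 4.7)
The plan is to pull back the question about the type I fixed point $\alpha$ to an analysis of the approaching type II points $\zeta_{0,r}$, in the spirit of Lemmas \ref{FirstPersistenceLemma} and \ref{SecondPersistenceLemma}. After a change of coordinates place $\alpha = 0$; since $\alpha$ is not a pole of $\varphi$, we have a convergent Taylor expansion $\varphi(z) = \lambda z + c_2 z^2 + c_3 z^3 + \cdots$ on some disk $D(0,\rho_0)$ of positive radius. For $r \in |K^\times|$ with $r < \rho_0$, conjugating by the diagonal matrix $\gamma_r$ with diagonal entries $(r,1)$ gives
\[
\varphi^{\gamma_r}(z) \ = \ \lambda z \ + \ c_2 r z^2 \ + \ c_3 r^2 z^3 \ + \ \cdots ,
\]
which homogenizes to a normalized representation once $r$ is small enough. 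Assuming $|\lambda| = 1$, the reduction of $\varphi$ at $\zeta_{0,r}$ is then $\tphi_{\zeta_{0,r}}(z) = \tlambda z$, in the parametrization of $T_{\zeta_{0,r}}$ where $\vv_0$ points toward $\alpha$ and $\vv_\infty$ points outward.

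From this core computation the forward implications follow easily. For Part (A), if $\tlambda = 1$ then every sufficiently small $\zeta_{0,r}$ is id-indifferent, placing $\alpha$ in the weak closure of $U_{\id}$; being of type I it cannot itself lie in $U_{\id}$, so it is a boundary point. For Part (B), if $\tlambda \ne 1$ with $|\lambda| = 1$, every small $\zeta_{0,r}$ is multiplicatively indifferent with reduced rotation $\tlambda$ for the axis from $\alpha$ outward, yielding the claimed segment; conversely the reduced multiplier of $\varphi$ at $\alpha$ coincides with the reduced multiplier of $\tphi_Q$ in the fixed direction pointing toward $\alpha$, forcing agreement with $\tlambda$. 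For the strong tube in Part (A), define
\[
\eta(s) \ = \ \min\bigl(\ord(\lambda-1),\ \min_{n \ge 2}\ord(c_n s^{n-1})\bigr)
\]
and apply Corollary \ref{FirstPersistenceCor} at each $Q = \zeta_{0,s} \in (\alpha,P)$: the normalized representation at $Q$ differs from the identity lift $(\hA(X,Y)\,X,\ \hA(X,Y)\,Y)$ with $\hA = Y^{d-1}$ by terms of order $\eta(s)$, so the corollary provides a strong ball $\cB_\rho(Q,\eta(s)/(d+1))^- \subset U_{\id}$. Since each summand is monotone as $s$ decreases, $\eta$ attains its minimum on $(0,s_0]$ at $s_0$, and taking $P = \zeta_{0,s_0}$ with $r = \eta(s_0)/(d+1)$ yields a tube $T((\alpha,P),r) \subset U_{\id}$.

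When $\lambda = 1$ the $\ord(\lambda-1)$ term is vacuous and $\eta(s) \to +\infty$ as $s \to 0$, since for small $s$ the minimum is attained at bounded $n$ while $(n-1)\ord(s)$ grows without bound; inductively selecting $P_n = \zeta_{0,s_n}$ with $\eta(s_n) \ge (d+1)n$ produces the required tubes $T((\alpha,P_n),n)$ of unbounded width. For the converse of Part (A), assume $\tlambda \ne 1$; I must exhibit a weak neighborhood of $\alpha$ disjoint from $U_{\id}$. The sets $V_r = B_{\zeta_{0,r}}(\vv_0)^-$ form a weak-open basis at $\alpha$: any type II point $Q = \zeta_{a,s} \in V_r$ has $D(a,s) \subset D(0,r)$, and a local computation at $Q$ via $\gamma(z) = sz + a$ shows that if $Q$ is an id-indifferent fixed point, then $\widetilde{\varphi'(a)} = 1$; but $\varphi'(a) = \lambda + 2c_2 a + 3c_3 a^2 + \cdots$ reduces to $\tlambda \ne 1$ for $|a|$ sufficiently small, a contradiction. (The case $|\lambda| \ne 1$ is easier still: any type II fixed point $Q = \zeta_{a,s}$ near $\alpha$ would require $\varphi(D(a,s)) = D(a,s)$, forcing $|\varphi'(a)| = 1$, which fails for $|a|$ small when $|\lambda| \ne 1$.)

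The main technical obstacle is the quantitative uniformity in the strong-tube construction, particularly using the geometric bound on the $|c_n|$ coming from the radius of convergence and distinguishing the two subcases of Part (A): the $\lambda = 1$ case (where $\eta(s) \to \infty$ gives tubes of arbitrary width near $\alpha$) from the $\lambda \ne 1$, $\tlambda = 1$ case (where $\eta(s)$ is capped by $\ord(\lambda-1)$, so one obtains only a single tube of bounded radius $\ord(\lambda-1)/(d+1)$).
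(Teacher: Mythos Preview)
Your approach is essentially the same as the paper's: move $\alpha$ to $0$, conjugate by a diagonal matrix to bring $\zeta_{0,s}$ to $\zeta_G$, observe that the reduction is $z\mapsto\tlambda z$, and then apply Corollary~\ref{FirstPersistenceCor} pointwise along the segment to build the strong tube. The paper carries this out with the homogeneous normalized pair $(F,G)$ rather than the Taylor expansion of $\varphi$, which is the one place your write-up needs tightening.

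The quantity Corollary~\ref{FirstPersistenceCor} actually controls is $\min\bigl(\ord(F_t-\hF),\ord(G_t-\hG)\bigr)$ for the \emph{normalized representation} $(F_t,G_t)$ of $\varphi^{\gamma_t}$. In the paper's notation, with $(F,G)$ normalized at $\zeta_G$ and $F_t,G_t$ obtained by dividing through by $tb_0$, these differences involve the coefficients $a_k t^{k-1}/b_0$ and $b_k t^k/b_0$, not the Taylor coefficients $c_n$ of $\varphi$. Your formula $\eta(s)=\min\bigl(\ord(\lambda-1),\min_{n\ge 2}\ord(c_n s^{n-1})\bigr)$ is therefore not literally the $\eta$ of the Corollary, and the assertion that the normalized representation ``differs from the identity lift by terms of order $\eta(s)$'' is not justified as written. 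The qualitative behavior you need (the minimum is $\ord(\lambda-1)$ for small $s$ when $\tlambda=1$, $\lambda\ne 1$; and $\to\infty$ as $s\to 0$ when $\lambda=1$) is the same for both quantities, so your conclusions are correct, but you should either switch to the $(F,G)$ bookkeeping as the paper does, or add a lemma translating Taylor-coefficient bounds into bounds on $\ord(F_t-\hF),\ord(G_t-\hG)$.

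Your converse argument for (A) via $\widetilde{\varphi'(a)}$ is a nice supplement; the paper handles the attracting/repelling case separately and for $|\lambda|=1$, $\tlambda\ne 1$ leans implicitly on the fact that every path from the type~I point $\alpha$ shares an initial segment with $(\alpha,\zeta_G)$. One small caveat: the implication ``$Q=\zeta_{a,s}$ id-indifferent $\Rightarrow \widetilde{\varphi'(a)}=1$'' requires that $a$ not be a pole of $\varphi$ and that the direction $\vv_a\in T_Q$ toward $a$ satisfy $s_\varphi(Q,\vv_a)=0$ (equivalently $X\nmid\tA$ in the local coordinates). Both conditions hold for generic centers $a\in D(a,s)$, so simply note that you choose $a$ avoiding the finitely many bad directions.
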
  

\begin{proof}  If $\alpha \in \PP^1(K)$ is an attracting or repelling fixed point, 
there is a neighborhood $V$ of $\alpha$ in $\BPP_K$ 
such that each $P \in V$ with $P \ne \alpha$ is moved by $\varphi$;  
thus $\alpha$ is not a boundary point of $U_{\id}$. 

Henceforth suppose $\alpha \in \PP^1(K)$ is an indifferent fixed point, 
so its multiplier $\lambda$ satisfies $|\lambda| = 1$.  
After a change of coordinates, we can assume that $\alpha = 0$.  Let $(F,G)$ be 
a normalized representation of $\varphi$ at $\zeta_G$, and write $F(X,Y) = a_d X^d + \cdots + a_0 Y^d$,
$G(X,Y) = b_d X^d + \cdots + b_0 Y^d$.  Since $\varphi(0) = 0$, we have $a_0 = 0$, $b_0 \ne 0$,  
and $a_1/b_0 = \lambda$; since $\alpha$ is indifferent, it follows that $|a_1| = |b_0|$.  
Consider $\varphi(z)$ on the path $(\alpha,\zeta_G)$.  For each $t \in K^{\times}$, if we conjugate
$\varphi(z)$ by $\gamma = \big( \begin{array}{cc} t & 0 \\ 0 & 1 \end{array} \big)$, then 
$\varphi^{\gamma}(z)$ has the representation $(F^t,G^t)$ where 
\begin{equation*}
F^t(X,Y)  =  t^d a_d X^d + \cdots + t a_1 XY^{d-1} ,  \quad G^t(X,Y)  =  t^{d+1} b_d X^d + \cdots + t b_0 Y^d \ .
\end{equation*} 
If $|t|$ is small enough, then $t a_1$ and $t b_0$ will be the unique coefficients of $F^t$ and $G^t$ with largest
absolute value.  In this situation, dividing $F^t(X,Y)$ and $G^t(X,Y)$ by $t b_0$ and setting
\begin{equation*}
F_t(X,Y)  =  (t^{d-1} a_d/b_0) X^d + \cdots + \lambda XY^{d-1} ,  \quad G_t(X,Y)  =  (t^d b_d/b_0) X^d + \cdots + Y^d \ .
\end{equation*}
gives a normalized representation $(F_t,G_t)$ for $\varphi^{\gamma}(z)$. 
The reductions of $F_t$ and $G_t$ are $\tF_t(X,Y) = \tlambda XY^{d-1}$, $\tG_t(X,Y) = Y^d$, so $\GCD(\tF_t,\tG_t) = Y^{d-1}$ 
and $\tphi^{\gamma}$ has the representation $(\tF_{t,0},\tG_{t,0}) = (\tlambda X, Y)$.  

Thus $\tphi^{\gamma}(z) = \tlambda z$ for all sufficiently small $|\,t|$.  It follows that $\tlambda = 1$ 
if and only if $\alpha = 0$ is $\tid$-indifferent, and this holds if and only if there is a 
$P_0 \in (\alpha,\zeta_G)$ such that each type II point $Q \in (\alpha,P_0)$ is id-indifferent.  
Likewise, $\tlambda \ne 1$ if and only if $\alpha = 0$ is $\trot$-indifferent with reduced multiplier $\tlambda$,
and this holds if and only if there is a $P_0 \in (\alpha,\zeta_G)$ such that each type II point $Q \in (\alpha,P_0)$
is multiplicatively indifferent, with reduced rotation number $\tlambda$ for the axis $(\alpha,P_0)$.  
By enlarging $K$ and using Proposition \ref{FaberProp}, these assertions apply to all points in $(\alpha,P_0)$. 
This proves (B), and the first part of (A). 

Now suppose $\tlambda = 1$, but $\lambda \ne 1$.  Put $\hF(X,Y) = X Y^{d-1}$ and $\hG(X,Y) = Y^d$, and let
$\eta = \ord(\lambda -1) > 0$.  For all sufficiently small $|\,t|$, we will have $\ord(F_t-\hF) = \ord(G_t-\hG) = \eta$.
Put $r = \eta/(d+1)$.  By Corollary \ref{FirstPersistenceCor}, there is a $P \in (\alpha,\zeta_G)$
such that for each $Q \in (\alpha,P)$, the ball $\cB_\rho(Q,r)^-$ is contained in $U_{\id}$.  
The strong tube $T((\alpha,P),r)^-$ is the union of these balls, so it is contained in $U_{\id}$.  

Finally, suppose $\lambda = 1$, and let $\hF(X,Y)$, $\hG(X,Y)$ be as above.  For each positive integer $n$, 
there is an $R_n > 0$ such that if $0 < |\,t| < R_n$, then $\ord(F_t-\hF),\ord(G_t-\hG) > n (d+1)$. 
Take $P_n = \zeta_{0,R_n}$, and put $\eta_n = n \cdot (d+1)$.  By Corollary \ref{FirstPersistenceCor},
for each type II point $Q \in (\alpha,P_n)$, the strong ball $\cB_\rho(Q,n)^-$ is contained in $U_{\id}$.
The strong tube $T((\alpha,P_n),n)^-$ is the union of these balls, so it is contained in $U_{\id}$.  
\end{proof} 

By Corollary \ref{NoTypeIIICor}, no boundary point of $U_{\id}$ can be of type III.    
By Lemma \ref{SecondPersistenceLemma}, type II boundary points of $U_{\id}$ 
are either repelling fixed points or additively indifferent fixed points. 
By extending $K$ and using Theorem \ref{WeightFormulaTheorem} and Proposition \ref{FaberProp}, we see that  
type IV boundary points of $U_{\id}$ are necessarily additively indifferent.  
By Lemma \ref{ThirdPersistenceLemma}, Type I boundary points of $U_{\id}$ are classical indifferent fixed points, 
and in particular are endpoints of $\Gamma_\Fix$.    

\begin{corollary} \label{IdIndiffBoundaryCor} 
If $Q$ is a boundary point of $U_{\id}$, 
then $Q$ is either 

$(A)$ a repelling fixed point of $\varphi$ in $\BHH_K$, or

$(B)$ an additively indifferent fixed point of $\varphi$ in $\BHH_K$, or

$(C)$ a $\tilde{1}$-indifferent fixed point of $\varphi$ in $\PP^1(K)$.

\noindent{If $Q \in \BHH_K$, and if $\vv \in T_Q$} 
is the direction pointing into $U_{\id}$, 
then $\#\tF_\varphi(Q,\vv) \ge 2$.
\end{corollary}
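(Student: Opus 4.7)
The plan is to perform a case analysis on the Berkovich type of a boundary point $Q$ of $U_{\id}$, reducing everything to the persistence lemmas of Section \ref{PersistenceTheoremSection}. Since $U_{\id}$ is open in the strong topology (Corollary \ref{IdIndiffOpenCor}) and each point of $U_{\id}$ is fixed by $\varphi$, any boundary point $Q$ is a strong-topology limit of fixed points, so $\varphi(Q) = Q$ by continuity. Type III is excluded outright by Corollary \ref{NoTypeIIICor}, so only types I, II, IV remain. If $Q$ is of type I, it cannot be attracting or repelling (nearby classical points would be moved, precluding adjacency to a locus of fixed points), so $Q$ is indifferent; Lemma \ref{ThirdPersistenceLemma}(B) rules out the $\trot$-indifferent case, because in that case a full initial segment $(Q,P)$ would consist of multiplicatively indifferent points with reduced rotation number $\tlambda \ne 1$, contradicting that points of $U_{\id}$ accumulate at $Q$. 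So $Q$ is $\tid$-indifferent, placing it in case (C).

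For type II: $Q$ is fixed but not id-indifferent (since $Q \notin U_{\id}$), so it is repelling, multiplicatively indifferent, or additively indifferent. Let $\vv \in T_Q$ be the direction pointing into $U_{\id}$; since nearby points in that direction are fixed, $\varphi_*(\vv) = \vv$. If $Q$ were multiplicatively indifferent, then every fixed direction at $Q$ lies on the axis, and Lemma \ref{SecondPersistenceLemma}(C) applied to $\vv$ would produce a segment of multiplicatively indifferent points with a nontrivial reduced rotation number, contradicting that they must be id-indifferent. So $Q$ is repelling (case A) or additively indifferent (case B). For type IV, choose a complete algebraically closed extension $L/K$ (say a maximally complete one built via Kaplansky's construction) such that $\iota := \iota_K^L$ sends $Q$ to a type II point of $\BPP_L$. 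Since $\iota$ is a $\rho$-isometry (Proposition \ref{FaberProp}(4)) and intertwines $\varphi_K$ with $\varphi_L$ (Proposition \ref{FaberProp}(6)), it carries $U_{\id}$ into $U_{\id,L}$ and sends $Q$ to a boundary point of $U_{\id,L}$. The type II analysis applied to $\iota(Q)$ shows it is repelling or additively indifferent; but by Rivera-Letelier, repelling fixed points are of type II over the base field, equivalently have local degree $\ge 2$, whereas $\deg_{\iota(Q)}(\varphi_L) = \deg_Q(\varphi_K) = 1$ for $Q$ of type IV. Hence $\iota(Q)$ is additively indifferent, and so $Q$ is additively indifferent in the generalized sense.

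For the final assertion, let $Q \in \BHH_K$ be a boundary point and $\vv \in T_Q$ the direction pointing into $U_{\id}$, so $\varphi_*(\vv) = \vv$ and there is a segment $(Q,P_0) \subset B_Q(\vv)^-$ entirely contained in $U_{\id}$. In the type II case, Lemma \ref{SecondPersistenceLemma} shows that a segment of id-indifferent points emanating from $Q$ in a fixed direction $\vv$ occurs precisely in case (B) of that lemma, which delivers $\#\tF_\varphi(Q,\vv) \ge 2$. In the type IV case, apply the same statement to $\iota(Q) \in \BHH_L$ and $\iota_*(\vv) \in T_{\iota(Q),L}$, then transfer back via the base-change convention by which $\#\tF_\varphi(Q,\vv)$ at a type IV point is read off from the reduction after enlarging $K$. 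The main obstacle I anticipate is the bookkeeping in the type IV case: one needs to verify carefully that $\iota(Q)$ really is a boundary point of $U_{\id,L}$ (rather than a point adjacent only to $\iota(U_{\id})$), which requires checking that a strong-topology neighborhood of $\iota(Q)$ in $\BPP_L$ meets both $U_{\id,L}$ and its complement, and that the definition of $\#\tF_\varphi$ via base change is independent of the chosen extension—both of which follow from Proposition \ref{FaberProp}, but deserve an explicit line or two in the final write-up.
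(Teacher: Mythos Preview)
Your proposal is correct and follows essentially the same case analysis as the paper: rule out type III via Corollary~\ref{NoTypeIIICor}, handle type I via Lemma~\ref{ThirdPersistenceLemma}, handle type II via Lemma~\ref{SecondPersistenceLemma}, and treat type IV by base change through Proposition~\ref{FaberProp}. The final assertion $\#\tF_\varphi(Q,\vv)\ge 2$ is obtained in both cases from Lemma~\ref{SecondPersistenceLemma}(B), just as you do.

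The one place where your argument diverges slightly from the paper is the type IV case. The paper invokes the Weight Formula (Theorem~\ref{WeightFormulaTheorem}) together with Proposition~\ref{FaberProp} to conclude that a type IV boundary point must be additively indifferent; you instead argue directly that $\deg_Q(\varphi_K)=1$ for any type IV fixed point (Rivera-Letelier, cited in the paper as \cite{B-R}, Lemma~10.80), and that this degree is preserved under $\iota$ by Proposition~\ref{FaberProp}(6b), so $\iota(Q)$ cannot be repelling. Your route is more transparent, and indeed the paper's appeal to the Weight Formula here is somewhat opaque. Your self-flagged bookkeeping concern---that $\iota(Q)$ is genuinely a boundary point of $U_{\id,L}$---is easily settled: $Q\notin U_{\id}$ together with the base-change definition of id-indifference gives $\iota(Q)\notin U_{\id,L}$, while $\iota(U_{\id,K})\subset U_{\id,L}$ (reduction type of a type II point is unchanged under extension of scalars) and the $\rho$-isometry property give $\iota(Q)\in\overline{U_{\id,L}}$.
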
 

\begin{proof} Suppose $Q$ is a boundary point of a component $U_{\id}(P)$.  
Since the path $(Q,P)$ consists of id-indifferent fixed points, 
by continuity $Q$ is fixed by $\varphi$ and $\varphi_*(\vv) = \vv$.  
Hence the result follows from Corollary \ref{NoTypeIIICor} and 
Lemmas \ref{SecondPersistenceLemma}, \ref{ThirdPersistenceLemma}.
\end{proof} 

\noindent{\bf Remark.}  Suppose $\alpha \in \PP^1(K)$ is a type I boundary point of $U_{\id}$.     
By Lemma \ref{ThirdPersistenceLemma}, $\alpha$ is an indifferent fixed point of $\varphi$ 
with reduced multiplier $\tlambda = \tid$.  
Let the multiplicity of $\alpha$ as a fixed point of $\varphi$ be $M \ge 1$.    
By a more complicated argument using the methods of Lemma \ref{ThirdPersistenceLemma}, 
it can be shown that there are a point $P \in U_{\id}$ and a constant $C = C(\varphi,P,\alpha) > 0$ 
such that if $\vv \in T_P$ is the direction pointing toward $\alpha$, 
then $U_{\id} \cap B_P(\vv)^-$ is `the cone with sides of equation $y = (M-1)x + C$' given by 
\begin{equation} \label{FCone2}
 \bigcup_{S \in (P,\alpha)} \{Q \in \BHH_K : [P,S] \cap (S,Q] = \phi, 
\rho(S,Q) < (M-1) \cdot \rho(P,S) + C\}  \ .
\end{equation}

\section{Additional Structure in the Dynamics of $\varphi$.} \label{ApplicationsSection} 
  
In this section we apply the Persistence Lemmas to the dynamics 
of $\varphi$.  We first obtain a formula for $s_\varphi(P,\vv)$ 
as a sum of terms $\#F_\varphi(Q,\vv)$ and $\#\tF_\varphi(Q,\vv)$ when $P$ is id-indifferent, 
which shows (among other things) that the locus of id-indifference 
has at most $\lfloor (d+1)/2\rfloor$ components. 
We show that points which are multiplicatively indifferent belong to `maximal rotational axes' whose 
endpoints are highly constrained.  Finally, we sharpen Theorem \ref{BaryCenterTheorem} 
(the Dynamical Characterization of $\MinResLoc(\varphi)$) in the case
when $\MinResLoc(\varphi)$ is a segment. 

\medskip
{\bf Balance Conditions for Id-Indifferent Points.}
For type II points $P$ which are not id-indifferent,   
Proposition \ref{FProp} gives ``balance conditions'' for $P$ to belong to $\MinResLoc(\varphi)$,
using the directional fixed point multiplicities $\#F_{\varphi}(P,\vv)$ and $\#\tF_\varphi(P,\vv)$.
 
We can now extend Proposition \ref{FProp} to id-indifferent points. 
Recall that if $P$ is a type II id-indifferent fixed point, 
then $U_{\id}(P)$ is the component of the locus of id-indifference containing $P$.  
Recall also that $\#F_\varphi(P,\vv)$ is the number of type I fixed points in $B_P(\vv)^-$,
counted with multiplicity.  Given $\vv \in T_P$, we now define 
\begin{equation*}
\#F_\varphi(P,\vv)_\Visible
\end{equation*} 
to be the number of type I fixed points in $\partial U_{\id}(P) \cap B_P(\vv)^-$, counted with multiplicity. 
 
When $P$ is id-indifferent and $\vv \in T_P$, there is a formula for $s_{\varphi}(P,\vv)$ 
as a sum of directional fixed point multiplicities, but it extends over the boundary of $U_{\id}(P)$ 
rather than being localized at $P$.  Using Proposition \ref{SvarphiProp}, this yields 
balance conditions for $P$ to belong to $\MinResLoc(\varphi)$, in terms of directional fixed point multiplicities:   

\begin{proposition} \label{IndiffBalanceProp} Let $P$ be a type {\rm II} id-indifferent fixed point, and let
$U_{\id}(P)$ be the component of the locus of id-indifference containing $P$.  
Given $Q \in \partial U_{\id}(P)$, let $\vv_{Q,P} \in T_Q$ be the direction pointing into $U_{\id}(P)$. 
Suppose $\vv \in T_P$.  Then
\begin{eqnarray} 
s_\varphi(P,\vv) & = & 
    \sum_{ \substack{ \text{\rm type II points $Q$ in } \\ \partial U_{\id}(P) \cap \Gamma_{\Fix,\Repel} \cap B_P(\vv)^-} } 
\Big(\#\tF_\varphi(Q,\vv_{Q,P}) - 2  \ + \ 
         \sum_{\substack{\vw \in T_Q \\ \vw \ne \vv_{Q,P}}} \#F_\varphi(Q,\vw) \Big)  \label{idIndiffFormula} \\
& \ge & \#F_\varphi(P,\vv) - \#F_\varphi(P,\vv)_\Visible  \ . \label{idIndiffBound} 
\end{eqnarray}
Furthermore, $P \in \MinResLoc(\varphi)$ if and only if $s_\varphi(P,\vv) \le \frac{d-1}{2}$
for each $\vv \in T_P$, and $\MinResLoc(\varphi) = \{P\}$ if and only if 
$s_\varphi(P,\vv) < \frac{d-1}{2}$ for each $\vv \in T_P$.  
\end{proposition}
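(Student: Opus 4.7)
The last two assertions of the Proposition — the balance conditions governing $\MinResLoc(\varphi)$ — follow at once from Corollary \ref{SpecialBalanceConditionsCor}, so my task reduces to establishing the formula (\ref{idIndiffFormula}); the inequality (\ref{idIndiffBound}) will then be a short combinatorial consequence. My plan is to compute $s_\varphi(P,\vv)$ using the slope identity $\partial_\vv f(P) = (d^2-d) - 2d \cdot s_\varphi(P,\vv)$ of Proposition \ref{IdIndiffFixedSlope}, running the same Laplacian argument used to prove the Weight Formula but on a finite subtree adapted to the component $U_{\id}(P)$ and the ball $B_P(\vv)^-$.

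I will take $\Gamma_V$ to be the smallest subtree of $\BPP_K$ containing $P$ together with $\Gamma_{\Fix,\Repel} \cap \overline{U_{\id}(P)} \cap B_P(\vv)^-$. Since $\overline{U_{\id}(P)}$ is a connected subtree of $\BPP_K$ containing $P$, all of $\Gamma_V$ lies inside $\overline{U_{\id}(P)}$. Because each boundary point of $U_{\id}(P)$ admits only one direction into $U_{\id}(P)$, such points can appear only as endpoints of $\Gamma_V$; together with the facts that $\Gamma_{\Fix,\Repel}$ contains no points of type IV (the segments between its type I or II vertices consist of type II and type III points only) and that no boundary point of $U_{\id}$ is of type III (Corollary \ref{NoTypeIIICor}), this shows the endpoints of $\Gamma_V$ are (i) $P$, (ii) the $\tid$-indifferent type I fixed points in $\overline{U_{\id}(P)} \cap B_P(\vv)^-$ (Lemma \ref{ThirdPersistenceLemma}(A)), and (iii) the type II points of $\partial U_{\id}(P) \cap \Gamma_{\Fix,\Repel} \cap B_P(\vv)^-$; all other vertices lie in $U_{\id}(P)$ and are therefore id-indifferent. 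Following the proof of Theorem \ref{WeightFormulaTheorem}, I will truncate each type I endpoint $\alpha_i$ at a nearby type II point $Q_i$ where by Proposition \ref{ClassicalFixedPtSlope} the slope of $f = \ordRes_\varphi$ toward the interior of $\Gamma_V$ equals $-(d^2-d)$, obtaining a finite metrized tree $\widehat{\Gamma}_V$.

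Next I will compute the local slope sums at each vertex of $\widehat{\Gamma}_V$. At each $Q_i$ the sum is $-(d^2-d)$. At an interior id-indifferent vertex $Q$, Proposition \ref{FirstPersistenceLemma} provides a strong $\rho$-neighborhood of $Q$ inside $U_{\id}(P)$, so every edge of $\Gamma_{\Fix,\Repel}$ at $Q$ has its initial segment in $\Gamma_V$; combined with the inclusion $\Gamma_V \subseteq \Gamma_{\Fix,\Repel}$ (valid when $P \in \Gamma_{\Fix,\Repel}$, which holds whenever $s_\varphi(P,\vv) > 0$ by Lemma \ref{ThirdIdentificationLemma}), this yields $T_{Q,\widehat{\Gamma}_V} = T_{Q,\FR}$. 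Applying Proposition \ref{IdIndiffFixedSlope} together with the identity $\sum_{\vw \in T_{Q,\FR}} s_\varphi(Q,\vw) = d-1$ (valid for id-indifferent $Q$) produces the local contribution $(d^2-d)(v_{\widehat{\Gamma}_V}(Q)-2)$. At each type II endpoint $Q$ from (iii), Corollary \ref{IdIndiffBoundaryCor} identifies $Q$ as a non-id-indifferent fixed point with $\varphi_*(\vv_{Q,P}) = \vv_{Q,P}$ and $\#\tF_\varphi(Q,\vv_{Q,P}) \ge 2$; substituting these into Proposition \ref{NonIdIndiffFixedSlope} and using $\#F_\varphi(Q,\vv_{Q,P}) = (d+1) - \sum_{\vw \ne \vv_{Q,P}} \#F_\varphi(Q,\vw)$ simplifies to $\partial_{\vv_{Q,P}} f(Q) = -(d^2-d) + 2d\,E_Q$, where $E_Q$ is the bracketed summand of (\ref{idIndiffFormula}). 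Summing all local contributions, using that the Laplacian on $\widehat{\Gamma}_V$ has total mass zero, and applying Lemma \ref{VertexSumFormula} exactly as in the Weight Formula proof collapses the $(d^2-d)$-terms into a single $-(d^2-d)$, giving $\partial_\vv f(P) = (d^2-d) - 2d\sum_Q E_Q$ and hence (\ref{idIndiffFormula}).

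For (\ref{idIndiffBound}), each $\#\tF_\varphi(Q,\vv_{Q,P}) - 2 \ge 0$; and for every non-$\tid$-indifferent type I fixed point $\alpha \in B_P(\vv)^-$, Lemma \ref{ThirdPersistenceLemma} forces $\alpha \notin \overline{U_{\id}(P)}$, so the path $[P,\alpha] \subseteq \Gamma_{\Fix,\Repel}$ exits $\overline{U_{\id}(P)}$ at a unique last point $Q^*(\alpha) \in \partial U_{\id}(P) \cap \Gamma_{\Fix,\Repel} \cap B_P(\vv)^-$, which by Corollaries \ref{NoTypeIIICor} and \ref{IdIndiffBoundaryCor} is of type II, and $\alpha$ lies in $B_{Q^*(\alpha)}(\vw)^-$ for some $\vw \ne \vv_{Q^*(\alpha),P}$. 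Hence each such $\alpha$ contributes at least once to the double sum $\sum_Q \sum_{\vw \ne \vv_{Q,P}} \#F_\varphi(Q,\vw)$, and summing over all such $\alpha$ yields (\ref{idIndiffBound}). The hardest step will be securing the identification $T_{Q,\widehat{\Gamma}_V} = T_{Q,\FR}$ at interior id-indifferent vertices — the linchpin allowing the interior Laplacian contributions to assume the same form as in the Weight Formula proof; once this is in hand the remaining bookkeeping mirrors that proof closely.
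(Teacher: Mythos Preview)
Your approach is genuinely different from the paper's. The paper proves (\ref{idIndiffFormula}) by a direct preimage count: it picks a type~I point $\alpha \notin B_P(\vv)^-$, observes that $s_\varphi(P,\vv)$ equals the number of solutions to $\varphi(z)=\alpha$ in $B_P(\vv)^-$, and notes that no such solutions lie in $U_{\id}(P)$ or at its type~I boundary points. The count therefore reduces to summing over type~II boundary points $Q$ of $U_{\id}(P)$ in $B_P(\vv)^-$ and over directions $\vw \ne \vv_{Q,P}$; a short computation with $m_\varphi$, $s_\varphi$, and Lemma~\ref{FirstIdentificationLemma} at each such $Q$ then gives exactly the bracketed term $E_Q$. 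This is elementary and never needs $P\in\Gamma_{\Fix,\Repel}$, nor any Laplacian machinery. Your Laplacian route, mirroring the Weight Formula proof, is a pleasing unification and your computation of the endpoint and interior contributions is correct once the framework is set up --- but it is considerably heavier.

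There is a real gap in your argument. You claim that $s_\varphi(P,\vv)>0$ implies $P\in\Gamma_{\Fix,\Repel}$ ``by Lemma~\ref{ThirdIdentificationLemma}'', and you use this to get $\Gamma_V\subseteq\Gamma_{\Fix,\Repel}$ and hence $T_{Q,\widehat{\Gamma}_V}=T_{Q,\FR}$ at interior vertices; you use it again for (\ref{idIndiffBound}) to assert $[P,\alpha]\subseteq\Gamma_{\Fix,\Repel}$. But Lemma~\ref{ThirdIdentificationLemma} only says $B_P(\vv)^-$ meets $\Gamma_{\Fix,\Repel}$; its proof establishes $P\in\Gamma_{\Fix,\Repel}$ only in the subcase where $B_P(\vv)^-$ contains no type~I fixed point. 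In general an id-indifferent $P$ need not lie in $\Gamma_{\Fix,\Repel}$ (the strong tubes of Lemma~\ref{ThirdPersistenceLemma} furnish many such $P$), so your inclusion $\Gamma_V\subseteq\Gamma_{\Fix,\Repel}$ can fail. This is fixable: when $P\notin\Gamma_{\Fix,\Repel}$ and $\vv$ is not the direction toward $\Gamma_{\Fix,\Repel}$ both sides of (\ref{idIndiffFormula}) vanish, and for the remaining direction one can either run your argument with $\Gamma_V$ enlarged by the segment from $P$ to the nearest point of $\Gamma_{\Fix,\Repel}$ (checking that the extra id-indifferent points on that segment still contribute $(d^2-d)(v-2)$, which follows since all their surplus lies in the single direction toward $\Gamma_{\Fix,\Repel}$), or simply replace $P$ by a nearby id-indifferent point on $\Gamma_{\Fix,\Repel}$. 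But this patch should be made explicit; as written, the linchpin step you identify does not quite hold.
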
 

\noindent{\bf Remark.} If $Q \in \partial U_{\id}(P) \cap B_P(\vv)^-$ is a focused repelling fixed point, 
then $\#\tF_\varphi(Q,\vv_{Q,P}) = \deg_\varphi(Q) + 1$, and $\#F_\varphi(Q,\vw)  = 0$ 
for each $\vw \in T_Q$ with $\vw \ne \vv_{Q,P}$.  Hence the contribution to (\ref{idIndiffFormula})
from $Q$ is $\#\tF_\varphi(Q,\vv_{Q,P}) - 2 = \deg_\varphi(Q) - 1$. 

\begin{proof}
Fix $\vv \in T_P$.  
To compute $s_\varphi(P,\vv)$, it suffices to choose a type I point $\alpha \notin B_P(\vv)^-$,
and count the number of solutions to $\varphi(z) = \alpha$ in $B_P(\vv)^-$. 
Since $U_{\id}(P) \subset \BHH_K$, there are no solutions in $U_{\id}(P)$.  Since the type I boundary points
of $U_{\id}(P)$ are fixed, they do not give solutions either.

Thus, the number of solutions to 
$\varphi(z) = \alpha$ in $B_P(\vv)^-$ is the sum of the number of solutions in the balls $B_Q(\vw)^-$, 
as $Q$ runs over all points $\partial U_{\id}(P) \cap B_P(\vv)^-$ and $\vw$ runs over 
$T_Q \backslash \{\vv_{Q,P}\}$. 
Fix $Q \in \partial U_{\id}(P) \cap B_P(\vv)^-$.  
If $Q$ is of type I  or type IV, then $\vv_{Q,P}$ is the only element of $T_Q$.   
By Corollary \ref{NoTypeIIICor}, $Q$ cannot be of type III. 
It remains to consider the case where $Q$ is of type II. 

\smallskip 
First suppose $Q \notin \Gamma_{\Fix,\Repel}$.  By Lemma \ref{SecondPersistenceLemma} 
$Q$ must be an additively indifferent fixed point.    
For such a $Q$, there is a unique
$\vw_0 \in T_Q$ fixed by $\varphi_*$, and Lemma  \ref{SecondPersistenceLemma} shows $\vw_0 = \vv_{Q,P}$.

Fix $\vw \in T_Q$ with $\vw \ne \vv_{Q,P}$.  
We claim that $\vw$ points away from $\Gamma_{\Fix,\Repel}$.  
Otherwise, $\Gamma_{\Fix,\Repel} \subset B_Q(\vw)^-$, and the $d+1$ type I fixed points of $\varphi$ 
would all belong to $B_Q(\vw)^-$, giving $\#F_\varphi(Q,\vw) = d+1$.  
However, $\varphi_*(\vw) \ne \vw$, so $\#\tF_\varphi(Q,\vw) = 0$.  Since $Q$ is not id-indifferent,
Lemma \ref{FirstIdentificationLemma} shows that $s_\varphi(Q,\vw) = \#F_\varphi(Q,\vw) - \#\tF_\varphi(Q,\vw) = d+1$.
However, this contradicts the universal inequality $s_\varphi(Q,\vw) \le d - \deg_\varphi(Q) = d-1$.  
Hence $\vw$ points away from $\Gamma_{\Fix,\Repel}$. 
This means $\#F_\varphi(Q,\vw) = 0$, so Lemma \ref{FirstIdentificationLemma} gives $s_\varphi(Q,\vw) = 0$.  
Thus $\varphi(B_Q(\vw)^-) = B_Q(\varphi_*(\vw))^-$.  However, since $\deg_\varphi(Q) = 1$, 
and since $\varphi_*(\vv_{Q,P}) = \vv_{Q,P}$, we cannot have $\varphi_*(\vw) = \vv_{Q,P}$.  
This means $\varphi_*(\vw)$ points away from $P$, so $B_Q(\varphi_*(\vw))^- \subset B_P(\vv)^-$. 
Hence there are no solutions to $\varphi(z) = \alpha$ in $B_Q(\vw)^-$.  

\smallskip
Next suppose $Q \in \Gamma_{\Fix,\Repel}$. Take any $\vw \in T_Q$ with $\vw \ne \vv_{Q,P}$.  
If $\varphi_*(\vw) = \vv_{Q,P}$, then the number of solutions to $\varphi(z) = \alpha$ in $B_Q(\vw)^-$
is $m_\varphi(Q,\vw) + s_\varphi(Q,\vw)$; if $\varphi_*(\vw) \ne \vv_{Q,P}$, the number of solutions
is $s_\varphi(Q,\vw)$.  
As $\vw$ varies, the total number of solutions to $\varphi(z) = \alpha$ in $\BPP_K \backslash B_Q(\vv_{Q,P})^-$ is
\begin{eqnarray*}
\sum_{ \substack{\vw \in T_Q, \, \vw \ne \vv_{Q,P} \\ \varphi_*(\vw) = \vv_{Q,P} } } 
           \Big(m_\varphi(Q,\vw) & + & s_\varphi(Q,\vw) \Big) \ + \ 
\sum_{ \substack{\vw \in T_Q, \, \vw \ne \vv_{Q,P} \\ \varphi_*(\vw) \ne \vv_{Q,P} } }  s_\varphi(Q,\vw) \\
& = & \sum_{ \substack{\vw \in T_Q, \, \vw \ne \vv_{Q,P} \\ \varphi_*(\vw) = \vv_{Q,P} } } 
          m_\varphi(Q,\vw) \ + \ 
\sum_{ \substack{ \vw \in T_Q \\ \vw \ne \vv_{Q,P} } } s_\varphi(Q,\vw) \\
& = & \Big( \deg_\varphi(Q) - 1 \Big) + \sum_{ \substack{\vw \in T_Q \\ \vw \ne \vv_{Q,P} } } 
\Big( \#F_\varphi(Q,\vw) - \#\tF_\varphi(Q,\vw) \Big) 
\end{eqnarray*}
Here, the equality between the second and third lines follows from Lemma \ref{SecondPersistenceLemma}
(which gives $m_\varphi(Q,\vv_{Q,P}) = 1$)  and from Lemma \ref{FirstIdentificationLemma} (which applies 
because $Q$ is not id-indifferent). 
Continuing on, and using that $\sum_{\vw \in T_Q} \#\tF_\varphi(Q,\vw) = \deg_\varphi(Q) + 1$,
we see that the number of solutions to $\varphi(z) = \alpha$ in $\BPP_K \backslash B_Q(\vv_{Q,P})^-$ is 
\begin{eqnarray*} 
\quad & = & \Big( \deg_\varphi(Q) - 1 \Big) 
  \ + \ \Big( \sum_{ \substack{\vw \in T_Q \\ \vw \ne \vv_{Q,P} } } \#F_\varphi(Q,\vw) \Big) 
  \ - \ \Big( \deg_\varphi(Q)+1 - \#\tF_\varphi(Q,\vv_{Q,P}) \Big) \\
& = & \Big(\#\tF_\varphi(Q,\vv_{Q,P}) -  2 \Big) \ + \ \Big( \sum_{ \substack{\vw \in T_Q \\ \vw \ne \vv_{Q,P} } } \#F_\varphi(Q,\vw) \Big) \ . \
\end{eqnarray*} 

Summing over all $Q \in \partial U_{\id}(P) \cap \Gamma_{\Fix,\Repel} \cap B_P(\vv)^-$
yields (\ref{idIndiffFormula}). 
However, Lemma \ref{SecondPersistenceLemma} gives $\#\tF_\varphi(Q,\vv_{Q,P}) \ge 2$ for each 
$Q \in \partial U_{\id}(P)$ of type II.  Hence by (\ref{idIndiffFormula}) 
\begin{equation*}
s_\varphi(P,Q) \ \ge \ \!\!\!
\sum_{ \substack{ \text{\rm type II points $Q$ in } \\ \partial U_{\id}(P) \cap \Gamma_{\Fix,\Repel} \cap B_P(\vv)^-} } \!\!\!
\Big( \sum_{ \substack{\vw \in T_Q \\ \vw \ne \vv_{Q,P} } } \#F_\varphi(Q,\vw) \Big) 
\ = \ \#F_\varphi(P,\vv) - \#F_\varphi(P,\vv)_\Visible \ ,
\end{equation*}
which is (\ref{idIndiffBound}).  
The final assertions in the Proposition
follow from Proposition \ref{SvarphiProp}.
\end{proof} 

\begin{corollary} \label{IdIndiffClosureCountCor}
The closure of each component $U_{\id}(P)$ of the locus of id-indifference contains at least two 
type {\rm I} fixed points $($counting multiplicities$)$.
\end{corollary}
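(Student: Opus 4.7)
The plan is to apply the inequality \eqref{idIndiffBound} of Proposition \ref{IndiffBalanceProp}, summed over all tangent directions at $P$, and to compare the totals against the universal counts for fixed points and surplus multiplicities.

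First, I would replace $P$ by a type II point inside the same component $U_{\id}(P)$. Such a point exists because $U_{\id}$ is open in the strong topology (Corollary \ref{IdIndiffOpenCor}) and type II points are dense in $\BHH_K$ for both topologies. So without loss of generality $P$ is of type II, and Proposition \ref{IndiffBalanceProp} applies directly. For each $\vv \in T_P$, that proposition gives
\begin{equation*}
s_\varphi(P,\vv) \ \ge \ \#F_\varphi(P,\vv) \ - \ \#F_\varphi(P,\vv)_\Visible \ .
\end{equation*}

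Now sum over $\vv \in T_P$. On the left, the identity $\sum_{\vv \in T_P} s_\varphi(P,\vv) = d - \deg_\varphi(P)$ (established in the proof of Proposition \ref{IdIndiffFixedSlope} via Lemma \ref{ThirdIdentificationLemma}), combined with the fact that an id-indifferent point has $\deg_\varphi(P)=1$, yields $d-1$. On the right, the first sum is $\sum_\vv \#F_\varphi(P,\vv) = d+1$, counting all type I fixed points of $\varphi$ with multiplicity. The second sum counts, with multiplicity, the type I fixed points of $\varphi$ that lie in $\partial U_{\id}(P)$: each such fixed point belongs to $B_P(\vv)^-$ for a unique $\vv \in T_P$ (since $P \in \BHH_K$), and the balls $B_P(\vv)^-$ partition $\BPP_K \setminus \{P\}$. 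Calling this total $N$, we obtain $d-1 \ge (d+1) - N$, hence $N \ge 2$.

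Since $U_{\id}(P) \subseteq \BHH_K$, it contains no type I points, so the type I points of $\overline{U_{\id}(P)}$ are precisely the type I points of $\partial U_{\id}(P)$. Thus $\overline{U_{\id}(P)}$ contains at least $N \ge 2$ type I fixed points counted with multiplicity, proving the corollary. The argument is a direct counting exercise once Proposition \ref{IndiffBalanceProp} is available; the main point requiring care is the identification of $\sum_\vv \#F_\varphi(P,\vv)_\Visible$ with the total multiplicity of type I fixed points on $\partial U_{\id}(P)$, which rests on the disjointness of the balls $B_P(\vv)^-$.
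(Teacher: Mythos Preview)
Your proof is correct and follows essentially the same approach as the paper: both sum the inequality \eqref{idIndiffBound} over all $\vv \in T_P$ and compare against the identities $\sum_\vv \#F_\varphi(P,\vv) = d+1$ and $\sum_\vv s_\varphi(P,\vv) = d - \deg_\varphi(P) = d-1$. The only cosmetic difference is that the paper phrases it as a contradiction (assuming at most one visible fixed point forces $\sum_\vv s_\varphi(P,\vv) \ge d$, violating the bound $d-1$), whereas you argue directly.
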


\begin{proof} Suppose $U_{\id}(P)$ were a component having at most one type I fixed point in its closure
(counting multiplicities).  Without loss we can assume $P$ is of type II.  
By Proposition \ref{IndiffBalanceProp} 
\begin{eqnarray*}
\sum_{\vv \in T_P} s_\varphi(P,\vv) & \ge &
\sum_{\vv \in T_P} \Big(\#F_\varphi(P,\vv) - \#F_\varphi(P,\vv)_\Visible \Big) \\
& \ge & (d+1) - 1 \ = \ d \ .
\end{eqnarray*}
This contradicts the universal inequality $\sum_{\vv \in T_P} s_\varphi(P,\vv) \le d-1$.
\end{proof} 

\begin{corollary} \label{IdIndiffComponentCountCor} 
The locus of id-indifference of $\varphi$ has at most $\lfloor \frac{d+1}{2} \rfloor$ components.
\end{corollary}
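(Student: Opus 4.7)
The plan is to combine the lower bound of Corollary \ref{IdIndiffClosureCountCor} with a uniqueness statement: each type~I fixed point $\alpha$ of $\varphi$ can lie in the closure of at most one component of $U_{\id}$. Granting this, the proof becomes a short counting argument: since $\varphi$ has exactly $d+1$ type~I fixed points (counted with multiplicity), and each component of $U_{\id}$ accounts for at least $2$ of them with disjoint contributions, the number of components is at most $(d+1)/2$, hence at most $\lfloor (d+1)/2\rfloor$.

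The main work is to establish the uniqueness statement. First, if $\alpha \in \overline{U_{\id}}$, then by Corollary \ref{IdIndiffBoundaryCor} $\alpha$ is a $\tid$-indifferent classical fixed point of $\varphi$. By Lemma \ref{ThirdPersistenceLemma}(A), there exist a point $P \in U_{\id}$ and $r > 0$ such that the strong tube $T((\alpha,P),r)^-$ is contained in $U_{\id}$. This tube is a union of strong balls $\cB_\rho(Q,r)^-$ over $Q \in (\alpha,P)$; consecutive balls overlap, so the tube is strong-path-connected and therefore lies in a single component $C_0$ of $U_{\id}$. I would then argue that \emph{every} point of $U_{\id}$ lying sufficiently close to $\alpha$ in the weak topology belongs to $C_0$: using that all paths $(P',\alpha]$ in $\BPP_K$ share a common final segment with $(P,\alpha]$, any $x \in U_{\id}$ close enough to $\alpha$ has a path to $\alpha$ whose tail lies inside $(P,\alpha]$, and by strong-openness of $U_{\id}$ (Corollary \ref{IdIndiffOpenCor}) a neighborhood of this tail is in $U_{\id}$, giving a strong-continuous path from $x$ into the tube $T((\alpha,P),r)^-$. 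Hence $x \in C_0$, proving $\alpha$ lies in $\overline{C_0}$ alone.

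The counting step is then routine: if there are $n$ components $C_1,\ldots,C_n$ of $U_{\id}$, then by Corollary \ref{IdIndiffClosureCountCor}, each $\overline{C_i}$ contains a multiset $S_i$ of at least $2$ type~I fixed points (counted with multiplicity). By the uniqueness just proved the multisets $S_i$ are pairwise disjoint, so
\[
2n \ \le \ \sum_{i=1}^{n} |S_i| \ \le \ d+1,
\]
giving $n \le \lfloor(d+1)/2\rfloor$, as required.

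The main obstacle is the uniqueness argument in the second paragraph: turning the local persistence of id-indifference along the path $(P,\alpha]$ into a statement that all nearby $U_{\id}$-points actually lie in the same component. The detailed local geometry is encoded in the cone description (\ref{FCone2}) stated in the remark following Corollary \ref{IdIndiffBoundaryCor}; if one cites that remark the argument is immediate, since the cone is manifestly strong-path-connected. Otherwise, a self-contained proof proceeds by taking a type~II point $Q$ deep in $(P,\alpha]$ so that the weak-neighborhood $B_Q(\vv_\alpha)^-$ of $\alpha$ meets $U_{\id}$ only at points whose path to $\alpha$ must traverse the tube, and then using strong-openness to fill in connecting strong paths inside $U_{\id}$.
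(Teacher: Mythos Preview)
Your strategy---combine Corollary~\ref{IdIndiffClosureCountCor} with the claim that each type~I fixed point lies in the closure of at most one component of $U_{\id}$, then count---is exactly the paper's; the paper's proof is a single sentence that simply asserts this uniqueness without argument, so your elaboration already goes beyond what the paper supplies.

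On that elaboration: the obstacle you flag is genuine, and neither of your proposed fixes closes it. Knowing $x\in U_{\id}$ and that the tail of $[x,\alpha]$ lies in $(\alpha,P]$ does not give a path \emph{inside} $U_{\id}$ from $x$ to the tube, since the segment $[x,Q_x]$ (where $Q_x$ is the point at which $[x,\alpha]$ joins $[\alpha,P]$) may leave $U_{\id}$. The cone description (\ref{FCone2}) is stated only as an unproved remark, and ``take $Q$ deep enough'' still places no bound on $\rho(x,Q_x)$.

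Here is a short self-contained argument. Since $(\BHH_K,\rho)$ is an $\RR$-tree, every connected subset is convex: if $p$ lies in the interior of $[x,y]$ then removing $p$ separates $x$ from $y$, so any connected set containing $x$ and $y$ must contain $p$. Hence each component $C$ of $U_{\id}$ is convex. By Lemma~\ref{ThirdPersistenceLemma} there is a segment $(\alpha,P]\subset U_{\id}$; let $C_0$ be its component. Given any component $C$ with $\alpha\in\overline C$, fix $x_0\in C$ and let $R_0\in(\alpha,P]$ be the point where $[x_0,\alpha]$ meets $[\alpha,P]$. If $x_0=R_0$ then already $x_0\in C\cap C_0$. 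Otherwise, since $\alpha\in\overline C$, there is some $x_1\in C\cap B_{R_0}(\vv_\alpha)^-$, and the geodesic $[x_0,x_1]\subset C$ must cross $\partial B_{R_0}(\vv_\alpha)^-=\{R_0\}$. Either way $R_0\in C\cap C_0$, so $C=C_0$.
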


\begin{proof} This follows from Corollary \ref{IdIndiffClosureCountCor}, 
since each type I fixed point can belong to the closure of at most one component $U_{\id}(P)$.
\end{proof}

\begin{corollary} \label{BranchPtIndiffCor} Suppose $Q \in \Gamma_{\Fix}$ is a branch point of $\Gamma_{\Fix,\Repel}$ but is not
a branch point of $\Gamma_{\Fix}$.  Then $Q$ is id-indifferent.  
\end{corollary}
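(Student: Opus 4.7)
The plan is to exhibit a direction $\vv \in T_Q$ with $s_\varphi(Q,\vv) > 0$ while $B_Q(\vv)^-$ contains no type I fixed point of $\varphi$, so that Corollary \ref{SurplusIdentificationCor} forces $Q$ to be id-indifferent (and in particular fixed).

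First I would note that $Q$ must be of type II: any branch point of a tree in $\BPP_K$ has at least three distinct tangent directions, which is impossible at points of type I, III, or IV. Since $Q$ lies in $\Gamma_{\Fix}$ and is a branch point of $\Gamma_{\Fix,\Repel}$ but not of $\Gamma_{\Fix}$, the valence of $Q$ in $\Gamma_{\Fix,\Repel}$ strictly exceeds its valence in $\Gamma_{\Fix}$. Hence there is some $\vv \in T_Q$ along which an edge of $\Gamma_{\Fix,\Repel}$ emanates while no edge of $\Gamma_{\Fix}$ does, so $B_Q(\vv)^-$ meets $\Gamma_{\Fix,\Repel}$ but contains no type I fixed point. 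By Proposition \ref{FiniteGenerationProp}, the endpoints of $\Gamma_{\Fix,\Repel}$ lying in $B_Q(\vv)^-$ must therefore be focused repelling fixed points; I would pick one and call it $R$.

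Next I would observe that by Proposition \ref{FocusedRepellingProp}, the focus $\vv_1 \in T_R$ of $R$ points toward $\Gamma_{\Fix}$, and in particular toward $Q$ (the path from $R$ to any type I fixed point of $\varphi$ passes through $Q$). Lemma \ref{FocusedP1Lemma} then applies and yields $s_\varphi(Q,\vv) > 0$. Finally, Corollary \ref{SurplusIdentificationCor} states that for any type II point $P$ which is not an id-indifferent fixed point of $\varphi$, $s_\varphi(P,\vv) > 0$ forces $B_P(\vv)^-$ to contain a type I fixed point of $\varphi$. Since our $B_Q(\vv)^-$ contains none, $Q$ must be id-indifferent, completing the proof.

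There is no substantive obstacle: the argument simply repackages the contradiction already used in Propositions \ref{BeadProp} and \ref{MultiFocusedProp}, combining Lemma \ref{FocusedP1Lemma} with the Identification Lemmas. The only point requiring a little care is confirming that the branch of $\Gamma_{\Fix,\Repel} \setminus \Gamma_{\Fix}$ emanating from $Q$ in the direction $\vv$ actually contains a focused repelling fixed point, rather than extending indefinitely through interior points; this is immediate from the finite generation of $\Gamma_{\Fix,\Repel}$ supplied by Proposition \ref{FiniteGenerationProp}.
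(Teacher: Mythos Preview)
Your proof is correct, but it takes a different (and more elementary) route than the paper's.

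The paper argues via the later-developed machinery on the locus of id-indifference: it picks an id-indifferent point $P$ on the branch of $\Gamma_{\Fix,\Repel}\setminus\Gamma_{\Fix}$ at $Q$, invokes Corollary~\ref{IdIndiffClosureCountCor} to produce a type~I fixed point $\alpha$ in the closure of $U_{\id}(P)$, and observes that the path $(P,\alpha)\subset U_{\id}(P)$ must pass through $Q$, so $Q$ is id-indifferent. Your argument instead stays entirely within the Section~\ref{IdentificationLemmaSection}--\ref{GammaFixRepelSection} toolkit: you locate a focused repelling fixed point in $B_Q(\vv)^-$, use Lemma~\ref{FocusedP1Lemma} to get $s_\varphi(Q,\vv)>0$, and then Corollary~\ref{SurplusIdentificationCor} forces id-indifference since $B_Q(\vv)^-$ has no type~I fixed point. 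This is exactly the mechanism behind Proposition~\ref{IdIndiffOffFixProp}; indeed, since that proposition's conclusion already covers the attachment point $Q_0\in\Gamma_{\Fix}$ (the interval there is $(Q,Q_0]$, closed at $Q_0$), you could shorten your proof to a single citation of Proposition~\ref{IdIndiffOffFixProp} once you have exhibited the focused repelling endpoint $R$ and noted that $Q$ is the nearest point of $\Gamma_{\Fix}$ to $R$. Your approach has the virtue of avoiding the Persistence Lemmas and the structure theory of $U_{\id}$ altogether; the paper's approach, by contrast, illustrates how those structural results feed back into the geometry of $\Gamma_{\Fix,\Repel}$.
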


\begin{proof}
Let $P$ be an id-indifferent point in some branch of $\Gamma_{\Fix,\Repel}$ off $\Gamma_{\Fix}$ at $Q$,
and let $U_{\id}(P)$ be the corresponding component of the locus of id-indifference.  
By Corollary \ref{IdIndiffClosureCountCor} $U_{\id}(P)$ has at least one type I fixed point $\alpha$
in its closure, and the path $(P,\alpha)$ goes through $Q$.  Since $(P,\alpha) \subset U_{\id}(P)$, 
it follows that $Q$ is id-indifferent. 
\end{proof} 

\begin{corollary} \label{BiFocusBoundCor} 
A given edge of $\Gamma_{\Fix}$ can contain at most two bi-focused repelling fixed points.
\end{corollary}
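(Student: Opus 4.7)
The plan is to combine convexity of $\ordRes_\varphi(\cdot)$ along the edge $e$ with the slope formula from Proposition~\ref{NonIdIndiffFixedSlope}, in order to force a contradiction whenever three or more bi-focused repelling fixed points lie on $e$.

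Suppose that $P_1, \ldots, P_n$ are bi-focused repelling fixed points on $e$, listed in order, and for each $i$ let $\vv_{i,L}, \vv_{i,R} \in T_{P_i}$ be the two directions along $e$. By Proposition~\ref{BeadProp}, each $P_i$ lies in the interior of $e$, its focal directions are $\{\vv_{i,L}, \vv_{i,R}\}$, and these are the only directions in $T_{P_i}$ fixed by $\varphi_*$. Because interior points of an edge of $\Gamma_\Fix$ are neither type I fixed points nor branch points of $\Gamma_\Fix$, the counts $\ell := \#F_\varphi(P_i, \vv_{i,L})$ and $r := \#F_\varphi(P_i, \vv_{i,R})$ are independent of $i$ and satisfy $\ell + r = d+1$. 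Since $P_i$ is not id-indifferent and since $\#\tF_\varphi(P_i, \vv) = 0$ for every $\vv$ not fixed by $\varphi_*$, the identity $\sum_{\vv} \#\tF_\varphi(P_i, \vv) = \deg_\varphi(P_i) + 1$ collapses to
\[
\#\tF_\varphi(P_i, \vv_{i,L}) \;+\; \#\tF_\varphi(P_i, \vv_{i,R}) \;=\; \deg_\varphi(P_i) + 1 \;\ge\; 3 \ .
\]

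Next I would apply the convexity of $f := \ordRes_\varphi(\cdot)$ along $e$ to each consecutive pair $P_i, P_{i+1}$. Convexity yields
\[
\partial_{\vv_{i,R}} f(P_i) \;+\; \partial_{\vv_{i+1,L}} f(P_{i+1}) \;\le\; 0 \ ,
\]
and plugging in the formula $\partial_{\vv} f(P) = (d^2 - d) - 2d\cdot \#F_\varphi(P,\vv) + 2d\cdot \max(1, \#\tF_\varphi(P,\vv))$ from Proposition~\ref{NonIdIndiffFixedSlope}, together with $\ell + r = d+1$, the linear pieces collapse and the inequality reduces to
\[
\max\bigl(1,\, \#\tF_\varphi(P_i,\vv_{i,R})\bigr) \;+\; \max\bigl(1,\, \#\tF_\varphi(P_{i+1},\vv_{i+1,L})\bigr) \;\le\; 2 \ .
\]
Each term is at least $1$, so both equal $1$, forcing $\#\tF_\varphi(P_i, \vv_{i,R}) \le 1$ and $\#\tF_\varphi(P_{i+1}, \vv_{i+1,L}) \le 1$.

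Finally, if $n \ge 3$, pick any interior index $j$ and apply the previous step to both $(P_{j-1}, P_j)$ and $(P_j, P_{j+1})$. This bounds both $\#\tF_\varphi(P_j, \vv_{j,L})$ and $\#\tF_\varphi(P_j, \vv_{j,R})$ by $1$, contradicting the lower bound $\ge 3$ from the first paragraph. Hence $n \le 2$. I do not foresee a serious obstacle; the only step requiring a moment of care is the constancy of $\ell$ and $r$ along $e$, which rests on the fact that interior points of an edge of $\Gamma_\Fix$ are neither type I fixed points nor branch points of $\Gamma_\Fix$, so removing any such point leaves the same two subtrees on each side. Everything else is a routine convexity computation.
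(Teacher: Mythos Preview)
Your argument is correct and takes a genuinely different route from the paper's proof.

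The paper argues via the locus of id-indifference. It observes (using Proposition~\ref{BeadProp}(A) together with the Second Persistence Lemma, as packaged in Corollary~\ref{FocusedBoundaryCor}) that the middle point $P_2$ lies on the boundary of some component $U_{\id}(P)$; by Corollary~\ref{IdIndiffClosureCountCor} the closure of that component contains a type~I fixed point $\alpha$, and the entire open path from $P_2$ to $\alpha$ consists of id-indifferent points. But this path must pass through $P_1$ or $P_3$, which are repelling, giving a contradiction.

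Your approach bypasses the persistence machinery entirely: you only use the slope formula of Proposition~\ref{NonIdIndiffFixedSlope} and the convexity of $\ordRes_\varphi$ from Theorem~\ref{ResThm}. The computation reducing $\partial_{\vv_{i,R}}f(P_i)+\partial_{\vv_{i+1,L}}f(P_{i+1})\le 0$ to $\max(1,\#\tF_\varphi(P_i,\vv_{i,R}))+\max(1,\#\tF_\varphi(P_{i+1},\vv_{i+1,L}))\le 2$ is clean, and the contradiction at an interior index follows immediately. This is more elementary---it relies only on material from \S\ref{DefinitionSection}--\S\ref{SlopeFormulaSection}---whereas the paper's proof sits naturally in the later development of $U_{\id}$ and gives a more geometric picture of \emph{why} a third bi-focused point cannot fit. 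The one point worth stating explicitly (you note it) is that each $P_i$, being type~II and not a branch point of $\Gamma_{\Fix,\Repel}$, is interior to $e$, so the two edge directions really are the two focal directions and the counts $\ell,r$ are constant along $e$.
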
 

\begin{proof}
Suppose an edge contained bi-focused repelling fixed points  $P_1, P_2, P_3$,  with $P_2$ between $P_1$ and $P_3$.  
By Proposition \ref{BeadProp}, $P_2$ is a boundary point of a component of the locus of 
id-indifference.  By Corollary \ref{IdIndiffClosureCountCor} that component has a type I fixed point $\alpha$ in its boundary, 
so the interior of the path $[P_2,\alpha]$ would be contained in it.  This is impossible,  
because the path would necessarily pass through $P_1$ or $P_3$, which are not id-indifferent. 
\end{proof} 
 
\medskip
{\bf Maximal Rotational Axes.}  If $P$ is a type II point where $\varphi$ has multiplicatively indifferent reduction, 
and has reduced rotation number $\tlambda$  for an axis $(P_0,P_1)$, 
then by Lemma \ref{SecondPersistenceLemma} 
there is a segment $(P_0,P_1)$ containing $P$ such that each type II point $Q \in (P_0,P_1)$ has multiplicatively 
indifferent reduction and has reduced rotation number $\tlambda$.  If $(T_0,T_1)$ is another segment 
(not necessarily containing $P$) such that each type II point $Q \in (T_0,T_1)$ 
has multiplicatively indifferent reduction with reduced rotation number $\tlambda^\prime$,  
and if $(P_0,P_1) \cap (T_0,T_1)$ is nonempty, then the overlap contains a type II point $Q$.
This means that $\tlambda = \tlambda^\prime$ (for an appropriate orientation of $(T_0,T_1)$, 
so $(P_0,P_1) \cup (T_0,T_1)$ is another 
segment with the same property.  Hence, there is a maximal segment $(P_0,P_1)$ containing $P$ with the property
that each type II point $Q \in (P_0,P_1)$ has multiplicatively indifferent reduction  with reduced rotation number $\tb$.
We will call this segment {\em the maximal rotational axis} of $P$.  

\begin{corollary} \label{AxisEndpointsCor} Let $\varphi(z) \in K(z)$ have degree $d \ge 2$.
Suppose $P$ is a type {\rm II} point where $\varphi$ has multiplicatively indifferent reduction,
and let $(P_0,P_1)$ be the maximal rotational axis of $P$.  Then $(P_0,P_1) \subset \Gamma_{\Fix}$,
and each endpoint of $(P_0, P_1)$ is either 

$(A)$ a type {\rm I} $\trot$-indifferent fixed point, or

$(B)$ a type {\rm II} repelling fixed point.  
\end{corollary}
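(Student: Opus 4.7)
The plan is to establish $(P_0, P_1) \subset \Gamma_{\Fix}$ and then classify the endpoints. For the inclusion, let $Q$ be any type II point on the axis. By hypothesis $Q$ is multiplicatively indifferent, so in appropriate coordinates $\tphi_Q(z) = \tlambda z$; the two fixed points $0$ and $\infty$ correspond exactly to the two axis directions at $Q$, and both are fixed by $\varphi_*$. The First Identification Lemma (Lemma \ref{FirstIdentificationLemma}) then places a type I fixed point of $\varphi$ in each of the corresponding balls, so $Q$ lies on a path between two type I fixed points and hence $Q \in \Gamma_{\Fix}$. Since type II points are dense in $(P_0, P_1)$ and $\Gamma_{\Fix}$ is a finite, hence closed, tree, this inclusion extends to the whole open segment.

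For the endpoint $P_1$ (the argument for $P_0$ is symmetric), continuity of $\varphi$ yields $\varphi(P_1) = P_1$. If $P_1 \in \PP^1(K)$, then since the type II axis points approaching $P_1$ are multiplicatively indifferent with reduced rotation number $\tlambda \ne 1$, Lemma \ref{ThirdPersistenceLemma}(B) identifies $P_1$ as a $\trot$-indifferent fixed point with reduced multiplier $\tlambda$, establishing case (A). If $P_1 \in \BHH_K$ is of type II, I show $P_1$ is repelling by eliminating each indifferent option. Id-indifference contradicts Lemma \ref{FirstPersistenceLemma}, which would put a strong neighborhood of $P_1$ inside the locus of id-indifference, inconsistent with the multiplicatively indifferent approach. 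Additive indifference contradicts Lemma \ref{SecondPersistenceLemma}(B) applied at the unique fixed direction of $P_1$ (whose reduced multiplier must be $1$), which would produce a segment of id-indifferent points along the axis direction. Multiplicative indifference is ruled out by maximality: consistency with the reduction of the axis points, via Lemma \ref{SecondPersistenceLemma}(C) applied at $P_1$ in the fixed direction toward $P_0$, forces the multipliers at the two fixed directions of $P_1$ to be $\tlambda$ and $\tlambda^{-1}$; then Lemma \ref{SecondPersistenceLemma}(C) applied in the other fixed direction yields a segment of multiplicatively indifferent points beyond $P_1$ that, joined with $(P_0, P_1)$, extends the axis with the same reduced rotation number, contradicting maximality. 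Hence $P_1$ is a type II repelling fixed point, giving case (B).

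To rule out $P_1$ being of type III or IV, use Faber's base-change map $\iota = \iota_K^L$ from Proposition \ref{FaberProp}, choosing $L/K$ maximally complete with residue field $\tk$ so that $\iota(P_1) \in \BPP_L$ is of type II. Then $\iota(P_1)$ is a fixed point of $\varphi_L$ of degree one by Proposition \ref{FaberProp}(6b), hence indifferent; applying the same maximality arguments inside $\BPP_L$ (using that the $\iota$-images of axis points remain multiplicatively indifferent with rotation number $\tlambda$) rules out id- and additive indifference for $\iota(P_1)$. So $\iota(P_1)$ is multiplicatively indifferent with two fixed directions: one is $\vw_1 = \iota_*(\vv_{P_0})$, where $\vv_{P_0} \in T_{P_1}$ points toward $P_0$, and call the other $\vw_2$. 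In the type III case, $T_{P_1}$ has a second direction $\vv'$ which is also fixed by $\varphi_*$ (Rivera-Letelier), so by injectivity of $\iota_*$ we must have $\vw_2 = \iota_*(\vv')$; the axis-extension argument from the type II case, applied at $\iota(P_1)$ in direction $\iota_*(\vv')$ and then pulled back through $\iota^{-1}$, extends the axis in $\BPP_K$ beyond $P_1$ with matching rotation number, again contradicting maximality. In the type IV case, $T_{P_1} = \{\vv_{P_0}\}$, so $\vw_2$ does not lie in the image of $\iota_*$; applying Lemma \ref{FirstIdentificationLemma} at $\iota(P_1)$ in the fixed direction $\vw_2$ produces a type I fixed point of $\varphi_L$ inside $B_{\iota(P_1)}(\vw_2)^-$, which, since $K$ is algebraically closed, is of the form $\iota(\alpha)$ for some $\alpha \in \PP^1(K)$. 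But the unique path $[P_1, \alpha]$ in $\BPP_K$ emanates from $P_1$ in its only available direction $\vv_{P_0}$, so its image $\iota([P_1, \alpha])$ leaves $\iota(P_1)$ in direction $\vw_1 \ne \vw_2$, contradicting the uniqueness of the tree path $[\iota(P_1), \iota(\alpha)]$ in $\BPP_L$. The main obstacle is the type IV case, which requires combining Faber's base change, algebraic closure of $K$, and the First Identification Lemma inside $\BPP_L$ in a subtle tree-geometric argument.
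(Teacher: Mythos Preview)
Your proof is correct, and for the inclusion $(P_0,P_1) \subset \Gamma_{\Fix}$ and the type I and type II endpoint cases it matches the paper's argument essentially line for line.

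The one substantive difference is in ruling out type III and type IV endpoints, where you work much harder than necessary. You call the type IV case ``the main obstacle'' and build an argument through base change, the First Identification Lemma over $L$, and a tree-path contradiction. The paper disposes of this case in one line: since $(P_0,P_1) \subset \Gamma_{\Fix}$ and $\Gamma_{\Fix}$ is closed (it is a finite tree), the endpoints $P_0,P_1$ lie in $\Gamma_{\Fix}$; but $\Gamma_{\Fix}$ is spanned by type I points, so every non-endpoint of $\Gamma_{\Fix}$ has at least two tangent directions and every endpoint is of type I, leaving no room for a type IV point. This exploits the very inclusion you just proved. For type III the paper simply invokes the argument of Corollary~\ref{NoTypeIIICor} (base change to make the point type II, then rule out each indifference type via the Persistence Lemmas), which is close in spirit to your type III argument but avoids the ``pull back through $\iota^{-1}$'' step. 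Your route works, but recognizing that $\Gamma_{\Fix}$ automatically excludes type IV points turns the hardest case into the easiest.
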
 

\begin{proof}  Since $P$ has multiplicatively indifferent reduction, there are exactly two tangent directions 
$\vv_0, \vv_\infty \in T_P$ which are fixed by $\varphi_*$.  By Lemma \ref{FirstIdentificationLemma}, 
each of $B_P(\vv_0)^-$ and $B_P(\vv_\infty)^-$ contains a type I fixed point of $\varphi$.  Thus,
$P \in \Gamma_{\Fix}$.  The same argument applies to each type II point $Q \in (P_0,P_1)$, 
and since the type II points are dense in $(P_0,P_1)$ for the strong topology, 
it follows that $(P_0,P_1) \subset \Gamma_{\Fix}$.  

By continuity, both $P_0$ and $P_1$ are fixed by $\varphi$.
By an argument similar to the one in Corollary \ref{NoTypeIIICor}, 
neither $P_0$ nor $P_1$ can be of type III, 
and they cannot be of type IV since $(P_0,P_1) \subset \Gamma_{\Fix}$, 
so they must be either of type I or II.   Consider $P_0$;  similar reasoning applies to $P_1$.  
If $P_0$ is of type I, we are done.  

If  $P_0$ is of type II, it cannot be id-indifferent because then there would be a ball $\cB_\rho(P,\varepsilon)^-$
such that each type II point $Q \in \cB_\rho(P,\varepsilon)^-$ was id-indifferent, and this ball would contain type II
points from $(P_0,P_1)$.  If $P_0$ were multiplicatively indifferent, then the direction $\vv_0 \in T_{P_0}$ (say) 
containing $(P_0,P_1)$ would be fixed by $\varphi_*$, so by Lemma \ref{SecondPersistenceLemma}
$\varphi$ would have reduced rotation number $\ta$ at $P_0$.  There would be another direction $\vv_\infty \in T_{P_0}$ 
fixed by $\varphi_*$, so by Lemma \ref{SecondPersistenceLemma} the segment $(P_0,P_1)$ would not be maximal.
If $P_0$ were additively indifferent, then the direction $\vv_0 \in T_{P_0}$ containing $(P_0,P_1)$ would be fixed 
by $\varphi_*$, hence it would be the unique $\vv \in T_P$ fixed by $\varphi_*$, 
so by Lemma \ref{SecondPersistenceLemma} there would be type II points in $(P_0,P_1)$ arbitrarily near
$P_0$ which are id-indifferent.  These contradictions show $P_0$ cannot be an indifferent fixed point, so it must
be repelling.  
\end{proof} 

{\bf Refinement of the Dynamical Characterization of $\MinResLoc(\varphi)$.} 
We can now refine Theorem \ref{BaryCenterTheorem}, giving more details in the 
case where $\MinResLoc(\varphi)$ is a segment:

\begin{theorem}  \label{SegmentCharacterization} 
Let $\varphi(z) \in K(z)$ have odd degree $d \ge 3$, and suppose $\MinResLoc(\varphi)$
is an edge $[A,B]$ of $\Gamma_\varphi$.  $A$ and $B$ may or may not have the same reduction type, 
but $(A,B)$ consists of points of only one type$:$  
each point of $(A,B)$ is either moved by $\varphi$, or is multiplicatively indifferent,
or is id-indifferent.  

$(A)$ If $(A,B)$ consists of points moved by $\varphi$, then both $A$ and $B$ belong to the crucial set.
They can be  additively indifferent, multiplicatively indifferent, or repelling fixed points, 
or points that are moved by $\varphi$,  but 
they cannot be id-indifferent.
There can be no branches of $\Gamma_{\Fix,\Repel}$ off $(A,B)$.  

$(B)$ If $(A,B)$ consists of points that are multiplicatively indifferent, 
then all points in $(A,B)$ have the same reduced rotation number for the axis $(A,B)$, 
and both $A$ and $B$ belong to the crucial set.
They can be multiplicatively indifferent or repelling fixed points, but 
they cannot be additively indifferent, id-indifferent, or moved by $\varphi$.
There can be no branches of $\Gamma_{\Fix,\Repel}$ off $(A,B)$. 

$(C)$ If $(A,B)$ consists of points that are id-indifferent, 
then $A$ and $B$ may or may not belong to the crucial set.  
They can be additively indifferent, id-indifferent, or repelling fixed points, 
but they cannot be multiplicatively indifferent, or moved by $\varphi$.
There may be branches of $\Gamma_{\Fix,\Repel}$ off $(A,B);$  
$(A,B)$ and the interiors of any such branches belong to a single component $U_{\id}(P)$
of the locus of id-indifference. 
The endpoints $($not in $(A,B))$ of branches of $\Gamma_{\Fix,\Repel}$ off $(A,B)$ must be $\tid$-indifferent 
type {\rm I} fixed points.
\end{theorem}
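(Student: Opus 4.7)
The plan is to combine the barycenter balance equation from Theorem~\ref{BaryCenterTheorem} with the three Persistence Lemmas of Section~\ref{PersistenceTheoremSection}, proceeding in three stages: (i)~establishing the uniform-type claim on $(A,B)$, (ii)~controlling side branches of $\Gamma_{\Fix,\Repel}$ off $(A,B)$, and (iii)~classifying the endpoints $A$ and $B$ in each case.

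For step~(i), any interior $Q$ of $(A,B)$ is not a vertex of $\Gamma_\varphi$, so $w_\varphi(Q)=0$. Proposition~\ref{WeightProperties}(B) then restricts $Q$ sharply: among fixed-point reduction types, repelling is forbidden (always crucial), additively indifferent is forbidden (its unique $\varphi_*$-fixed tangent direction is incompatible with both directions along $(A,B)$ having fixed neighbours by continuity of fixing), and types~III and~IV are excluded via Corollary~\ref{NoTypeIIICor} and density of type~II points. The three surviving alternatives---moved, id-indifferent, and non-branch multiplicatively indifferent---partition the interior, and each forms a relatively open subset of $(A,B)$: moved points because $\{P:\varphi(P)=P\}$ is closed, id-indifferent points by Corollary~\ref{IdIndiffOpenCor}, and multiplicatively indifferent points by the Second Persistence Lemma applied to both directions at $Q$ along $(A,B)$ (which must be the two $\varphi_*$-fixed directions, with matching reduced rotation numbers). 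Connectedness of $(A,B)$ then forces exactly one alternative.

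For steps (ii) and (iii), the barycenter equation at an interior $Q$, using $d$ odd and $w_\varphi(Q)=0$, forces exactly $(d-1)/2$ units of crucial weight in each of the two directions along $(A,B)$ and zero in any other direction. This immediately rules out every side branch of $\Gamma_{\Fix,\Repel}$ off $(A,B)$ in Cases~A and~B: such a branch terminates at a focused repelling fixed point (crucial, violating zero mass) or at a type~I fixed point (which makes $Q$ a branch point of $\Gamma_{\Fix}$, hence $w_\varphi(Q)>0$ by Proposition~\ref{WeightProperties}(A)). In Case~C, id-indifferent points contribute no crucial weight, so side branches whose interior lies in $U_{\id}$ remain possible; Corollary~\ref{BranchPtIndiffCor} forces any further internal branching to occur at id-indifferent points, Lemma~\ref{ThirdPersistenceLemma} forces the type~I leaves to be $\tid$-indifferent, and openness of $U_{\id}$ joins $(A,B)$ and all open side branches into a single component $U_{\id}(P)$. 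The endpoint classification then follows by persistence applied at $A$ (symmetrically at $B$): First Persistence forbids an id-indifferent endpoint in Case~A (else $(A,B)$ would itself be id-indifferent by openness of $U_{\id}$); Second Persistence forbids id-, additively-, and non-fixed endpoints in Case~B (each would propagate the wrong reduction type into $(A,B)$); in Case~C, Corollary~\ref{IdIndiffBoundaryCor} gives repelling and additively indifferent as the only boundary types of $U_{\id}(P)$ in $\BHH_K$, while an endpoint interior to $U_{\id}(P)$ is necessarily id-indifferent, and the $\tid$-indifferent type~I option is ruled out because $\MinResLoc(\varphi)\subset\BHH_K$. To see that $A$ lies in the crucial set in Cases~A and~B, one checks that if $w_\varphi(A)=0$ and the crucial mass at $A$ in directions $\ne\vv_B$ were concentrated in a single direction, the edge $[A,B]$ would extend past $A$ inside $\Gamma_\varphi$, contradicting $\MinResLoc(\varphi)=[A,B]$; the remaining endpoint configurations produce enough branching weight at $A$ to force $A$ itself into the crucial set.

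The main technical obstacle is the endpoint classification in Case~C: verifying that the full configuration consisting of $(A,B)$ together with all open side branches lies inside a single component of $U_{\id}$, and producing the precise list of allowed endpoint reduction types from the boundary structure of $U_{\id}$ combined with the barycenter balance at $A$.
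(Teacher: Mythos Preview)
Your overall strategy matches the paper's: exploit $w_\varphi(Q)=0$ on the interior of the edge to restrict reduction types, use the Persistence Lemmas to get uniformity, then analyze branches and endpoints. Your topological argument for uniformity (three relatively open types partitioning a connected segment) is a nice variant of the paper's direct case-by-case analysis on pairs $P_1,P_2$ of different types.

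There is one genuine gap. Your stated reason for excluding additively indifferent interior points---``its unique $\varphi_*$-fixed tangent direction is incompatible with both directions along $(A,B)$ having fixed neighbours by continuity of fixing''---does not make sense as written. In Case~(A) the neighbours of $Q$ are \emph{moved}, not fixed, so no ``continuity of fixing'' argument is available; and at this stage of the proof you have not yet established which case holds, so you cannot assume neighbours are fixed. The correct exclusion is the one the paper uses (implicitly): any point of $(A,B)\cap(\Gamma_{\Fix,\Repel}\setminus\Gamma_{\Fix})$ is id-indifferent by Proposition~\ref{IdIndiffOffFixProp}, so an additively indifferent $Q\in(A,B)$ lies in $\Gamma_{\Fix}$, and then Proposition~\ref{WeightProperties}(A)(2) gives $w_\varphi(Q)>0$, contradicting that $Q$ is interior to an edge of $\Gamma_\varphi$.

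A second, smaller point: in your openness argument for multiplicatively indifferent points you assert that the two $\varphi_*$-fixed directions at $Q$ coincide with the two directions along $(A,B)$. This is true but needs a line of justification: since $w_\varphi(Q)=0$ forces $N_{\Shearing}(Q)=0$, every direction containing a type~I fixed point is $\varphi_*$-fixed; combining Corollary~\ref{AxisEndpointsCor} (which places $Q$ in $\Gamma_{\Fix}$) with the fact that $Q$ is not a branch point of $\Gamma_{\Fix}$ then identifies the two $\Gamma_{\Fix}$-directions at $Q$ with the two $\varphi_*$-fixed directions and with the two directions along $(A,B)$. Once these two items are fixed, your argument goes through and is equivalent to the paper's.
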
 

\begin{proof}  First note that since $[A,B]$ is an edge of $\Gamma_\varphi$, 
no  $P \in (A,B)$ can belong have $w_\varphi(P) > 0$, since the $P$ would belong to the crucial set. 
 It follows that no $P \in (A,B)$ can be a repelling fixed point
or an  additively indifferent fixed point, since such points necessarily have positive weight.  
Thus each $P \in (A,B)$ is either moved by $\varphi$, or is multiplicatively indifferent, or is id-indifferent.  

Next, we claim that all the points in $(A,B)$ are of the same reduction type.  Suppose to the contrary that 
$P_1, P_2 \in (A,B)$ were of different types.  Consider the segment $(P_1,P_2)$.  There are two cases:

\begin{itemize}
\item[$(1)$] If one of $P_1, P_2$ is moved by $\varphi$, assume without loss that $P_1$ is moved and $P_2$ is fixed.  
Let $P \in [P_1, P_2]$ be the closest point to $P_1$ which is fixed by $\varphi$. 
Then each point of $[P_1,P)$ is moved by $\varphi$, so if $\vv_1 \in T_P$ is the direction containing $P_1$, 
then either $\varphi_*(\vv_1) \ne \vv_1$, in which case $\vv_1$ is a shearing direction at $P$, hence $w_\varphi(P) > 1$; 
or else $\varphi_*(\vv_1) = \vv_1$ and $\varphi(Q) \ne Q$ for each $Q \in [P_1,P)$.  In this case, 
$m_\varphi(P,\vv_1) > 1$ since if $m_\varphi(P,\vv_1) = 1$ there would be an subsegment $(P,Q) \subset (P,P_1)$
which was pointwise fixed by $\varphi$.  It follows that $\deg_\varphi(P) > 1$, so $P$ is a repelling fixed point,
and again $w_\varphi(P) > 1$.  This contradicts that $[A,B]$ is an edge of $\Gamma_\varphi$.

\smallskip
\item[$(2)$]  If both $P_1$ and $P_2$ are fixed by $\varphi$, then one must be id-indifferent and the other must be
multiplicatively indifferent.  Suppose $P_1$ is id-indifferent;  then $(P_1,P_2)$ would contain an endpoint $P$ of the 
locus of id-indifference $U_{\id}(P_1)$.  By Lemma \ref{ThirdPersistenceLemma}, 
$P$ must either be a repelling fixed point, or an additively indifferent fixed point, 
and both cases are impossible since then $w_\varphi(P) > 1$. 
\end{itemize}   

Next we claim that if $(A,B)$ consists of multiplicatively indifferent fixed points, then all $P \in (A,B)$ 
have the same reduced rotation number for the axis $(A,B)$.  If $P_1, P_2 \in (A,B)$ had different 
reduced rotation numbers $\tlambda_1, \tlambda_2$, let $P \in (P_1,P_2)$ be the nearest point to $P_1$ 
with reduced rotation number $\tlambda \ne \tlambda_1$.  
Then $P$ would either be an endpoint of the maximal rotational axis for $P_1$,
or it would be a point where the maximal rotational axis of $P_1$ branched off of $(P_1,P_2)$.  In the first case, 
$P$ would be either repelling fixed point or a type I fixed point, and both are impossible.  In the second case,
the direction $\vv_2 \in T_P$ towards $P_2$ would be a shearing direction, so $w_\varphi(P) > 1$, which is also impossible.    

\smallskip 
If $(A,B)$ consists of points which are moved by $\varphi$, or if $(A,B)$ 
consists of multiplicatively indifferent fixed points, 
then no $P \in (A,B)$ can be a branch point of $\Gamma_{\Fix,\Repel}$, since such a $P$ would 
necessarily have $w_\varphi(P) > 1$.  

\smallskip
If $(A,B)$ consists of id-indifferent fixed points, 
Examples D and E below show that $\Gamma_{\Fix,\Repel}$ 
may have branches off $(A,B)$.  Let $\Gamma$ be such a branch.  
Then $\Gamma$ can contain no points with $w_\varphi(Q) \ge 1$, since
$[A,B]$ is an edge of $\Gamma_\varphi$ and by definition the vertices of $\Gamma_\varphi$ are either 
points of $\cCr(\varphi)$ or branch points of the tree they span.  
Since $U_{\id}(P)$ is open,
$\Gamma$ contains points of $U_{\id}(P)$.  Let $Q$ be an endpoint of $U_{\id}(P)$ in $\Gamma$.  
If $Q \in \BHH_K$, then by Lemma \ref{SecondPersistenceLemma} $Q$ would be a repelling fixed point
or an additively indifferent fixed point of $\varphi$ belonging to $\Gamma_{\Fix}$, and in either case $w_\varphi(Q) \ge 1$,
a contradiction.  Thus $Q$ must be of type I, and by Lemma \ref{ThirdPersistenceLemma} it is $\tid$-indifferent.
This also shows that each interior point of $\Gamma$ belongs to $U_{\id}(P)$.

\smallskip
Now consider the nature of the endpoints $A$, $B$.  Since $[A,B]$ is an edge of $\Gamma_\varphi$, 
its endpoints must belong to the crucial set or be branch points of $\Gamma_\varphi$. 
 
First suppose $(A,B)$ consists of points moved by $\varphi$. We claim that both $A$ and $B$ must 
belong to the crucial set. Consider $A$.  If it belongs to the crucial set, we are done.  If not, it must
be a branch point of $\Gamma_\varphi$, hence a branch point of $\Gamma_{\Fix,\Repel}$.  
If $\varphi(A) \ne A$, then $w_\varphi(P) = v(A) - 2 > 0$, so $A$ belongs to the crucial set.  If $\varphi(A) = A$, 
then $A$ must either be a repelling fixed point, or must be multiplicatively or additively indifferent;  
it cannot be id-indifferent, since otherwise $(A,B)$ would contain points of the component $U_{\id}(A)$
of the locus of id-indifference.  
If $A$ is a repelling fixed point then $w_\varphi(P) \ge 1$,
and if it is a multiplicatively or additively indifferent branch point of $\Gamma_{\Fix,\Repel}$ it necessarily 
has a shearing direction, so again $w_\varphi(P) \ge 1$.  Thus $A$ belongs to the crucial set; similar arguments apply to $B$.
In the argument above we have seen that $A$ and $B$ cannot be id-indifferent;  they can be repelling fixed points,
or multiplicatively or additively indifferent fixed points, or they can be moved by $\varphi$.  
Examples F(1) and F(2) below 
show they may or may not have the same reduction type.    

If $(A,B)$ consists of multiplicatively indifferent fixed points, again we claim that $A$ and $B$ must belong
to the crucial set.  Consider $A$.  If it belongs to the crucial set, we are done.  If not, since $[A,B]$
is an edge of $\Gamma_\varphi$, then $A$ must be a branch point of $\Gamma_\varphi$, and hence a 
branch point of $\Gamma_{\Fix,\Repel}$.  Since each point of $(A,B)$ is fixed by $\varphi$, by continuity
$A$ is fixed as well.  $A$ cannot be id-indifferent, since otherwise $(A,B)$ would contain points of
the component $U_{\id}(A)$ of the locus of id-indifference.  This means $A$ would be a multiplicatively 
or additively indifferent branch point of $\Gamma_{\Fix,\Repel}$, so it would have a shearing direction, 
and again $w_\varphi(P) \ge 1$.  Hence $A$ belongs to the crucial set;  similarly for $B$.  
We have seen that $A$ and $B$ are fixed by $\varphi$ but cannot be
id-indifferent;  they can be repelling fixed points, or multiplicatively or additively indifferent fixed points.
Example F(4) below shows they need not be of the same reduction type.

\smallskip
If $(A,B)$ consists of id-indifferent fixed points, then 
by continuity, $A$ and $B$ 
are both fixed by $\varphi$;  thus they can be repelling fixed points, or additively indifferent or id-indifferent 
fixed points.  However, they cannot be multiplicatively indifferent, because by Lemma \ref{SecondPersistenceLemma} 
endpoints of $U_{\id}$ in $\Gamma_{\Fix,\Repel}$ belonging to $\BHH_K$ are necessarily repelling fixed points 
or additively indifferent fixed points. 

In Example D of \S\ref{ExamplesSection}, $A$ and $B$ are repelling fixed points, 
and in Example E of \S\ref{ExamplesSection} they are id-indifferent. 
Example E shows that $A$, $B$ need not belong to the crucial set.  
Corollary \ref{SpecialBalanceConditionsCor}, together with the construction in Example A of \S\ref{GammaFixRepelSection} 
can be used to give examples where at least one of $A$ and $B$ is a focused repelling fixed point.  
A modification of the construction in Example C below
can be used to give functions $\varphi(z)$ where at least one of $A, B$ is id-indifferent and 
there are no branches of $\Gamma_{\Fix,\Repel}$ off $(A,B)$.   
If neither $A$ or $B$ is id-indifferent, then since the component $U_{\id}(P)$ of the locus of 
id-indifference containing $(A,B)$ has type I fixed points in its closure (Corollary \ref{IdIndiffClosureCountCor}), 
there must be at least one branch of $\Gamma_{\Fix,\Repel}$ off $(A,B)$. 
\end{proof}  

\noindent{{\bf Remark.} We do not know if all possibilities for $A$ and $B$ allowed by 
Theorem \ref{SegmentCharacterization} actually occur. The examples in the following section
illustrate several possibilities.

\section{\bf Examples.} \label{ExamplesSection} 

In this section we illustrate some possible configurations of $\Gamma_{\Fix,\Repel}$ and the crucial set.  
In Example C, we construct a function $\varphi$ of degree $d \ge 2$ which has $d-1$ repelling fixed points
in $\BHH_K$.  This shows that the bound in Corollary \ref{RepellingFixedPtCor} is sharp.  In Example D, 
we construct a $\varphi$ for which $\MinResLoc(\varphi)$ is a segment whose interior consists of
id-indifferent fixed points, such that there are many branches of $\Gamma_{\Fix,\Repel}$ off $\MinResLoc(\varphi)$. 
In Example E, we construct a $\varphi$ for which $\MinResLoc(\varphi)$ is a segment, 
and contains no elements of the crucial set.  
In Example F, when $\varphi$ has degree $d =3$, we give four configurations of $\MinResLoc(\varphi)$ and its 
endpoints which can occur when $\MinResLoc(\varphi)$ is a segment.  Finally, in Example G, we describe all the ways
that the crucial set of $\varphi$ can consist of a single point.

\smallskip
\noindent{\bf Example C.} (A function $\varphi$ of degree $d$, with $d-1$ repelling fixed points in $\BHH_K$.)
 
{\rm Fix $d \ge 2$.  
In this example we construct a rational function $\varphi(z) \in K(z)$ of degree $d$ 
with $d-1$ repelling fixed points, the maximum number allowed by the weight formula.  
This example is interesting for other reasons as well:
\begin{enumerate}
\item The tree $\Gamma_{\Fix,\Repel}$ has a branch off $\Gamma_{\Fix}$ which forks into $d-1$ segments.  
This shows that branches of $\Gamma_{\Fix,\Repel}$ off $\Gamma_{\Fix}$ need not just be segments. 
\item $\MinResLoc(\varphi) = \{\zeta_G\}$ consists of a single id-indifferent point, 
namely the branch point of $\Gamma_{\Fix,\Repel} \backslash \Gamma_{\Fix}$ from $(1)$.  This shows that
$\MinResLoc(\varphi)$ need not contain elements of the crucial set.  
\end{enumerate}}

We use the procedure for constructing id-indifferent points given in Example B of \S\ref{TreeTheoremSection}.
Take distinct elements $a_1, \ldots, a_{d-1} \in \tk$, and lift them to $\alpha_1, \ldots, \alpha_{d-1} \in \cO$;  
put $A(X,Y) = \prod_{i=1}^{d-1} (X-\alpha_i Y)$.  Choose $\beta_1, \ldots, \beta_d \in \cO$ with $|\beta_i| = 1$ 
for each $i$, and put $F_1(X,Y) = \prod_{i=1}^d (\pi X-\beta_i Y)$, $G_1(X,Y) = 0$.
Fix $\pi \in \cO$ with $|\,\pi| < 1$, and set 
\begin{eqnarray*} 
F(X,Y) & = & X \cdot \prod_{i=1}^{d-1} \big(X-\alpha_i Y\big) 
          \ + \ \pi \cdot \prod_{i=1}^d \big(\pi X-\beta_i Y\big) \ , \\
G(X,Y) & = & Y \cdot \prod_{i=1}^{d-1} \big(X-\alpha_i Y\big) \ .
\end{eqnarray*} 
Then $\GCD(F,G) = 1$, since if $L(X,Y)$ is a nontrivial divisor of $F(X,Y)$ and $G(X,Y)$, then $L(X,Y)$
divides $X \cdot G(X,Y) - Y \cdot F(X,Y) = \ - \pi Y \cdot F_1(X,Y)$.  However, this is impossible because 
$Y \nmid F(X,Y)$, and $(\pi X - \beta_i Y) \nmid G(X,Y)$ for each $i$.

Write $P = \zeta_G$. 
The function $\varphi(z)$ with normalized representation $(F,G)$ has the type I fixed points 
$\beta_1/\pi, \ldots, \beta_d/\pi$ and $\infty$, which all lie in the ball $B_P(\vv_\infty)^-$.  
On the other hand, it has $s_\varphi(\zeta_G,\vv) > 0$ in the directions 
$\vv_{a_1}, \ldots, \vv_{a_{d-1}} \in T_P$.  Since none of these directions contains a type I fixed point,
each must contain a focused repelling fixed point $P_i$.  By the weight formula, each 
$P_i$ has $\deg_\varphi(P_i) = 2$, and the crucial set is precisely $\{P_1, \ldots, P_{d-1}\}$.  
Thus, $\varphi$ has exactly $d-1$ repelling fixed points, each of degree $2$.

Since the $\beta_i/\pi$ and $\infty$ all lie in the same tangent direction at $P$, the paths from $P$ 
to the fixed points share a common initial segment.  Thus $P \in \Gamma_{\Fix,\Repel} \backslash \Gamma_{\Fix}$.  
Furthermore, $P$ belongs to $\cE(\varphi)$ and satisfies the balance conditions in Theorem \ref{BaryCenterTheorem};
by construction it is id-indifferent.  By moving slightly away from $P$ in each direction in $\Gamma_{\Fix,\Repel}$,
one sees that that no other point in $\Gamma_{\Fix,\Repel}$ can satisfy the balance conditions.  
Hence $\MinResLoc(\varphi) = \{P\}$.   

\smallskip
\noindent{\bf Example D.} (A function $\varphi$ where $\Gamma_{\Fix,\Repel}$ has many branches off of $\MinResLoc(\varphi)$.)
   
{\rm Let $d \ge 3$ be odd, and for simplicity, assume $\Char(\tk) \ne 2$.    
In this example we construct a rational function $\varphi(z) \in K(z)$ of degree $d$ 
for which $\MinResLoc(\varphi)$ consists of a segment connecting two repelling fixed points.
Each interior point of the segment is id-indifferent, and there are $d-1$ branches of $\Gamma_{\Fix,\Repel}$ 
off the interior of the segment which lead to $\tid$-indifferent type I fixed points. 
} 

\smallskip
We again use the procedure for constructing id-indifferent points from Example B of \S\ref{TreeTheoremSection}.
Let $\lambda, \pi, \mu \in \cO$ be nonzero parameters.  
We will require $\ord(\lambda) \gg \ord(\pi) \gg \ord(\mu) > 0$, 
but knowing the precise values of the parameters is not important. 
Write $d = 2n+1$, and take $A(X,Y) = (X-\lambda Y)^n (Y-\lambda X)^n$.  Put 
\begin{equation*}
D(X,Y) \ = \ X \cdot Y \cdot \prod_{k=1}^{n-1} (X - \mu^k Y) \cdot \prod_{k=1}^{n-1} (Y - \mu^k X) \ ,
\end{equation*}    
and set  
\begin{eqnarray*}     
F(X,Y) & = & X \cdot A(X,Y) \ + \ \pi Y \cdot D(X,Y) \ , \\ G(X,Y) & = & Y \cdot A(X,Y) \ + \ \pi X \cdot D(X,Y) \ .
\end{eqnarray*}
Then $\GCD(F,G) = 1$, since if $L(X,Y)$ is a nontrivial divisor of $F(X,Y)$ and $G(X,Y)$, then $L(X,Y)$
divides $X \cdot G(X,Y) - Y \cdot F(X,Y) = \ - \pi \cdot (X^2-Y^2) \cdot D(X,Y)$.  
However, this is impossible since by construction $Y \nmid F(X,Y)$ and $X \nmid G(X,Y)$ 
and for $k = 1, \ldots, n-1$ we have $(X - \pi^k Y) \nmid A(X,Y)$, $(Y-\pi^k X) \nmid A(X,Y)$;  furthermore  
$(X \pm Y) \nmid F(X,Y)$ since $\tF(X,Y) = X^{n+1} Y^n$ and $(X \pm Y) \nmid X^{n+1} Y^n$.  

Write $P = \zeta_G$. 
The rational function $\varphi(z)$ with normalized representation $(F,G)$ is invariant under conjugation by 
$\gamma(z) = 1/z$, and has $d+1$ type I fixed points 
$0$, $\infty$, $\pm 1$, $\mu, \mu^2, \ldots, \mu^{n-1}$, $\mu^{-1}, \mu^{-2}, \ldots, \mu^{-(n-1)}$.  
The tree $\Gamma_{\Fix}$ consists of the path $[0,\infty]$ together with the $d-1$ 
branches off it leading to the other fixed points.  

Consider the function $\ordRes_\varphi(\cdot)$ on $[0,\infty]$.  Writing
$F(X,Y) = a_d X^d + \cdots + a_0 Y^d$, $G(X,Y) = b_d X^d + \cdots b_0 Y^d$, for each $A \in K^\times$ we have 
\begin{eqnarray*}
\lefteqn{ \ordRes_{\varphi}(\zeta_{0,|A|}) - \ordRes_\varphi(\zeta_G) \ = \ } & &  \\ 
  & &  \min_{0 \le \ell \le d} \big( (d^2 + d - 2d \ell) \ord(A) - 2d \, \ord(a_\ell), 
                 (d^2 + d - 2d (\ell+1)) \ord(A) - 2d\,\ord(b_\ell) \big) \ .
\end{eqnarray*} 
Here $\ord(a_{n+1}) = \ord(b_n) = 0$, while $\ord(a_\ell), \ord(b_\ell) \ge \ord(\pi) > 0$ for other values of $\ell$.
If we require $\ord(\pi)$ to be sufficiently large relative to $\ord(\mu)$, 
and $\ord(\lambda)$ to be sufficiently large relative to $\ord(\pi)$, 
then the restriction of $\ordRes_\varphi(\cdot)$ to $[0,\infty]$ will have three affine pieces: 
there will be an $N > (n-1) \ord(\mu)$ in the value group $\ord(K^{\times})$ such that there is 
a piece with slope $-(d^2-d)$ for $\ord(A) \le -N$, 
a piece with slope $0$ for $-N \le \ord(A) \le N$, 
and a piece with slope $(d^2 - d)$ for $\ord(A) \ge N$.

Let $\zeta_{0,R_1}$, $\zeta_{0,R_2}$ be the points where the slope changes. 
It follows that $\MinResLoc(\varphi) = [\zeta_{0,R_1}, \zeta_{0,R_2}]$ and  $\Gamma_{\Fix,\Repel} = \Gamma_{\Fix}$.   
Since $\zeta_G \in (\zeta_{0,R_1},\zeta_{0,R_2})$ is id-indifferent, Corollary \ref{SegmentCharacterization} shows 
each point in $(\zeta_{0,R_1}, \zeta_{0,R_2})$ must id-indifferent. 
By construction, $\Gamma_{\Fix}$ has $d-1$ branches off the interior of $[\zeta_{0,R_1}, \zeta_{0,R_2}]$.
By Lemma \ref{SecondPersistenceLemma}, 
$\zeta_{0,R_1}$ and $\zeta_{0,R_2}$ must be bi-focused repelling fixed points with degree $n = (d-1)/2$.

\medskip
\noindent{\bf Example E.} (A function $\varphi$ where $\MinResLoc(\varphi)$ contains no points in the crucial set.) 

{\rm  Assume $\Char(K) \ne 2, 3$.  
In this example, we will construct a rational function $\varphi(z) \in K(z)$ of degree $5$, 
such that $\MinResLoc(\varphi)$ consists of a segment joining two id-indifferent points, neither of which 
belongs to the crucial set.
The crucial set consists of four repelling fixed points of degree $2$.  
The tree $\Gamma_\varphi$ they span consists of a central bar with a `$Y$' off each end,
and $\MinResLoc(\varphi)$ is the central bar.   
The type I fixed points lie on branches off the middle of the central bar, and are all $\tid$-indifferent
as required by Proposition \ref{SegmentCharacterization}(C).  
Each point of the interior of $\Gamma_\varphi$, and of $\Gamma_{\Fix,\Repel}$, is id-indifferent.}  

\smallskip
Fix  $\alpha \in K$ with $0 < |\,\alpha| < 1$. 
Let $\varphi(z) \in K(z)$ be the function with normalized representation $(F,G)$, where
\begin{eqnarray*}
F(X,Y) & = & \alpha^4 X^5 + \alpha X^4 Y + (1+\alpha) X^3 Y^2 +  \alpha X^2 Y^3 + \alpha^4 X Y^4 + \alpha^4 Y^5 \ , \\
G(X,Y) & = & \alpha^4 X^5 + \alpha^4 X^4 Y + \alpha X^3 Y^2 + (1+\alpha) X^2 Y^3 + \alpha X Y^4 + \alpha^4 Y^5 \ ,
\end{eqnarray*} 
Note that $F(X,Y) = G(Y,X)$,  so $\varphi$ is invariant under conjugation 
by $\gamma = \left( \begin{array}{cc} 0 & 1 \\ 1 & 0 \end{array} \right)$, 
and  $\varphi(1/z) = 1/\varphi(z)$.  One sees easily that  
\begin{equation*}
X \cdot G(X,Y) - Y \cdot F(X,Y) \ = \ \alpha^4 (X^6 - Y^6) \ , 
\end{equation*} 
so (identifying $\PP^1(K)$ with $K \cup \{\infty\}$) 
the fixed points of $\varphi(z)$ are the $6^{th}$ roots of unity, 
and they lie on branches off $\zeta_G$ in directions other than $\vv_0, \vv_\infty$. 

Reducing $(F,G)\pmod \fM$ we see that $(\tF,\tG) = X^2 Y^2 \cdot (X,Y)$.  
Thus $\varphi$ has id-indifferent reduction at $\zeta_G$, 
and $s_{\varphi}(\zeta_G,\vv_0) = s_{\varphi}(\zeta_G,\vv_{\infty}) = 2$. 

Conjugating by $\gamma = \left( \begin{array}{cc} \alpha & 0 \\ 0 & 1 \end{array} \right)$, which 
brings $\zeta_{0,1/\alpha}$ to $\zeta_G$, yields 
\begin{eqnarray*}
F^\alpha(X,Y) & = & \alpha^3 X^3 Y^2 + \alpha^9 X^5 + \alpha^5 X^4 Y + \alpha^4 X^3 Y^2 
                   +  \alpha^3 X^2 Y^3 + \alpha^5 X Y^4 + \alpha^4 Y^5 \\
              & \equiv & \alpha^3\big(X(X+Y)Y^2 \cdot X) \pmod{\alpha^4 \cO}  
\end{eqnarray*}
and 
\begin{eqnarray*}
G^\alpha(X,Y) & = & \alpha^3 X^2 Y^3 + \alpha^10 X^5 + \alpha^9 X^4 Y + \alpha^5 X^3 Y^2 
              +  \alpha^4 X^2 Y^3 + \alpha^3 X Y^4 + \alpha^5 Y^5 \\
         & \equiv & \alpha^3\big(X(X+Y)Y^2 \cdot Y) \pmod{\alpha^4 \cO}  \ .
\end{eqnarray*}
Dividing through by $\alpha^3$ yields a normalized representation $(F_\alpha,G_\alpha)$, and reducing it $\pmod{\fM}$ 
gives $(\tF_\alpha,\tG_\alpha) = X(X+Y)Y^2 \cdot (X,Y)$. 
Thus, $\varphi$ has id-indifferent reduction at $P = \zeta_{0,|\alpha|}$,
and $s_{\varphi}(P,\vv_0) = s_\varphi(P,\vv_{-1}) = 1$.  Since all the fixed points lie in the direction $\vv_\infty$
from $P$, it follows that $\varphi$ has a focused repelling fixed point in each of $B_P(\vv_0)^-$ and 
$B_P(\vv_{-1})^-$.

Since $\varphi(z)$ is invariant under conjugation by $z \mapsto 1/z$, it follows that $\varphi$ has
id-indifferent reduction at $Q = \zeta_{0,1/|\alpha|}$ and that $\varphi$ has a focused repelling fixed point 
in each of $B_Q(\vv_\infty)^-$ and $B_Q(\vv_{-1})$.  As $\varphi$ has degree $5$, these four focused repelling
fixed points account for all elements of the crucial set, and each must have weight $1$ and degree $2$.  

The tree $\Gamma_\varphi$ has a central bar $[P,Q]$, with two forks at each end leading to the focused repelling
repelling fixed points.  It is easy to see that the barycenter of the crucial measure $\nu_{\varphi}$,
which gives mass $1/4$ to each of the focused repelling fixed points, is the segment $[P,Q]$.
Thus $\MinResLoc(\varphi) = [P,Q]$.

The tree $\Gamma_{\Fix,\Repel}$ is composed of $\Gamma_\varphi$ together
with the branches off  $\zeta_G$ leading to the type I fixed points; one can also view $\Gamma_{\Fix,\Repel}$
as being gotten from the tree $\Gamma_{\Fix}$ by adding branches off $\zeta_G$,  
leading to the focused repelling fixed points, in the directions $\vv_0$ and $\vv_\infty$.  
We have seen that $\varphi$ is id-indifferent at $\zeta_G$.
By Proposition \ref{IdIndiffOffFixProp}, each interior point of the branches off $\Gamma_{\Fix}$ 
leading to the focused repelling fixed points must be id-indifferent, and by 
Proposition \ref{SegmentCharacterization} each interior point of the branches of $\Gamma_{\Fix,\Repel}$ 
leading to the type I fixed points must be id-indifferent.  Thus, each interior point of $\Gamma_{\Fix,\Repel}$ 
is id-indifferent for $\varphi$, and so is each interior point of $\Gamma_\varphi$. 

In particular, neither endpoint of $\MinResLoc(\varphi)$ belongs to the crucial set, 
and each point of $\MinResLoc(\varphi)$ is id-indifferent for $\varphi$.

\medskip
\noindent{\bf Example F.}  (Cubic functions $\varphi$ for which $\MinResLoc(\varphi)$ is a segment.) 

{\rm In this example, we give several functions $\varphi(z) \in K(z)$ with $\deg(\varphi) = 3$, 
for which $\MinResLoc(\varphi)$ is a segment $[A,B]$.  These functions illustrate different configurations 
which can occur for $A$ and $B$.} 
\smallskip

If $\varphi(z) \in K(z)$ has degree $3$, and has four distinct fixed points, 
then after conjugation we can assume
it has fixed points $0, 1, \alpha$ and $\infty$, for some $\alpha \in K$ with $0 < |\,\alpha| \le 1$.  
It follows that there are $A,B,C \in K$ such that $\varphi(z)$ has the form 
\begin{equation*}
\varphi(z) \ = \ \frac{(A+1)z^3 + (B-1-\alpha) z^2 + (C + \alpha)z}{Az^2 + Bz + C}  \ .
\end{equation*} 

Below we will assume $|\alpha| < 1$, 
and consider particular functions arising from different choices of $A$, $B$, and $C$.
Note that $\Gamma_{\Fix}$ is the union of the segments $[\zeta_{0,|\alpha|},\zeta_G]$, $[0,\zeta_{0,|\alpha|}]$, 
$[\alpha,\zeta_{0,|\alpha|}]$, $[1,\zeta_G]$ and $[\infty,\zeta_G]$.  In our examples, $\MinResLoc(\varphi)$
will be the segment $[\zeta_{0,|\alpha|},\zeta_G]$. 
We will be concerned with the nature of the points of $(\zeta_{0,|\alpha|},\zeta_G)$
(whether they are moved by $\varphi$, multiplicatively indifferent, or id-indifferent), 
and the dynamical behavior of $\zeta_{0,|\alpha|}$ and $\zeta_G$.  We will write $\tphi(z)$
for the reduction of $\varphi$ at $\zeta_G$, and $\tpsi(u)$ for its reduction at $\zeta_{0,|\alpha|}$,
obtained by letting $\psi(u)$ be the conjugate of $\varphi$ by $z = \alpha u$ and then reducing.  

\smallskip
(1) If $A = 0$, $B = 0$, and $C = -\alpha$, then $\varphi(z) = (z^3 - (1+\alpha)z^2)/(-\alpha)$.  In this 
case $\varphi$ moves each point of $(\zeta_{0,|\alpha|},\zeta_G)$, and $\varphi$ moves $\zeta_G$ 
but fixes $\zeta_{0,|\alpha|}$;  here, $\zeta_{0,|\alpha|}$ is a repelling fixed point of degree $2$.  
We have $\varphi(\zeta_G) = \zeta_{0,1/|\alpha|}$, so $w_\varphi(\zeta_G) = \max(0,3-2) = 1$.  
Also, $\tpsi(u) = u^2$, and $\tpsi(0) = 0$, $\tpsi(1) = 1$,
and $\tpsi(\infty) = \infty$, so $\deg_\varphi(\zeta_{0,|\alpha|}) = 2$ and $\zeta_{0,|\alpha|}$ has no shearing directions.  
Thus $w_\varphi(\zeta_{0,|\alpha|}) = 2 -1 + 0= 1$.  

\smallskip
(2) If $A = 0$, $B = 0$, and $C = \alpha^2$, then $\varphi(z) = (z^3 - (1+\alpha)z^2 + (\alpha + \alpha^2)z)/(\alpha^2)$. 
In this case $\varphi$ moves each point of $(\zeta_{0,|\alpha|},\zeta_G)$, 
and $\varphi$ moves both $\zeta_G$ and $\zeta_{0,|\alpha|}$.  
We have $\varphi(\zeta_G) = \zeta_{0,1/|\alpha|^2}$ so $w_\varphi(\zeta_G) = \max(0,3-2) = 1$,
and $\varphi(\zeta_{0,|\alpha|}) = \zeta_{0,1/|\alpha|}$ so $w_\varphi(\zeta_{0,|\alpha|}) = \max(0,3-2) = 1$.

\smallskip
(3) Fix $\tlambda \in \tk$ with $\tlambda \ne 0, 1$, and choose $A \in \cO$ so that $\tA = 1/(\tlambda -1)$. 
Take $B = \alpha$, $C = -\alpha$. Then $\varphi(z) = ((1+A)z^3 - z^2)/(Az^2 + \alpha z - \alpha)$. 
In this case $\varphi$ moves each point of $(\zeta_{0,|\alpha|},\zeta_G)$, and $\varphi$ fixes $\zeta_G$ 
but moves $\zeta_{0,|\alpha|}$; here, $\zeta_G$ is multiplicatively indifferent
for $\varphi$, with rotation number $\tlambda$ for the axis $(1,\infty)$.  We have
$\tphi(z) = ((1+\tA)/\tA) z - (1/\tA) = \tlambda z - (1/\tA)$.  Note that $\tphi(0) = -1/\tA = 1 - \tlambda$, $\tphi(1) = 1$,
and $\tphi(\infty) =\infty$, so $\zeta_G$ has one shearing direction, and $w_\varphi(\zeta_G) = 1 - 1 + 1 = 1$.
Since $\zeta_{0,|\alpha|}$ is moved we have $w_\varphi(\zeta_{0,|\alpha|}) = \max(0,3-2) = 1$.

(4) Fix $\tlambda \in \tk$ with $\tlambda \ne 0, 1$, and choose $A \in \cO$ so that $\tA = 1/(\tlambda -1)$.
Take $B = -A$ and $C = -\alpha$.  Then $\varphi(z) = ((1+A)z^3 - (1+A + \alpha) z^2)/(Az^2 - A z - \alpha)$.
In this case each point of $(\zeta_{0,|\alpha|},\zeta_G)$ is multiplicatively indifferent, 
with rotation number $\tlambda$ for the axis $(0,\infty)$.  
Furthermore, $\zeta_G$ is multiplicatively indifferent, 
while $\zeta_{0,|\alpha|}$ is a repelling fixed point of degree $2$.  
We have $\tphi(z) = (((1+\tA)/\tA) z = \tlambda z$, and $\tphi(0) = 0$, $\tphi(1) = \tlambda$,
and $\tphi(\infty) = \infty$, 
so $\zeta_G$ is multiplicatively indifferent, with rotation number $\tlambda$ for the axis $(0,\infty)$.
There is one shearing direction at $\zeta_G$, so $w_\varphi(\zeta_G) = 1 - 1 + 1 = 1$. 
We have $\tpsi(u) = (1+\tA) u^2/(A u + 1)$ so $\deg_\varphi(\zeta_{0,|\alpha|}) = 2$, 
and $\tpsi(0) = 0$, $\tpsi(1) = 1$, $\psi(\infty) = \infty$. 
There are no shearing directions at $\zeta_{0,|\alpha|}$;  
the multiplier for $\tpsi$ at $\infty$ is $1/\tlambda$,   
and $w_\varphi(\zeta_{0,|\alpha|}) = 2-1+0 = 1$.

\medskip
\noindent{\bf Example G.} (All ways the crucial set of $\varphi$ can consist of one point.)  
 
{\rm In this example, we illustrate the ways the crucial set can consist of one point. 
We give examples of rational functions $\varphi(z) \in K(z)$ of arbitrary degree $d \ge 2$
such that $\cCr(\varphi)$ is a single point $P$ with $w_\varphi(P) = d-1$, and $P$ is  

\begin{itemize} 
\item[(1)] a repelling fixed point of arbitrary degree $2 \le k \le d$,
\item[(2)] an additively indifferent fixed point, 
\item[(3)] a multiplicatively indifferent fixed point, 
\item[(4)] or is moved by $\varphi$.
\end{itemize} 

\noindent{These} the only ways one could have $\cCr(\varphi) = \{P\}$ since an id-indifferent point 
necessarily has weight $w_\varphi(P) = 0$.  
The author thanks Xander Faber for suggesting this example, and for providing the construction for 
repelling fixed points with degree $2 \le k < d$.
}

\smallskip
Let $\varphi(z) \in K(z)$ have degree $d \ge 2$. 

If $\varphi$ has good reduction, then $P = \zeta_G$ is a repelling fixed point of degree $d$, so $w_\varphi(P) = d-1$.  
Similarly, if $\varphi$ has potential good reduction, and it achieves good reduction at $P$, 
then $P$ is a repelling fixed point of degree $d$ and $w_\varphi(P) = d-1$.

For an example where $P$ is a repelling fixed point of degree $2 \le k < d$, 
choose a polynomial $\tf(z) \in \tk[z]$ of degree $d-k$ whose roots are distinct, nonzero,  
and are not ${k-1}^{st}$ roots of unity.   
Let $f_1(z), f_2(z) \in \cO[z]$ be lifts of $\tf(z)$ with no common roots; put 
\begin{equation*} 
\varphi(z) \ = \ 
\frac{z^k f_1(z)} {f_2(z)}  \ .
\end{equation*}
Then $\tphi(z) = z^k$, so $\varphi$ fixes $P = \zeta_G$ and $\deg_\varphi(P) = 2$.
For each root $a$ of $\tf(z)$, we have $s_{\varphi}(P,\vv_a) = 1$.  By Lemma \ref{FirstIdentificationLemma}, 
$\vv_a$ contains a type I fixed point of $\varphi$, and by construction $\tphi(a) \ne a$,
so $\vv_a$ is a shearing direction.  Thus $\varphi$ has at least $d-k$ shearing directions at $P$, 
and $w_\varphi(P) = \deg_\varphi(P) -1 + N_{\Shearing}(P) \ge (k-1) + (d-k) = d-1$.  Since trivially $w_\varphi(P) \le d-1$, 
we must have $w_\varphi(P) = d-1$ and the construction is complete.    
 
Examples where $P$ is additively indifferent or multiplicatively indifferent can be constructed 
in a similar way.  Fix $\lambda \in \cO$ whose reduction $\tlambda \in \tk$ is not $0$ or $1$.
Choose $\tf(z) \in \tk[z]$ of degree $d-1$ whose roots are distinct and nonzero, and different from $\tlambda$.
Let $f_1(z), f_2(z) \in \cO[z]$ be lifts of $\tf(z)$ with no common roots, and put  
\begin{equation*}
\varphi(z) \ = \ \frac{(z+\lambda) \cdot f_1(z)}{f_2(z)}  \qquad 
 ( \text{ resp. \quad $\varphi(z) = \frac{\lambda z \cdot f_1(z)}{f_2(z)}$ } \ ) \ . 
\end{equation*} 
Then $P = \zeta_G$ is additively indifferent (resp. multiplicatively indifferent) for $\varphi$,
and by Lemma \ref{FirstIdentificationLemma}, each direction $\vv_a$ corresponding to a root of $\tf(z) = 0$
contains a type I fixed point of $\varphi$.  Since $\tphi(z)$ moves each of these directions, they are shearing
directions.  As in the previous case, one concludes $w_\varphi(P) = d-1$.

For an example where $P$ is moved by $\varphi$, 
suppose $\alpha_1, \ldots, \alpha_d \in \cO$ belong to distinct classes of $\cO/\fM$, 
and let $0 \ne \pi \in \cO$ have $\ord(\pi) > 0$.      
Write $s_k(x_1, \ldots, x_d)$ for the $k^{th}$ symmetric polynomial in $x_1, \ldots, x_d$, 
put $\valpha = (-\alpha_1, \ldots, -\alpha_d)$, and take
\begin{equation*} 
\varphi(z) \ = \ 
\frac{z^d + s_1(\valpha) z^d + s_2(\valpha) z^{d-1} + \cdots + (s_{d-1}(\valpha) + \pi) z + s_d(\valpha)} {\pi} \ .
\end{equation*} 
The fixed points of $\varphi(z)$ are $\alpha_1, \ldots, \alpha_d, \infty$.  Since these lie in distinct 
tangent directions at $\zeta_G$, and since $\varphi$ moves $P = \zeta_G$ to $\zeta_{0,1/|\pi|}$, we have 
$w_\varphi(P) = \max(0,v(P)-2) = d-1$.

\medskip

\end{document}